\documentclass[11pt,reqno]{amsart}
\usepackage[pdfstartview=FitB]{hyperref}
\usepackage{amsmath,amssymb,amsthm}
\usepackage[dvips]{graphicx}
\usepackage{epsfig,epstopdf}
\usepackage{url}
\usepackage{colortbl,xr}

\oddsidemargin +0.0in
 \evensidemargin +0.0in
\topmargin 10pt \textheight 8.7in \textwidth 6.5in
\linespread{1.19}\parskip .03in

\renewcommand{\qed}{\hfill{\tiny \ensuremath{\blacksquare} }}%

\newcommand{\R}{\mathbb{R}}

\newcommand{\N}{\mathbb{N}}
\newcommand{\D}{\mathbb{D}}
\newcommand{\E}{\mathbb{E}}

\newcommand{\mA}{\mathcal{A}}

\newcommand{\mP}{\mathcal{P}}

\newcommand{\mW}{\mathcal{W}}
\newcommand{\mF}{\mathcal{F}}

\newcommand{\mH}{\mathcal{H}}

\newcommand{\mU}{\mathcal{U}}
\newcommand{\mZ}{\mathcal{Z}}
\newcommand{\mV}{\mathcal{V}}

\newcommand{\mQ}{\mathcal{Q}}

\newcommand{\bP}{\mathbb{P}}
\newcommand{\supp}{\mathrm{supp}}
\newcommand{\Id}{\mathrm{Id}}

\newcommand{\bG}{\mathbb{G}}

\newcommand{\G}{{\Lambda}}

\newcommand{\ci}{\perp\!\!\!\perp}
\renewcommand{\qed}{\hfill {\tiny {\ensuremath{\blacksquare}}}}

\vfuzz2pt 
\hfuzz2pt 
\newtheorem{theorem}{Theorem}[section]
\newtheorem{corollary}{Corollary}[section]
\newtheorem{lemma}{Lemma}[section]

\newtheorem{assumption}{Assumption}[section]
\newtheorem{algorithm}{Algorithm}[section]

\newtheorem{definition}{Definition}[section]

\theoremstyle{definition}

\newtheorem{remark}{Comment}[section]
\numberwithin{remark}{section}
\newtheorem{example}{Example}
\numberwithin{equation}{section}
\numberwithin{theorem}{section}

\newcommand{\eps}{\varepsilon}
\usepackage{url}
\usepackage{natbib}
\newcommand{\citen}{\citet}
\renewcommand{\citeyear}{\citeyearpar}

\newcommand{\Gn}{\mathbb{G}_n}

\newcommand{\Pn}{\mathbb{P}_n}

\newcommand{\F}{\mathcal{F}}

\newcommand{\Ep}{{\mathrm{E}}}
\newcommand{\barEp}{\bar \Ep}
\renewcommand{\Pr}{{\mathrm{P}}}

\renewcommand{\(}{\left(}
\renewcommand{\)}{\right)}
\renewcommand{\hat}{\widehat}
\newcommand{\En}{{\mathbb{E}_n}}

\renewcommand{\Pr}{{\mathrm{P}}}
\newcommand{\RR}{\mathbb{R}}

\newcommand{\ceil}[1]{\left\lceil #1 \right\rceil}
\newcommand{\semin}[1]{\phi_{{\rm min}}(#1)}
\newcommand{\semax}[1]{\phi_{{\rm max}}(#1)}
\renewcommand{\hat}{\widehat}
\renewcommand{\leq}{\leqslant}
\renewcommand{\geq}{\geqslant}
\newcommand{\sign}{ {\rm sign}}

\DeclareMathOperator{\Var}{Var}

\newcommand{\diag}{{\rm diag}}

\newcommand{\dn}{{d_u}}
\newcommand{\cc}{\mathbf{c}}
\renewcommand{\(}{\left(}
\renewcommand{\)}{\right)}
\renewcommand{\[}{\left[}
\renewcommand{\]}{\right]}

\begin{document}

\title[]{Program Evaluation and Causal Inference with High-Dimensional Data}

\author[]{A. Belloni, V. Chernozhukov,  I. Fern\'andez-Val, and C. Hansen}

\date{\tiny First version: April 2013.  This version: \today.  
We gratefully acknowledge research support from
the NSF. We are very grateful to the co-editor, three anonymous referees,  Alberto Abadie, Stephane Bonhomme, Matias Cattaneo, Jinyong Hahn, Michael Jansson, Toru Kitagawa, Roger Koenker, Simon Lee, Yuan Liao, Oliver Linton, Blaise Melly, Whitney Newey, Adam Rosen, and seminar participants at the Bristol Econometric Study Group, Cambridge, CEMFI,  Cornell-Princeton Conference on ``Inference on Non-Standard Problems", Winter 2014 ES meeting, Semi-Plenary Lecture at of ES Summer Meeting 2014, ES World Congress 2015, 2013 Summer NBER Institute, University of Montreal,  UIUC,  UCL, and University of Warwick for helpful comments. We are especially grateful to Andres Santos for many useful comments. Matlab code for the empirical illustration is available on the \textit{Econometrica} website.  R code for implementing the treatment effects estimators is available in the R package ``hdm'' from \citen{R:hdm}.  }
\keywords{\tiny machine learning, causality,  Neyman orthogonality,  heterogeneous treatment effects, endogeneity, local average and quantile treatment effects, instruments, local effects of treatment on the treated, propensity score, Lasso, inference after model selection, moment condition models, moment condition models with a continuum of target parameters, Lasso and Post-Lasso with functional response data, randomized control trials}
\begin{abstract}
\begin{footnotesize}

In this paper, we provide efficient estimators and honest confidence bands
for a variety of treatment effects including local average (LATE) and local quantile treatment effects (LQTE) in data-rich environments.    We can handle \textit{very many} control variables, \textit{endogenous} receipt of treatment, \textit{heterogeneous} treatment effects, and \textit{function-valued} outcomes.  Our framework covers the special case of exogenous receipt of treatment, either conditional on controls
or unconditionally as in randomized control trials. In the latter case, our approach produces efficient estimators and honest bands for (functional) average treatment effects (ATE) and quantile treatment effects (QTE).  To make informative inference possible, we assume that key reduced form predictive relationships are approximately sparse.  This assumption allows the use of regularization and selection methods to estimate those relations, and we provide methods for post-regularization and post-selection inference that are uniformly valid (honest) across a wide-range of models. We show that a key ingredient enabling honest inference  is the use of orthogonal or doubly robust moment conditions in estimating certain reduced form functional parameters. We illustrate the use of the proposed  methods with an application to estimating the effect of 401(k) eligibility and participation on accumulated assets.

The results on program evaluation are obtained as a consequence of more general results on honest inference in a
general moment condition framework, which arises from structural equation models in econometrics.    Here too the crucial ingredient is the use of orthogonal moment conditions, which
can be constructed from the initial moment conditions. We provide  results on honest inference for (function-valued) parameters within this general framework where \textit{any high-quality}, modern \textit{machine learning} methods (e.g., boosted trees, deep neural networks, random forests, and their aggregated and hybrid versions) 
can be used to learn the nonparametric/high-dimensional components of the model.  These include a number of supporting auxilliary results that are of major independent interest: namely, we (1) prove uniform validity of a  multiplier bootstrap,
(2)  offer a uniformly valid functional delta method, and (3) provide results for sparsity-based estimation of regression functions for function-valued outcomes.
\end{footnotesize}

\end{abstract}

\enlargethispage*{\baselineskip}

\maketitle
\thispagestyle{empty}

\section{Introduction}

The goal of many empirical analyses is to understand the causal effect of a treatment, such as participation in a training program or a government policy, on economic and other outcomes.  Such analyses are often complicated by the fact that treatments or policies are rarely randomly assigned.  The lack of true random assignment has led to the adoption of a variety of quasi-experimental approaches to estimating treatment effects that are based on observational data.  Such approaches include instrumental variable (IV) methods in cases where treatment is not randomly assigned but there is some other external variable, such as eligibility for receipt of a government program or service, that is either randomly assigned or the researcher is willing to take as exogenous conditional on the right set of control variables (or simply controls).  Another common approach is to assume that the treatment variable itself may be taken as exogenous after conditioning on the right set of controls which leads to regression or matching based methods, among others, for estimating treatment effects.\footnote{There is a large literature about estimation of treatment effects.  See, for example, the textbook treatments in \citen{AngristBook}, \citen{wooldridge:text}, and \citen{imbens:rubin:book}.}

A practical problem empirical researchers face when trying to estimate treatment effects is deciding what conditioning variables to include.  When the treatment variable or instrument is not randomly assigned, a researcher must choose what needs to be conditioned on to make the argument that the instrument or treatment is exogenous plausible.  Typically, economic intuition will suggest a set of variables that might be important to control for but will not identify exactly which variables are important or the functional form with which variables should enter the model. While less crucial to identifying treatment effects, the problem of selecting controls  also arises in situations where the key treatment or instrumental variables are randomly assigned.  In these cases, a researcher interested in obtaining precisely estimated policy effects will also typically consider including additional controls to help absorb residual variation.  As in the case where including controls  is motivated by a desire to make identification of the treatment effect more plausible, one rarely knows exactly which variables will be most useful for accounting for residual variation.  In either case, the lack of clear guidance about what variables to use presents the problem of selecting controls from a potentially large set  including raw variables available in the data as well as interactions and other transformations of these variables.

In this paper, we consider estimation of the effect of an \textit{endogenous} binary treatment, $D$, on an outcome, $Y$, in the presence of a binary instrumental variable, $Z$, in settings with very many potential controls, $f(X)$.  Allowing many potential controls expressly covers both the case where there are simply many controls (where $f(X) = X$)) and the case where there are many technical controls $f(X)$ generated as transformations such as powers, b-splines, or interactions of raw controls,\footnote{See, e.g., \citen{koenker:jappliedeconometircs},  \citen{newey:series}, \citen{wasserman:npbook},  \citen{chen:Chapter},  and  \citen{tsybakov:npbook}.} $X$, along with combinations of the two cases. The notation $f(X)$ naturally accommodates these cases, and we call $f(X)$ the \emph{controls} regardless of the case.  We allow for fully \textit{heterogeneous} treatment effects and thus focus on estimation of causal quantities that are appropriate in heterogeneous effects settings such as the local average treatment effect (LATE) or the local quantile treatment effect (LQTE).  We focus our discussion on the \textit{endogenous} case where identification is obtained through the use of an instrumental variable, but all results carry through to the \textit{exogenous} case where the treatment is taken as exogenous unconditionally or  after conditioning on sufficient controls  by simply replacing the instrument with the treatment variable in the estimation and inference methods and in the formal results.  In the latter case, LATE reduces
to the average treatment effect (ATE) and LQTE to the quantile treatment effect (QTE).

The methodology for estimating treatment effects we consider allows for cases where the number of potential controls, $p := \dim f(X)$, is much larger than the sample size, $n$.  Of course, informative inference about causal parameters cannot proceed allowing for $p \gg n$ without further restrictions.  We impose sufficient structure through the assumption that reduced form relationships such as the conditional expectations $\Ep_P[D|X]$, $\Ep_P[Z|X]$, and $\Ep_P[Y|X]$ are approximately sparse.  Intuitively, approximate sparsity imposes that these reduced form relationships can be represented up to a small approximation error as a linear combination, possibly inside of a known link function such as the logistic function, of a number $s \ll n$ of the variables in $f(X)$ whose identities are \emph{a priori} unknown to the researcher.  This assumption allows us to use methods for estimating models in high-dimensional sparse settings that are known to have good prediction properties to estimate the fundamental reduced form relationships.  We may then use these estimated reduced form quantities as inputs to estimating the causal parameters of interest.  Approaching the problem of estimating treatment effects within this framework allows us to accommodate the realistic scenario in which a researcher is unsure about exactly which confounding variables or transformations of these confounds are important and so must search among a broad set of controls.

Valid inference following model selection is non-trivial.  Direct application of usual inference procedures following model selection does not provide valid inference about causal parameters even in low-dimensional settings, such as when there is only a single control, unless one assumes sufficient structure on the model that perfect model selection is possible.  Such structure can be restrictive and seems unlikely to be satisfied in many economic applications.  For example, a typical condition that allows perfect model selection is the ``beta-min" condition, which requires that all but a small number of coefficients are exactly zero and that the non-zero coefficients are all large enough that they can be distinguished from zero with probability very near one in finite samples.  Such a condition rules out the possibility that there may be some variables which have moderate, but non-zero, partial effects.  Ignoring such variables may result in large omitted variables bias that has a substantive impact on estimation and inference regarding individual model parameters; see Leeb and P{\"o}tscher \citeyear{leeb:potscher:pms,leeb:potscher:review}; \citen{Potscher2009}; and Belloni, Chernozhukov, and Hansen \citeyear{BCH2011:InferenceGauss,BelloniChernozhukovHansen2011}.

The \textit{first main contribution} of this paper is providing inferential procedures for key parameters used in program evaluation that are theoretically valid within approximately sparse models allowing for imperfect model selection.  Our procedures build upon \citen{BellChernHans:Gauss} and  \citen{BellChenChernHans:nonGauss}, who were the first to demonstrate in a highly specialized context, that valid inference can proceed following model selection allowing for model selection mistakes under two conditions.
 We formulate and extend these two conditions to a rather general moment-condition framework (e.g., \citen{hansen_gmm} and \citen{hansen-singleton_gmm}) as follows.  First, estimation should be based upon ``orthogonal" moment conditions that are first-order insensitive to changes in the values of nuisance parameters that will be estimated using high-dimensional methods. Specifically, if the target parameter value $\alpha_0$ is identified via the moment condition
\begin{equation}
\Ep_{P} \psi(W, \alpha_0, h_0) =  0,
\end{equation}
where $h_0$ is a function-valued nuisance parameter estimated via a model-selection or regularization method, one needs to use a moment function, $\psi$, such that
the corresponding moment condition is orthogonal with respect to perturbations of $h$ around $h_0$.  More formally, the moment condition should satisfy the \textit{Neyman orthogonality condition}
\begin{equation}\label{LowBias}
\partial_h [\Ep_{P} \psi(W, \alpha_0, h)]_{h = h_0} =  0,
\end{equation}
where $\partial_h$ is a functional derivative operator with respect to $h$ restricted to directions of possible deviations of estimators of $h_0$ from $h_0$. Second, one needs to ensure that the model selection mistakes occurring in the estimation of nuisance parameters are uniformly ``moderately" small with respect to the underlying model.  Specifically, we will require that the nuisance parameter $h_0$ is estimated at the rate $o(n^{-1/4})$, which ensures small bias, and that the estimator takes values in a space whose entropy does not grow too fast, which ensures no overfitting. In this paper, we establish that building estimators based upon moment conditions with the orthogonality condition (\ref{LowBias}) holding ensures that crude estimation of $h_0$ via post-selection or other regularization methods has an asymptotically negligible effect on the estimation of $\alpha_0$ in general frameworks.  It then follows that we can form a regular, root-$n$ consistent estimator of $\alpha_0$, uniformly with respect to the underlying model.

In the endogenous treatment effects setting, we build moment conditions satisfying (\ref{LowBias}) from the efficient influence functions for certain reduced form parameters, building upon \citen{hahn-pp}. We illustrate how  orthogonal moment conditions coupled with methods developed for forecasting in high-dimensional approximately sparse models can be used to estimate and obtain valid inferential statements about a wide variety of structural/treatment effects.  We formally demonstrate the uniform validity of the resulting inference within a broad class of approximately sparse models including models where perfect model selection is theoretically impossible.
An important feature of our main theoretical results is that they cover the use of variable selection for \textit{functional response data} using $\ell_1$-penalized methods.  Functional response data arises, for example,  when one is interested in the LQTE at not just a single quantile but over a range of quantile indices.  Considering this case then necessitates looking  at the functional dependent variable $u \longmapsto 1(Y \leq u)$, where $u$ denotes various levels that $Y$ can cross.  Treating such functional response data allows us to provide a unified inference procedure for interesting quantities such as the (local) distributional  and quantile effects of the treatment, including simpler important parameters such as LQTE at a given quantile as a special case.

The \textit{second main contribution} of this paper  is providing a general set of results for uniformly valid estimation and inference methods in moment-condition problems, arising in structural analysis in econometrics and other data sciences. These results are useful not only for establishing the properties of treatment effects estimators developed here, but they are also useful for attacking a wide range of problems in structural econometrics.  For example, \citen{CHS:PnP} provide estimates of parameters characterizing a simple structural demand model based loosely on the analysis in \citen{BLP:Autos} using the framework developed here; see also \citen{CHS:AnnRev}.   A key element to our establishing uniform validity of post-regularization inference is again the use of Neyman orthogonal moment conditions.  In the general framework we consider, we may have (a continuum of) target parameters identified via (a continuum of) moment conditions that involve (a continuum of) nuisance functions that will be estimated via Lasso, Post-Lasso, or some other high-quality machine learning method.   Our general theory expressly allows for a wide variety of traditional and machine learning methods, including those that do not rely on approximate sparsity, as long as the methods \begin{itemize}
\item[\textbf{1)}] have good approximation ability and
\item[\textbf{2)}] do not overfit.
\end{itemize}
By ``not overfitting"  we mean that the entropy of the function classes containing  the realizations of the estimator of the nuisance function/parameter does not increase too rapidly with the sample size.  This second condition can  only be verified analytically, but can be avoided by the use of various data splitting methods.  For example, we can set aside a vanishing fraction of the data  to estimate the nuisance parameter, as in \citen{bickel:1982}, or employ cross-fitting, as in Belloni \textit{et al.} (2010, 2012) and \citen{CCDHM16}.  Either scheme ensures that there is no asymptotic efficiency loss from data-splitting. We refer the reader to \citen{CCDHM16} for a detailed discussion and analysis of cross-fitting in connection to inference on ATE and other causal parameters using machine learning methods for high-dimensional data.\footnote{Cross-fitting proceeds as follows: (1) split the sample into two equal parts, the auxiliary and main parts; (2) use the auxiliary part to estimate the nuisance parameter and the main part to estimate the target  parameter, obtaining  one estimator of the target parameter;  (3) by reversing the roles of the main and auxiliary parts,  obtain another estimator of the target parameter; and (4) average the two estimators of the target parameter to obtain the final estimator.  The theorems  established  in Section 5 yield the properties of the final estimator; see \citen{CCDHM16}.}

  These results contain the results on treatment effects relevant for program evaluation, particularly the results for distributional and quantile effects, as a leading special case.  These results are also immediately useful in other contexts such as nonseparable quantile models as in \citen{iqr:ema}, \citen{CH06}, \citen{C03}, and \citen{IN09}; semiparametric and partially identified models as in \citen{EZ2013}; and many others. In our results, we first establish a functional central limit theorem for the continuum of target parameters and show that this functional central limit theorem holds uniformly in a wide range of data-generating processes $P$ with approximately sparse continua of nuisance functions. Second, we establish a functional central limit theorem for the multiplier bootstrap that resamples the first order approximations to the standardized estimators and demonstrate its uniform-in-$P$ validity. These uniformity results build upon and complement those given in \citen{Romano:Shaikh:AoS} for the empirical bootstrap. Third, we establish a functional delta method for smooth functionals of the continuum of target parameters and a functional delta method for the multiplier bootstrap of these smooth functionals, both of which hold uniformly in $P$, using an appropriately strengthened notion of Hadamard differentiability.  All of these results are new and are of independent interest outside of the treatment effects focus of this paper.

We illustrate the use of our methods by estimating the effect of 401(k) eligibility and 401(k) participation on measures of accumulated assets as in \citen{CH401k}.\footnote{See also Poterba, Venti, and Wise \citeyear{pvw:94,pvw:95,pvw:nber96,pvw:01}, \citen{abadie:401k}, \citen{benjamin},  and \citen{ORR:401k} among others.}  Similar to \citen{CH401k}, we provide estimates
of ATE and QTE of 401(k) eligibility and of LATE and LQTE of 401(k) participation.  We differ from this previous work by using the high-dimensional methods developed in this paper to allow ourselves to consider a broader set of controls than has previously been considered.  We find that 401(k) participation has a moderate impact on accumulated financial assets at low quantiles while appearing to have a much larger impact at high quantiles.  Interpreting the quantile index as ``preference for savings'' as in \citen{CH401k}, this pattern suggests that 401(k) participation has little causal impact on the accumulated financial assets of those with low desire to save but a much larger impact on those with stronger preferences for saving.
It is interesting that these results are similar to those in \citen{CH401k} despite allowing for a much richer set of controls.

\subsection*{Links to the literature} The Neyman orthogonality condition embodied in (\ref{LowBias}) has a long history in statistics and econometrics.  For example, this type of orthogonality was used by \citen{Neyman1979} in low-dimensional settings to deal with crudely estimated parametric nuisance parameters.  See also \citen{newey90}, \citen{andrews94},  \citen{newey94}, \citen{robins:dr}, and \citen{linton96} for the use of this condition in semi-parametric problems.

To the best of our knowledge, \citen{BellChernHans:Gauss} and \citen{BellChenChernHans:nonGauss} were the first to use the orthogonality (\ref{LowBias})
to expressly address the question of the uniform post-selection inference without imposing ``beta-min" conditions, either in high-dimensional settings with $p \gg n$ or in low-dimensional settings with $p \ll n$. They applied it to the specific problem of the linear instrumental variables model with many instruments where the nuisance function $h_0$ is the optimal instrument estimated by Lasso or Post-Lasso methods and $\alpha_0$ is the coefficient of the endogenous regressor. \citen{BCH2011:InferenceGauss} and  \citen{BelloniChernozhukovHansen2011} also exploited this approach to develop a double-selection method 
that yields valid post-selection inference
 on the parameters of the linear part of a partially linear model and on average treatment effects when the  treatment is binary and \textit{exogenous} conditional on controls in both the $p \gg n$ and the $p \ll n$ setting.\footnote{Note that these results as well as results of this paper on the uniform post-selection inference in moment-condition problems are new for either $p\ll n$ or $p \gg n$ settings.  The results also apply to arbitrary model selection devices, such as the Dantzig selector, Square-Root-Lasso,  or Adaptive Lasso,  that are able to select good sparse approximating models; and ``moderate'' model selection errors are explicitly allowed in the paper.}
Subsequently, \citen{Farrell:JMP} extended the results of \citen{BCH2011:InferenceGauss} and  \citen{BelloniChernozhukovHansen2011} to estimation of ATE when the treatment is multivalued and exogenous conditional on controls using group penalization for selection.  Note that this previous work on treatment effects covers only the exogenous case and does not allow for functional responses which are necessary, for example, for working with distributional or quantile treatment effects.

Our work also contributes to the line of research on obtaining
$\sqrt{n}$-consistent and asymptotically normal estimates for low-dimensional components within traditional semiparametric frameworks as
in the important work by  \citen{bickel:1982},  \citen{robinson},  \citen{newey90}, \citen{vaart:1991}, \citen{andrews94},  \citen{newey94},  \citen{ai:chen, AC2012}, and \cite{CLK:EfficientSP}.    The major difference is that we allow for the use of modern high-dimensional methods, a.k.a. machine learning methods, for modeling and fitting the non-parametric (or high-dimensional) components of the model. In contrast to the former literature, we expressly allow for data-driven choice of the approximating model for the high-dimensional component, which addresses a crucial problem that arises in empirical work. 
Moreover, recent methods based on $\ell_1$-penalization, upon which we focus in this paper, allow for much more flexible modeling of the non-parametric/high-dimensional parts of the model.\footnote{See, for instance, \citen{BellChenChernHans:nonGauss} and \citen{BCW-SqLASSO2} for a formalization of this claim in terms of rearranged Sobolev spaces where it is shown that traditional methods can fail to be consistent while $\ell_1$-penalized methods remain consistent and have good rates of convergence.}  Our general theory in Section 5 also allows, in principle, for a wide variety of both traditional and machine learning methods.

The paper also generates a number of new results on sparse estimation with functional response data.   These results are of independent interest in themselves, and they build upon the work of \citen{BC-SparseQR} who provided rates of convergence for variable selection when one is interested in estimating the quantile regression process with exogenous variables.  More generally, this theoretical work complements and extends the rapidly growing set of results for $\ell_1$-penalized estimation methods; see, for example, \citen{FF:1993}; \citen{T1996}; \citen{FanLi2001}; \citen{Zou2006};  \citen{CandesTao2007};  \citen{vdGeer};  \citen{HHS2008};  \citen{BickelRitovTsybakov2009};  \citen{MY2007};  \citen{Bach2010}; \citen{horowitz:lasso};  \citen{BC-SparseQR};  \citen{kato};  \citen{BellChenChernHans:nonGauss}; \citen{BC-PostLASSO};  \citen{BCK-LAD}; \citen{BCY-honest}; \citen{CanerZhang:GMMEL}; and the references therein.

\subsection*{Plan of the Paper} Section \ref{sec: model} introduces the structural parameters for policy evaluation and relates these parameters to reduced form functions.
 Section \ref{EstimationSection} describes a three step procedure to estimate and make inference on the structural parameters and functionals of these parameters, and Section \ref{sec: asymptotics} provides asymptotic theory in the treatment effects setting. Section \ref{sec: general} generalizes the setting and results to moment-condition problems with a continuum of structural parameters and a continuum of reduced form functions. Section \ref{FunctionalLassoSection} derives general asymptotic theory for the Lasso and post-Lasso estimators for functional response data used in the estimation of the reduced form functions.
Section \ref{sec: 401k} presents the empirical application. We provide notation, proofs of key results, and details about implementation of the methods in the empirical example in Appendices \ref{subsec:notation}--\ref{subsec:ProofSection5}. An on-line Supplementary Appendix provides all remaining proofs, additional technical material, and results from a small Monte Carlo simulation  \citep{bcfh15sup}.  

\section{The Treatment Effects Setting and Target Parameters}\label{sec: model}

\subsection{Observables and Reduced Form Parameters}
The observed random variables consist of $( (Y_u)_{u \in \mU}, X,Z,D)$. The outcome variable of interest $Y_u$ is indexed by $u \in \mU$. We give examples of the index $u$ below.
The variable $D \in \mathcal{D}=\{0,1\}$ is a binary indicator of the  receipt of a treatment
or participation in a program.  It will typically be treated as endogenous; that is, we will typically view the treatment as assigned non-randomly with respect to the outcome.
The instrumental variable $Z \in \mathcal{Z}=\{0,1\}$ is a binary indicator, such as an offer of participation, that is assumed to be randomly assigned conditional on the observable covariates $X$ with support $\mathcal{X}$.\footnote{Of course, by ``randomly assigned" we mean independently of potential outcomes conditional on the covariates.}
For example, we argue that 401(k) eligibility can be considered exogenous only after conditioning on income and other individual characteristics in the empirical application. The notions of exogeneity and endogeneity we employ are standard and thus omitted.\footnote{For completeness, we provide a review of these conditions as well as restate standard conditions that are sufficient for a causal interpretation of the target parameters in the Supplementary Appendix.}

The indexing of the outcome $Y_u$ by $u$ is useful to analyze functional data.  For example, $Y_u$ could represent an outcome falling short of a threshold,
namely $Y_u = 1(Y \leq u)$, in the context of distributional analysis;   $Y_u $ could be a height indexed by age $u$  in growth charts analysis; or $Y_u$ could be a health outcome indexed by a dosage $u$ in dosage response studies.  Our framework
is tailored for such functional response data.  The special case with no index is included by simply considering $\mU$
to be a singleton set.


We make use of two key types of reduced form parameters for estimating the structural parameters of interest -- (local) treatment effects and related quantities.  These reduced form parameters are defined as
\begin{equation}\label{KeyRF1}
\alpha_V(z) := \Ep_P[ g_V(z,X)]  \ \text{ and }   \ \gamma_V:= \Ep_P[V],
\end{equation}
where $z = 0$ or $z =1$ are the fixed values of  $Z$.\footnote{The expectation that defines $\alpha_V(z)$ is well-defined under the standard support condition $0 < c < \Pr_P(Z=1 \mid X) < 1 - c < 1$ a.s. This condition is standard in treatment effects estimation; see, e.g., the supplementary appendix.  We impose this condition in 
Assumption \ref{assumption: basic}.}
The function $ g_{V}$ maps $\mathcal{ZX}$, the support  of the vector $(Z,X)$, to the real line $\mathbb{R}$ and is defined as
\begin{eqnarray}
&&  g_{V}(z,x): = \Ep_P[V|Z=z,X=x].
\end{eqnarray}
We use $V$ to denote a target variable whose identity may change depending on the context such as $V=\mathbf{1}_d(D) Y_u$  or $V=\mathbf{1}_d(D)$ where $\mathbf{1}_d(D) := 1 (D=d)$ is the indicator function.

All the structural parameters we consider are
smooth functionals of these reduced-form parameters.  In our approach to estimating treatment effects, we estimate the key reduced form parameter $\alpha_V(z)$ using modern methods to deal with high-dimensional data coupled with orthogonal estimating equations. The orthogonality property allows us to deal with the ``non-regular" nature of  penalized and post-selection
estimators which do not admit linearizations except under very restrictive conditions.  The use of regularization by model selection or penalization is in turn motivated by the desire to accommodate high-dimensional data.

\subsection{Target Structural Parameters -- Local Treatment Effects}\label{subset: lates}

The reduced form parameters defined in (\ref{KeyRF1}) are key because the structural parameters of interest are
functionals of these elementary objects.  The local average structural
function (LASF) defined as
\begin{equation}\label{define:LASF}
\theta_{Y_u}(d) =  \frac{\alpha_{\mathbf{1}_d(D)Y_u}(1)-\alpha_{\mathbf{1}_d(D)Y_u}(0)}{\alpha_{\mathbf{1}_d(D)}(1) - \alpha_{\mathbf{1}_d(D)}(0)},  \  \ d \in \{0,1\}
\end{equation}
underlies the formation of many commonly used treatment effects.  Under standard assumptions, the LASF
identifies average potential outcomes for the group of \textit{compliers}, individuals whose treatment status may be influenced by variation in the instrument, in the treated and non-treated states; see, e.g. Abadie \citeyear{abadie:bstest,abadie:401k}. The local average treatment
effect (LATE) of \citen{imbens:angrist:94} corresponds to the difference of the two values of the LASF:
\begin{equation}\label{define:LATE}
\theta_{Y_u}(1) -  \theta_{Y_u}(0).
\end{equation}
 The term local designates
 that this parameter does not measure the effect on the entire population
 but rather measures the effect on the subpopulation of compliers.\footnote{The methods of the paper can be extended to analyze the marginal treatment effects of \citen{heckman:vytlacil, hv2005}.}

When there is no endogeneity, formally when $D \equiv Z$, the LASF and LATE become the average structural
function (ASF) and average treatment effect (ATE) on the entire population.  Thus, our results cover this situation as a special case where
the ASF and ATE simplify to
\begin{equation}
\theta_{Y_u}(z) = \alpha_{Y_u}(z), \  \ \theta_{Y_u}(1)-\theta_{Y_u}(0) = \alpha_{Y_u}(1) -  \alpha_{Y_u}(0).
\end{equation}
We also note that the impact of the instrument $Z$ itself may be of interest since $Z$
often encodes an offer of participation in a program.  In this case, the parameters of interest are again simply
the reduced form parameters $$
\alpha_{Y_u}(z) ,  \ \  \alpha_{Y_u}(1) -  \alpha_{Y_u}(0).$$
Thus, the LASF and LATE are primary targets of interest in this paper, and the ASF and ATE are subsumed as special cases.

\subsubsection{Local Distribution and Quantile Treatment Effects}
Setting $Y_u = Y$ in (\ref{define:LASF}) and (\ref{define:LATE}) provides the conventional LASF and LATE.  An important generalization arises by letting $Y_u = 1(Y \leq u)$ be the indicator of the outcome of interest falling below a threshold $u \in \mathbb{R}$.   In this case, the family of effects
\begin{equation}\label{define:LDTE}
( \theta_{Y_u}(1) -  \theta_{Y_u}(0))_{ u \in \mathbb{R}},
\end{equation}
describe the local distribution treatment effects (LDTE).
Similarly, we can look at the quantile left-inverse transform of the
curve $u \longmapsto \theta_{Y_u}(d)$,
\begin{equation}{\label{define:LSFQ}}
\theta^{\leftarrow}_Y(\tau, d) := \inf\{ u \in \mathbb{R} :  \theta_{Y_u}(d) \geq \tau \},
\end{equation}
and examine the family of local quantile treatment effects  (LQTE):
\begin{equation}{\label{define:LQTE}}
(\theta^{\leftarrow}_Y(\tau, 1) - \theta^{\leftarrow}_Y(\tau, 0))_{\tau \in (0,1)}.
\end{equation}
The LQTE identify the differences of quantiles between the distribution of potential outcomes in the treated and non-treated states for compliers.

\subsection{Target Structural Parameters -- Local Treatment Effects on the Treated}\label{subset: latts}
We may also be interested in local treatment effects on the treated.  The key object in defining these effects  is the local average structural function on the treated (LASF-T) which is defined by its two values:
\begin{equation}\label{define:LASF-T}
\vartheta_{Y_u}(d) =  \frac{\gamma_{\mathbf{1}_d(D) Y_u} -  \alpha_{\mathbf{1}_d(D) Y_u}(0) }{\gamma_{\mathbf{1}_d(D) } -  \alpha_{\mathbf{1}_d(D)}(0)   }, \ \  d \in \{0,1\}.
 \end{equation}
The LASF-T identifies average potential outcomes for the group of \textit{treated compliers} in the treated and non-treated states under standard assumptions.  The local average treatment
effect on the treated (LATE-T) introduced in \citen{hong:nekipelov:2010} and \citen{frolich:melly} is the difference of two values of the LASF-T:
\begin{equation}\label{define:LATE-T}
\vartheta_{Y_u}(1) -  \vartheta_{Y_u}(0).
\end{equation}
The LATE-T may be of interest because it measures the average treatment effect for \textit{treated compliers}, namely the subgroup of compliers that actually receive the treatment.  

When the treatment is assigned randomly given controls so we can take $D=Z$, the LASF-T and LATE-T become the average structural
function on the treated (ASF-T) and average treatment effect on the treated (ATE-T).  In this special case, the ASF-T and ATE-T simplify to
\begin{equation}
\vartheta_{Y_u}(1) =   \frac{\gamma_{\mathbf{1}_1(D) Y_u}}{\gamma_{\mathbf{1}_1(D) }   } ,     \  \
\vartheta_{Y_u}(0) =   \frac{\gamma_{\mathbf{1}_0(D) Y_u} -  \alpha_{Y_u}(0) }{\gamma_{\mathbf{1}_0(D) } -  1   },  \ \
\vartheta_{Y_u}(1)-\vartheta_{Y_u}(0);
\end{equation}
and we can use our results to provide estimation and inference methods for these quantities.

\subsubsection{Local Distribution and Quantile Treatment Effects on the Treated}
Local distribution treatment effects on the treated (LDTE-T) and local quantile treatment effects on the treated (LQTE-T) can also be defined.  As in Section 2.2.1, we let $Y_u = 1(Y \leq u)$ be the indicator of the outcome of interest falling below a threshold $u$.  The family of treatment effects
\begin{equation}\label{define:LDTE-T}
(\vartheta_{Y_u}(1) -  \vartheta_{Y_u}(0))_{ u \in \mathbb{R}}
\end{equation}
then describes the LDTE-T.  We can also use the quantile left-inverse transform of the
curve $u \longmapsto \vartheta_{Y_u}(d)$, namely $
\vartheta^{\leftarrow}_Y(\tau, d) := \inf\{ u \in \mathbb{R} :  \vartheta_{Y_u}(d) \geq \tau \},$
and define the LQTE-T:
\begin{equation}{\label{define:LQTE-T}}
(\vartheta^{\leftarrow}_Y(\tau, 1) - \vartheta^{\leftarrow}_Y(\tau, 0))_{\tau \in (0,1)}.
\end{equation}
Under conditional exogeneity LQTE and LQTE-T reduce to the quantile treatment effects (QTE)
and quantile treatment effects on the treated (QTE-T) \cite[Chap. 2]{koenker:book}.

\section{Estimation of Reduced-Form and Structural Parameters in a Data-Rich Environment}\label{EstimationSection}

The key objects used to define the structural parameters in Section 2 are the expectations
\begin{equation}
\alpha_V(z) = \Ep_P[ g_V(z,X)] \ \textrm{and} \ \gamma_V= \Ep_P[V],
\end{equation}
where $g_V(z,X)= \Ep_P[V|Z=z,X]$ and $V$ denotes a variable whose identity will change with the context.  Specifically, we shall vary $V$  over the set $\mathcal{V}_u$:
\begin{equation}\label{define: V}
V \in \mathcal{V}_u:= \{V_{uj}\}_{j=1}^5:=\{Y_u, \mathbf{1}_0(D) Y_u,  \mathbf{1}_0(D),\mathbf{1}_1(D) Y_u,  \mathbf{1}_1(D) \}.
\end{equation}

It is clear that $g_V(z,X)$ will play an important role in estimating $\alpha_V(z)$.  A related function that will also play an important role in forming a robust estimation strategy is the propensity score $m_Z: \mathcal{ZX} \longmapsto \mathbb{R}$ defined by
\begin{eqnarray}
&& m_{Z}(z,x) := \Pr_P[Z=z|X=x].
\end{eqnarray}
We will denote other potential values for the functions $g_{V}$ and $m_Z$ by the parameters $g$ and $m$, respectively.
We can then estimate $\alpha_V(z)$ by estimating $g_V$ and $m_Z$ using high-dimensional modeling and estimation methods.\footnote{Note that there is an alternative approach based on decomposing $g_V$ as $g_{V}(z,x) = \sum_{d=0}^1 e_{V}(d,z,x)  l_{D}(d,z,x)$ where the regression functions $e_{V}$ and $l_{D}$ map the support of $(D,Z,X)$, $\mathcal{DZX}$, to the real line and are defined by $e_{V}(d,z,x): = \Ep_P[V|D=d, Z=z,X=x]$ and $l_{D}(d,z,x): =  \Pr_P[D=d|Z=z, X=x]$.  We provide some discussion of this approach in the supplementary appendix.}

In the rest of this section, we describe the estimation of the reduced-form and structural parameters. The estimation method consists of 3 steps:

\textbf{1)} Estimate the predictive relationships $m_Z$ and $g_V$ 
using high-dimensional nonparametric methods with model selection.

\textbf{2)} Estimate the reduced form parameters $\alpha_V$ and $\gamma_V$ using orthogonal estimating equations to immunize the reduced form estimators to imperfect model selection in the first step.

\textbf{3)} Estimate the structural parameters and effects via the plug-in rule.

\subsection{First Step: Modeling and Estimating $g_{V}$ and $m_{Z}$}\label{FirstStepSection}
In this section, we discuss estimation of the conditional expectation functions $g_{V}$ and $m_{Z}$. Since these functions are unknown and potentially complicated, we use a generalized linear combination of a large number of control terms
\begin{equation}
f(X)= (f_j(X))_{j=1}^p,
\end{equation}
to approximate $g_{V}$ and $m_{Z}$.  Specifically, we use
\begin{eqnarray}\label{eq: approximations}
& & g_{V}(z,x)  =: \Lambda_V [ f(z,x)'\beta_{V}] + r_{V}(z,x) , \\
&& f(z,x):=  ( (1-z) f(x)', z f(x)')',  \ \  \beta_V := (\beta_V(0)', \beta_V(1)')',\\
& & m_{Z}(1,x)  =: \Lambda_Z [f(x) '\beta_{Z}] + r_{Z}(x),  \ \   m_{Z}(0,x) = 1-  \Lambda_Z [f(x) '\beta_{Z}] - r_{Z}(x).\label{eq: approximations4}
\end{eqnarray}
In these equations,  $r_{V}(z,x)$ and $r_{Z}(x)$ are approximation errors, and the functions $ \Lambda_V (f(z,x) '\beta_{V})$ and $\Lambda_Z(f(x)'\beta_{Z})$ are generalized linear approximations to the target functions
 $g_{V}(z,x)$  and $m_{Z}(1,x)$.  The functions $\Lambda_V$ and $\Lambda_Z$ are taken to be known link functions $\Lambda$.  The most common example is the linear link $\Lambda(u) = u$. When the response variable is binary, we may also use the logistic link $\Lambda(u) = \Lambda_0 (u) = e^u/(1+ e^u)$ and its complement $1- \Lambda_0(u)$ or the probit
link $\Lambda(u) = \Phi(u) = (2\pi)^{-1/2}\int_{-\infty}^u e^{-z^2/2}dz$ and its complement  $1-\Phi(u)$.  For clarity, we use links from the finite set $\mathcal{L}=\{ \Id, \Phi, 1-\Phi,  \Lambda_0, 1-\Lambda_0\}$ where $\Id$ is the identity (linear) link.

As discussed in the Introduction,
 the dictionary of controls, denoted by $ f(X)$, can be ``rich" in the sense that its dimension $p=p_n$ may be large relative to the sample size. Specifically, our results require only that $\log p = o(n^{1/3})$ along with other technical conditions.
We also note that the functions $f$ forming the dictionary can depend on $n$, but we suppress this dependence.

Having very many controls $f(X)$ creates a challenge for estimation and inference.  A useful condition that makes it possible to perform constructive estimation and inference in such cases is termed approximate sparsity or simply sparsity.  Sparsity imposes that there exist approximations of the form given in (\ref{eq: approximations})-(\ref{eq: approximations4}) that require only a small number of non-zero coefficients to render the approximation errors small relative to estimation error.  More formally, sparsity relies on two conditions.  First, there must exist $\beta_{V}$ and $\beta_{Z}$ such  that, for all $V \in \mV := \{\mV_u : u \in \mU\},$
\begin{equation}\label{eq: sparsity1-s1}
 \| \beta_{V}\|_0  + \| \beta_{Z}\|_0 \leq s,
\end{equation}
where $\|x \|_0$ is the number of non-zero components of vector $x$ and all other norms we use are defined in Appendix A. That is, there are at most $s=s_n \ll n$ components of $f(Z,X)$ and $f(X)$ with nonzero coefficient in the approximations to $g_V$ and $m_Z$. Second, the sparsity condition requires that the size of the resulting approximation errors is small compared to the conjectured size of the estimation error; namely, for all $V \in \mV$,
\begin{equation}\label{eq: sparsity2-s1}
\{\Ep_P[r^{2}_{V}(Z,X)]\}^{1/2} + \{\Ep_P[r^{2}_{Z}(X)]\}^{1/2} \lesssim  \sqrt{s/n}.
\end{equation}
Note that the size of the approximating model $s=s_n$ can grow with $n$ just as in standard series estimation, subject to the rate condition $$s^2 \log^2 (p\vee n) \log^2 n/n \to 0.$$ These conditions ensure that the functions $g_V$ and $m_Z$
are estimable at a $o(n^{-1/4})$ rate and are used to derive asymptotic normality results for the structural
and reduced-form parameter estimators.  They could be relaxed through the use of sample splitting methods
as in \citen{BellChenChernHans:nonGauss}.


The high-dimensional-sparse-model framework outlined above extends the standard framework in the program evaluation literature which assumes both that the identities of the relevant controls are known and that the number of such controls $s$ is small relative to the sample size.\footnote{For example, one would select a set of basis functions, $\{f_{j}(X)\}_{j=1}^{\infty}$, such as power series or splines and then use only the first $s \ll n$ terms in the basis under the assumption that $s^C/n \rightarrow 0$ for some number $C$ whose value depends on the specific context in a standard nonparametric approach using series.}  Instead, we  assume that there are many, $p$, potential controls of which at most $s$ controls suffice to achieve a desirable approximation to the unknown functions $g_V$ and $m_{Z}$; and we allow the identity and number of these controls to be unknown.  Relying on this assumed sparsity, we use selection methods to choose approximately the right set of controls.

Current estimation methods that exploit approximate sparsity employ different types of regularization aimed at producing estimators that theoretically perform well in high-dimensional settings while remaining computationally tractable.  Many widely used methods are based on $\ell_1$-penalization.  The Lasso method is one such commonly used approach that adds a penalty for the weighted sum of the absolute values of the model parameters to the usual objective function of an M-estimator.
A related approach is the Post-Lasso method which performs re-estimation of the model after selection of variables by Lasso.   These methods are discussed at length in recent papers and review articles; see, for example, \citen{BCH2011:InferenceGauss}.

In the following, we outline the general features of the Lasso and Post-Lasso methods focusing on estimation of $g_V$. Given the data  $(\tilde Y_i, \tilde X_i )_{i=1}^n = (V_i, f(Z_i, X_i))_{i=1}^n$, the Lasso estimator $\hat \beta_{V}$ solves
\begin{equation}\label{Def:LASSOmain2}
 \hat \beta_V\in \arg \min_{\beta \in \mathbb{R}^{\dim(\widetilde X)}} \Big ( \En[M(\tilde Y, \tilde X'\beta)] +  \frac{\lambda}{n} \|\hat \Psi \beta\|_1 \Big),
\end{equation} where $\hat \Psi = {\rm diag}(\hat l_1,\ldots,\hat l_{\dim(\widetilde X)})$ is a diagonal matrix of data-dependent
penalty loadings, $M(y, t) =  (y-t)^2/2$ in the case of linear regression, and
$M(y, t) = -\{1(y=1) \log \Lambda_V(t)+ 1(y=0) \log(1- \Lambda_V(t))\}$ in the case of binary regression.  
The penalty level, $\lambda$,
and loadings, $\hat l_j, \ j = 1,...,\dim(\widetilde X)$, are selected to guarantee good theoretical properties of the method.
We provide further discussion of these methods for estimation of a continuum of functions in Section \ref{FunctionalLassoSection}, and we specify detailed implementation algorithms used in the empirical example in Appendix \ref{sec:Implementation}.
A key consideration in this paper is that the penalty level needs to be set to account for the fact that we will be simultaneously estimating potentially a \textit{continuum} of Lasso regressions since our $V$ varies over the list $\mV_u$ with $u$ varying over the index set
$\mU$.

The Post-Lasso method uses $\widehat \beta_V$ solely as a model selection device.  Specifically, it makes use of the labels of the regressors with non-zero estimated coefficients,
$
\widehat I_V := \supp(\widehat \beta_V).
$
The Post-Lasso estimator is then a solution to
\begin{equation}\label{Def:PostLASSOmain2}
 \tilde \beta_V\in \arg \min_{\beta \in \mathbb{R}^{\dim(\widetilde X)}} \Big ( \En[M(\tilde Y, \tilde X'\beta)] :  \beta_j = 0, j  \not \in \widehat I_V \Big).
\end{equation}
 A main contribution of this paper is establishing that the estimator $\widehat g_{V}(Z,X) = \Lambda_V(f(Z,X)'\bar \beta_V)$ of  the regression function $g_{V}(Z,X)$, where $\bar \beta_V= \hat \beta_V$ or $\bar \beta_V= \tilde \beta_V$,  achieves the near oracle rate of convergence $\sqrt{(s \log p)/n} $ and maintains desirable theoretic properties while allowing for a \textit{continuum} of response variables.

Estimation of $m_Z$ proceeds similarly.  The Lasso estimator $\hat \beta_Z$ and Post-Lasso estimator $\tilde \beta_Z$ are defined analogously to $\hat \beta_V$ and $\tilde \beta_V$ using the data  $(\tilde Y_i, \tilde X_i)_{i=1}^n$= $( Z_i, f(X_i))_{i=1}^n$. The estimator $\widehat m_Z(1, X) = \Lambda_Z(f(X)'\bar \beta_Z)$ of $m_{Z}(X)$, with $\bar \beta_Z= \hat \beta_Z$ or $\bar \beta_Z= \tilde \beta_Z$, also achieves the near oracle rate of convergence $\sqrt{(s \log p)/n}$ and has other good theoretic properties.  The estimator of $\widehat m_Z(0,X)$ is then formed as  $1 - \widehat m_Z(1,X)$.


\subsection{Second Step: Robust Estimation of  the Reduced-Form Parameters $\alpha_V(z)$ and $\gamma_V$}
 Estimation of the key quantities $\alpha_V(z)$ will make heavy use of orthogonal
moment functions as defined in (\ref{LowBias}).  These moment functions are closely tied to efficient influence functions, where efficiency is in the sense of locally minimax semi-parametric efficiency.  The use of these functions
will deliver robustness with respect to the non-regularity of the
post-selection and penalized estimators needed to manage high-dimensional data.  The use of these functions
also automatically delivers semi-parametric efficiency for estimating and performing inference on the reduced-form parameters and their smooth transformations -- the structural parameters.

The efficient influence function and orthogonal moment function for $\alpha_V(z)$,  $z \in \mathcal{Z} = \{0,1\}$,  are given
respectively by
\begin{align}
\label{EffInfluence}
\psi^{\alpha}_{V,z} (W) &:= \psi^{\alpha}_{V, z, g_{V},m_Z} (W, \alpha_V(z)) \ \ \textrm{and} \\
\label{LowBiasMoment}
\psi^{\alpha}_{V, z, g, m}(W,\alpha) &:= \frac{1(Z=z) (V - g(z,X))}{m(z,X)}  + g(z,X) - \alpha.
\end{align}
 This efficient influence function was derived by \citen{hahn-pp}; it has recently been used by \citen{cattaneo2010efficient} in the  series context (with $p \ll n$) and \citen{rothe_firpo2013} in the kernel context. The efficient influence function and the moment function for $\gamma_V$ are trivially given by
\begin{eqnarray}
&&\psi^{\gamma}_{V} (W) := \psi^{\gamma}_{V} (W,\gamma_V), \ \textrm{and} \  \psi^{\gamma}_{V} (W,\gamma) :=V - \gamma.
 \end{eqnarray}

We then define estimators of the reduced-form parameters $\alpha_V(z)$  and $\gamma_V(z)$ as solutions $\alpha = \hat \alpha_V(z)$ and $\gamma= \hat \gamma_V$
to the equations
\begin{eqnarray}
\En[\psi^{\alpha}_{V, z, \hat g_{V}, \hat m_Z} (W, \alpha) ] = 0, \ \ \En[\psi^{\gamma}_{V} (W, \gamma) ] = 0,
\end{eqnarray}
where $\hat g_V$ and $\hat m_Z$  are constructed as in Section \ref{FirstStepSection}.  
We apply this procedure
to each  variable name $V \in \mV_u$ and obtain
the estimator\footnote{By default notation, $(a_j)_{j \in \mathcal{J}}$ returns a column vector
produced by stacking components together in some consistent order. }
\begin{eqnarray}\label{define rho}
\hat \rho_u := \big (\{\hat \alpha_{V} (0), \hat \alpha_{V} (1),  \hat \gamma_V\} \big )_{V \in \mV_u} \  \ \text { of }
\ \   \rho_u := \big (\{ \alpha_{V} (0),  \alpha_{V} (1),   \gamma_V\} \big )_{V \in \mV_u}.\end{eqnarray}
The estimator and the parameter are vectors in $\mathbb{R}^{d_\rho}$ with dimension $d_{\rho} = 3 \times \dim \mV_u = 15$.

In the next section, we formally establish a principal result which shows that
\begin{align}\label{result:normal}
\sqrt{n} (\hat \rho_u - \rho_u)  \rightsquigarrow  {N}(0, \Var_{P} ( \psi^\rho_u)), \ \ \psi^\rho_{u}:= (\{\psi^\alpha_{V,0},
\psi^\alpha_{V,1}, \psi^\gamma_{V}\})_{ V \in \mV_u},   \ \text{  uniformly in $P \in \mP_n$, }
 \end{align}
where  $\mP_n$ is a rich set of data generating processes $P$ which includes
cases where perfect model selection is impossible theoretically. The notation ``$Z_{n,P}  \rightsquigarrow Z_P$  uniformly  in $P \in \mP_n$" is defined formally in Appendix \ref{subsec:notation} and can be read as ``$Z_{n,P}$ is approximately distributed as $Z_P$ uniformly  in $P \in \mP_n$." This usage corresponds  to the usual notion of asymptotic distribution extended to handle uniformity in $P$.

We then stack all the reduced form estimators and  parameters over $u \in \mU$ as
$$
\hat \rho = (\hat \rho_u)_{u \in \mU} \ \text{ and } \ \  \rho = (\rho_u)_{u \in \mU},
$$
giving rise to the empirical reduced-form process $\hat \rho$ and the reduced-form function-valued parameter $\rho$.
We establish that  $\sqrt{n}(\hat \rho - \rho)$ is asymptotically Gaussian: In $\ell^\infty(\mU)^{d_\rho}$,
\begin{eqnarray}
& &\sqrt{n}(\hat \rho - \rho)  \rightsquigarrow Z_P := (\mathbb{G}_{P}   \psi^\rho_{u})_{u \in \mathcal{U}},  \text{ uniformly in $P \in \mP_n$},
 \end{eqnarray}
where $\mathbb{G}_{P}$ denotes the $P$-Brownian bridge \cite[p. 81--82]{vdV-W}.
This result contains (\ref{result:normal}) as a special case and again allows $\mP_n$ to be a ``rich" set of data generating processes $P$ that includes cases where perfect model selection is impossible theoretically.  Importantly, this result verifies that the functional central limit theorem applies to the reduced-form estimators in the presence of possible model selection mistakes.

Since some of our objects of interest are complicated, inference can be facilitated by a multiplier bootstrap method as in \cite{GineZinn1984}. We define  $\hat \rho^* = (\hat \rho_u^*)_{u \in \mU}$, a bootstrap draw of $\hat \rho$, via
\begin{eqnarray}\label{BootstrapDraw}
&&  \hat \rho^*_u = \hat \rho_u + n^{-1} \sum_{i=1}^n  \xi_i \hat \psi^{\rho}_u(W_i).
\end{eqnarray}
Here  $(\xi_i)_{i=1}^n$ are i.i.d. copies of $\xi$ which are independently distributed
from the data $(W_i)_{i=1}^n$ and whose distribution $P_\xi$ does not depend on $P$.  We also impose that
\begin{equation}\label{eq: multipliers}
\Ep [\xi] = 0,  \ \ \Ep[\xi^2]=1, \ \ \Ep[\exp(|\xi|)] < \infty.
\end{equation}
Examples of $\xi$ include (a) $\xi = \mathcal{E}-1$, where
$\mathcal{E}$ is a standard exponential random variable,
(b) $\xi = \mathcal{N}$, where $\mathcal{N}$ is a standard
normal random variable, and (c) $\xi =  \mathcal{N}_1/\sqrt{2} +
(\mathcal{N}_2^2-1)/2$, where $\mathcal{N}_1$ and $\mathcal{N}_2$ are mutually independent  standard
normal random variables.\footnote{We do not consider the nonparametric bootstrap, which corresponds to using multinomial multipliers $\xi$, to reduce the length of the paper; but we note that the conditions and analysis could be extended to cover this case.}  The choices of (a), (b), and (c) correspond respectively to the Bayesian bootstrap (e.g.,  \citen{Hahn97} and
 \citen{chamberlain2003nonparametric}), the Gaussian multiplier method (e.g, \citen{GineZinn1984} and \citen[Chap. 3.6]{vdV-W}), and  the wild bootstrap method (\cite{mammen1993:bootstrap}).\footnote{ The motivation for method (c) is that it is able to match 3 moments since $\Ep[\xi^2] = \Ep[\xi^3]=1$.  Methods (a) and (b) do not satisfy this property since $\Ep[\xi^2] = 1$ but $\Ep[\xi^3]\neq 1$ for these approaches.
}
$\hat \psi^\rho_{u}$ in (\ref{BootstrapDraw})
is an estimator of the influence function $\psi^{\rho}_u$ defined via the plug-in rule:
\begin{equation}\label{estimate psi}
\hat \psi^\rho_{u}=    (\hat \psi^\rho_{V})_{ V \in \mV_u}, \ \
 \hat \psi^{\rho}_{V} (W) :=\{\psi^{\alpha}_{V, 0, \hat g_{V}, \hat m_{Z}}(W,\hat \alpha_{V}(0)), \psi^{\alpha}_{V, 1, \hat g_{V}, \hat m_{Z}}(W,\hat \alpha_{V}(1)), \psi^{\gamma}_{V}(W, \hat \gamma_{V})\}.
 \end{equation}
 Note that this bootstrap is computationally
efficient since it does not involve recomputing the influence
functions $\hat \psi_u^{\rho}$.\footnote{\citen{CH06} and \citen{HS2006} proposed related computationally efficient bootstrap schemes that resample the influence functions.} 
Each new draw of $(\xi_i)_{i=1}^n$ generates a new draw of $\hat \rho^*$
holding the data and the estimates of the influence functions fixed. This method
 simply amounts to resampling the first-order approximations to the estimators.
Here we build upon  prior uses of this or similar methods in low-dimensional settings such as \citen{Hansen96}  and \citen{KS12}.

We establish that the bootstrap law of $\sqrt{n} (\hat \rho^*- \hat \rho)$ is uniformly asymptotically consistent:
In the metric space $\ell^\infty(\mU)^{d_\rho}$, conditionally on the data,
\begin{eqnarray}
& &\sqrt{n} (\hat \rho^* - \hat \rho)\rightsquigarrow_B Z_P,  \text{ uniformly in $P \in \mP_n$}, \nonumber
 \end{eqnarray}
where $\rightsquigarrow_B$ denotes weak convergence of the bootstrap law in probability, as defined in Appendix \ref{app: B}.

\subsection{Third Step: Robust Estimation of the Structural Parameters}

All structural parameters we consider take the form of smooth transformations of the reduced-form parameters:
\begin{equation}  \label{eq: functional_of_interest}
\Delta := (\Delta_q)_{q \in \mathcal{Q}},  \text{ where }  \Delta_q := \phi ( \rho)(q),  \ q \in \mathcal{Q}.\end{equation}
The structural parameters may themselves carry an index $q \in \mathcal{Q}$ that can be different from $u$; for example,
the LQTE is indexed by a quantile index $q \in (0,1)$. This formulation includes as special
cases all the structural functions of Section \ref{sec: model}.   We estimate these quantities
by the plug-in rule.  We establish the asymptotic behavior of these estimators and the validity of the bootstrap as
a corollary from the results outlined in Section 3.2 and the functional delta method (extended to handle uniformity in $P$).

For the application of the functional delta method, we require that the functional $\rho \longmapsto \phi(\rho)$ be  Hadamard
differentiable \textit{uniformly} in $\rho \in \D_{\rho}$, where $\D_{\rho}$ is a set that contains the true values $\rho= \rho_P$ for all $P \in \mP_n$, tangentially to a subset that contains the realizations of $Z_P$ for all $P \in \mP_n$ with derivative map $h \longmapsto \phi'_\rho(h) = (\phi'_{\rho}(h)(q))_{q \in \mathcal{Q}}$.\footnote{We give the definition of  uniform Hadamard differentiability in Definition \ref{def:uhd} of Appendix \ref{app: B}.} We define the
estimators of the structural parameters and their bootstrap versions via the plug-in rule as
\begin{equation}  \label{eq: estimate functional_of_interest}
\hat \Delta := (\hat \Delta_q)_{q \in \mQ},  \ \ \hat \Delta_q:= \phi\left( \hat \rho\right)(q), \text{ and } \hat \Delta^*:=(\hat \Delta^*_q)_{q \in \mQ},   \ \ \hat \Delta^*_q:= \phi\left(\widehat \rho^*\right)(q).
\end{equation}
We establish that these estimators are asymptotically Gaussian
\begin{equation}
\sqrt{n} (\hat \Delta - \Delta)   \rightsquigarrow  \phi'_\rho (Z_P) ,  \text{  uniformly in $P \in \mP_n$},
\end{equation}
and that the bootstrap consistently estimates their large sample distribution:
\begin{equation}
\sqrt{n} (\hat \Delta^* - \widehat \Delta)  \rightsquigarrow_B \phi'_\rho (Z_P),  \text{  uniformly in $P \in \mP_n$. }
\end{equation}
These results can be used to construct simultaneous
confidence bands and test functional hypotheses on $\Delta$ using the methods described for example in   \citen{CF15} and \citen{CFM}.

\section{Theory: Estimation and Inference on Local Treatment Effects Functionals}\label{sec: asymptotics}

Consider fixed sequences of numbers $\delta_n \searrow 0$, $\epsilon_n \searrow 0$, $\Delta_n \searrow 0$, at a speed at most polynomial in $n$ (for example, $\delta_n \geq 1/n^c$ for some $c > 0$),  $\ell_n := \log n$, and positive constants $c$, $C$, and $c' <1/2$. These sequences and constants will not vary with $P$.    The probability $P$ can vary in the set $\mP_n$ of probability measures, termed ``data-generating processes", where $\mP_n$ is typically a set that is weakly increasing in $n$, i.e. $\mP_n \subseteq \mP_{n+1}$.  Other definitions and notation are collected in Appendix A.

\begin{assumption}[Basic Assumptions] \label{assumption: basic} (i) Consider a random element $W$  with values in a measure space $(\mathcal{W}, \mathcal{A}_\mathcal{W})$ and  law determined by a probability measure $P \in \mP_n$. The observed data $((W_{ui})_{u \in \mathcal{U}})_{i=1}^{n}$ consist of $n$ i.i.d. copies of a random element $(W_u)_{u \in \mathcal{U}}= ( (Y_u)_{u \in \mU}, D, Z, X)$,  where $\mU$ is a Polish space equipped with its Borel sigma-field and $(Y_u, D, Z, X)  \in \mathbb{R}^{3 +d_X}$.   Each $W_u$ is generated via a measurable transform
$t(W,u)$ of $W$ and $u$, namely the map $t: \mW \times \mU \longmapsto \mathbb{R}^{3+ d_X}$ is measurable,
and the map can possibly depend on $P$. Let  $$\mV_u:= \{V_{uj}\}_{j \in \mathcal{J}}:=\{Y_u, \mathbf{1}_0(D) Y_u,  \mathbf{1}_0(D), \mathbf{1}_1(D) Y_u,  \mathbf{1}_1(D) \}, \ \ \mathcal{V} := (\mV_u)_{u \in \mU}, $$ where $\mathcal{J} = \{1,...,5\}$.   (ii) For $\mP := \cup_{n=n_0}^{\infty} \mP_n$, the map $u \longmapsto Y_{u}$ obeys the uniform continuity property: $$\lim_{\epsilon \searrow 0}\sup_{P \in \mP} \sup_{ d_{\mU} (u , \bar u) \leq \epsilon } \| Y_{u} - Y_{\bar u}\|_{P,2} = 0,  \  \sup_{P \in \mP } \Ep_P \sup_{u \in \mU}|Y_{u}|^{2+c} < \infty,$$ 
where the second supremum in the first expression is taken over $u, \bar u \in \mU$, and $\mU$ is a totally bounded metric space equipped with a semi-metric $d_{\mU}$. The uniform covering entropy
of  the set  $\mathcal{F}_P= \{Y_u : u \in \mU\}$, viewed as a collection of maps $(\mW, \mA_{\mW}) \longmapsto \mathbb{R}$, obeys
$$\sup_Q  \log N(\epsilon \|F_P\|_{Q,2}, \mathcal{F}_P, \| \cdot \|_{Q,2}) \leq C  \log(\mathrm{e}/\epsilon) \vee 0 $$ for all $P \in \mP$, where  $F_P(W)= \sup_{u \in \mU} |Y_u|$, with the supremum taken over all finitely discrete probability measures $Q$ on $(\mW, \mA_{\mW})$. (iii) For each $P \in \mP$, the conditional probability of $Z=1$ given $X$ is bounded away from zero or one, namely  $ c' \leq m_{Z}(1,X) \leq 1-c'$ $P$-a.s.,  the instrument $Z$ has a non-trivial impact on $D$, namely
$c' \leq |\Pr_P[D=1|Z=1, X] -\Pr_P[D=1|Z=0, X]|$ $P$-a.s, and the regression function $g_V$ is bounded, $\|g_V\|_{P,\infty} < \infty$ for all $V \in \mV$.

\end{assumption}

Assumption \ref{assumption: basic}  is stated  to deal
with the measurability issues associated with functional response data. This assumption also implies that the set of functions $(\psi_u^{\rho})_{ u \in \mU}$, where $$\psi^\rho_{u}:= (\{\psi^\alpha_{V,0}, \psi^\alpha_{V,1}, \psi^\gamma_{V}\})_{ V \in \mV_u},$$ is $P$-Donsker uniformly in $\mP$.  That is, it implies
\begin{eqnarray}
&& Z_{n,P} \rightsquigarrow  Z_{P}  \ \  \text{ in $\ell^\infty(\mU)^{d_\rho}$,  uniformly in $P \in  \mP$},\\
&& \label{define: ZP} Z_{n, P} :=   (\mathbb{G}_{n}   \psi^\rho_{u})_{u \in \mathcal{U}}
\text{ and } Z_{P} :=   (\mathbb{G}_{P}   \psi^\rho_{u})_{u \in \mathcal{U}},
\end{eqnarray}
with $\mathbb{G}_{P}$ denoting the $P$-Brownian bridge \cite[p. 81--82]{vdV-W}
and with $Z_P$ having bounded, uniformly continuous paths uniformly in $P \in \mP$:
\begin{equation}\label{eq: nice paths}
\sup_{P \in  \mP} \Ep_P\sup_{u \in \mU} \|Z_P(u)\| < \infty, \ \ \lim_{\varepsilon \searrow 0} \sup_{P \in \mP}
\Ep_P \sup_{ d_{\mathcal{U}}(u, \tilde u) \leq \varepsilon}  \|Z_P(u) - Z_P(\tilde u)\| = 0.
\end{equation}


We work with the sequence of constants defined prior to Assumption \ref{assumption: basic}.

\begin{assumption}[Approximate Sparsity]\label{ass: sparse1}   Under each $P \in \mP_n$ and for each $n \geq n_0$, uniformly for all $V \in \mV $:  (i) The approximations (\ref{eq: approximations})-(\ref{eq: approximations4}) hold with the link
functions $\Lambda_V$ and $\Lambda_Z$ belonging to the set $\mathcal{L}$, the sparsity condition $\|\beta_{V}\|_0  + \|\beta_{Z} \|_0 \leq s$ holding, the approximation errors satisfying $\|r_{V}\|_{P,2} + \|r_{Z}\|_{P,2} \leq \delta_n n^{-1/4}$ and $\|r_{V}\|_{P,\infty} + \|r_{Z}\|_{P,\infty}  \leq \epsilon_n $, and the sparsity index $s$ and the number of terms $p$ in the vector $f(X)$ obeying $s^2 \log^2 (p\vee n) \log^2 n  \leq \delta_n n$. (ii) There are estimators $\bar \beta_V$ and $\bar \beta_{Z}$ such that, with probability  no less than $1- \Delta_n$, the estimation errors satisfy $\|f(Z,X)'(\bar \beta_{V} - \beta_{V})\|_{\Pn,2} + \|f(X)'(\bar \beta_{Z} - \beta_{Z})\|_{\Pn,2} \leq \delta_n n^{-1/4}$,  $K_n \|\bar \beta_{V} - \beta_{V}\|_1 + K_n \|\bar \beta_{Z} - \beta_{Z}\|_1 \leq \epsilon_n $; the
estimators  are sparse such that $\|\bar \beta_{V}\|_0  + \|\bar \beta_{Z}\|_0 \leq Cs$; and the empirical and population norms induced by the Gram matrix formed by $(f(X_i))_{i=1}^n$ are equivalent on sparse subsets, $\sup_{\|\delta\|_{0} \leq \ell_n s} \left | \| f(X) '\delta\|_{\Pn,2}/\|f(X)'\delta\|_{P,2} -1 \right | \leq \epsilon_n$. (iii)  The following boundedness conditions hold: $\|\|f(X)\|_\infty ||_{P, \infty} \leq K_n$ and   $\|V\|_{P,\infty} \leq C$. \end{assumption}

\begin{remark} Assumption \ref{ass: sparse1} imposes  simple intermediate-level conditions which encode
both the approximate sparsity of the models as well as some
reasonable behavior of the sparse estimators of $m_Z$ and $g_V$. These conditions significantly extend and generalize the conditions employed in the literature on adaptive estimation using series methods. The boundedness conditions are made to simplify arguments, and they could be removed at the cost of more complicated proofs and more stringent side conditions.  Sufficient conditions for the equivalence between empirical and population norms and primitive examples of functions admitting sparse approximations are given in \citen{BelloniChernozhukovHansen2011}.  We provide primitive conditions for Lasso estimators to satisfy the bounds above while addressing the problem of estimating continua of approximately sparse nuisance functions in Section 6.  We expect that other sparsity-based estimators, such as the Dantzig selector or adaptive Lasso, could be used in the present context as well. 
\qed \end{remark}

Under the stated assumptions, the empirical reduced form process  $ \hat Z_{n,P} = \sqrt{n}(\hat \rho - \rho)$ defined by (\ref{define rho})
obeys the following relations.  We recall definitions of convergence uniformly in $P \in \mathcal{P}_n$ in Appendix \ref{subsec:notation}.

\begin{theorem}[\textbf{Uniform Gaussianity of the Reduced-Form Parameter Process}]\label{theorem1}
 Under Assumptions \ref{assumption: basic} and \ref{ass: sparse1},  the reduced-form empirical process admits a linearization; namely,
\begin{eqnarray}
& & \hat Z_{n,P}:= \sqrt{n}(\hat \rho - \rho) = Z_{n,P} + o_{P}(1) \ \  \text{ in $\ell^\infty(\mU)^{d_\rho}$,  uniformly in $P \in \mP_n$}.
 \end{eqnarray}
 The process $\hat Z_{n,P}$ is asymptotically Gaussian, namely
 \begin{eqnarray}
\hat Z_{n,P} \rightsquigarrow  Z_{P}  \ \  \text{ in $\ell^\infty(\mU)^{d_\rho}$,  uniformly in $P \in \mP_n$},
\end{eqnarray}
where $Z_P$ is defined in (\ref{define: ZP}) and its paths obey the  property  (\ref{eq: nice paths}). \end{theorem}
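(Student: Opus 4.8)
The plan is to establish the linearization $\hat Z_{n,P} = Z_{n,P} + o_P(1)$ uniformly in $P \in \mP_n$ first, and then deduce the functional central limit theorem from the uniform Donsker property already recorded in the discussion following Assumption \ref{assumption: basic}. The whole argument proceeds $V$-by-$V$ for $V \in \mV_u$ and then uniformly over $u \in \mU$; since there are only finitely many ($=5$) variable names in each $\mV_u$, it suffices to treat a single $\psi^{\alpha}_{V,z,g,m}$ and the trivial $\psi^\gamma_V$. The estimator $\hat\alpha_V(z)$ solves $\En[\psi^{\alpha}_{V,z,\hat g_V,\hat m_Z}(W,\alpha)] = 0$, and because $\psi^{\alpha}$ is affine in $\alpha$ with coefficient $-1$, one has the exact identity $\hat\alpha_V(z) = \En[\psi^{\alpha}_{V,z,\hat g_V,\hat m_Z}(W,\alpha_V(z))] + \alpha_V(z)$. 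So $\sqrt n(\hat\alpha_V(z) - \alpha_V(z)) = \sqrt n\, \En[\psi^{\alpha}_{V,z,\hat g_V,\hat m_Z}(W,\alpha_V(z))]$, and the task reduces to showing this equals $\mathbb{G}_n \psi^\alpha_{V,z} + o_P(1) = \sqrt n\,\En[\psi^{\alpha}_{V,z,g_V,m_Z}(W,\alpha_V(z))] + o_P(1)$ uniformly in $P$.

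The core is a standard but carefully uniformized three-term decomposition of the difference $\sqrt n\,\En[\psi^{\alpha}_{V,z,\hat g_V,\hat m_Z}(\cdot) - \psi^{\alpha}_{V,z,g_V,m_Z}(\cdot)]$ into: (a) an empirical-process (stochastic equicontinuity) term $\sqrt n\,(\En - \Ep_P)[\psi^{\alpha}_{V,z,\hat g_V,\hat m_Z} - \psi^{\alpha}_{V,z,g_V,m_Z}]$; (b) a first-order bias term $\sqrt n\,\Ep_P[\psi^{\alpha}_{V,z,\hat g_V,\hat m_Z} - \psi^{\alpha}_{V,z,g_V,m_Z}]$ evaluated at $\alpha_V(z)$; and (c) the plug-in error coming from $\alpha_V(z)$, which here vanishes identically by affinity. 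For (a), I would condition on the first-step estimates, note that by Assumption \ref{ass: sparse1}(ii) the functions $\hat g_V, \hat m_Z$ lie with probability $1-\Delta_n$ in a sparse, bounded class of VC-type with entropy controlled by $s\log(p\vee n)$, combine this with the uniform-entropy bound on $\mathcal{F}_P = \{Y_u\}$ from Assumption \ref{assumption: basic}(ii), and invoke a maximal inequality for empirical processes (e.g., the uniform-in-$P$ version of the functional Donsker / Chernozhukov–Chetverikov–Kato bounds) to show this term is $o_P(1)$ because the $L^2(P)$-size of the increments $\|\psi^{\alpha}_{V,z,\hat g_V,\hat m_Z} - \psi^{\alpha}_{V,z,g_V,m_Z}\|_{P,2}$ is $O_P(\delta_n n^{-1/4}) \to 0$, using also the bound $c' \le m_Z(1,X) \le 1-c'$ and the equivalence of empirical and population norms on sparse sets. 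For (b) — the crux — I would exploit the Neyman orthogonality \eqref{LowBias}: since $\psi^{\alpha}$ is the efficient influence function, $\partial_g \Ep_P\psi^{\alpha}|_{g=g_V} = 0$ and $\partial_m \Ep_P\psi^{\alpha}|_{m=m_Z} = 0$, so a second-order Taylor expansion of $g \mapsto \Ep_P\psi^{\alpha}_{V,z,g,m}$ and $m\mapsto\Ep_P\psi^{\alpha}_{V,z,g,m}$ around $(g_V,m_Z)$ leaves only a quadratic remainder, bounded (using the explicit form $\psi^\alpha = 1(Z=z)(V-g)/m + g - \alpha$, so the only bilinear term is $\Ep_P[1(Z=z)(g_V-\hat g)(1/\hat m - 1/m_Z)]$) by a constant times $\|\hat g_V - g_V\|_{P,2}\|\hat m_Z - m_Z\|_{P,2} + \|r_V\|_{P,2} + \|r_Z\|_{P,2}$. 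By Assumption \ref{ass: sparse1}(i)–(ii) each of these factors is $O_P(\delta_n n^{-1/4})$, so the product times $\sqrt n$ is $O_P(\delta_n^2) = o_P(1)$ — this is exactly the ``$o(n^{-1/4})$-rate plus orthogonality gives negligible bias" mechanism flagged in the Introduction.

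Having shown $\hat Z_{n,P} = Z_{n,P} + o_P(1)$ uniformly in $P$ in $\ell^\infty(\mU)^{d_\rho}$ (uniformity being handled by taking sup over $P \in \mP_n$ at each bounding step and using that all constants $c,C,c',\delta_n,\epsilon_n,\Delta_n$ are $P$-free), the asymptotic Gaussianity $\hat Z_{n,P} \rightsquigarrow Z_P$ uniformly in $P$ follows immediately: the discussion after Assumption \ref{assumption: basic} already asserts $Z_{n,P} \rightsquigarrow Z_P$ uniformly in $P \in \mP$ (the uniform-in-$P$ Donsker property of $(\psi^\rho_u)_{u\in\mU}$, with limit paths satisfying \eqref{eq: nice paths}), and weak convergence is preserved under addition of a term that is $o_P(1)$ uniformly in $P$ — this is the extended continuous mapping / Slutsky argument in the uniform-in-$P$ framework of Appendix \ref{subsec:notation}. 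I expect step (b), the orthogonality-based bias control, to be the part requiring the most care: one must verify that the map $(g,m)\mapsto\Ep_P\psi^\alpha_{V,z,g,m}$ has the claimed exactly-quadratic remainder structure along the specific directions traced out by the Lasso/Post-Lasso estimators (not merely Gâteaux-smooth), that the link functions $\Lambda_V,\Lambda_Z \in \mathcal{L}$ do not spoil this (they enter only through $g_V$ and $m_Z$, which are what get perturbed, so they are harmless), and that all the resulting bounds are genuinely uniform over the rich class $\mP_n$ — including over the event of probability at least $1-\Delta_n$ on which the first-step estimators behave, with $\Delta_n \to 0$.
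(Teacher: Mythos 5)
Your proposal follows essentially the same route as the paper's proof: the exact affine-in-$\alpha$ structure reduces everything to $\sqrt{n}\,\En[\psi^{\alpha}_{V,z,\hat g_V,\hat m_Z}(W,\alpha_V(z))]$, which is split into the main term $\Gn \psi^{\alpha}_{V,z}$, a stochastic-equicontinuity term controlled by the sparse-class entropy bound ($\lesssim s\log p$) plus the maximal inequality with increment size $O_P(\delta_n n^{-1/4})$, and a bias term killed to first order by orthogonality and to second order by $\sqrt{n}(\delta_n n^{-1/4})^2 = \delta_n^2$; the FCLT for the main term is then the uniform-in-$P$ Donsker theorem (which the paper actually verifies inside the proof, as its Step 2, rather than taking it as given).

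One bookkeeping slip should be corrected: your stated bound for the bias, a constant times $\|\hat g_V - g_V\|_{P,2}\|\hat m_Z - m_Z\|_{P,2} + \|r_V\|_{P,2} + \|r_Z\|_{P,2}$, would not suffice as written, since $\sqrt{n}\,\|r_V\|_{P,2}$ is only $O(\delta_n n^{1/4})$, which diverges. The additive approximation-error terms should not appear: your own bilinear computation gives the bias exactly as $\Ep_P[(g_V-\hat g_V)(\hat m_Z - m_Z)/\hat m_Z]$, so the $r$'s enter only \emph{inside} the two $L^2$ factors (via $\|\hat g_V - g_V\|_{P,2} \leq \|\hat g_V - \Lambda_V(f'\beta_V)\|_{P,2} + \|r_V\|_{P,2} \lesssim \delta_n n^{-1/4}$), and your conclusion $O_P(\delta_n^2)$ is then right. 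Relatedly, the concern you flag about the remainder not being exactly quadratic is real because $\hat m_Z$ sits in the denominator; the paper resolves it by a third-order Taylor expansion in $h$, bounding the cubic remainder by $\sqrt{n}\,\|h-h_V\|_{P,2}^2\|h-h_V\|_{P,\infty} \lesssim \delta_n^2\epsilon_n$ using the $L^\infty$ rates in Assumption \ref{ass: sparse1}(ii), which also keeps $\hat m_Z$ bounded away from zero. With these two points patched, the argument is complete and matches the paper's.
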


Another main result of this section shows that the bootstrap law of the process
$$
\hat Z^*_{n,P}  := \sqrt{n} (\hat \rho^* - \hat \rho) := \frac{1}{\sqrt{n}} \sum_{i=1}^n  \xi_i \hat \psi^{\rho}_u(W_i),
$$
where $\hat \psi^\rho_u$ is defined in (\ref{estimate psi}), provides a valid approximation to the large sample law of $\sqrt{n}(\hat \rho - \rho)$.

\begin{theorem}[\textbf{Validity of Multiplier Bootstrap for
Inference on Reduced-Form  Parameters}]\label{theorem2}
 Under Assumptions \ref{assumption: basic} and \ref{ass: sparse1}, the bootstrap law consistently approximates the large sample
 law $Z_P$ of $Z_{n,P}$ uniformly in $P \in \mP_n$, namely,
 \begin{eqnarray}
\hat Z^*_{n,P} \rightsquigarrow_B  Z_{P}  \ \  \text{ in $\ell^\infty(\mU)^{d_\rho}$,  uniformly in $P \in \mP_n$}.
\end{eqnarray}
\end{theorem}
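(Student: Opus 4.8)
The plan is to write the bootstrap process $\hat Z^*_{n,P}=\sqrt n(\hat\rho^*-\hat\rho)$ as an ``ideal'' multiplier process built from the \emph{true} orthogonal influence functions plus a remainder coming from plugging in the first-step estimates, to handle the ideal process by a conditional multiplier functional central limit theorem that is uniform in $P$, and to show the remainder is asymptotically negligible uniformly in $P$. Concretely, since $\hat\psi^\rho_u$ in \eqref{estimate psi} is the plug-in analogue of $\psi^\rho_u$ and $\Ep_P[\psi^\rho_u]=0$ for every $u$ and every $P$, I would decompose, pointwise in $u$,
\begin{equation*}
\hat Z^*_{n,P}(u)\;=\;\underbrace{\frac{1}{\sqrt n}\sum_{i=1}^n\xi_i\,\psi^\rho_u(W_i)}_{=:\;Z^{*,0}_{n,P}(u)}\;+\;\underbrace{\frac{1}{\sqrt n}\sum_{i=1}^n\xi_i\bigl(\hat\psi^\rho_u(W_i)-\psi^\rho_u(W_i)\bigr)}_{=:\;R_n(u)} .
\end{equation*}
For the first term: Assumption~\ref{assumption: basic} already records that $\{\psi^\rho_u:u\in\mU\}$ is $P$-Donsker uniformly in $P\in\mP$, through a single $P$-free VC-type uniform covering-entropy bound and a uniformly integrable envelope. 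Feeding that one entropy/envelope pair, together with the sub-exponential multiplier conditions $\Ep[\xi]=0$, $\Ep[\xi^2]=1$, $\Ep[\exp(|\xi|)]<\infty$, into the Lindeberg-type argument behind the conditional multiplier CLT --- run once, so that it applies simultaneously to all $P$ --- would give $Z^{*,0}_{n,P}\rightsquigarrow_B Z_P$ in $\ell^\infty(\mU)^{d_\rho}$ uniformly in $P\in\mP_n$, with $Z_P$ the $P$-Brownian bridge of \eqref{define: ZP} enjoying the path regularity \eqref{eq: nice paths}.

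Next I would show $R_n$ is negligible in the bootstrap sense, uniformly in $P$. Conditionally on $(W_i)_{i=1}^n$, $R_n$ is a sum of independent mean-zero summands, so a conditional multiplier maximal inequality (symmetrization plus Dudley chaining, with the logarithmic correction appropriate to sub-exponential $\xi$) bounds $\Ep_\xi\bigl[\sup_{u\in\mU}\|R_n(u)\|\bigr]$ by $\sigma_n$ times a $\sqrt{s\log(p\vee n)}$-type entropy factor, plus a lower-order term of order $s\log(p\vee n)/\sqrt n$, all up to powers of $\ell_n$, where $\sigma_n^2:=\sup_{u\in\mU}\En\bigl[\|\hat\psi^\rho_u-\psi^\rho_u\|^2\bigr]$. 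Two inputs feed this. First, on an event of probability at least $1-\Delta_n$ (with $\Delta_n$ free of $P$) the functions $\hat g_V=\Lambda_V(f(Z,X)'\bar\beta_V)$ and $\hat m_Z=\Lambda_Z(f(X)'\bar\beta_Z)$ are bounded links of $Cs$-sparse linear combinations (Assumption~\ref{ass: sparse1}(ii)--(iii)), so --- combined with the covering-entropy bound on $\{Y_u:u\in\mU\}$ from Assumption~\ref{assumption: basic} --- the random class $\{\hat\psi^\rho_u-\psi^\rho_u:u\in\mU\}$ sits inside a fixed VC-type class of index $O(s\log(p\vee n))$ with bounded envelope. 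Second, $\sigma_n\lesssim\delta_n n^{-1/4}$ with probability tending to one: $\alpha\mapsto\psi^\alpha_{V,z,g,m}$ is affine, so the $\hat\alpha_V(z)-\alpha_V(z)=O_P(n^{-1/2})$ part contributes $O_P(n^{-1/2})$; every link in $\mathcal L$ is Lipschitz, so $\|\hat g_V-g_V\|_{\Pn,2}\lesssim\|f(Z,X)'(\bar\beta_V-\beta_V)\|_{\Pn,2}+\|r_V\|_{\Pn,2}\lesssim\delta_n n^{-1/4}$ uniformly in $u$, and likewise for $\hat m_Z$; and $m_Z\in[c',1-c']$ while $\|\hat m_Z-m_Z\|_{P,\infty}\lesssim\epsilon_n$ (from $K_n\|\bar\beta_Z-\beta_Z\|_1\leq\epsilon_n$, $\|r_Z\|_{P,\infty}\leq\epsilon_n$, and $\|\,\|f(X)\|_\infty\,\|_{P,\infty}\leq K_n$), so the divisor $\hat m_Z$ stays bounded away from $0$ and $1$ and the remaining terms of $\hat\psi^\rho_u-\psi^\rho_u$ are Lipschitz functions of $\hat g_V-g_V$ and $\hat m_Z-m_Z$. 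Substituting $\sigma_n\lesssim\delta_n n^{-1/4}$ and invoking the rate condition $s^2\log^2(p\vee n)\log^2 n\leq\delta_n n$ would then give $\Ep_\xi[\sup_{u}\|R_n(u)\|]=o_P(1)$.

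Finally, combining the two pieces through a uniform-in-$P$ bootstrap Slutsky lemma (the analogue for $\rightsquigarrow_B$ of the extended continuous mapping theorem, in the sense set up in Appendix~\ref{app: B}) would yield $\hat Z^*_{n,P}=Z^{*,0}_{n,P}+R_n\rightsquigarrow_B Z_P$ in $\ell^\infty(\mU)^{d_\rho}$ uniformly in $P\in\mP_n$, as claimed; equivalently, this is the $u$-indexed-moment specialization of the general multiplier-bootstrap validity result developed later in the paper. The hard part will be the remainder step: one must control the plug-in influence functions $\hat\psi^\rho_u$ \emph{simultaneously} in the functional index $u\in\mU$ and \emph{uniformly} over $P\in\mP_n$, establishing both $L^2(\Pn)$-consistency at an $o(n^{-1/4})$ rate and containment --- with a $P$-free probability $1-\Delta_n$ --- in a class whose entropy grows slowly enough that the conditional chaining bound still vanishes after multiplication by $\sigma_n$. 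This is where the first-step sparsity inputs (estimation rates, sparsity of $\bar\beta_V$ and $\bar\beta_Z$, and equivalence of empirical and population norms on sparse subsets) must be merged with the covering-entropy and moment bounds on $\{Y_u\}$, and merged $P$-uniformly. By contrast, the conditional multiplier CLT for the ideal process and the $o_P$-perturbation passage are comparatively routine given the uniform Donsker property already in hand; note, in particular, that --- unlike in Theorem~\ref{theorem1} --- the orthogonality \eqref{LowBias} plays no role here, since mean-zero multipliers leave no first-order bias to cancel.
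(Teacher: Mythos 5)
Your proposal is correct and follows essentially the same route as the paper's proof: the same decomposition of $\hat Z^*_{n,P}$ into the ideal multiplier process $(\Gn\,\xi\psi^\rho_u)_{u\in\mU}$ (handled by the uniform-in-$P$ conditional multiplier CLT, Theorem \ref{lemma: uniform Donsker for bootstrap}) plus a plug-in remainder, which is killed by combining the $O(s\log p)$-type entropy bound for the sparse estimator class with the maximal inequality at scale $\sigma_n\lesssim\delta_n n^{-1/4}$ and then passed through the bounded-Lipschitz metric via Markov's inequality. The only cosmetic difference is that the paper bounds the remainder unconditionally over the joint class $\xi\mathcal{J}_n$ rather than conditionally on the data, and your closing remark about orthogonality should be read with the caveat that the $(\hat\alpha_V(z)-\alpha_V(z))\Gn\xi$ piece is controlled using the root-$n$ rate from Theorem \ref{theorem1}, which itself rests on orthogonality.
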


Next we consider inference on the structural functionals $\Delta$ defined in (\ref{eq: functional_of_interest}).
We derive the large sample distribution of the estimator $\hat \Delta$  in (\ref{eq: estimate functional_of_interest}),
 and show that the multiplier bootstrap  law of $\hat \Delta^*$  in (\ref{eq: estimate functional_of_interest})
 provides a consistent approximation to that distribution.   We rely on the functional delta method in our derivations, which we modify to handle uniformity with respect to the underlying d.g.p. $P$.  Our argument relies on the following assumption on the structural functionals.

\begin{assumption}[Uniform Hadamard Differentiability of Structural Functionals] \label{ass: uhd} Suppose that for each $P \in  \mP$,  $\rho= \rho_P \in \D_\rho$, a compact metric space.  Suppose $\varrho \longmapsto \phi(\varrho) $, a functional of interest mapping
 $\mathbb{D}_{\phi} \subset \mathbb{D}= \ell^{\infty}(\mU)^{d_\rho}$ to $\ell^{\infty}( \mathcal{Q})$,  where $\D_\rho \subset \D_\phi$,
 is  Hadamard differentiable in $\varrho$ tangentially to $\mathbb{D}_0 = UC(\mU)^{d_\rho}$ uniformly in $\varrho \in \mathbb{D}_{\rho}$, with the linear derivative map $\phi^{\prime}_{\varrho}: \D_0 \longmapsto \D$ such  that the mapping $(\varrho, h) \longmapsto \phi'_{\varrho}(h)$  from $\mathbb{D}_\rho \times \mathbb{D}_0$ to $\ell^{\infty}(\mQ)$ is continuous. \end{assumption}
The definition of  uniform Hadamard differentiability is given in Definition \ref{def:uhd} of Appendix \ref{app: B}. Assumption \ref{ass: uhd} holds for all examples of structural parameters  listed in Section 2.   \\

The following corollary gives the large sample law of $\sqrt{n} (\hat \Delta - \Delta)$,
the properly normalized structural estimator.
It also shows that the bootstrap law of
$
 \sqrt{n} (\hat \Delta^* - \hat \Delta),
$
computed conditionally on the data, approaches the large sample law $\sqrt{n} (\hat \Delta - \Delta)$.
It follows from the previous theorems as well as from a more general result contained in Theorem \ref{theorem3}.

\begin{corollary}[\textbf{Limit Theory and Validity of Multiplier Bootstrap for Smooth Structural Functionals}]
\label{corollary: bs LATE} Under Assumptions \ref{assumption: basic}, \ref{ass: sparse1}, and \ref{ass: uhd},
\begin{equation}  \label{eq:results on functionals}
\sqrt{n} (\hat \Delta - \Delta)   \rightsquigarrow T_P:= \phi'_{\rho_P} (Z_P),  \ \  \text{ in $\ell^\infty(\mathcal{Q})$,  uniformly in $P \in \mP_n$},
\end{equation}
where $T_P$ is a zero mean tight Gaussian process,  for each $P \in \mP$.  Moreover,
\begin{equation}  \label{eq:results on functionals}
\sqrt{n} (\hat \Delta^* - \hat \Delta)  \rightsquigarrow_B T_P 
,  \ \  \text{ in $\ell^\infty(\mathcal{Q})$,  uniformly in $P \in \mP_n$}.
\end{equation}

\end{corollary}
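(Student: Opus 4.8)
The plan is to derive both displays as consequences of Theorems \ref{theorem1} and \ref{theorem2} by transporting the reduced-form limits through the smooth map $\phi$ with a functional delta method that is valid uniformly in $P \in \mP_n$; this uniform-in-$P$ delta method and its bootstrap counterpart are exactly what the more general Theorem \ref{theorem3} of Section \ref{sec: general} provides, so the work here reduces to checking that its hypotheses hold in the present setting.

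First I would assemble the inputs. Theorem \ref{theorem1} gives the linearization $\sqrt{n}(\hat\rho - \rho) = Z_{n,P} + o_P(1) \rightsquigarrow Z_P$ in $\ell^\infty(\mU)^{d_\rho}$, uniformly in $P \in \mP_n$, and by property (\ref{eq: nice paths}) the limit $Z_P$ concentrates on $\D_0 = UC(\mU)^{d_\rho}$, which is precisely the tangent set named in Assumption \ref{ass: uhd}. The same linearization yields $\hat\rho - \rho = o_P(1)$ uniformly in $P$, so with probability tending to one (uniformly in $P$) the estimator $\hat\rho$ lies in a prescribed neighborhood of $\rho_P$ inside the compact set $\D_\rho$. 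Assumption \ref{ass: uhd} supplies Hadamard differentiability of $\phi$ on $\D_\rho$ tangentially to $\D_0$, uniformly in the base point, together with joint continuity of $(\varrho,h) \mapsto \phi'_\varrho(h)$. Applying the uniform functional delta method of Theorem \ref{theorem3} to $\sqrt{n}(\hat\rho - \rho)$ then gives $\sqrt{n}(\hat\Delta - \Delta) = \sqrt{n}\big(\phi(\hat\rho) - \phi(\rho_P)\big) \rightsquigarrow \phi'_{\rho_P}(Z_P) =: T_P$ in $\ell^\infty(\mQ)$, uniformly in $P \in \mP_n$. That $T_P$ is a zero-mean tight Gaussian process is immediate: $\phi'_{\rho_P}$ is a continuous linear map and $Z_P$ is a tight zero-mean Gaussian element of $\D_0$ (a $P$-Brownian bridge with the nice paths of (\ref{eq: nice paths})), so its image is tight by continuity and Gaussian with mean zero by linearity.

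For the bootstrap statement I would use the bootstrap form of the same result. Theorem \ref{theorem2} gives $\sqrt{n}(\hat\rho^* - \hat\rho) \rightsquigarrow_B Z_P$ in $\ell^\infty(\mU)^{d_\rho}$, uniformly in $P \in \mP_n$. Combining this with the uniform consistency $\hat\rho - \rho_P = o_P(1)$ and the joint continuity of the derivative map in Assumption \ref{ass: uhd}, the bootstrap version of the uniform functional delta method (again part of Theorem \ref{theorem3}) delivers $\sqrt{n}(\hat\Delta^* - \hat\Delta) = \sqrt{n}\big(\phi(\hat\rho^*) - \phi(\hat\rho)\big) \rightsquigarrow_B \phi'_{\rho_P}(Z_P) = T_P$ in $\ell^\infty(\mQ)$, uniformly in $P \in \mP_n$, which is the second display of the corollary.

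I expect the one genuinely delicate step to be verifying that the conditional, in-probability weak convergence $\rightsquigarrow_B$ survives composition with $\phi$ uniformly over $\mP_n$: one must control the delta-method remainder along the random, data-dependent centering $\hat\rho$ simultaneously across all $P \in \mP_n$, which is exactly why the strengthened (uniform-in-$\varrho$) notion of Hadamard differentiability from Definition \ref{def:uhd} and the joint continuity of $(\varrho,h)\mapsto\phi'_\varrho(h)$ are needed, rather than mere Hadamard differentiability at the single point $\rho_P$. Once Theorem \ref{theorem3} is available the unconditional part is comparatively routine, and the only remaining bookkeeping is to confirm, case by case, that each structural functional of Section \ref{sec: model} meets Assumption \ref{ass: uhd} — the inf-transforms defining the quantile effects being the least trivial of these.
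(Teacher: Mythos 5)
Your proposal is correct and follows essentially the same route as the paper: the paper's proof consists of one line citing Theorems \ref{theorem1} and \ref{theorem2} together with the uniform functional delta method and its bootstrap version (Theorems \ref{thm: delta-method} and \ref{theorem:delta-method-bootstrap}), which is exactly the combination you assemble (your invocation of Theorem \ref{theorem3} is just the packaged form of those two delta-method theorems). The only small point left implicit in your write-up is the relative-compactness hypothesis on $\{Z_P : P \in \mP\}$ required by Theorem \ref{thm: delta-method}, which follows from the uniform Donsker property already established in the proof of Theorem \ref{theorem1}.
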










 \section{General Theory:  Honest Inference in General Moment Condition Problems
 with  Nuisance Functions Estimated by Machine Learning Methods}\label{sec: general}

 In this section, we consider a general moment condition framework, where possibly a continuum of target parameters is of interest and we use modern machine learning methods, with Lasso-type methods being a lead example, to estimate a continuum of high-dimensional nuisance functions.  This setting covers a rich variety of  modern moment-condition problems in econometrics including  the  treatment effects problem.  We establish a functional central limit theorem for the estimators of the continuum of target parameters that holds uniformly in $P \in \mathcal{P}$, where $\mathcal{P}$ includes a wide range of data-generating processes with well-approximable continuums of nuisance functions. We also derive a functional central limit theorem for the multiplier bootstrap that resamples the first order approximations to the standardized estimators of the continuum of target parameters and establish its uniform validity. Moreover, we establish the uniform validity of the functional delta method and the functional delta method for the multiplier bootstrap for smooth functionals of the continuum of target parameters using an appropriate strengthening of Hadamard differentiability.

\subsection{Setting}   We are interested in function-valued target parameters indexed  by $u \in \mathcal{U} \subset \mathbb{R}^{d_u}$.  We denote  the true value of the target parameter by   $$\theta^0 = (\theta_u)_{u \in \mathcal {U}}, \text{ where } \theta_u \in \Theta_u \subset \Theta \subset \mathbb{R}^{d_\theta}, \text{ for each }
 u \in \mathcal{U}. $$
 We assume that for each $u \in \mathcal{U},$ the true value $\theta_u$ is identified as the solution to the following moment condition:
 \begin{equation}\label{eq:ivequation}
 \Ep_P[ \psi_u(W_u, \theta_u, h_u(Z_u) ) ] = 0,
 \end{equation}
where   $W_u$ is a random vector that takes values in a Borel set $\mathcal{W}_u \subset \mathbb{R}^{d_w}$ and contains as a subcomponent the vector $Z_u$ taking values in a Borel set $\mathcal{Z}_u$, the moment function
\begin{equation}\label{def:psi_u}
\psi_u:  \mathcal{W}_u \times \Theta_u \times T_u \longmapsto \mathbb{R}^{d_\theta},  \ \  (w, \theta, t) \longmapsto \psi_u(w, \theta, t) = (\psi_{uj}(w, \theta, t))_{j=1}^{d_\theta}\end{equation}
 is a Borel measurable map, and the function
 \begin{equation}\label{def:h_u}
h_u:  \mathcal{Z}_u \longmapsto  \mathbb{R}^{d_t},  \  \ \ z \longmapsto h_u(z) = (h_{um}(z))_{m=1}^{d_t}  \in   T_u(z),
 \end{equation}
is another Borel measurable map that denotes the possibly infinite-dimensional nuisance parameter.  The sets $T_u(z)$ are assumed to be convex for each $u \in \mU$ and $z \in \mZ_u$. Finite-dimensional nuisance parameters that do not depend on $Z_u$ are treated as part of $h_u$ as well.

We assume that the continuum of nuisance functions $(h_{u})_{u \in \mathcal{U}}$ is well-approximable and can be well estimated by the modern generation of statistical and machine learning methods.  In particular, our regularity conditions allow for
approximately sparse nuisance functions, which can be modeled and estimated using methods such as Lasso and Post-Lasso.   We let
 $\hat h_u= (\hat h_{um})_{m=1}^{d_t}$ denote the estimator of $h_u$, which we assume obeys the conditions in Assumption \ref{ass: AS}.   The estimator $\hat \theta_u$ of $\theta_u$ is constructed as any approximate $\epsilon_n$-solution  in $\Theta_u$ to a sample analog of the moment condition (\ref{eq:ivequation}), i.e.,
\begin{equation}\label{eq:analog}
\|\En[ \psi_u(W_u, \hat \theta_u, \hat h_u(Z_u) ) ] \| \leq \inf_{\theta \in \Theta_u}\|\En[ \psi_u(W_u, \theta, \hat h_u(Z_u) ) ] \| + \epsilon_n,   \text{ where }  \epsilon_n = o(n^{-1/2}).
\end{equation}

\begin{remark}[Handling Over-identified Cases] We do not analyze
over-identified cases explicitly, but it is helpful to note that they can be handled
within the current framework. Let
 $\psi^o_u (W_u, \theta, h^o_u(Z_u))$ be the original over-identifying moment function. Let $A_u(Z_u)$ denote the pointwise optimal matrix of linear combinations of the moments, so that the final moment function $\psi_u (W_u, \theta, h(Z_u)) = A_u(Z_u) \psi^o_u  (W_u, \theta, h^o_u(Z_u))$ has the same dimension as $\theta_u$.  Here $h_u(Z_u) = (\text{vec}(A_u(Z_u))', $ $ {h^o}'_u(Z_u))'$; that is, we simply treat $A_u$ as part of the nuisance function $h_u$ being estimated.  We do not analyze the preliminary estimation of $A_u$ in the present paper in order to maintain the focus on exactly identified cases as in Section 4.   \qed
\end{remark}

\subsection{The Neyman Orthogonality or Immunization Condition} A key condition needed for regular estimation of $\theta_u$ is an orthogonality or immunization condition.
The simplest to explain, yet  strongest, form of this condition can be expressed as follows:
  \begin{equation}\label{eq:orthogonality0}
\partial_t  \Ep_P[  \psi_u(W_u, \theta_u, t)  |Z_u] \Big|_{t = h_u(Z_u)}= 0, \text{ a.s.,}
\end{equation}
subject to additional technical conditions such as continuity (\ref{eq:contin}) and dominance (\ref{eq:dom}) stated below,
where we use the symbol $\partial_t$ to abbreviate  $\frac{\partial}{\partial t'}$.
This condition holds in the previous setting of inference on treatment effects after interchanging the order of the derivative and expectation. The formulation here also  covers certain non-smooth cases such as  structural and instrumental quantile regression problems.

In the formal development, we use a more general form of the orthogonality condition.
\begin{definition}[\textbf{Neyman Orthogonality for Moment Condition Models, General Form}]  For each $u \in \mathcal{U}$,
 suppose that (\ref{eq:ivequation})--(\ref{def:h_u}) hold.   Consider $\mathcal{H}_u$, a  set of measurable functions $z \longmapsto h(z) \in T_u(z) $ from $\mathcal{Z}_u$ to $\mathbb{R}^{d_t}$ such that $ \| h (Z_u) - h_u(Z_u)\|_{P,2} < \infty$ for all $h \in \mH_u$.    Suppose also that the set $T_u(z)$ is a convex subset of $\mathbb{R}^{d_t}$ for each $z \in \mZ_u$.  We say that $\psi_u$ obeys a general form of orthogonality with respect to $\mH_u$ uniformly in $u \in \mathcal{U}$ if the following conditions hold: For each $u \in \mathcal{U}$, the derivative
\begin{equation}\label{eq:contin} t \longmapsto \partial_t  \Ep_P[  \psi_u(W_u, \theta_u, t)  |Z_u] \text{ is continuous on $t \in T_u(Z_u)$ $P$-a.s.;}
\end{equation}  is dominated, \begin{equation}\label{eq:dom} \left \|\sup_{t \in T_u(Z_u)}  \Big \|  \partial_t  \Ep_P[ \psi_u(W_u, \theta_u, t)  |Z_u] \Big \| \right \|_{P,2}< \infty;
\end{equation}
and obeys the orthogonality condition:
  \begin{equation}\label{eq:orthogonality}
\Ep_P \Big [  \partial_t  \Ep_P\big [  \psi_u(W_u, \theta_u, h_u(Z_u))  |Z_u\big ]  (h (Z_u) - h_u(Z_u))  \Big ]= 0 \ \  \text{ for all } h \in \mH_u.
\end{equation}
\end{definition}

\noindent The orthogonality condition (\ref{eq:orthogonality})  reduces to (\ref{eq:orthogonality0}) when $\mH_u$ can span all measurable functions $h: \mZ_u  \longmapsto T_u$  such that $\|h\|_{P,2} < \infty$ but is more general otherwise.

\begin{remark}[\textbf{An alternative formulation of the Neyman orthogonality condition}]  A slightly more general, though less primitive definition of the orthogonality condition is as follows.  For each $u \in \mathcal{U}$,
 suppose that (\ref{eq:ivequation})- (\ref{def:h_u}) hold.   Consider $\mathcal{H}_u$, a  set of measurable functions $z \mapsto h(z) \in T_u(z)$ from $\mathcal{Z}_u$ to $\R^{d_t}$ such that $ \| h (Z_u) - h_u(Z_u)\|_{P,2} < \infty$ for all $h \in \mH_u$,  where the set $T_u(z)$ is a convex subset of $\R^{d_t}$ for each $z \in \mZ_u$.  We say that $\psi_u$ obeys a general form of orthogonality with respect to $\mH_u$ uniformly in $u \in \mathcal{U}$, if the following conditions hold: The Gateaux derivative map
 $$
  \mathrm{D}_{u,t}[h - h_u]:=  \partial_t  \Ep_P \Bigg (  \psi_u \Big\{ W_u, \theta_u, h_u(Z_u)+ t \Big [h (Z_u) - h_u(Z_u)\Big] \Big\}   \Bigg )
  $$
  exists for all $t \in [0,1)$, $h \in \mH_u$, and $u \in \mU$ and vanishes at $t=0$ -- namely,
  \begin{equation}\label{eq:cont}
 \mathrm{D}_{u,0}[h - h_u] = 0  \ \  \text{ for all } h \in \mH_u.
\end{equation}
Definition 5.1 implies this definition by the mean-value expansion and the dominated convergence theorem. \qed
\end{remark}

\begin{remark}[\textbf{Orthogonalization typically expands the number of nuisance parameters}]
It is important to use a moment function $\psi_u$ that satisfies the orthogonality property given in (\ref{eq:orthogonality}); see examples given below.  Generally, if we  have a
moment function $\tilde \psi_u$ which identifies  $\theta_u$ but does not have this property,  we can construct a moment function $\psi_u$ that identifies $\theta_u$ and has the required orthogonality property by projecting the original function $\tilde \psi_u$ onto the orthocomplement of the tangent space for the original set of nuisance functions $h^o_u$; see, for example, \citen{vdV-W}, \citen[Chap. 25]{vdV}, \citen{kosorok:book}, \citen{BCK-LAD}, and \citen{BelloniChernozhukovHansen2011}.  This projection creates the semi-parametrically efficient score function.  There are other ways to create orthogonal nuisance functions, as illustrated by the second example below.

Note that the projection typically depends on $P$, which gives rise to additional nuisance parameters $h^n_u$, which are then incorporated together with the original nuisance parameters into the new parameter $h_u = (h_u^0, h^n_u)$. Note that this is a feature of all of the examples we consider.
For example, the orthogonal moment functions in the exogenous case of the treatment effects framework depend on both the regression function and the propensity score function.
 This point is clarified further by considering the classical linear model as demonstrated in the next remark.\qed
\end{remark}

\begin{example}[\textbf{Neyman Orthogonal Equations for Linear Regression}]  To illustrate the orthogonality condition in the simplest possible setting, let us consider the linear model:
 \begin{equation}
 Y = D\theta_0 +  X'\beta_0 + \epsilon,  \quad \Ep_P [\epsilon X] = 0, \quad   D = X'\pi_0 + \nu, \quad \Ep_P[\nu X] = 0,
 \end{equation}
 where $D$ is the treatment and $X$
 are the controls of high dimension $p \gg n$.  Call the first equation
 the regression equation, and the second equation the propensity score equation.
 The orthogonal moment
 condition that identifies the projection coefficient $\theta_0$ is  the Frisch-Waugh-Lovell partialling out interpretation of $\theta_0$:
 \begin{equation}
 \Ep_P ( U - \nu \theta_0) \nu = 0,  \ \
 \end{equation}
 where $U$ is the population residual left after projecting out the controls $X$ from the outcome, i.e.
  $Y = X'\delta_0 + U, \ \Ep_P U X = 0$;  and $\nu$ is the population residual left after
  projecting out controls from the treatment as defined in the propensity score equation.
  The high-dimensional nuisance function is    $h(Z) = (X'\delta, X'\pi)'$,
  for $Z =X$, with true value denoted by   $h_0(Z) = (X'\delta_0, X'\pi_0)'$. Now
   the moment function
 \begin{equation} \psi(W, \theta, h(Z)) = \{ ( Y - X'\delta) -
(D - X'\pi) \theta\} (D - X'\pi),
\end{equation} has the required orthogonality property (\ref{eq:orthogonality}), since
by the law of iterated expectations and some simple algebra:
\begin{eqnarray}
&&  \Ep_P \Big [  \partial_t  \Ep_P\big [  \psi(W, \theta_0, h_0(Z))  |Z\big ]  (h (Z) - h_0(Z))  \Big ] \\
&& =   \Big (
\Ep_P[
-(D - X'\pi_0) X'a],
\Ep_P[\{ -( Y - X'\delta_0) +
2(D - X'\pi_0) \theta_0\} X'b]   \Big) = 0, \nonumber
\end{eqnarray}
for $a = \delta - \delta_0$ and $b =  \pi-\pi_0$.
In fact, $ \psi(W, \theta_0, h_0)$ is the semi-parametrically efficient score for $\theta_0$.  The resulting estimator of $\theta_0$ is root-$n$ consistent and asymptotically normal,
uniformly within a class of approximately sparse models as follows from the general
results of this section, and is also semi-parametrically efficient. See also \citen{BelloniChernozhukovHansen2011} which deals with the partially linear model in detail and thus covers this linear example as a special case.

Note that the orthogonal moment function contains two nuisance functions --
the regression function and the propensity score -- $X'\delta_0$ and $X'\pi_0$.
 We could also identify $\theta_0$ through non-orthogonal moment conditions
containing single nuisance functions:
$$
 \Ep_P [\{Y - D\theta_0 -  X'\beta_0\} D ] =0 \quad \text{ or } \quad  \Ep_P [\{Y - D\theta_0 \} (D - X'\pi_0) ] =0.
$$
The first moment condition corresponds to the  regression method, while the second
 to the so-called covariate balancing method. Importantly, the use of these non-orthogonal moment conditions generally does not produce an estimator for $\theta_0$ that is $\sqrt{n}$-consistent
and asymptotically normal uniformly in the class of approximately sparse models.  This failure occurs because
we are forced to use highly non-regular estimators to estimate the nuisance functions $X'\delta_0$ and $X'\pi_0$ in the $p \gg n$ setting.   In fact, this failure would also occur with a low number of controls, including having only $p=1$, whenever selection procedures that exclude irrelevant variables with very high probability are used to estimate the regression parameter $\delta_0$ or the propensity score parameter $\pi_0$.   For more discussion
and documentation of this failure, see Leeb and P{\"o}tscher \citeyear{leeb:potscher:pms,leeb:potscher:review}; \citen{Potscher2009}; and Belloni, Chernozhukov, and Hansen \citeyear{BCH2011:InferenceGauss,BelloniChernozhukovHansen2011}. By contrast, constructing orthogonal moment conditions -- involving the projection of both the outcome and the treatment onto the controls and thereby combining the regression and covariate balancing methods -- makes it possible to achieve $\sqrt{n}$ consistency and asymptotic normality uniformly within a class of approximately sparse models.
  \qed
\end{example}

\begin{example}[\textbf{Neyman Orthogonal Equations for a Class of Conditional Moment Problems}]
Next, consider the conditional moment restrictions framework studied by \cite{C92}:
$$
\Ep_P [ \varphi(W, \theta_{0}, g_{0}(X)) \mid X] = 0,
$$
where $X$ and $W$ are random vectors with $X$ being a sub-vector of $W$, $\theta \in \Theta \subset \Bbb{R}^d$ is a finite-dimensional parameter whose true value $\theta_0$ is of interest, $g$ is a functional nuisance parameter mapping the support of $X$ into a convex set $V \subset \Bbb{R}^l$ whose true value is $g_0$, and $\varphi$ is a known function with values in $\mathbb R^k$ for $k\geq d + l$.

Here we would like to build a score function $(\theta, h)\mapsto \psi (W, \theta,  h)$ for estimating $\theta_{0}$, the true value of parameter $\theta$, where $h$ is a new nuisance parameter  with true value $h_{0}$ that obeys the strong form of the orthogonality condition
(\ref{eq:orthogonality0}) and thus also its weak form \eqref{eq:orthogonality}.
To this end, let $t\mapsto \Ep_P[\varphi(W,\theta_0,t)\mid X]$ be a function mapping $\mathbb R^l$ into $\mathbb R^k$ and let $\gamma(X,\theta_0,g_0) = \partial_{t'}\Ep_P[\varphi(W,\theta_0,t)\mid X] \vert_{t = g_0(X)}$ be a $k\times l$ matrix of its derivatives. We will set $Z=X$ and $h(X) = \text{vec}(g(X),\beta(X),\Sigma(X)),$ where $\beta$ is a function mapping the support of $X$ into the space of $d\times k$ matrices, $\mathbb R^{d\times k}$, and $\Sigma$ is the function mapping the support of $X$ into the space of $k\times k$ matrices, $\mathbb R^{k\times k}$.  Define the true value $\beta_0$  of $\beta$ as
$$
\beta_{0}(X)  =   A(X) (I - \Pi_0(X)),
$$
where $A(X)$ is a $d\times k$ matrix of measurable transformations of $X$, $I$ is the $k\times k$ identity matrix, and $\Pi_0(X) \neq I_{k\times k}$ is a $k\times k$ non-identity matrix with the property:
\begin{eqnarray}
\label{annihilate}
&& \Pi_0(X)  \Sigma_{0}(X)^{-1} \gamma(X,\theta_0,g_0) = \Sigma_{0}(X)^{-1}  \gamma(X,\theta_0,g_0),
\end{eqnarray}
where $\Sigma_0$ is the true value of parameter $\Sigma$.
For example, $\Pi_0(X)$ can be chosen to be the idempotent matrix:
 \begin{align*}
 \Pi_0(X) & =    \Sigma_{0}(X)^{-1} \gamma(X,\theta_0,g_0)\left ( \gamma(X,\theta_0,g_0)'\Sigma_{0}(X)^{-1}   \gamma(X,\theta_0,g_0) \right )^{-1}    \gamma(X,\theta_0,g_0) '.
  \end{align*}
Then an orthogonal score for the problem above can be constructed as
$$
\psi (W, \theta, h(X)) = {\beta(X)} { \Sigma(X)^{-1}}{\varphi(W, \theta, h(X))}, \quad h(X) = \text{vec}(g(X), \beta(X), \Sigma(X)).
$$
 It is straightforward to check that under mild regularity conditions that the score function $\psi$ satisfies $\Ep_P[\psi(W,\theta_0,h_0(X))] = 0$ for $h_0(X) = \text{vec}(g_0(X),\beta_0(X),\Sigma_0(X))$ and also obeys the orthogonality condition:
  \begin{equation}\label{eq:orthogonality01}
\partial_t  \Ep_P[  \psi(W, \theta_0, t)  |X] \Big|_{t = h_0(X)}
=  0, \text{ a.s.}
\end{equation}

 Furthermore, by setting
$$
A(X) = \Big(\partial_{\theta'} \Ep_P[\varphi(W,\theta,g_0(X)\mid X]\vert_{\theta = \theta_0}\Big)' ,\quad \Sigma_{0}(X)  =   \Ep_P\Big[ \varphi(W, \theta_{0}, g_0(X)) \varphi(W, \theta_{0}, g_0(X))' |X\Big],
$$
and using $\Pi_0(X)$ suggested above, we obtain  the efficient score $\psi$ that yields an estimator of $\theta_0$ achieving the semi-parametric efficiency bound provided in \cite{C92}.  

Here we would like to note that an analogous, though more involved, construction can be provided for the more general class of problems considered in \citen{ai:chen} where the nuisance functions depend on the endogenous variables. \qed \end{example}

\subsection{Regularity Conditions and Results}

In what follows, we shall denote by $\delta$, $c_0$, $c$, and $C$ some positive constants. For a positive integer $d$, $[d]$ denotes the set $\{1,\ldots, d\}.$  We shall impose the following regularity conditions.

\begin{assumption}[Moment condition problem]\label{ass: S1}
Consider a random element $W$,  taking values in a measure space $(\mathcal{W}, \mathcal{A}_\mathcal{W})$, with law determined by a probability measure $P \in \mP_n$.
The observed data $((W_{ui})_{u \in \mathcal{U}})_{i=1}^{n}$ consist of $n$ i.i.d. copies of a random element $(W_u)_{u \in \mathcal{U}}$ which is  generated as a suitably measurable transformation with respect to $W$ and $u$.  Uniformly for all $n \geq n_0$ and $P \in \mathcal{P}_n$, the following conditions hold: (i) The true parameter value $\theta_u$ obeys (\ref{eq:ivequation}) and is interior relative to $\Theta_u \subset \Theta \subset \mathbb{R}^{d_\theta}$, namely there is a ball
of radius $\delta$ centered at $\theta_u$ contained in $\Theta_u$ for all $u \in \mathcal{U}$, and $\Theta$ is compact.   (ii) For
$\nu := (\nu_k)_{k=1}^{d_\theta + d_t} = (\theta, t)$,  each $j \in [d_{\theta}]$ and $u \in \mathcal{U}$,  the map $ \Theta_u \times T_u(Z_u) \ni  \nu \longmapsto \Ep_P[\psi_{uj}(W_u, \nu)|Z_u]$ is twice continuously differentiable a.s. with derivatives obeying the integrability conditions specified in Assumption \ref{ass: S2}. (iii) For all $u \in \mU,$ the moment function $\psi_u$ obeys the orthogonality condition given in Definition 5.1 for the set $\mathcal{H}_u =\mathcal{H}_{un}$ specified in Assumption \ref{ass: AS}. (iv) The following identifiability condition holds: $\|\Ep_P[\psi_u(W_u, \theta, h_u(Z_u))]\| \geq 2^{-1} ( \|J_u (\theta- \theta_u)\| \wedge c_0)\ \text{  for all } \theta \in \Theta_u,$  where the singular values of $J_u :=  \partial_\theta \Ep[ \psi_u (W_u, \theta_u, h_u(Z_u))]$ lie between $c$  and $C$ for all $u \in \mathcal{U}$.
\end{assumption}

The conditions of Assumption \ref{ass: S1} are mild and standard in moment condition problems.   Assumption \ref{ass: S1}(iv) encodes sufficient global and local identifiability to obtain a rate result.  The suitably measurable condition, defined in Appendix \ref{subsec:notation}, is a mild condition satisfied in most practical cases.


\begin{assumption}[Entropy and smoothness]\label{ass: S2}
 The set $(\mathcal{U}, d_{\mathcal{U}})$ is a semi-metric space such that $\log N(\epsilon, \mathcal{U},  d_{\mathcal{U}}) \leq  C \log (\mathrm{e}/\epsilon) \vee 0$.  Let
$\alpha \in [1,2]$, and let $\alpha_1$ and $\alpha_2$ be some positive constants. Uniformly for all $n \geq n_0$ and $P \in  \mathcal{P}_n$,  the following conditions hold: (i) The set of functions $\mathcal{F}_0 = \{  \psi_{uj}(W_u, \theta_u, h_u(Z_u)):  j  \in [d_\theta], u \in \mathcal{U}\}$, viewed as functions of $W$ is suitably measurable;  has an envelope function $F_0(W)= \sup_{j\in [d_\theta], u \in \mathcal{U}, \nu \in \Theta_u\times  T_u(Z_u)}|\psi_{uj}(W_u, \nu)|$ that is measurable with respect to $W$ and obeys $\|F_0\|_{P, q} \leq C$, where $q\geq 4$ is a fixed constant; and has a uniform covering entropy obeying
$
\sup_Q  \log N(\epsilon \|F_0\|_{Q,2}, \mathcal{F}_0, \| \cdot \|_{Q,2}) \leq C  \log(\mathrm{e}/\epsilon) \vee 0.
$
(ii)   For all $j \in [d_\theta]$ and  $k,r \in [d_\theta+d_t]$, and $\psi_{uj}(W) := \psi_{uj}(W_u, \theta_u, h_u(Z_u) )$, \begin{itemize}
\item[(a)] $\sup_{u \in \mathcal{U}, (\nu, \bar \nu) \in (\Theta_u\times T_u(Z_u))^2} \Ep_P[  ( \psi_{uj}(W_u, \nu) - \psi_{uj}(W_u, \bar \nu))^2 |Z_u] / \| \nu - \bar \nu\|^{\alpha}\leq C$, $P$-a.s.,
 \item[(b)] $\sup_{d_\mathcal{U} (u, \bar u) \leq \delta } \Ep_P[  ( \psi_{uj}(W) - \psi_{\bar{u}j}(W))^2] \leq C \delta^{ \alpha_1},   \ \  \sup_{d_\mathcal{U}(u, \bar u) \leq \delta} \| J_u -  J_{\bar u} \| \leq C \delta^{\alpha_2}, $
     \item [(c)]
 $\Ep_P  \sup_{u \in \mathcal{U}, \nu \in \Theta_u\times T_u(Z_u)} |\partial_{\nu_r} \Ep_P \left [ \psi_{uj}(W_u, \nu)\mid Z_u \right ]|^2  \leq C$,
 \item[(d)] $\sup_{u \in \mathcal{U}, \nu \in \Theta_u\times T_u(Z_u)} |\partial_{\nu_k} \partial_{\nu_r}  \Ep_P[\psi_{uj}(W_u, \nu)|Z_u]| \leq  C,$ $P$-a.s.
    \end{itemize}
\end{assumption}

Assumption \ref{ass: S2} imposes smoothness and integrability conditions on various quantities derived from $\psi_u$.
It also imposes conditions on the complexity of the relevant function classes.

 In what follows, let $\Delta_n \searrow 0$, $\delta_n \searrow 0$, and $\tau_n \searrow 0$ be sequences of constants approaching zero from above at a speed at most polynomial in $n$ (for example, $\delta_n \geq 1/n^c$ for some $c > 0$).  \\

\begin{assumption}[Estimation of nuisance functions]\label{ass: AS}
The following conditions hold for each $n \geq n_0$ and all $P \in  \mathcal{P}_n$. The estimated functions $\hat h_u = (\hat h_{um})_{m=1}^{d_t} \in \mathcal{H}_{un}$ with probability at least $1- \Delta_n$, where
$\mathcal{H}_{un}$ is the set of measurable maps $\mathcal{Z}_u \ni z \longmapsto h = (h_m)_{m=1}^{d_t}(z) \in T_u(z)$ such that
$$
\| h_m - h_{um}\|_{P,2} \leq \tau_n,  \quad \tau_n^2 \sqrt{n} \leq \delta_n,
$$
and whose complexity does not grow too quickly in the sense that
$\mathcal{F}_1 = \{  \psi_{uj}(W_u, \theta, h(Z_u)):  j  \in [d_\theta], u \in \mathcal{U}, \theta \in \Theta_u, h \in \mathcal{H}_{un} \}$
is suitably measurable and its uniform covering entropy obeys
 $$
\sup_Q  \log N(\epsilon \|F_1\|_{Q,2}, \mathcal{F}_1, \| \cdot \|_{Q,2}) \leq s_n ( \log (a_n/\epsilon)) \vee 0,
$$
where $F_1(W)$ is an envelope for $\mathcal{F}_1$ which is measurable with respect to  $W$ and satisfies $F_1(W) \leq F_0(W)$ for $F_0$  defined in Assumption \ref{ass: S2}. The complexity characteristics $a_n \geq \max(n, \mathrm{e}) $ and $s_n \geq 1$ obey the growth conditions:
$$
n^{-1/2}  \left ( \sqrt{ s_n \log (a_n) } + n^{-1/2} s_n n^{\frac{1}{q}}  \log (a_n) \right) \leq \tau_n \text{ and }
\tau_n^{\alpha/2} \sqrt{ s_n \log (a_n)}  +  s_n n^{\frac{1}{q}-\frac{1}{2}}  \log (a_n) \log n  \leq \delta_n,
$$
where $q$ and $\alpha$ are defined in Assumption \ref{ass: S2}.
\end{assumption}

\begin{remark}[On Rate and Entropy Rate Conditions] Assumption \ref{ass: AS} imposes conditions on the estimation rate of the nuisance functions $h_{um}$ and on the complexity of the functions sets that contain the estimators $\hat h_{um}$.   This condition allows for a wide variety of modern modeling assumptions and regularization methods for function fitting, including both traditional methods and more recent statistical and machine learning methods. Within the approximately
sparse framework, the index $s_n$ corresponds to the maximum of the
dimension of the approximating models and of the size of the selected models; and $a_n = p \vee n$.    Under other frameworks, these parameters could be different; yet if they are well-behaved, then our results still apply. Thus, these results cover other frameworks, where structured assumptions other than approximate sparsity are used to make the estimation and modeling problem  manageable.  It is important  to point out that the class $\mathcal{F}_1$ generally will not be Donsker because its entropy is allowed to increase with $n$.  Allowing for non-Donsker classes is crucial for accommodating modern, high-dimensional estimation methods for the nuisance functions.  This feature makes the conditions imposed here very different from the conditions imposed in various classical references on dealing with nonparametrically estimated nuisance functions; see, for example, \citen{vdV-W}, \citen{vdV},  \citen{kosorok:book}, and other references listed in the introduction.
\end{remark}

\begin{remark}[Removing Entropy Rate Conditions by Sample-Splitting]  We can can set $s_n=1$ and $a_n = e$ in Assumption 5.3 if we employ data-splitting. That is, under data-splitting the entropy condition becomes very weak, akin to that in parametric problems, facilitating the application of modern statistical and machine learning methods (e.g. random forest, boosted trees, deep neural nets, and their aggregated and hybrid versions)
 to estimate the nuisance functions. Thus, with data-splitting  Assumption 5.3 only requires that the estimators of nuisance parameters attain sufficiently rapid rates of convergences $\tau_n$, in particular $\tau_n = o(n^{-1/4})$ in smooth problems.  Of course in practice we can not verify that these rates hold in a given problem, but the regularity conditions become \textit{more plausible} with data-splitting than without it.   \citen{bickel:1982} employs the idea of data-splitting, namely setting aside a vanishing fraction of the  sample to estimate the nuisance parameter, to set up adaptive estimators of the main parameter; see also \citen{vdV}. This ensures that there is no asymptotic efficiency loss from data-splitting.  Another method, which seems more practical, is to use the following cross-fitting approach: (1) split the sample into two equal parts, the auxiliary and main parts; (2) use the auxiliary part to estimate the nuisance parameter and the main part to estimate the target  parameter, obtaining one estimator of the target parameter;   (3) by reversing the roles of the main and auxiliary parts,  obtain another estimator of the target parameter; and (4) average the two estimators of the target parameter to obtain the final estimator.  Theorems  5.1 given below yields the properties of the final estimator. We refer to  \citen{CCDHM16} for further details, including the result that there is no asymptotic efficiency loss from data-splitting under cross-fitting.
\end{remark}

The following theorem is one of the main results of the paper:

\begin{theorem}[\textbf{Uniform Functional Central Limit Theorem for a Continuum of Target Parameters in Moment Condition Problems}] \label{theorem:semiparametric} Under  Assumptions \ref{ass: S1}, \ref{ass: S2}, and \ref{ass: AS}, for an estimator $(\hat \theta_u)_{u \in \mathcal{U}}$ that obeys equation (\ref{eq:analog}),
$$ \sqrt{n}(\hat \theta_u - \theta_u)_{u \in \mathcal{U}} =  ( \Gn    \bar \psi_u  )_{u \in \mathcal{U}} + o_P(1)$$  in  $\ell^\infty(\mathcal{U})^{d_\theta},$ uniformly in $P \in  \mathcal{P}_n$,
where  $\bar \psi_u(W):= - J^{-1}_u  \psi_u(W_u, \theta_u, h_u(Z_{u}))$, and
$$
 Z_{n,P} := ( \Gn    \bar \psi_u )_{u \in \mathcal{U}} \rightsquigarrow Z_P := ( \mathbb{G}_P \bar \psi_u )_{u \in \mathcal{U}} \text{ in } \ell^\infty(\mathcal{U})^{d_\theta}, \text{ uniformly in $P \in  \mathcal{P}_n$,}
$$
where  the paths of $u \longmapsto  \mathbb{G}_P \bar \psi_u$ are a.s. uniformly continuous on $(\mathcal{U}, d_{\mathcal{U}})$ and
$$\sup_{P \in  \mathcal{P}_n} \Ep_P \sup_{u \in \mathcal{U}}\|\mathbb{G}_P \bar \psi_u\|  < \infty \text{ and }
\displaystyle \lim_{\delta \to 0} \sup_{P \in  \mathcal{P}_n} \Ep_P \sup_{ d_{\mathcal{U}}(u,\bar u) \leq \delta  }\|\mathbb{G}_P \bar \psi_u - \mathbb{G}_P \bar \psi_{\bar u} \|  = 0.$$

\end{theorem}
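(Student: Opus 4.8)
The plan is to run the classical three-part argument for $Z$-estimators with an estimated infinite-dimensional nuisance parameter---uniform rate, asymptotic linearization, functional central limit theorem for the leading term---but carried out with non-asymptotic empirical-process bounds so that it survives the fact that the relevant class $\mathcal{F}_1$ of Assumption \ref{ass: AS} is not $P$-Donsker, and with every constant and rate tracked uniformly over $P\in\mathcal{P}_n$. Throughout I work on the event $\{\hat h_u\in\mathcal{H}_{un}\text{ for all }u\}$, which has probability at least $1-\Delta_n$ by Assumption \ref{ass: AS}, so that $\hat h_u$ may be treated as ranging over the deterministic class $\mathcal{H}_{un}$. \textbf{Step 1 (uniform rate).} Write $g_u(\theta,h):=\Ep_P[\psi_u(W_u,\theta,h(Z_u))]$ and split $\En[\psi_u(W_u,\theta,\hat h_u(Z_u))]=g_u(\theta,h)\big|_{h=\hat h_u}+(\En-\Ep_P)[\psi_u(W_u,\theta,h(Z_u))]\big|_{h=\hat h_u}$. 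A maximal inequality over $\mathcal{F}_1$, using the entropy bound of Assumption \ref{ass: AS} and $\|F_1\|_{P,q}\leq\|F_0\|_{P,q}\leq C$ with $q\geq 4$, bounds the empirical-process term uniformly in $u$ and $P$ by $O_P(n^{-1/2}\sqrt{s_n\log a_n}+n^{1/q-1}s_n\log a_n)=O_P(\tau_n)$. A second-order Taylor expansion of $t\mapsto\Ep_P[\psi_u(W_u,\theta_u,t)\mid Z_u]$ about $h_u(Z_u)$---licit by Assumptions \ref{ass: S1}(ii) and \ref{ass: S2}(ii)(c)(d)---together with the orthogonality condition (\ref{eq:orthogonality}), which kills the first-order term, gives $g_u(\theta_u,\hat h_u)=O(\tau_n^2)$, while Assumption \ref{ass: S2}(ii)(c) gives $\|g_u(\theta,\hat h_u)-g_u(\theta,h_u)\|=O(\tau_n)$ for every $\theta$. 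Inserting these into the defining inequality (\ref{eq:analog}) with $\epsilon_n=o(n^{-1/2})$ and invoking the global identifiability $\|g_u(\theta,h_u)\|\geq 2^{-1}(\|J_u(\theta-\theta_u)\|\wedge c_0)$ of Assumption \ref{ass: S1}(iv), with the singular values of $J_u$ bounded below by $c$, yields $\sup_{u\in\mathcal{U}}\|\hat\theta_u-\theta_u\|=O_P(\tau_n)$, which is $o_P(n^{-1/4})$ by the growth conditions.

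\textbf{Step 2 (asymptotic linearization).} Now localize $\theta$ to a $\tau_n$-ball about $\theta_u$ and $h$ to $\mathcal{H}_{un}$. By the H\"older bound of Assumption \ref{ass: S2}(ii)(a) (exponent $\alpha\in[1,2]$, which absorbs the square-root modulus of non-smooth scores) and Jensen's inequality, the $L^2(P)$ envelope of the localized class $\{\psi_{uj}(\cdot,\theta,h(Z_u))-\psi_{uj}(\cdot,\theta_u,h_u(Z_u))\}$ is $O(\tau_n^{\alpha/2})$, so a second maximal inequality over this shrinking class gives, uniformly in $P$,
$$\sup_{u\in\mathcal{U}}\big\|\mathbb{G}_n[\psi_u(\cdot,\hat\theta_u,\hat h_u(Z_u))-\psi_u(\cdot,\theta_u,h_u(Z_u))]\big\|=O_P\big(\tau_n^{\alpha/2}\sqrt{s_n\log a_n}+n^{1/q-1/2}s_n\log a_n\log n\big)=o_P(1)$$
by the growth conditions of Assumption \ref{ass: AS}. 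Consequently
$$\sqrt n\,\En[\psi_u(W_u,\hat\theta_u,\hat h_u(Z_u))]=\mathbb{G}_n\psi_u(W_u,\theta_u,h_u(Z_u))+\sqrt n\,J_u(\hat\theta_u-\theta_u)+o_P(1),$$
where the $o_P(1)$ collects the equicontinuity remainder, the $\sqrt n$-scaled second-order Taylor remainder $O_P(\sqrt n(\tau_n^2+\|\hat\theta_u-\theta_u\|^2+\tau_n\|\hat\theta_u-\theta_u\|))=O_P(\sqrt n\tau_n^2)=o_P(1)$, and (again by (\ref{eq:orthogonality})) the vanishing first-order $h$-term. Since the left side is $o_P(1)$ by (\ref{eq:analog}) with $\epsilon_n=o(n^{-1/2})$ upon evaluating the infimum at $\theta_u$ and reusing the bounds of Step 1, inverting $J_u$ (singular values in $[c,C]$) gives
$$\sqrt n(\hat\theta_u-\theta_u)=-J_u^{-1}\mathbb{G}_n\psi_u(W_u,\theta_u,h_u(Z_u))+o_P(1)=\mathbb{G}_n\bar\psi_u+o_P(1)\quad\text{in }\ell^\infty(\mathcal{U})^{d_\theta},$$
uniformly in $P\in\mathcal{P}_n$; taking the supremum over $u$ of the pointwise remainders is legitimate because every bound above was uniform in $u$.

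\textbf{Step 3 (uniform FCLT for the leading term).} It remains to show $(\mathbb{G}_n\bar\psi_u)_{u\in\mathcal{U}}\rightsquigarrow(\mathbb{G}_P\bar\psi_u)_{u\in\mathcal{U}}$ uniformly in $P\in\mathcal{P}_n$, i.e. that $\{\bar\psi_u=-J_u^{-1}\psi_u(\cdot,\theta_u,h_u(Z_u)):u\in\mathcal{U}\}$ is $P$-Donsker uniformly in $\mathcal{P}_n$. Its envelope is $\leq C\,F_0$ with $\|F_0\|_{P,q}\leq C$, $q\geq 4$; its uniform covering entropy is $\lesssim\log(\mathrm{e}/\epsilon)$ by Assumption \ref{ass: S2}(i), premultiplying by the uniformly bounded matrices $J_u^{-1}$ leaving the entropy order unchanged; and $\|\bar\psi_u-\bar\psi_{\bar u}\|_{P,2}\lesssim d_{\mathcal{U}}(u,\bar u)^{(\alpha_1\wedge 2\alpha_2)/2}$ by Assumption \ref{ass: S2}(ii)(b) and $\|J_u^{-1}-J_{\bar u}^{-1}\|\lesssim\|J_u-J_{\bar u}\|$. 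Feeding these into a uniform-in-$P$ functional central limit theorem for VC-type classes---of the kind assembled in the appendix in the spirit of \citen{vdV-W} and \citen{Romano:Shaikh:AoS}---yields the weak convergence, and the same entropy and envelope inputs, via Dudley's entropy integral applied uniformly in $P$, give $\sup_{P\in\mathcal{P}_n}\Ep_P\sup_{u\in\mathcal{U}}\|\mathbb{G}_P\bar\psi_u\|<\infty$ and the stated uniform equicontinuity of the limit paths. Combining this with Step 2 and the fact that a uniformly-$o_P(1)$ perturbation preserves uniform weak convergence finishes the proof.

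\textbf{The main obstacle} is Step 2: since $\mathcal{F}_1$ is \emph{not} $P$-Donsker---its entropy grows like $s_n\log a_n$---one cannot invoke asymptotic stochastic equicontinuity, and the equicontinuity remainder must instead be driven to $o_P(1)$ by a sharp finite-sample maximal inequality whose ``sub-Gaussian'' piece $\sim\sigma\sqrt{s_n\log a_n}$ and ``higher-moment'' piece $\sim n^{1/q-1/2}s_n\log a_n$ are small only once the shrinking localization radius $\sigma$ of order $\tau_n^{\alpha/2}$ coming from Step 1 is inserted; hence Steps 1 and 2 are genuinely coupled and the growth conditions of Assumption \ref{ass: AS} are precisely the hypotheses that close the loop. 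Ensuring that all these maximal inequalities, and the final Donsker theorem, hold with constants independent of $P\in\mathcal{P}_n$ is the other point requiring care.
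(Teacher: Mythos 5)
Your overall architecture coincides with the paper's proof: a preliminary rate $\sup_u\|\hat\theta_u-\theta_u\|=O_P(\tau_n)$ obtained from the identifiability condition plus a maximal inequality over $\mathcal{F}_1$ and a first-order bound on the population bias; a linearization whose equicontinuity remainder is killed by a second maximal inequality over the localized class with $\sigma\lesssim\tau_n^{\alpha/2}$ and whose second-order Taylor remainder is $O_P(\sqrt{n}\tau_n^2)=o_P(1)$, with orthogonality removing the first-order $h$-term; and a uniform-in-$P$ Donsker theorem for $\{\bar\psi_u\}$ at the end. All of that matches Steps 1--4 of the paper's argument and the appeal to Theorem \ref{lemma: uniform Donsker}.

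There is, however, one concrete gap in your Step 2. To pass from the linearization identity to $\sqrt{n}(\hat\theta_u-\theta_u)=\mathbb{G}_n\bar\psi_u+o_P(1)$ you need $\sqrt{n}\,\|\En\psi_u(W_u,\hat\theta_u,\hat h_u(Z_u))\|=o_P(1)$ uniformly in $u$, and you justify this by bounding the infimum in (\ref{eq:analog}) at $\theta=\theta_u$. That evaluation fails: $\sqrt{n}\,\En\psi_u(W_u,\theta_u,\hat h_u(Z_u))=\mathbb{G}_n\psi_u(W_u,\theta_u,h_u(Z_u))+o_P(1)+\sqrt{n}\,O(\tau_n^2)$, whose leading term is a nondegenerate $O_P(1)$ empirical process, not $o_P(1)$; likewise $\sqrt{n}\,O_P(\tau_n)$ from your Step 1 bounds diverges since $\tau_n$ is only $o(n^{-1/4})$. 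The missing idea is the paper's Step 5: evaluate the infimum at the one-step point $\bar\theta_u=\theta_u-J_u^{-1}\En\psi_u(W_u,\theta_u,h_u(Z_u))$ (which lies in $\Theta_u$ with probability $1-o(1)$ because $\theta_u$ is interior and $\|\bar\theta_u-\theta_u\|=O_P(n^{-1/2})$), and reuse the same linearization at $\bar\theta_u$ so that $J_u(\bar\theta_u-\theta_u)$ exactly cancels the $O_P(1)$ leading term, leaving $\inf_\theta\sqrt{n}\|\En\psi_u(W_u,\theta,\hat h_u(Z_u))\|=o_P(1)$. With that substitution the rest of your argument closes as written.
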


\begin{remark} It is important to mention here that this result on a continuum of parameters solving a continuum of moment conditions is completely new.
The prior approaches dealing with continua of moment conditions with infinite-dimensional nuisance parameters, for example, the ones  given in
\citen{CH06} and \citen{EZ2013}, impose Donsker conditions on the class of functions, following \citen{andrews:emp}, that contain the values of the estimators of these nuisance functions.  This approach is precluded in our setting because the resulting class of functions in our case has entropy that grows with the sample size
and therefore is not Donsker.  Hence, we develop a new approach to establishing the results
which exploits the interplay between the rate of growth of entropy, the biases, and the size of the estimation error.
In addition, the new approach allows for obtaining results that are uniform in $P$. \qed
\end{remark}

We can estimate the law of $Z_P$ with the bootstrap law of
\begin{equation}\label{define: BS draw}
\hat Z^*_{n,P}  := \sqrt{n} (\hat \theta^*_u - \hat \theta_u)_{u \in \mU} := \left ( \frac{1}{\sqrt{n}} \sum_{i=1}^n \xi_i \hat \psi_u(W_i) \right)_{u \in \mU},
\end{equation}
where $(\xi_i)_{i=1}^n$ are i.i.d. multipliers as defined in equation (\ref{eq: multipliers}),  $ \hat \psi_u(W_i)$ is the estimated score
$$
 \hat \psi_u(W_i):= - \hat J_u^{-1} \psi_u(W_{ui}, \hat \theta_u, \hat h_u(Z_{ui})),
 $$
and $\hat J_u$ is a suitable estimator of $J_u$.\footnote{We do not discuss the estimation of $J_u$ since
 it is often a problem-specific matter. In Section 3, $J_u$ was equal to minus the identity matrix, so we did not need to estimate it.}  The bootstrap
law  is computed by drawing $(\xi_i)_{i=1}^n$  conditional on the data.

The following theorem shows that the multiplier bootstrap provides a valid approximation to the large sample law of $\sqrt{n}(\hat \theta_u- \theta_u)_{u \in \mathcal{U}}$.

\begin{theorem}[\textbf{Uniform Validity of Multiplier Bootstrap}]\label{theorem: general bs}
Suppose Assumptions \ref{ass: S1}, \ref{ass: S2}, and \ref{ass: AS} hold, the estimator $(\hat \theta_u)_{u \in \mathcal{U}}$ obeys equation (\ref{eq:analog}), and that
the estimator $(\hat J_u)_{u \in \mathcal{U}}$ obeys the following condition: uniformly in $P \in \mP_n$ with probability $1-\delta_n$,
$\sup_{u \in \mU }\| \widehat J_u - J_u \| \leq \Delta_n.$
Then, $$ \hat Z^*_{n,P} \rightsquigarrow_B  Z_{P} \text{ in }  \ell^\infty(\mU)^{d_\theta},
\text{ uniformly in $P \in \mP_n$}.$$
\end{theorem}

 We next derive the large sample distribution and validity of the multiplier bootstrap for the estimator $\hat \Delta := \phi(\hat \theta):= \phi( (\hat \theta_u)_{u \in \mU})$ of the functional $\Delta := \phi(\theta^0)= \phi( (\theta_u)_{u \in \mU} )$ using the functional delta method.
The functional $\theta^0 \longmapsto \phi(\theta^0)$ is defined as a uniformly Hadamard differentiable transform of $\theta^0 = (\theta_u)_{u \in \mU}$.
The following result gives the large sample law  of $\sqrt{n} (\hat \Delta - \Delta)$, the properly normalized estimator.
It also shows that the bootstrap law of
$ \sqrt{n} (\hat \Delta^* - \hat \Delta),$
computed conditionally on the data, is consistent for the large sample law of $\sqrt{n} (\hat \Delta - \Delta)$. Here $\hat \Delta^* := \phi(\hat \theta^*) = \phi ( (\hat \theta^*)_{u \in \mU})$ is the bootstrap version of $\hat \Delta$,
and  $\hat \theta^*_u =  \hat \theta_u + n^{-1} \sum_{i=1}^n \xi_i \hat \psi_u(W_i)$ is the multiplier bootstrap version of $\hat \theta_u$ defined via equation (\ref{define: BS draw}).

\begin{theorem}[\textbf{Uniform Limit Theory and Validity of Multiplier Bootstrap for Smooth Functionals of $\theta$}]
\label{theorem3}
Suppose that for each $P \in  \mP:= \cup_{n \geq n_0} \mP_n$,  $\theta^0= \theta^0_P$ is an element of a compact set
 $\mathbb{D}_{\theta}$.  Suppose $\theta \longmapsto \phi(\theta) $, a functional of interest mapping
 $\mathbb{D}_{\phi} \subset \mathbb{D}= \ell^{\infty}(\mU)^{d_\theta}$ to $\ell^{\infty}( \mathcal{Q})$,  where $\D_\theta \subset \D_\phi$,
 is  Hadamard differentiable in $\theta$ tangentially to $\mathbb{D}_0 = UC(\mU)^{d_\theta}$ uniformly in $\theta \in \mathbb{D}_{\theta}$, with the linear derivative map $\phi^{\prime}_{\theta}: \D_0 \longmapsto \D$ such that the mapping $(\theta, h) \longmapsto \phi'_{\theta}(h)$  from $\mathbb{D}_\theta \times \mathbb{D}_0$ to $\ell^{\infty}(\mQ)$ is continuous.
   Then,
\begin{equation}  \label{eq:results on functionals}
\sqrt{n} (\hat \Delta - \Delta)   \rightsquigarrow T_P:= \phi'_{\theta^0_P} (Z_P)  \ \  \text{ in $\ell^\infty(\mathcal{Q})$,  uniformly in $P \in \mP_n$},
\end{equation}
where $T_P$ is a zero mean tight Gaussian process,  for each $P \in \mP$.  Moreover,
\begin{equation}  \label{eq:results on functionals}
\sqrt{n} (\hat \Delta^* - \hat \Delta)  \rightsquigarrow_B T_P 
 \ \  \text{ in $\ell^\infty(\mathcal{Q})$,  uniformly in $P \in \mP_n$}.
\end{equation}
\end{theorem}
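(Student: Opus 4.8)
The plan is to derive both assertions by combining Theorems~\ref{theorem:semiparametric} and~\ref{theorem: general bs} with a version of the functional delta method, and of its bootstrap analog, strengthened to deliver conclusions uniform in $P \in \mP_n$; this uniform delta method is itself established as a lemma in Appendix~\ref{app: B}, built on the notion of uniform Hadamard differentiability of Definition~\ref{def:uhd}. Recall that $\phi$ is assumed uniformly Hadamard differentiable on the compact set $\D_\theta$ tangentially to $\D_0 = UC(\mU)^{d_\theta}$, with $(\theta,h)\mapsto\phi'_\theta(h)$ jointly continuous, and that $\theta^0_P \in \D_\theta$ for every $P$.

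By Theorem~\ref{theorem:semiparametric}, $\sqrt{n}(\hat\theta - \theta^0) = (\Gn\bar\psi_u)_{u\in\mU} + o_P(1) \rightsquigarrow Z_P$ in $\ell^\infty(\mU)^{d_\theta}$ uniformly in $P \in \mP_n$, where $Z_P = (\mathbb{G}_P\bar\psi_u)_{u\in\mU}$ has a.s. uniformly continuous sample paths and thus concentrates on the tangent set $\D_0$. Feeding this into the uniform functional delta method gives
\begin{equation*}
\sqrt{n}(\hat\Delta - \Delta) = \phi'_{\theta^0_P}\big(\sqrt{n}(\hat\theta - \theta^0)\big) + o_P(1) \rightsquigarrow \phi'_{\theta^0_P}(Z_P) =: T_P \quad \text{in } \ell^\infty(\mQ), \text{ uniformly in } P \in \mP_n.
\end{equation*}
That $T_P$ is a zero-mean tight Gaussian process is then immediate: $Z_P$ is zero-mean Gaussian and tight (its tightness being precisely the uniform path-equicontinuity assertion of Theorem~\ref{theorem:semiparametric}), and $\phi'_{\theta^0_P}$ is a continuous linear map into $\ell^\infty(\mQ)$ whose modulus is controlled uniformly over $P$ by joint continuity of $(\theta,h)\mapsto\phi'_\theta(h)$ together with compactness of $\D_\theta$; a continuous linear image of a tight zero-mean Gaussian element is again such an element.

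For the bootstrap claim, Theorem~\ref{theorem: general bs} gives $\sqrt{n}(\hat\theta^* - \hat\theta) = \hat Z^*_{n,P} \rightsquigarrow_B Z_P$ in $\ell^\infty(\mU)^{d_\theta}$ uniformly in $P$, while Theorem~\ref{theorem:semiparametric} also yields $\hat\theta \to \theta^0$ in probability uniformly in $P$. Inserting these into the bootstrap version of the uniform functional delta method --- using once more the uniform Hadamard differentiability of $\phi$ tangentially to $\D_0$ and the conditional asymptotic tightness in $\D_0$ of $\hat Z^*_{n,P}$ supplied by Theorem~\ref{theorem: general bs} --- linearizes $\sqrt{n}(\hat\Delta^* - \hat\Delta)$ as $\phi'_{\theta^0_P}(\hat Z^*_{n,P})$, up to a term negligible conditionally on the data with (inner) probability tending to one uniformly in $P$. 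Hence $\sqrt{n}(\hat\Delta^* - \hat\Delta) \rightsquigarrow_B \phi'_{\theta^0_P}(Z_P) = T_P$ in $\ell^\infty(\mQ)$, uniformly in $P$.

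I expect the real work to lie not in the statement above but in the auxiliary uniform delta method it invokes, since the classical delta method is only pointwise in $P$. The crux is a uniform extended-continuous-mapping/delta lemma: if $X_{n,P}\rightsquigarrow X_P$ uniformly in $P$ with equicontinuous limits, and the maps $g_P$ are equi-Hadamard-differentiable along a common tangent set with derivatives depending continuously on a parameter in a compact set, then $r_n\big(g_P(\theta_P + X_{n,P}/r_n) - g_P(\theta_P)\big) \rightsquigarrow g'_P(X_P)$ uniformly in $P$. This can be handled by the subsequencing device used throughout the paper to unpack ``$\rightsquigarrow$ uniformly in $P$'' --- reducing matters to arbitrary sequences $P_n \in \mP_n$ --- combined with the uniform differentiability modulus of Definition~\ref{def:uhd} and the equicontinuity of $Z_P$. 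For the bootstrap the added subtlety is that $\rightsquigarrow_B$ denotes convergence of conditional laws in outer probability, so the linearization errors must be shown to vanish in that conditional-in-probability sense uniformly in $P$; this is again handled by the subsequencing argument together with the conditional tightness of $\hat Z^*_{n,P}$ from Theorem~\ref{theorem: general bs}.
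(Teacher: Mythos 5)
Your proposal is correct and follows essentially the same route as the paper: the paper's proof of this theorem is a one-line citation combining Theorem \ref{theorem:semiparametric}, Theorem \ref{theorem: general bs}, and the uniform functional delta method theorems \ref{thm: delta-method} and \ref{theorem:delta-method-bootstrap}, which is exactly the combination you describe (and your anticipated subsequencing argument for the auxiliary delta method lemmas is indeed how the paper proves them in Appendix B).
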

To derive Theorem \ref{theorem3}, we strengthen the usual notion of Hadamard differentiability to a uniform notion introduced in Definition \ref{def:uhd}.  Theorems  \ref{thm: delta-method} and \ref{theorem:delta-method-bootstrap} show that this uniform Hadamard differentiability  is sufficient to guarantee the  validity of the functional delta uniformly in $P$.  These new uniform functional delta method theorems may be of independent interest.

\section{Theory: Lasso and Post-Lasso for Functional Response Data}\label{FunctionalLassoSection}

In this section, we provide results for Lasso and Post-Lasso estimators with function-valued outcomes and linear or logistic links.  As these results are of interest beyond the context of estimation of nuisance functions for moment condition problems or treatment effects estimation,
we present this section in a way that leaves it autonomous with respect to the rest of the paper. 

\subsection{The generic setting with function-valued outcomes}

Consider a data generating process with a functional response variable $(Y_{u})_{u\in \mathcal{U}}$ and observable covariates $X$ satisfying for each $u\in \mathcal{U}$,
\begin{equation}\label{A:EqMainFunc}\Ep_P[ Y_{u} \mid X ] = \G(f(X)'\theta_u) + r_u(X), \end{equation}
 where $f:\mathcal{X}\to\mathbb{R}^p$ is a set of $p$ measurable transformations of the initial controls $X$, $\theta_u$ is a $p$-dimensional vector, $r_u$ is an approximation error, and $\G$ is a fixed known link function. The notation in this section differs from the rest of the paper with $Y_u$ and $X$ denoting a generic response and a generic vector of covariates to facilitate the application of these results to other contexts.  We only consider the linear link function, $\G(t) = t$, and the logistic link function, $\G(t)=\exp(t)/\{1+\exp(t)\}$, in detail.

 Considering the logistic link is useful when the functional response is binary, though the linear link can be used in that case as well under some conditions. For example, it is useful for estimating a high-dimensional generalization of the distributional regression models considered in \citen{CFM} where the response variable is the continuum $(Y_u = 1( Y \leq u))_{u \in \mU}$.   Even though we focus on these two cases we  note that the principles discussed here apply to many other $M$-estimators with convex (or approximately convex) criterion functions. In the remainder of the section, we discuss and establish results for $\ell_1$-penalized and post-model selection estimators of $(\theta_u)_{u \in \mU}$  that hold uniformly over $u\in\mathcal{U}$.

Throughout the section, we assume that $u\in \mathcal{U} \subset [0,1]^\dn$ and that we have $n$ i.i.d. observations from d.g.p.'s where (\ref{A:EqMainFunc}) holds, $\{( Y_{ui})_{u\in\mathcal{U}},  X_i)\}_{i=1}^n$, available for estimating $(\theta_u)_{u\in\mathcal{U}}$. For each $u\in\mathcal{U}$, penalty level $\lambda$, and diagonal matrix of penalty loadings $\hat\Psi_u,$ we define the Lasso estimator as
 \begin{equation}\label{Adef:LassoFunc} \hat\theta_u \in \arg\min_{\theta \in \mathbb{R}^{p}} \En[M(Y_{u},f(X)'\theta)] + \frac{\lambda}{n}\|\hat\Psi_u\theta\|_1 \end{equation} where $M(y,t) = \frac{1}{2}(y-\G(t))^2$ in the case of linear regression, and $M(y,t) = -\{1(y=1)\log \G(t) + 1(y=0)\log(1-\G(t))\}$ in the case of the logistic link function for binary response data.
 For each $u\in\mathcal{U}$, the Post-Lasso estimator based on a set of covariates $\widetilde T_u$ is then defined as
 \begin{equation}\label{Adef:PostFunc}\widetilde\theta_u \in \arg\min_{\theta \in \mathbb{R}^{p}} \En[M(Y_{u},f(X)'\theta)] \ \ : \ \ \supp(\theta)\subseteq \widetilde T_u\end{equation}
where the set $\widetilde T_u$ contains $\supp(\hat\theta_u)$ and may also contain additional variables deemed as important.\footnote{The total number of additional variables $\hat s_a$ should also obey the same growth conditions that $s$ obeys. For example, if the additional variables are chosen so that $\hat s_a \lesssim \|\hat\theta_u\|_0$ the growth condition is satisfied with probability going to one for the designs covered by Assumptions \ref{ass: linear} and \ref{ass: logistic}. See also \citen{BelloniChernozhukovHansen2011} for a discussion on choosing additional variables.} We will set $\widetilde T_u = \supp(\hat\theta_u)$ unless otherwise noted.

The chief departure between the analysis when $\mathcal{U}$ is a singleton and the functional response case is that the penalty level needs to be set to control selection errors
uniformly over $u\in\mathcal{U}$. To do so, we will set
$\lambda$ so that with high probability
\begin{equation}\label{Eq:reg} \frac{\lambda}{n} \geq c \sup_{u\in \mathcal{U}}\left\|\hat \Psi^{-1}_u \En\left[\partial_\theta M(Y_{u},f(X)'\theta_u) \right] \right\|_\infty,
\end{equation}where $c>1$ is a fixed constant. When $\mathcal{U}$ is a singleton  the strategy above is similar to \citen{BickelRitovTsybakov2009}, \citen{BC-PostLASSO}, and \citen{BCW-SqLASSO}, who use an analog of (\ref{Eq:reg}) to derive the properties of Lasso and Post-Lasso.   When $\mathcal{U}$ is not a  singleton, this strategy was first employed in the context of $\ell_1$-penalized quantile regression processes by \citen{BC-SparseQR}.

To implement (\ref{Eq:reg}), we propose setting the penalty level as
\begin{equation}\label{Eq:Def-lambda}\lambda =   c \sqrt{n} \Phi^{-1}(1-\gamma/\{2pn^\dn\}),
 \end{equation}
where $\dn$ is the dimension of $\mathcal{U}$, $1-\gamma$ with $\gamma = o(1)$ is a confidence level associated with the probability of event (\ref{Eq:reg}),  and $c>1$ is a slack constant.\footnote{When the set $\mathcal{U}$ is a singleton, one can use the penalty level in (\ref{Eq:Def-lambda}) with $\dn = 0$.  This choice corresponds to that used in \citen{BelloniChernozhukovHansen2011}.}  When implementing the estimators, we set $c=1.1.$ and $\gamma = .1/\log(n)$, which is theoretically motivated and practically tested in an extensive set of simulation experiments in \citen{BelloniChernozhukovHansen2011}.
In addition to the penalty parameter $\lambda$, we also need to construct a penalty loading matrix $\widehat\Psi_u = \diag(\{\hat l_{u j}, j=1,\ldots,p\})$.  This loading matrix can be formed according to the following iterative algorithm.

\begin{algorithm}[Estimation of Penalty Loadings]\label{AlgFunc} Choose $\gamma \in [1/n, \min\{1/\log n, pn^{\dn-1}\}]$ and $c > 1$ to form $\lambda$ as defined in (\ref{Eq:Def-lambda}), and choose a constant $K \geq 1$ as an upper bound on the number of iterations. (0) Set $k=0$,  and initialize $\hat l_{uj,0}$ for each $j=1,\ldots,p$. For the linear link function, set $\hat l_{uj,0} = \{\En[f_{j}^2(X)(Y_{u}-\bar Y_{u})^2]\}^{1/2}$ with $\bar Y_{u}=\En[Y_{u}]$. For the logistic link function, set $\hat l_{uj,0} = \frac{1}{2}\{\En[f_{j}^2(X)]\}^{1/2}$. (1)  Compute the Lasso and Post-Lasso estimators, $\hat \theta_u$ and $\widetilde \theta_u$, based on $\widehat\Psi_u = \diag(\{\hat l_{uj,k}, j=1,\ldots,p\})$. (2) Set $\hat l_{uj,k+1} := \{\En[ f_{j}^2(X)(Y_{u}- \G(f(X)'\widetilde\theta_u))^2]\}^{1/2}.$ (3) If $k> K$, stop; otherwise set $k \leftarrow k+1 $ and go to step (1). \end{algorithm}

\subsection{Properties of a Continuum of Lasso and Post-Lasso: Linear Link}

We provide sufficient conditions for establishing good performance of the estimators discussed above when the linear link function is used. 
In the statement of the following assumption, $\delta_n\searrow 0$ and $\Delta_n\searrow 0$ are fixed sequences approaching zero from above at a speed at most polynomial in $n$ (for example, $\delta_n \geq 1/n^c$ for some $c > 0$), $\ell_n := \log n$,  and $c, C, \kappa', \kappa''$ and $\nu \in (0,1]$ are positive finite constants.


\begin{assumption}\label{ass: linear}  Consider  a random element $W$  taking values in a measure space $(\mathcal{W}, \mathcal{A}_\mathcal{W})$, with law determined by a probability measure $P \in \mP_n$.
The observed data  $((Y_{ui})_{u \in \mathcal{U}}, X_i)_{i=1}^{n}$ consist of $n$ i.i.d. copies of random element $((Y_{u})_{u \in \mathcal{U}}, X)$, which is  generated as a suitably measurable transformation of  $W$ and $u$.  The model (\ref{A:EqMainFunc}) holds with linear link $t \longmapsto \Lambda(t) = t$ for all  $ u \in  \mathcal{U}\subset [0,1]^\dn$, where $\dn$ is fixed and $\mathcal{U}$ is equipped with the semi-metric $d_\mathcal{U}$. Uniformly for all $n \geq n_0$ and $P \in \mathcal{P}_n$, the following conditions hold. (i) The model (\ref{A:EqMainFunc}) is approximately sparse with sparsity index obeying $\sup_{u\in\mathcal{U}}\|\theta_u\|_0\leq s$ and the growth restriction  $\log (p \vee n) \leq \delta_n n^{1/3}$. (ii) The set $\mathcal{U}$ has uniform covering entropy obeying
 $\log N(\epsilon,\mathcal{U},d_\mathcal{U}) \leq \dn \log (1/\epsilon)\vee 0$, and the collection $(\zeta_u=Y_u-\Ep_P[Y_{u}\mid X], r_u)_{u\in \mathcal{U}}$ are suitably measurable transformations of $W$ and $u$. (iii) Uniformly over $u\in\mathcal{U}$, the moments of the model are boundedly heteroscedastic, namely $c \leq \Ep_P[\zeta_{u}^2\mid X] \leq C $ a.s., and ${ \max_{j\leq p} } \Ep_P[|f_{j}(X)\zeta_{u}|^3+|f_{j}(X)Y_{u}|^3] \leq C.$ (iv) For a fixed $\nu>0$ and a sequence $K_n$, the dictionary functions, approximation errors, and empirical errors obey the following regularity conditions: (a) $c\leq \Ep_P [f_j^2(X)] \leq C$, $j=1,\ldots,p$; $\max_{j \leq p}|f_j(X)| \leq K_{n}$ a.s.; $K_{n}^2s\log(p\vee n)\leq \delta_n n$. (b) With probability $1-\Delta_n$,   $ \sup_{u\in\mathcal{U}} \En[ r_{u}^2(X)] \leq C  s\log (p \vee n) / n$;
$  \sup_{u\in\mathcal{U}}\max_{j\leq p} |(\En-\Ep_P)[f_{j}^2(X)\zeta_{u}^2]| \vee  |(\En-\Ep_P)[f_{j}^2(X)Y_{u}^2]| \leq \delta_n$; ${  \log^{1/2}(p\vee n ) \sup_{d_\mathcal{U}(u,u')\leq 1/n} } \max_{j\leq p}\{\En[f_j(X)^2(\zeta_{u}-\zeta_{u'})^2]\}^{1/2} \leq \delta_n$, and ${\sup_{d_\mathcal{U}(u,u')\leq 1/n}} \| \En[ f(X)(\zeta_{u}- \zeta_{u'}) ]\|_\infty\leq \delta_n n^{-1/2}$. (c)
With probability $1-\Delta_n$, the empirical minimum and maximum sparse eigenvalues are bounded from zero and above, namely $ \kappa' \leq \inf_{\|\delta\|_0\leq s \ell_n, \|\delta\|=1}\|f(X)'\delta\|_{\Pn,2} \leq \sup_{\|\delta\|_0\leq s \ell_n, \|\delta\|=1}\|f(X)'\delta\|_{\Pn,2} \leq \kappa''$. \end{assumption}

Assumption \ref{ass: linear} is only a set of sufficient conditions. The finite sample results in the Supplementary  Appendix allow for more general conditions (for example, $\dn$ can grow with the sample size). We verify that the more technical conditions in Assumption \ref{ass: linear}(iv)(b) hold in a variety of cases, see Lemma \ref{PrimitiveWL} in Appendix \ref{subsec: lasso} in the Supplementary Appendix. Under Assumption \ref{ass: linear}, we establish results on the performance of the estimators (\ref{Adef:LassoFunc}) and (\ref{Adef:PostFunc}) for the linear link function case that hold uniformly over $u \in \mathcal{U}$ and $P \in \mP_n$.

\begin{theorem}[Rates and Sparsity for Functional Responses under Linear Link]\label{Thm:RateEstimatedLassoLinear}
 Under Assumption \ref{ass: linear} and setting the penalty and loadings as in Algorithm \ref{AlgFunc}, for all $n$ large enough, uniformly for all $P \in \mathcal{P}_n$ with $\mathrm{P}_P$ probability $1-o(1)$,  for some constant $\bar C$, the Lasso estimator $\hat\theta_u$ is uniformly sparse, $\sup_{u\in \mathcal{U}}\|\hat \theta_u \|_0  \leq \bar C  s$, and the following performance bounds hold:
$$\begin{array}{l}
 \displaystyle\sup_{u\in\mathcal{U}} \| f(X)'(\hat\theta_u - \theta_{u})\|_{\Pn,2} \leq \bar C \sqrt{\frac{s\log (p\vee n)}{n}} \ \mbox{and} \ \
 \displaystyle \sup_{u\in\mathcal{U}}\|\hat\theta_u-\theta_{u}\|_1  \leq \bar C   \sqrt{\frac{s^2\log (p\vee n)}{n}}.\end{array}$$
For all $n$ large enough, uniformly for all $P \in \mathcal{P}_n$, with $\mathrm{P}_P$ probability $1-o(1)$,  the Post-Lasso estimator corresponding to $\hat\theta_u$ obeys
$$ \sup_{u\in \mathcal{U}} \| f(X)'(\widetilde \theta_u -\theta_u)\|_{\Pn,2} \leq \bar C  \sqrt{\frac{s \log (p \vee n)}{n}}, \text{ and }   \ \
 \displaystyle \sup_{u\in\mathcal{U}}\|\widetilde \theta_u-\theta_{u}\|_1  \leq \bar C \sqrt{\frac{s^2\log (p\vee n)}{n}}. $$\end{theorem}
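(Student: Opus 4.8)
The plan is to run the standard program for $\ell_1$-penalized $M$-estimation — control the score by the penalty level, deduce a restricted-cone inequality for the estimation error, invoke restricted strong convexity, and then bound the size of the selected model and analyze Post-Lasso — but carried out uniformly over the index $u \in \mathcal{U}$, extending the functional quantile-regression argument of \citen{BC-SparseQR} to the least-squares criterion. Write $\zeta_u := Y_u - \Ep_P[Y_u\mid X]$, so that the gradient of $\theta \mapsto \En[M(Y_u,f(X)'\theta)]$ at $\theta_u$ equals $-\En[(\zeta_u + r_u(X))f(X)]$.

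\textbf{Step 1: uniform regularization event and valid loadings (the crux).} I would first show that with $\mathrm{P}_P$-probability $1-o(1)$, uniformly in $P\in\mathcal{P}_n$, the penalty $\lambda$ chosen in Algorithm \ref{AlgFunc} dominates the maximal score, i.e. $\lambda/n \ge c'\sup_{u\in\mathcal{U}}\|\hat\Psi_u^{-1}\En[\partial_\theta M(Y_u,f(X)'\theta_u)]\|_\infty$ for some $c'>1$. For the ideal loadings $l_{uj}=\{\Ep_P[f_j^2(X)\zeta_u^2]\}^{1/2}$, the stochastic term $\sup_u\max_j|\En[f_j(X)\zeta_u]|/l_{uj}$ is controlled by applying the self-normalized moderate deviation inequality for sums of independent variables (permitted because Assumption \ref{ass: linear}(iii) assumes only third moments of $f_j(X)\zeta_u$) on a $1/n$-net of $\mathcal{U}$, which has at most $n^{\dn}$ points by the entropy bound in Assumption \ref{ass: linear}(ii), and then extending to all of $\mathcal{U}$ via the equicontinuity bounds $\log^{1/2}(p\vee n)\sup_{d_{\mathcal{U}}(u,u')\le 1/n}\max_j\{\En[f_j^2(X)(\zeta_u-\zeta_{u'})^2]\}^{1/2}\le\delta_n$ and $\sup_{d_{\mathcal{U}}(u,u')\le 1/n}\|\En[f(X)(\zeta_u-\zeta_{u'})]\|_\infty\le\delta_n n^{-1/2}$ of Assumption \ref{ass: linear}(iv)(b); this is exactly where the $n^{\dn}$ inflation of $\lambda$ is used. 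The deterministic bias term involving $r_u$ is of smaller order than $\lambda/n\asymp\sqrt{\log(p\vee n)/n}$ since $\sup_u\En[r_u^2(X)]\lesssim s\log(p\vee n)/n$. To replace $l_{uj}$ by the data-driven $\hat l_{uj}$ I would run the usual induction over the iteration counter $k$ of Algorithm \ref{AlgFunc}: the initial loading $\hat l_{uj,0}=\{\En[f_j^2(X)(Y_u-\bar Y_u)^2]\}^{1/2}$ dominates $l_{uj}$ up to a constant with high probability, so Steps 2--4 below already produce valid rates using $\hat l_{uj,0}$; feeding the resulting Post-Lasso residuals back in one sees $\ell\le\hat l_{uj,1}/l_{uj}\le L$ for fixed constants $0<\ell\le L$, uniformly in $(u,j)$, so the loadings are asymptotically valid after one iteration.

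\textbf{Steps 2--4: Lasso rates and sparsity.} On the regularization event, the Lasso optimality inequality gives, up to an approximation-error slack that is itself $O(\sqrt{s\log(p\vee n)/n})$, the restricted-cone bound $\|\hat\Psi_u(\hat\delta_u)_{T_u^c}\|_1\le\bar c\|\hat\Psi_u(\hat\delta_u)_{T_u}\|_1$ with $\hat\delta_u:=\hat\theta_u-\theta_u$ and $T_u:=\supp(\theta_u)$, $|T_u|\le s$, uniformly in $u$. Since the restricted and sparse eigenvalue conditions of Assumption \ref{ass: linear}(iv)(c) are conditions on $f(X)$ only and hold for all $u$ simultaneously, restricted strong convexity of the quadratic loss combined with the cone condition yields $\sup_u\|f(X)'\hat\delta_u\|_{\Pn,2}\le\bar C\sqrt{s\log(p\vee n)/n}$, and the cone condition with $|T_u|\le s$ upgrades this to $\sup_u\|\hat\delta_u\|_1\le\bar C\sqrt{s^2\log(p\vee n)/n}$. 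For sparsity, the first-order conditions force $|\En[f_j(X)(Y_u-f(X)'\hat\theta_u)]|=\lambda\hat l_{uj}/n$ on the support of $\hat\theta_u$; combining this with the upper sparse eigenvalue $\kappa''$ and the prediction-norm rate bounds $\|\hat\theta_u\|_0$ by $\bar C s$ (first arguing $\|\hat\theta_u\|_0\le s\ell_n$ by a sub-linearity argument, then sharpening), again uniformly in $u$ because the only $u$-dependent inputs are already uniform.

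\textbf{Step 5: Post-Lasso, and the main obstacle.} With $\widetilde T_u\supseteq\supp(\hat\theta_u)$ and $|\widetilde T_u|\le\bar Cs$ (Step 4 plus the footnote's growth condition on any added variables), $\widetilde\theta_u$ is ordinary least squares on a model of dimension $O(s)$; inverting the restricted Gram matrix via $\kappa'>0$ and bounding the omitted-variable bias by the Lasso prediction error — legitimate because $\hat\theta_u$ restricted to $\widetilde T_u$ is feasible in the Post-Lasso program — gives $\sup_u\|f(X)'(\widetilde\theta_u-\theta_u)\|_{\Pn,2}\le\bar C\sqrt{s\log(p\vee n)/n}$, and multiplying by $\sqrt{|\widetilde T_u|}\lesssim\sqrt{s}$ yields the $\ell_1$ bound. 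The main obstacle is Step 1: establishing the regularization event and the validity of the iterative penalty loadings uniformly over the continuum $\mathcal{U}$ with only third-moment conditions, which forces the self-normalized moderate-deviation step in place of a Gaussian maximal inequality, together with the chaining over $\mathcal{U}$ that justifies the $n^{\dn}$ term in $\lambda$, both resting on the stochastic-equicontinuity conditions of Assumption \ref{ass: linear}(iv)(b) (themselves verified in concrete designs in the Supplementary Appendix); everything downstream then reduces to the singleton-index Lasso and Post-Lasso analysis because the sparse-eigenvalue machinery does not see $u$.
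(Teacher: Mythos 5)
Your proposal is correct and follows essentially the same route as the paper's proof: the regularization event is established via self-normalized moderate deviations on an $\epsilon$-net of $\mathcal{U}$ of cardinality at most $n^{\dn}$ and extended by the stochastic-equicontinuity conditions of Assumption \ref{ass: linear}(iv)(b) (this is the paper's Lemma on the choice of $\lambda$), the data-driven loadings are validated by the same induction over the iterations of Algorithm \ref{AlgFunc}, and the downstream Lasso rate, sparsity (via first-order conditions plus sublinearity of sparse eigenvalues), and Post-Lasso arguments match the paper's finite-sample lemmas in the Supplementary Appendix. No gaps.
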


We note that the performance bounds are exactly of the type used in Assumption \ref{ass: sparse1} (see also Assumption \ref{ass: sparse2}  in the Supplementary Appendix). Indeed, under the condition $s^2\log^2(p\vee n) \log^2 n \leq \delta_n n$, the rate of convergence established in Theorem \ref{Thm:RateEstimatedLassoLinear} yields $\sqrt{s\log(p\vee n)/n} \leq o( n^{-1/4})$.

\subsection{Properties of Lasso and Post-Lasso Estimators: Logistic Link}

We provide sufficient conditions to state results on the performance of the estimators discussed above for the logistic link function. 
Consider the fixed sequences $\delta_n\searrow 0$ and $\Delta_n\searrow 0$  approaching zero from above at a speed at most polynomial in $n$, $\ell_n := \log n$, and the positive finite constants $c$, $C$, $\kappa'$, $\kappa''$, and $\underline{c} \leq 1/2$.

\begin{assumption}\label{ass: logistic}   Consider  a random element $W$  taking values in a measure space $(\mathcal{W}, \mathcal{A}_\mathcal{W})$, with law determined by a probability measure $P \in \mP_n$.
The observed data  $((Y_{ui})_{u \in \mathcal{U}}, X_i)_{i=1}^{n}$ consist of $n$ i.i.d. copies of random element $((Y_{u})_{u \in \mathcal{U}}, X)$, which is  generated as a suitably measurable transformation of  $W$ and $u$.   The model (\ref{A:EqMainFunc}) holds with $Y_{ui} \in \{0,1\}$ with the logistic link $t \longmapsto \Lambda(t) = \exp(t)/\{1+\exp(t)\}$ for each  $u \in \mathcal{U}\subset [0,1]^{\dn}$, where $\dn$ is fixed and $\mathcal{U}$ is equipped with the semi-metric $d_\mathcal{U}$.  Uniformly for all $n \geq n_0$ and $P \in \mathcal{P}_n$, the following conditions hold.
 (i) The model (\ref{A:EqMainFunc}) is approximately sparse with sparsity index obeying $\sup_{u\in\mathcal{U}}\|\theta_u\|_0\leq s$ and the growth restriction  $\log (p \vee n) \leq \delta_n n^{1/3}$. (ii) The set $\mathcal{U}$ has uniform covering entropy obeying
 $\log N(\epsilon,\mathcal{U},d_\mathcal{U}) \leq \dn \log (1/\epsilon)\vee 0$, and the collection $(\zeta_{u}=Y_u-\Ep_P[Y_{u}\mid X], r_{u})_{u\in \mathcal{U}}$ is a suitably measurable transformation of $W$ and $u$. (iii) Uniformly over $u\in\mathcal{U}$ the moments of the model satisfy  ${ \max_{j\leq p} } \Ep_P[|f_{j}(X)|^3] \leq C,$ and $\underline{c}\leq \Ep_P[Y_{u}\mid X ] \leq 1-\underline{c}$ a.s. (iv) For a sequence $K_n$, the dictionary functions, approximation errors, and empirical errors obey the following boundedness and empirical regularity conditions: (a) $\sup_{u\in\mathcal{U}}|r_{u}(X)|\leq \delta_n$ a.s.; $c\leq \Ep_P [f_j^2(X)] \leq C$, $j=1,\ldots,p$; $\max_{j \leq p}|f_j(X)| \leq K_{n}$ a.s.;  and $K_n^2s^2\log^2(p\vee n) \leq \delta_n n$.  (b) With probability $1-\Delta_n$,   $ \sup_{u\in\mathcal{U}} \En[ r_{u}^2(X)] \leq C  s\log (p \vee n) / n$;
$  \sup_{u\in\mathcal{U}}\max_{j\leq p} |(\En-\Ep_P)[f_{j}^2(X)\zeta_{u}^2]| \leq \delta_n;$  $\sup_{u,u'\in\mathcal{U},d_\mathcal{U}(u,u')\leq 1/n}  \max_{j\leq p}\{\En[f_j(X)^2(\zeta_{u}-\zeta_{u'})^2]\}^{1/2} \leq \delta_n$, and ${\sup_{u,u'\in\mathcal{U},d_\mathcal{U}(u,u')\leq 1/n}} \| \En[ f(X)(\zeta_{u}- \zeta_{u'}) ]\|_\infty\leq \delta_n n^{-1/2}$.
(c)  With probability $1-\Delta_n$,  the empirical minimum and maximum sparse eigenvalues are bounded from zero and above: $ \kappa' \leq \inf_{\|\delta\|_0\leq s \ell_n, \|\delta\|=1}\|f(X)'\delta\|_{\Pn,2} \leq \sup_{\|\delta\|_0\leq s \ell_n, \|\delta\|=1}\|f(X)'\delta\|_{\Pn,2} \leq \kappa''$. \end{assumption}

The following result characterizes the performance of the estimators (\ref{Adef:LassoFunc}) and (\ref{Adef:PostFunc}) for the logistic link function case under Assumption \ref{ass: logistic}.

\begin{theorem}[Rates and Sparsity for Functional Response under Logistic Link]\label{Thm:RateEstimatedLassoLogistic}
Under Assumption \ref{ass: logistic} and setting the penalty and loadings as in Algorithm \ref{AlgFunc}, for all $n$ large enough, uniformly for all $P \in \mathcal{P}_n$ with $\mathrm{P}_P$ probability $1-o(1)$, the following performance bounds hold for some constant $\bar C$:
$$\begin{array}{l}
 \displaystyle\sup_{u\in\mathcal{U}} \| f(X)'(\hat\theta_u - \theta_{u})\|_{\Pn,2} \leq \bar C \sqrt{\frac{s\log (p\vee n)}{n}} \ \mbox{and} \ \
 \displaystyle \sup_{u\in\mathcal{U}}\|\hat\theta_u-\theta_{u}\|_1  \leq \bar C   \sqrt{\frac{s^2\log (p\vee n)}{n}}.\end{array}$$
and the estimator is uniformly sparse: $\sup_{u\in \mathcal{U}}\|\hat \theta_u \|_0  \leq \bar C  s$. For all $n$ large enough, uniformly for all $P \in \mathcal{P}_n$, with $\mathrm{P}_P$ probability $1-o(1)$,  the Post-Lasso estimator corresponding to $\hat\theta_u$ obeys
$$ \sup_{u\in \mathcal{U}} \| f(X)'(\widetilde \theta_u -\theta_u)\|_{\Pn,2} \leq \bar C  \sqrt{\frac{s \log (p \vee n)}{n}}, \text{ and }   \ \
 \displaystyle \sup_{u\in\mathcal{U}}\|\widetilde \theta_u-\theta_{u}\|_1  \leq \bar C \sqrt{\frac{s^2\log (p\vee n)}{n}}. $$\end{theorem}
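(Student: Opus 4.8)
The plan is to follow the by-now standard two-step analysis of $\ell_1$-penalized $M$-estimation --- first show that the chosen penalty level dominates the score of the empirical loss at the truth, then convert this domination into prediction and $\ell_1$-rates via a restricted-strong-convexity argument --- carrying both steps out uniformly in $u\in\mathcal{U}$ and $P\in\mathcal{P}_n$ and adapting them to the curvature of the logistic loss. Write $\zeta_u=Y_u-\Ep_P[Y_u\mid X]$, $T_u=\supp(\theta_u)$ and $\delta_u=\hat\theta_u-\theta_u$. For the logistic link, $\partial_t M(y,t)=\Lambda(t)-y$, so the score at $\theta_u$ is $\En[(\Lambda(f(X)'\theta_u)-Y_u)f(X)]$, which splits into the mean-zero noise term $-\En[\zeta_u f(X)]$ and the deterministic bias term $-\En[r_u(X)f(X)]$; the curvature of the loss along the segment $[\theta_u,\hat\theta_u]$ is governed by $M''(t)=\Lambda(t)(1-\Lambda(t))$.

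The core of the argument --- and, together with the uniformity requirement, the main obstacle --- is establishing the regularization event $\lambda/n\ge c\,\sup_{u\in\mathcal{U}}\|\hat\Psi_u^{-1}\En[\partial_\theta M(Y_u,f(X)'\theta_u)]\|_\infty$. For fixed $u$ and coordinate $j\in[p]$, self-normalized moderate-deviation bounds for $\sqrt n\,\En[f_j(X)\zeta_u]/\{\En[f_j(X)^2\zeta_u^2]\}^{1/2}$ (using the third-moment conditions of Assumption \ref{ass: logistic}(iii) and $\log p=o(n^{1/3})$) produce tails matching the Gaussian quantile appearing in $\lambda=c\sqrt n\,\Phi^{-1}(1-\gamma/\{2pn^{\dn}\})$; the factor $n^{\dn}$ absorbs the cardinality $\lesssim n^{\dn}$ of an $\epsilon$-net of $(\mathcal{U},d_\mathcal{U})$ implied by $\log N(\epsilon,\mathcal{U},d_\mathcal{U})\le\dn\log(1/\epsilon)$, while the oscillation bounds $\sup_{d_\mathcal{U}(u,u')\le 1/n}\|\En[f(X)(\zeta_u-\zeta_{u'})]\|_\infty\le\delta_n n^{-1/2}$ and the companion second-moment bounds of Assumption \ref{ass: logistic}(iv)(b) extend the control from the net to all of $\mathcal{U}$. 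The bias term is of smaller order than $\lambda/n$ by $\sup_u\En[r_u^2]\lesssim s\log(p\vee n)/n$ and $\sup_u|r_u|\le\delta_n$. Finally, the data-driven loadings $\hat l_{uj}$ delivered by Algorithm \ref{AlgFunc} must be shown asymptotically equivalent, uniformly in $(u,j)$, to $\{\Ep_P[f_j(X)^2\zeta_u^2]\}^{1/2}$; this follows from the $(\En-\Ep_P)$ and oscillation bounds in Assumption \ref{ass: logistic}(iv)(b) and a short induction over the iteration count $k\le K$, as in the scalar-$u$ case.

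On the regularization event, convexity of $M$ yields the cone membership $\|\delta_{u,T_u^c}\|_1\le\bar c\,\|\delta_{u,T_u}\|_1+(\text{approximation-error terms})$, and it remains to extract curvature of the empirical logistic loss along this cone. Here the nonlinearity matters: instead of a fixed quadratic form one uses the self-concordance-type comparison that along $[\theta_u,\hat\theta_u]$ the empirical Hessian is within a multiplicative factor $\exp(-\sup_i|f(X_i)'\delta_u|)$ of its value at $\theta_u$, the latter bounded below because $\underline c\le\Ep_P[Y_u\mid X]\le 1-\underline c$. The strengthened growth condition $K_n^2 s^2\log^2(p\vee n)\le\delta_n n$ of Assumption \ref{ass: logistic}(iv)(a), together with the restricted sparse-eigenvalue bounds of (iv)(c), forces $\sup_i|f(X_i)'\delta_u|=o(1)$ on the cone, so the empirical loss satisfies a restricted strong convexity property with constant bounded away from zero uniformly in $(u,P)$. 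Standard manipulations then give $\sup_u\|f(X)'\delta_u\|_{\Pn,2}\lesssim\sqrt s\,\lambda/n\asymp\sqrt{s\log(p\vee n)/n}$, and the $\ell_1$-bound $\sup_u\|\delta_u\|_1\lesssim\sqrt{s^2\log(p\vee n)/n}$ follows from the cone inequality and the restricted eigenvalue.

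For the sparsity claim $\sup_u\|\hat\theta_u\|_0\lesssim s$ I would use the standard counting argument: the first-order conditions bound the number $\hat s_u$ of selected coordinates through $\sqrt{\hat s_u}\lesssim\sqrt n\,\|f(X)'\delta_u\|_{\Pn,2}/\lambda$ up to a sparse-eigenvalue factor, which with the prediction rate just obtained gives $\hat s_u\lesssim s$ uniformly in $(u,P)$. For Post-Lasso, conditioning on the selected model $\widetilde T_u\supseteq\supp(\hat\theta_u)$ of size $\lesssim s$ reduces the problem to an ordinary logistic maximum-likelihood fit over $\lesssim s$ regressors; the same restricted-strong-convexity argument --- now on a fixed low-dimensional model, where the cone restriction is automatic --- together with a bound on the bias from relevant variables possibly missed by Lasso (itself controlled by the Lasso prediction rate) delivers the stated rates. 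All the probabilistic statements above are phrased through the sequences $\delta_n,\Delta_n,K_n,s,p$ common to the whole family $\mathcal{P}_n$, so every bound is automatically uniform in $P\in\mathcal{P}_n$; the finite-sample versions behind these steps are recorded in the Supplementary Appendix. The one genuinely delicate point, as noted, is the uniform-in-$(u,P)$ control of the self-normalized score in the presence of the logistic nonlinearity and the estimated loadings; once that is in hand, the remainder parallels Theorem \ref{Thm:RateEstimatedLassoLinear} and the scalar-$u$ analyses of \citen{BelloniChernozhukovHansen2011} and \citen{BC-SparseQR}.
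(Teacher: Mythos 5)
Your proposal follows essentially the same route as the paper: the penalty level is validated by self-normalized moderate deviations on an $n^{-1}$-net of $\mathcal{U}$ extended by the oscillation conditions, the loadings by induction over iterations, the rates by a cone argument combined with a self-concordance-type lower bound on the logistic loss (the paper packages this as a "nonlinear impact coefficient" $\bar q_{A_u}$, bounded below by exactly the $K_n\sqrt{s}$ quantity you control), and the sparsity and Post-Lasso claims by the standard first-order-condition counting and low-dimensional refitting arguments. The only substantive detail you gloss over is the rescaling of the approximation error to $\tilde r_u$ so that it enters the weighted prediction norm correctly, but this does not change the structure of the argument.
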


\begin{remark} The performance bounds derived in Theorem \ref{Thm:RateEstimatedLassoLogistic} satisfy the conditions of Assumption \ref{ass: sparse1} (see also Assumption \ref{ass: sparse2}  in the Supplementary Material). Moreover, since the link function is $1$-Lipschitz in the logistic case  and the approximation errors are assumed to be small, the results above establish the same rates of convergence for estimators of the conditional probabilities; for example,
$$ \sup_{u\in\mathcal{U}}\| \Ep_P[Y_u\mid X] - \G(f(X)'\widehat\theta_u)\|_{\Pn,2} \leq \bar{C}\sqrt{\frac{s\log(p\vee n)}{n}}.$$
\end{remark}

\section{Application:  the Effect of 401(k) Participation on Asset Holdings}\label{sec: 401k}

As a practical illustration of the methods developed in this paper, we consider estimation of the effect of 401(k) eligibility and participation on accumulated assets as in \citen{abadie:401k} and \citen{CH401k}.  Our goal here is to illustrate the estimation results and inference statements and to make the following points that underscore our theoretical findings:   1) In a \textit{low-dimensional setting}, where the number of controls is low and therefore there is no need for selection,  our robust post-selection inference methods perform well. That is, the results of our methods agree with the results of standard methods that do not employ any selection.   2) In a \textit{high-dimensional} setting, where there are (moderately) many controls, our post-selection inference methods perform well, producing well-behaved estimates and confidence intervals compared to the erratic estimates and confidence intervals produced by standard methods that do not employ selection as a means of regularization.   3)  Finally, in a \textit{very high-dimensional} setting, where the number of controls is comparable to the sample size,  the standard methods break down completely, while our methods still produce well-behaved estimates and confidence intervals. These findings are in line with our theoretical results about uniform validity of our inference methods.

The key problem in determining the effect of participation in 401(k) plans on accumulated assets is saver heterogeneity coupled with the fact that the decision  to enroll in a 401(k) is non-random.  It is generally recognized that some people have a higher preference for saving than others. It also seems likely that those individuals with high unobserved preference for saving would be most likely to choose to participate in tax-advantaged retirement savings plans and would tend to have otherwise high amounts of accumulated assets.  The presence of unobserved savings preferences with these properties then implies that conventional estimates that do not account for saver heterogeneity and endogeneity of participation will be biased upward, tending to overstate the savings effects of 401(k) participation.

To overcome the endogeneity of 401(k) participation, \citen{abadie:401k} and \citen{CH401k} adopt the strategy detailed in Poterba, Venti, and Wise \citeyear{pvw:94,pvw:95,pvw:nber96,pvw:01} and \citen{benjamin}, who used data from the 1991 Survey of Income and Program Participation and argue that eligibility for enrolling in a 401(k) plan in this data can be taken as exogenous after conditioning on a few observables of which the most important for their argument is income.  The basic idea of their argument is that, at least around the time 401(k)'s initially became available, people were unlikely to be basing their employment decisions on whether an employer offered a 401(k) but would instead focus on income.  Thus, eligibility for a 401(k) could be taken as exogenous conditional on income, and the causal effect of 401(k) eligibility could be directly estimated by appropriate comparison across eligible and ineligible individuals.\footnote{Poterba, Venti, and Wise \citeyear{pvw:94,pvw:95,pvw:nber96,pvw:01} and \citen{benjamin} all focus on estimating the effect of 401(k) eligibility, the intention to treat parameter.  Also note that there are arguments that eligibility should not be taken as exogenous given income; see, for example, \citen{engen} and \citen{engen:gale}.}  {\citen{abadie:401k}, \citen{CH401k}, and \citen{ORR:401k}} use this argument for the exogeneity of eligibility conditional on controls to argue that 401(k) eligibility provides a valid instrument for 401(k) participation and employ IV methods to estimate the effect of 401(k) participation on accumulated assets.

As a complement to the work cited above, we estimate various treatment effects of 401(k) participation on financial  wealth using high-dimensional methods.  A key component of the argument underlying the exogeneity of 401(k) eligibility is that eligibility may only be taken as exogenous after conditioning on income.  Both \citen{abadie:401k} and \citen{CH401k} adopt this argument but control only for a small number of terms.  One might wonder whether the small number of terms considered is sufficient to adequately control for income and other related confounds.  At the same time, the power to learn anything about the effect of 401(k) participation decreases as one controls more flexibly for confounds.  The methods developed in this paper offer one resolution to this tension by allowing us to consider a very broad set of controls and functional forms under the assumption that among the set of variables we consider there is a relatively low-dimensional set that adequately captures the effect of confounds.  This approach is more general than that pursued in previous research which implicitly assumes that confounding effects can adequately be controlled for by a small number of variables chosen \textit{ex ante} by the researcher.

We use the same data as \citen{CH401k}.  The data consist of 9,915 observations at the household level drawn from the 1991 SIPP.  We use net financial assets
  as the outcome variable, $Y$,  in our analysis.  Our treatment variable, $D$, is an indicator for having positive 401(k) balances; and our instrument, $Z$, is an indicator for being eligible to enroll in a 401(k) plan.  The vector of raw covariates, $X$, consists of age, income, family size, years of education, a married indicator, a two-earner status indicator, a defined benefit pension status indicator, an IRA participation indicator, and a home ownership indicator.  Further details can be found in \citen{CH401k}.

We present detailed results for three different sets of controls $f(X)$.  The first specification uses indicators of marital status, two-earner status, defined benefit pension status, IRA participation status, and home ownership status, second order polynomials in family size and education, a third order polynomial in age, and a quadratic spline in income with six break points\footnote{Specifically, we allow for income, income-squared, and then interact these two variables with seven dummies for the categories formed by the cut points.} (Quadratic Spline specification).  The second specification augments the Quadratic Spline specification by interacting all the non-income variables with each term in the income spline (Quadratic Spline Plus Interactions specification).  The final specification forms a larger set of potential controls by starting with all of the variables from the Quadratic Spline specification and forming all two-way interactions between all of the non-income variables.  The set of main effects and interactions of all non-income variables is then fully interacted with all of the income terms (Quadratic  Spline Plus Many  Interactions specification).\footnote{The specifications are motivated by the original specification used in \citen{abadie:401k}, \citen{benjamin}, and \citen{CH401k} allowing for data-dependent accommodation of nonlinearity.  We report results based on the exact specification used in previous papers in the Supplementary Appendix.}  The dimensions of the set of controls are thus 35, 311, and 1756 for the Quadratic Spline, Quadratic Spline Plus Interactions, and Quadratic Spline Plus Many Interactions specification, respectively. For methods that do not use variable selection, we use 32, 272, and 1526 variables resulting from removing terms that are perfectly collinear. We refer to the specification without interactions as \textit{low-$p$}, to the specification with only income interactions as \textit{high-$p$}, and to the specification with all two-way interactions further interacted with income as \textit{very-high-$p$}.

We report a variety of results for each specification.  Under the maintained assumption that 401(k) eligibility may be taken as exogenous after controlling for the variables defined in the preceding paragraph, we can use the methods of this paper to estimate intention to treat effects of 401(k) eligibility by setting 401(k) eligibility as $D = Z$.  We report the estimated average intention to treat and average intention to treat on the treated as the ATE and ATE-T, and we report estimates of quantile intention to treat and quantile intention to treat on the treated effects as QTE and QTE-T.  We also directly apply the results of this paper to estimate effects of 401(k) participation, reporting estimates of the LATE, LATE-T, LQTE, and LQTE-T for each specification.\footnote{We note that because of one-sided compliance the local effects for the treated actually coincide with population effects for the treated; see
 \citen{frolich:melly}.}  For comparison, we also report estimates of the eligibility effect from the linear model without selection and with selection using the approach of \citen{BelloniChernozhukovHansen2011} and estimates of the participation effect from linear instrumental variables estimation without selection and with selection as in \citen{CHS:PnP}.

Estimation of all  these treatment effects depends on first-stage estimates of reduced form functions as detailed in Section \ref{EstimationSection}.  We estimate reduced form functions where the outcome is continuous using ordinary least squares when no model selection is used or Post-Lasso when selection is used.  We estimate reduced form functions where the outcome is binary by logistic regression when no model selection is used or Post-$\ell_1$-penalized logistic regression when selection is used.  We only report selection-based estimates in the very-high-$p$ setting.\footnote{The estimated propensity score shows up in the denominator of the efficient moment conditions.  As is conventional, we use trimming to keep the denominator bounded away from zero with trimming set to $10^{-12}$.  Trimming occurs in the Quadratic Spline Plus Interactions (12 observations trimmed) and Quadratic Spline Plus Many Interactions specifications (9915 observations trimmed) when selection is not done.  Trimming never occurs in the selection-based estimates in this example.  We choose not to report unregularized estimates in the very-high-$p$ specification since all observations are trimmed and, in fact, have estimated propensity scores of either 0 or 1.}  We refer to Appendix \ref{sec:Implementation} for detailed discussion of implementing our approach in this example.

Estimates of the ATE, ATE-T, LATE and LATE-T as well as the coefficient on 401(k) eligibility from the linear model and coefficient on 401(k) participation in the linear IV model are given in Table 1.  In this table, we provide point estimates for each of the three sets of controls with and without variable selection.  We report conventional heteroscedasticity consistent standard error estimates for the linear model and linear IV coefficient.  For the ATE, ATE-T, LATE, and LATE-T, we report both analytic and multiplier bootstrap standard errors.  The bootstrap standard errors are based on 500 bootstrap replications with \citen{mammen1993:bootstrap} weights as multipliers.

Looking first at the two sets of standard error estimates for the average treatment effect estimates, we see that the bootstrap and analytic standard errors are quite similar and that one would not draw substantively different conclusions from using one versus the other.  We also see that estimates of the effect of 401(k) eligibility using the linear model and estimates of the effect of 401(k) participation using the linear IV model are broadly consistent with each other across all specifications and regardless of whether or not variable selection is done.  We also have that the estimates of the ATE, ATE-T, LATE, and LATE-T are very similar regardless of whether selection is used in the low-p Quadratic Spline specification.  The ATE and ATE-T both indicate a positive and significant average effect of 401(k) eligibility; and the LATE and LATE-T suggest positive and significant effects of 401(k) participation for compliers.  The similarity in the low-p case is reassuring as it illustrates that there is little impact of variable selection relative to simply including everything in a low-dimensional setting.\footnote{In the low-dimensional setting, using all available controls is semi-parametrically efficient and allows uniformly valid inference.  Thus, the similarity between the results in this case is an important feature of our method which results from our reliance on low-bias moment functions and sensible variable selection devices to produce semi-parametrically efficient estimators and uniformly valid inference statements \emph{following} model selection.}

We observe somewhat different results in the Quadratic Spline Plus Interactions specification.  For both the ATE and the LATE in the Quadratic Spline Plus Interactions case, we see a substantially larger point estimate without selection than with selection, with the selection results being similar to those obtained in the low-p case.  Along with the larger point estimate, we also see that the estimated standard errors in the no selection case for the ATE and LATE are roughly three times larger than the standard errors in the selection case.  For the ATE-T and LATE-T in the Quadratic Spline Plus Interactions case, point estimates following selection are notably smaller than without selection but estimated standard errors after selection are somewhat larger.  We note that one might suspect estimated standard errors for all of the estimators without selection to be substantially downward biased in this case due to the use of many control variables without regularization as in \citen{CJN:PLMStandardError}. Finally, we see a large difference in the Orthogonal Polynomials Plus Many Interactions Specifications as estimates cannot even be computed reliably without selection due to severe overfitting:
The estimated propensity score is either 0 or 1 for every observation.


We provide estimates of the QTE and QTE-T in Figure 1 and estimates of the LQTE and LQTE-T in Figure 2. The left column of Figure 1 gives results for the QTE, and the right column displays the results for the QTE-T.  Similarly, the left and right columns of Figure 2 provide the LQTE and LQTE-T respectively.  We give the results for the Quadratic Spline, Quadratic Spline Plus Interactions, and Quadratic Spline Plus Many Interactions specification in the top row, middle row, and bottom row respectively.  In each graphic, we use solid lines for point estimates and report uniform 95\% confidence intervals with dashed lines.

Looking across the figures, we see a similar pattern to that seen for the estimates of the average effects in that the selection-based estimates are stable across all specifications and are very similar to the estimates obtained without selection from the baseline low-$p$ Quadratic Spline specification.  In the more flexible Quadratic Spline plus Interactions specification, the estimates that do not make use of selection behave somewhat erratically.  This erratic behavior is especially apparent in the estimated LQTE of 401(k) participation where we observe that small changes in the quantile index may result in large swings in the point estimate of the LQTE and estimated standard errors are quite large.  Again, this erratic behavior is likely due to overfitting due to the large set of variables considered.  As with the average effects, estimated quantile effects without selection in the Quadratic Spline Plus Many Interactions specification are not reported as the estimated propensity score is always 0 or 1.

If we focus on the LQTE and LQTE-T estimated from variable selection methods, we find that 401(k) participation has a small impact on accumulated net total financial assets at low quantiles while appearing to have a larger impact at high quantiles.  Looking at the uniform confidence intervals, we can see that this pattern is statistically significant at the 5\% level and that we would reject the hypothesis that 401(k) participation has no effect and reject the hypothesis of a constant treatment effect more generally.

It is also worth discussing the results of the variable selection briefly as well.  Due to the number of models and variable selection steps taken, especially in computing quantile effects, it is not practical to give a complete accounting of the selected variables here.  Rather, we note that for the linear model, linear IV, ATE, and LATE results, we select between two and 22 variables depending on the specification of controls and left-hand-side variable.  The median  number of variables selected for the QTE and LQTE results, where the median is taken across index values $u$, across the different specifications of controls and left-hand-side variables varies between one and 11.  There is considerable variability in the number of variables selected across $u$ though, ranging from a minimum of no variables selected to a maximum of 237 selected variables.\footnote{Having more than 100 variables selected occurs in the very high dimensional setting when the outcome in the penalized regression is $\mathbf{1}_0(D) Y_u$ for the six lowest values of $u$ among the subset of households eligible for 401(k)'s and for the six highest values of $u$ among the subset of households that are not eligible for 401(k)'s.}  The selected variables themselves mostly correspond to capturing the effect of income.  For example, the union of the variables selected in forming each of the reduced form quantities used for estimating the LATE in the Quadratic Spline Plus Many Interactions specification consists of 36 variables, only four of which do not include income.\footnote{Let $i_1$ be the indicator for income in the first income category, and define $i_2-i_7$ similarly.  Let $db$ be the defined benefit dummy, $ira$ be the IRA dummy, $hown$ be the home ownership dummy, $mar$ be the married dummy, $te$ be the two-earner household dummy, $ed$ be years of schooling, and $fsize$ be family size.  The exact identities of the variables selected for modeling any reduced form quantity used in estimating the LATE in the very-high-dimensional case are $i_1$, $i_2$, $i_3$, $income*i_3$, $income^2*i_6$, $db$, $ira*hown$, $age*ira$, $ed*ira$, $i_1*fsize$, $i_1*fsize^2*db$, $i_2*fsize$, $i_2*fsize^2*db$, $i_3*age^3$, $i_3*fsize$, $i_3*mar$, $i_3*fsize*te$, $i_3*fsize*mar$, $i_4*fsize$, $i_4*te$, $i_4*fsize*te$, $i_4*mar*te$, $i_4*ed*fsize$, $i_5*ed^2*te$, $i_5*fsize*te$, $income*ira$, $income*hown$, $income*mar*hown$, $income*te*hown$, $income*fsize*ira$, $income*fsize^2*ira$, $income*i_1*fsize$, $income^2*ira*hown$, $income^2*ed^2*te$, $income^2*i_3*fsize$, and $income^2*i_6*hown$.} This pattern of largely selecting terms that are direct income effects or interactions of income with other variables holds up across the specifications considered.

It is interesting that our results are similar to those in \citen{CH401k} despite allowing for a much richer set of controls.  The fact that we allow for a rich set of controls but produce similar results to those previously available lends further credibility to the claim that previous work controlled adequately for the available observables.\footnote{Of course, the estimates are still not valid causal estimates if one does not believe that 401(k) eligibility can be taken as exogenous after controlling for income and the other included variables.}  Finally, it is worth noting that this similarity is not mechanical or otherwise built in to the procedure.  For example, applications in \citen{BellChenChernHans:nonGauss} and  \citen{BelloniChernozhukovHansen2011} use high-dimensional variable selection methods and produce sets of variables that differ substantially from intuitive baselines.

\appendix

\section{Notation}\label{subsec:notation}

\subsection{Overall Notation} We consider a random element $W=W_P$ taking values in the measure space $(\mathcal{W}, \mathcal{A}_{\mathcal{W}})$, with probability law $P \in \mathcal{P}$. Note that it is most convenient to think about $P$ as a parameter in a parameter set $\mathcal{P}$.  We shall also work with a bootstrap multiplier variable $\xi$ taking values in $(\mathbb{R}, \mathcal{A}_\mathbb{R})$ that is independent of $W_P$, having probability law $P_\xi$, which is fixed throughout.   We  consider $(W_{i})_{i=1}^\infty= (W_{i,P})_{i=1}^\infty$ and $(\xi_i)_{i=1}^\infty$ to be i.i.d. copies of $W$ and  $\xi$, which are also independent of each other. The data will be defined as some measurable function of $W_{i}$ for  $i = 1,..., n$,  where $n$ denotes the sample size.

 We require the sequences $(W_{i})_{i=1}^\infty$ and $(\xi_i)_{i=1}^\infty$ to live on a probability space $(\Omega, \mathcal{A}_\Omega, \Pr_P)$ for all $P \in \mathcal{P}$; note that other variables arising in the proofs do not need to live on the same space.
 It is important to keep track of the dependence on $P$ in the analysis since we want the results to hold uniformly in $P$ in some set $\mathcal{P}_n$ which may be dependent on $n$.  Typically, this set will increase with $n$; i.e. $\mathcal{P}_n \subseteq \mathcal{P}_{n+1}$.

 Throughout the paper we signify the dependence on $P$
 by mostly using $P$ as a subscript in $\Pr_P$, but in the proofs we sometimes use it as a subscript for variables as in $W_P$.  The operator $\Ep$ denotes a generic expectation operator with respect to a generic probability measure $\Pr$, while
 $\Ep_P$ denotes the expectation with respect to  $\Pr_P$. Note also that we use capital letters such as $W$ to denote random elements and use the corresponding lower case letters such as $w$ to denote fixed values that these random elements can take.

We denote by $\mathbb{P}_{n}$ the (random) empirical probability measure that assigns probability $n^{-1}$ to each $W_{i} \in (W_{i})_{i=1}^n$. $\En$ denotes the expectation with respect to the empirical measure, and
$\mathbb{G}_{n,P}$ denotes the empirical process $\sqrt{n}(\En - P)$, i.e.
{\small $$\mathbb{G}_{n,P}(f) = \mathbb{G}_{n,P}(f(W))  = n^{
-1/2} \sum_{i=1}^n \{f(W_i) - P[f(W)]\},  \ \  P [f (W)] := \int f(w) dP(w), $$}\!indexed by a  measurable class of functions $\mathcal{F}: \mathcal{W} \longmapsto \mathbb{R}$; see \citen[chap. 2.3]{vdV-W}. We shall often omit the index $P$ from $\mathbb{G}_{n,P}$ and simply write $\Gn$. In what follows, we use $\|\cdot\|_{\mathsf{P},q}$ to denote the $L^q(\mathsf{P})$ norm; for example, we use $\|f(W)\|_{P,q} = (\int |f(w)|^q d P(w))^{1/q}$ and $\| f(W)\|_{\Pn,q} = (n^{-1} \sum_{i=1}^n |f(W_i)|^q)^{1/q}$. For a vector $v = (v_1,\ldots,v_p)'\in \mathbb{R}^p$, $\|v\|_1 = |v_1| + \cdots + |v_p|$ denotes the $\ell_1$-norm of $v$, 
$\|v\| = \sqrt{v'v}$ denotes the Euclidean norm of $v$,
 and $\|v\|_0$ denotes the $\ell_0$-``norm" of $v$ which equals the number of non-zero components  of $v$.  For a positive integer $k$, $[k]$ denotes the set $\{1,\ldots, k\}.$
 For $x_n, y_n$ denoting sequences in $\Bbb{R}$, the statement $ x_n \lesssim y_n$ means that $x_n \leq A y_n$ for some constant $A$ that does not depend on $n$.

 We say that a collection of
random variables $ \mathcal{F}= \{f(W,t), t \in T\}$, where $f: \mathcal{W} \times T \to \mathbb{R}$, indexed by a set $T$ and viewed as functions of $W \in \mathcal{W}$, is \textit{suitably measurable } with respect to $W$ if it is image admissible Suslin class, as defined in \citen[p 186]{Dudley99}. In particular, $\mathcal{F}$ is suitably measurable if  $f: \mathcal{W}\times T \to \mathbb{R}$ is measurable and $T$ is a Polish space equipped with its Borel sigma algebra, see \citen[p 186]{Dudley99}.  This condition is a mild assumption satisfied in practical cases.

\subsection{ Notation for Stochastic Convergence Uniformly in $P$} All parameters, such as the law of the data, are indexed by $P$. This dependency is sometimes kept implicit.  We shall allow for the possibility that the probability measure $P=P_n$  can depend on $n$. We shall conduct our stochastic convergence analysis uniformly in $P$, where $P$ can vary within
some set $\mP_n$, which itself may vary with $n$.

The convergence analysis, namely the stochastic order relations and convergence in distribution,  uniformly in $P \in \mP_n$
and the analysis under all sequences $P_n \in \mP_n$ are equivalent.  Specifically,  consider  a sequence of stochastic processes $X_{n,P}$ and a random element $Y_P$, taking values in the normed space $\mathbb{D}$, defined on the probability space $(\Omega, \mathcal{A}_\Omega, \Pr_{P})$. Through most of the Appendix $\mathbb{D} = \ell^\infty(\mU)$, the space of uniformly bounded functions mapping an arbitrary index set $\mU$ to the real line, or $\mathbb{D} = UC(\mU)$, the space of uniformly continuous functions mapping an arbitrary index set $\mU$ to the real line.
 Consider also a sequence of deterministic positive constants $a_n$. We shall say that
\begin{itemize}
\item[(i)] $X_{n,P} = O_P(a_n)$  uniformly in $P \in \mP_n$,
if  $\lim_{K \to  \infty} \lim_{n \to \infty} \sup_{P \in \mP_n} \Pr_P^*(|X_{n,P}| > K a_n) =0 $,
 \item[(ii)] $X_{n,P} = o_P(a_n)$  uniformly in $P \in \mP_n$,
if  $\sup_{K>0} \lim_{n \to \infty} \sup_{P \in \mP_n} \Pr_P^*(|X_{n,P}| > K a_n) =0 $,
\item[(iii)] $X_{n,P} \rightsquigarrow Y_P$ uniformly in $P \in \mP_n$, if
$ \sup_{P \in \mP_n} \sup_{h \in \mathrm{BL}_1(\mathbb{D})} | \Ep^*_P h(X_{n,P}) -  \Ep_P h(Y_P)| \to 0.$
\end{itemize}
Here the symbol $\rightsquigarrow$ denotes weak convergence, i.e. convergence in distribution or law, $\mathrm{BL}_1(\mathbb{D})$ denotes the space of functions mapping $\mathbb{D}$ to $[0,1]$ with Lipschitz norm at most 1, and the outer probability and expectation, $\Pr_P^*$ and $\Ep^*_\Pr$, are invoked whenever  (non)-measurability arises.

\begin{lemma}\label{lemma: equivalences} The above notions (i), (ii) and (iii) are equivalent to the following notions (a), (b), and (c), each
holding for \textit{every} sequence $P_n \in \mP_n$:
\begin{itemize}
\item[(a)] $X_{n,P_n} = O_{P_n}(a_n)$,  i.e.
  $\lim_{K \to \infty} \lim_{n \to \infty}  \Pr^*_{P_n}(|X_{n,P_n}| > K a_n) =0 $;
 \item[(b)] $X_{n,P_n} = o_{P_n}(a_n)$, i.e.  $\sup_{K>0} \lim_{n \to \infty}  \Pr^*_{P_n}(|X_{n,P_n}| > K a_n) =0 $;
     \item[(c)]  $X_{n,P_n} \rightsquigarrow Y_{P_n}$, i.e.
    $
    \sup_{h \in \mathrm{BL}_1(\mathbb{D})} | \Ep^*_{P_n} h(X_{n,P_n}) -  \Ep_{P_n} h(Y_{P_n})| \to 0.$
   \end{itemize}
\end{lemma}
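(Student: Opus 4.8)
The plan is to prove the three equivalences (i)$\Leftrightarrow$(a), (ii)$\Leftrightarrow$(b), (iii)$\Leftrightarrow$(c) one at a time; in each case the implication ``uniform in $P\in\mP_n$'' $\Rightarrow$ ``holds along every sequence $P_n\in\mP_n$'' is immediate, and the converse is the standard subsequence/diagonalization argument. For the easy directions: if $X_{n,P}=O_P(a_n)$ uniformly in $P\in\mP_n$, then for any sequence $P_n\in\mP_n$ and any $K>0$ we have $\Pr^*_{P_n}(|X_{n,P_n}|>Ka_n)\le \sup_{P\in\mP_n}\Pr^*_P(|X_{n,P}|>Ka_n)$, so passing to $\limsup_n$ and then $K\to\infty$ yields (a); replacing the left-hand side by $\Pr^*_{P_n}(|X_{n,P_n}|>K_0a_n)$ for a fixed $K_0$ gives (ii)$\Rightarrow$(b), and writing $\Delta_n(P):=\sup_{h\in\mathrm{BL}_1(\mathbb{D})}|\Ep^*_P h(X_{n,P})-\Ep_P h(Y_P)|$, the same one-line bound $\Delta_n(P_n)\le\sup_{P\in\mP_n}\Delta_n(P)$ gives (iii)$\Rightarrow$(c).

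For the converses I would argue by contraposition. Suppose (i) fails. Since $K\mapsto g(K):=\limsup_n\sup_{P\in\mP_n}\Pr^*_P(|X_{n,P}|>Ka_n)$ is nonincreasing (because $\{|X_{n,P}|>K'a_n\}\subseteq\{|X_{n,P}|>Ka_n\}$ for $K\le K'$ and outer probability is monotone), the limit $\lim_{K\to\infty}g(K)=\inf_K g(K)$ exists, so its not being $0$ means there is $\delta>0$ with $g(K)\ge\delta$ for every $K\in\mathbb{N}$. Setting $N_K:=\{n:\sup_{P\in\mP_n}\Pr^*_P(|X_{n,P}|>Ka_n)>\delta/2\}$, each $N_K$ is infinite and $N_1\supseteq N_2\supseteq\cdots$, so I can pick $n_1<n_2<\cdots$ with $n_k\in N_k$ and, for each $k$, a law $P_{n_k}\in\mP_{n_k}$ with $\Pr^*_{P_{n_k}}(|X_{n_k,P_{n_k}}|>ka_{n_k})>\delta/2$; for $n\notin\{n_k\}$ take $P_n\in\mP_n$ arbitrary. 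Then for each fixed $K$ and all $k\ge K$, monotonicity in the threshold gives $\Pr^*_{P_{n_k}}(|X_{n_k,P_{n_k}}|>Ka_{n_k})>\delta/2$, hence $\limsup_n\Pr^*_{P_n}(|X_{n,P_n}|>Ka_n)\ge\delta/2$ for every $K$, contradicting (a). The argument for $\neg$(ii)$\Rightarrow\neg$(b) is the same but shorter: negating (ii) yields a single $K_0>0$ and $\delta>0$ with $\limsup_n\sup_{P\in\mP_n}\Pr^*_P(|X_{n,P}|>K_0a_n)\ge\delta$, so one extracts $n_k\uparrow\infty$ and $P_{n_k}\in\mP_{n_k}$ with $\Pr^*_{P_{n_k}}(|X_{n_k,P_{n_k}}|>K_0a_{n_k})>\delta/2$, contradicting $X_{n,P_n}=o_{P_n}(a_n)$. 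Likewise, if (iii) fails then $\sup_{P\in\mP_n}\Delta_n(P)\not\to 0$, so there are $\delta>0$ and $n_k\uparrow\infty$ with $\sup_{P\in\mP_{n_k}}\Delta_{n_k}(P)>\delta$; choosing $P_{n_k}\in\mP_{n_k}$ with $\Delta_{n_k}(P_{n_k})>\delta$ and completing the sequence arbitrarily gives $\Delta_n(P_n)\not\to 0$, i.e.\ (c) fails.

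The only points that require care, rather than any genuine obstacle, are the following. First, the outer probabilities $\Pr^*_P$ and outer expectations $\Ep^*_P$ must be carried consistently throughout, so that one never needs measurability of the maps $P\mapsto \Pr^*_P(\cdot)$ or $h\mapsto\Ep^*_P h(X_{n,P})$ — at every step one only takes a supremum of real numbers over $P\in\mP_n$ or over $h\in\mathrm{BL}_1(\mathbb{D})$. Second, the inner ``$\lim_{n\to\infty}$'' in the definitions of $O_P$ and $o_P$ uniformly in $P$ should be read as ``$\limsup_{n\to\infty}$,'' the outer limit in $K$ in the $O_P$ case then existing automatically by the monotonicity noted above. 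With these conventions fixed, the proof is exactly the subsequence principle applied to each of the three modes of convergence, and no further input is needed.
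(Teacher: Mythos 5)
Your proof is correct. The paper omits the proof of this lemma entirely ("the claims follow straightforwardly from the definitions"), and your argument — the one-line bound $\Pr^*_{P_n}(\cdot)\le\sup_{P\in\mP_n}\Pr^*_P(\cdot)$ for the easy direction and the standard extraction of a bad sequence $P_{n_k}\in\mP_{n_k}$ by contraposition for the converse, with the diagonal choice $n_k\in N_k$ handling the double limit in the $O_P$ case — is exactly the routine subsequence argument the authors had in mind. Your remarks on carrying outer probabilities as plain real numbers and reading the inner limit as a $\limsup$ are the right conventions and close the only points where care is needed.
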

 The claims follow straightforwardly from the definitions,
so the proof is omitted.   We shall use this equivalence extensively in the proofs of the main results without explicit reference.

\section{Key Tools I: Uniform in $P$ Donsker Theorem, Multiplier Bootstrap, and Functional Delta Method }\label{app: B}
\subsection{Uniform in $P$ Donsker Property}

Let $ ( W_i)_{i=1}^\infty$ be a sequence of i.i.d. copies of the random element $ W$  taking values in the measure space $({\mathcal{W}}, \mathcal{A}_{{\mathcal{W}}})$ according to the probability law $P$ on that space. Let $\mathcal{F}_P= \{f_{t,P} : t \in T\}$ be a set  of suitably measurable functions $w \longmapsto f_{t,P}(w)$ mapping ${\mathcal{W}}$ to $\mathbb{R}$, equipped with a measurable envelope $F_P: \mathcal{W} \longmapsto \mathbb{R}$.  The class is indexed by $P \in \mathcal{P}$ and $t \in T$, where $T$ is a fixed, totally bounded semi-metric space equipped with a semi-metric $d_{T}$.     Let $N(\epsilon,\mathcal{F}_P, \| \cdot \|_{Q,2})$ denote the $\epsilon$-covering number of the class of functions $\mathcal{F}_P$ with respect to the $L^{2}(Q)$ seminorm $\| \cdot \|_{Q,2}$ for $Q$ a finitely-discrete measure on $({\mathcal{W}}, \mathcal{A}_{{\mathcal{W}}})$. We shall use the following result.

\begin{theorem}[\textbf{Uniform in $P$ Donsker Property}]\label{lemma: uniform Donsker} Work with the set-up above.  Suppose that for $q > 2$
\begin{eqnarray}\label{eq: characteristics1}
&& \sup_{P \in \mP} \|F_P\|_{P,q} \leq C  \text{ and }  \  \lim_{\delta \searrow 0} \sup_{P \in \mathcal{P}} \sup_{ d_{T}(t, \bar t) \leq \delta} \| f_{t,P} - f_{\bar t,P}\|_{P,2} = 0.
\end{eqnarray}
Furthermore, suppose that
\begin{eqnarray}\label{eq: characteristics2}
\lim_{\delta \searrow 0} \sup_{P \in \mathcal{P}}\int_0^\delta \sup_Q \sqrt{ \log N(\epsilon \|F_P\|_{Q,2}, \mathcal{F}_P, \| \cdot \|_{Q,2})} d \epsilon =0.
\end{eqnarray}
Let $\mathbb{G}_P$ denote the P-Brownian Bridge, and  consider $$Z_{n,P} := (Z_{n,P}(t))_{t \in T} := (\Gn(f_{t,P}))_{t \in T},  \ \  Z_P := (Z_P(t))_{t \in T} := (\mathbb{G}_P(f_{t,P}))_{t \in T}.$$

\noindent (a) Then, $Z_{n,P} \rightsquigarrow Z_P$ in $\ell^\infty(T)$ uniformly in $P \in \mathcal{P}$, namely
$$
\sup_{P \in \mP} \sup_{h \in \textrm{BL}_1(\ell^{\infty}(T))} | \Ep^*_P h (Z_{n,P}) - \Ep_P h(Z_P) | \to 0.
$$
(b) The process $Z_{n,P}$ is stochastically equicontinuous  uniformly in $P \in \mathcal{P}$, i.e.,  for every $\varepsilon>0$,
$$
\lim_{\delta \searrow 0} \limsup_{n \to \infty} \sup_{P \in \mathcal{P}} \Pr^*_P\left (  \sup_{d_T  (t, \bar t) \leq \delta} | Z_{n,P}(t) - Z_{n,P}(\bar t)| > \varepsilon \right ) =0.
$$
(c) The limit process $Z_P$ has the following continuity properties:
$$
\sup_{P \in \mP} \Ep_P \sup_{t \in T} |Z_P(t) | < \infty,  \ \  \lim_{\delta \searrow 0} \sup_{P \in \mathcal{P}} \Ep_P \sup_{d_T(t, \bar t) \leq \delta} |Z_{P}(t) - Z_{P}(\bar t)| = 0.
$$
(d) The paths $t \longmapsto  Z_P(t)$ are a.s. uniformly continuous on $(T,d_T)$ under each $P \in \mathcal P$.
\end{theorem}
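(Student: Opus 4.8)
The plan is to reduce everything to statements along arbitrary sequences by Lemma~\ref{lemma: equivalences}: it suffices to fix a sequence $P_n\in\mathcal{P}$ and prove that (a) $Z_{n,P_n}\rightsquigarrow Z_{P_n}$ in $\ell^\infty(T)$, meaning $\sup_{h\in\mathrm{BL}_1(\ell^\infty(T))}|\Ep^*_{P_n}h(Z_{n,P_n})-\Ep_{P_n}h(Z_{P_n})|\to 0$; (b) the modulus of continuity of $Z_{n,P_n}$ over $d_T$-balls of radius $\delta$ is negligible as first $n\to\infty$ and then $\delta\to 0$; and (c), (d) the path properties of the Gaussian limit. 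The ``suitably measurable'' hypothesis lets us work with outer expectations and apply symmetrization without measurability obstructions. The standard route is then to obtain (a) by combining finite-dimensional convergence with asymptotic tightness, the latter coming from (b) together with total boundedness of $(T,d_T)$.

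For the Gaussian limit, write $d_P(t,\bar t):=\|f_{t,P}-f_{\bar t,P}\|_{P,2}$. Evaluating \eqref{eq: characteristics2} at $Q=P$ shows $(T,d_P)$ is totally bounded with $\int_0^\delta\sqrt{\log N(\epsilon,T,d_P)}\,d\epsilon$ finite and tending to $0$ as $\delta\to0$ uniformly in $P$ (after rescaling by $\|F_P\|_{P,2}\leq C$); hence each $\mathcal{F}_P$ is $P$-pre-Gaussian, $Z_P$ exists as a tight Gaussian element of $\ell^\infty(T)$, and Dudley's maximal inequality gives $\sup_P\Ep_P\sup_t|Z_P(t)|\lesssim\diam_{d_P}(T)+\int_0^{\diam}\sqrt{\log N(\epsilon,T,d_P)}\,d\epsilon<\infty$ and $\sup_P\Ep_P\sup_{d_T(t,\bar t)\leq\delta}|Z_P(t)-Z_P(\bar t)|\lesssim\int_0^{\eta_P(\delta)}\sqrt{\log N(\epsilon,T,d_P)}\,d\epsilon$, where $\eta_P(\delta):=\sup_{d_T(t,\bar t)\leq\delta}d_P(t,\bar t)$. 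By \eqref{eq: characteristics1}, $\sup_P\eta_P(\delta)\to0$, which yields (c), and (d) follows since the $d_T$-modulus of $Z_P$ then has expectation tending to $0$. For the finite-dimensional convergence along $P_n$, the vectors $(Z_{n,P_n}(t_j))_{j\leq k}$ are normalized sums of i.i.d.\ centered vectors, and the Lindeberg condition holds uniformly because $\sup_P\Ep_P(f_{t,P}-\Ep_Pf_{t,P})^2\mathbf{1}\{|f_{t,P}-\Ep_Pf_{t,P}|>M\}\lesssim M^{-(q-2)}(2C)^q\to0$ by \eqref{eq: characteristics1} with $q>2$; Lindeberg's swapping argument then bounds $d_{\mathrm{BL}}$ between the law of $(Z_{n,P_n}(t_j))_j$ and the centered Gaussian law with covariance $(\mathrm{Cov}_{P_n}(f_{t_j,P_n},f_{t_l,P_n}))_{j,l}$, which is exactly that of $(Z_{P_n}(t_j))_j$.

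The main work lies in (b), the asymptotic equicontinuity uniformly in $P$. Fixing $\delta$, I would apply Pollard-type symmetrization to dominate $\Ep^*_P\sup_{d_T(t,\bar t)\leq\delta}|\Gn(f_{t,P}-f_{\bar t,P})|$ by twice its Rademacher-symmetrized version; conditionally on the data the symmetrized process is sub-Gaussian for $\|\cdot\|_{\Pn,2}$, so Dudley's chaining bound applies with the entropy of the difference class $\{f_{t,P}-f_{\bar t,P}:d_T(t,\bar t)\leq\delta\}$, controlled by $\log N(\epsilon,\mathcal{F}_P,\|\cdot\|_{Q,2})$ via the product bound for covering numbers (with envelope $2F_P$). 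The delicate point is the random chaining radius $\theta_n:=\sup_{d_T(t,\bar t)\leq\delta}\|f_{t,P}-f_{\bar t,P}\|_{\Pn,2}$: a second maximal inequality, applied to the squared-difference class $\{(f_{t,P}-f_{\bar t,P})^2\}$ with envelope $(2F_P)^2$ (here the extra integrability $q>2$ helps), shows $\theta_n^2=\sup_{d_T(t,\bar t)\leq\delta}\|f_{t,P}-f_{\bar t,P}\|_{P,2}^2+o_P(1)$ with the $o_P(1)$ bounded uniformly in $P$, while $\sup_P\sup_{d_T(t,\bar t)\leq\delta}\|f_{t,P}-f_{\bar t,P}\|_{P,2}=\sup_P\eta_P(\delta)\to0$. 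Since all constants from these maximal inequalities depend only on $q$, $C$, and the uniform entropy characteristics, taking $\sup_P$, then $\limsup_n$, then $\delta\to0$ delivers (b).

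Finally I would assemble (a). By (b) and total boundedness of $(T,d_T)$, the triangular array $Z_{n,P_n}$ is asymptotically tight in $\ell^\infty(T)$; combined with the finite-dimensional convergence above and the asymptotic-tightness/finite-dimensional criterion for weak convergence—applied along subsequences on which the covariances $\mathrm{Cov}_{P_{n_k}}$ converge (using the uniform $d_P$-continuity of covariances to pass from a dense subset of $T\times T$ to all of it, and Step~two to get tightness of the laws $Z_{P_n}$ themselves)—one obtains $d_{\mathrm{BL}}(\mathcal{L}(Z_{n_k,P_{n_k}}),\mathcal{L}(Z))\to0$ and $d_{\mathrm{BL}}(\mathcal{L}(Z_{P_{n_k}}),\mathcal{L}(Z))\to0$ for a common tight Gaussian $Z$, hence $d_{\mathrm{BL}}(\mathcal{L}(Z_{n,P_n}),\mathcal{L}(Z_{P_n}))\to0$ along the full sequence. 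Since $P_n\in\mathcal{P}$ was arbitrary, Lemma~\ref{lemma: equivalences} upgrades this and (b) to the stated uniform-in-$P$ conclusions. As indicated, the step I expect to be the genuine obstacle is the uniform-in-$P$ control of the empirical modulus of continuity—carrying the chaining argument through with $P$-free constants while simultaneously taming the random $L^2(\Pn)$ radius.
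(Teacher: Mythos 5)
Your proposal is correct and follows essentially the same route as the paper: reduction to arbitrary sequences $P_n$, asymptotic equicontinuity via symmetrization and uniform-entropy chaining with the random $L_2(\Pn)$ radius tamed by a maximal inequality on squared differences (the paper delegates this to Lemma \ref{lemma: Donsker dep on n}, which repeats the proof of van der Vaart--Wellner Theorem 2.11.22 with $P$ depending on $n$), an Arzel\`a--Ascoli subsequence-splitting argument for the covariance functions to get (a), and Dudley's entropy inequality for (c). The only point stated too quickly is (d): that the expected $d_T$-modulus of $Z_P$ tends to zero gives continuity in $L^1$, and one still needs the standard Borel--Cantelli step (as in the paper) to upgrade this to almost-sure uniform continuity of the paths.
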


\begin{remark}\textbf{[Important Feature of the Theorem]}
This is an extension of the uniform Donsker theorem stated in Theorem 2.8.2 in \citen{vdV-W}, which allows for the function classes $\mF$ to be \textbf{dependent on} $P$.  This generalization is crucial and is required in all of our problems.
\end{remark}

\subsection{Uniform in $P$ Validity of Multiplier Bootstrap}
Consider the setting of the preceding subsection. Let $(\xi_i)_{i=1}^n$ be i.i.d multipliers  whose distribution does not depend on $P$,
such that $\Ep \xi = 0$,  $\Ep \xi^2 = 1$, and $\Ep |\xi|^q \leq C$ for $q >2$. Consider the multiplier empirical process:
$$Z^*_{n,P} := (Z^*_{n,P}(t))_{t \in T} := (\Gn(\xi f_{t,P}))_{t \in T} := \left (\frac{1}{\sqrt{n}} \sum_{i=1}^n \xi_i f_{t,P} (W_i) \right)_{t \in T}.$$
Here $\mathbb{G}_n$ is taken to be an extended empirical processes defined by
the empirical measure that assigns mass $1/n$ to each point $(W_i, \xi_i)$ for $i=1,...,n$.
Let $Z_P = (Z_P(t))_{t \in T} = (\mathbb{G}_P(f_{t,P}))_{t \in T}$ as defined in Theorem \ref{lemma: uniform Donsker}.

\begin{theorem}[\textbf{Uniform in $P$ Validity of Multiplier Bootstrap}]\label{lemma: uniform Donsker for bootstrap} Assume
 the conditions of Theorem \ref{lemma: uniform Donsker} hold.  Then
 (a)  the  following  unconditional convergence takes place,  $Z^*_{n,P} \rightsquigarrow Z_P$
 in $\ell^{\infty}(T)$ uniformly in $P \in \mathcal{P}$, namely
 $$
\sup_{P \in \mP} \sup_{h \in \textrm{BL}_1(\ell^{\infty}(T))} | \Ep^*_P h (Z^*_{n,P}) - \Ep_P h(Z_P) | \to 0,
$$
and (b) the following conditional convergence takes place,  $Z^*_{n,P} \rightsquigarrow_B Z_P$
 in $\ell^{\infty}(T)$ uniformly in $P \in \mathcal{P}$, namely uniformly in $P \in \mathcal{P}$
$$
 \sup_{h \in \textrm{BL}_1(\ell^{\infty}(T))} | \Ep_{B_n} h (Z^*_{n,P}) - \Ep_P h(Z_P) | =  o_P^*(1),
$$
where $\Ep_{B_n}$ denotes the expectation over the multiplier weights $(\xi_{i})_{i=1}^n$ holding the data $(W_i)_{i=1}^n$ fixed.
\end{theorem}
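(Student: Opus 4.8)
The plan is to derive both statements from the uniform‑in‑$P$ Donsker theorem already in hand, Theorem \ref{lemma: uniform Donsker}, applied to an enlarged ``multiplier'' function class on the product space carrying $(W,\xi)$, and then to pass from the unconditional limit (a) to the conditional limit (b) via a conditional Lindeberg CLT combined with a Markov‑type transfer of the unconditional stochastic equicontinuity. Throughout I use Lemma \ref{lemma: equivalences}, so that every ``uniformly in $P$'' assertion may be verified along an arbitrary sequence $P_n\in\mathcal{P}_n$. In the applications the $f_{t,P}$ are efficient influence functions and hence satisfy $Pf_{t,P}\equiv 0$; I carry out the argument in that case (the one relevant when $Z^*_{n,P}$ resamples the first‑order approximations of the estimators), so that $Z^*_{n,P}(t)=\frac{1}{\sqrt n}\sum_{i=1}^n\xi_i f_{t,P}(W_i)$ is the empirical process $\Gn$ of the class $\mathcal{G}_P:=\{(w,\xi)\mapsto \xi\,f_{t,P}(w):t\in T\}$ on $(\mathcal{W}\times\mathbb{R},\ \mathcal{A}_{\mathcal{W}}\otimes\mathcal{A}_{\mathbb{R}},\ P\times P_\xi)$, with measurable envelope $G_P(w,\xi)=|\xi|\,F_P(w)$.

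\emph{Part (a).} I would check that $\mathcal{G}_P$ inherits hypotheses \eqref{eq: characteristics1}--\eqref{eq: characteristics2}. The envelope bound is immediate by independence, $\sup_{P\in\mathcal{P}}\|G_P\|_{P\times P_\xi,q}=\|\xi\|_{P_\xi,q}\,\sup_{P\in\mathcal{P}}\|F_P\|_{P,q}\le C$, using $\Ep|\xi|^q\le C$ with $q>2$; the $L^2(P\times P_\xi)$ modulus in $t$ equals the $L^2(P)$ modulus of $\mathcal{F}_P$, since $\|\xi f_{t,P}-\xi f_{\bar t,P}\|_{P\times P_\xi,2}=\|f_{t,P}-f_{\bar t,P}\|_{P,2}\to 0$ uniformly in $P$ by \eqref{eq: characteristics1}. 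For the uniform entropy, for any finitely discrete $Q$ on the product space let $\bar\mu$ be the $\mathcal{W}$‑marginal of $Q$ reweighted by $\xi^2$ and renormalized; then $\|\xi f_{s,P}-\xi f_{t,P}\|_{Q,2}/\|G_P\|_{Q,2}=\|f_{s,P}-f_{t,P}\|_{\bar\mu,2}/\|F_P\|_{\bar\mu,2}$, so $N(\epsilon\|G_P\|_{Q,2},\mathcal{G}_P,\|\cdot\|_{Q,2})=N(\epsilon\|F_P\|_{\bar\mu,2},\mathcal{F}_P,\|\cdot\|_{\bar\mu,2})\le\sup_{Q'}N(\epsilon\|F_P\|_{Q',2},\mathcal{F}_P,\|\cdot\|_{Q',2})$, and \eqref{eq: characteristics2} carries over to $\mathcal{G}_P$. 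Theorem \ref{lemma: uniform Donsker}(a) then gives $Z^*_{n,P}\rightsquigarrow\widetilde{Z}_P$ in $\ell^\infty(T)$ uniformly in $P$, with $\widetilde{Z}_P$ the $(P\times P_\xi)$‑Brownian bridge of $\mathcal{G}_P$; since $\Ep\xi=0$ and $\Ep\xi^2=1$, one checks $\mathrm{Cov}(\widetilde{Z}_P(s),\widetilde{Z}_P(t))=P(f_{s,P}f_{t,P})=\mathrm{Cov}(Z_P(s),Z_P(t))$, so $\widetilde{Z}_P\overset{d}{=}Z_P$ and (a) follows. Parts (b)--(d) of Theorem \ref{lemma: uniform Donsker} applied to $\mathcal{G}_P$ also yield unconditional stochastic equicontinuity of $Z^*_{n,P}$ uniformly in $P$, which together with a standard maximal inequality (valid under \eqref{eq: characteristics1}--\eqref{eq: characteristics2} because $q>2$) bounding $\sup_n\sup_{P\in\mathcal{P}}\Ep^*_P[\sup_{t\in T}|Z^*_{n,P}(t)|]$ I record for use in (b).

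\emph{Part (b).} Fix an arbitrary sequence $P_n\in\mathcal{P}_n$; by Lemma \ref{lemma: equivalences} it suffices to prove $\sup_{h\in\mathrm{BL}_1(\ell^\infty(T))}|\Ep_{B_n}h(Z^*_{n,P_n})-\Ep_{P_n}h(Z_{P_n})|=o^*_{P_n}(1)$, which I would obtain through the conditional analogue of the ``finite‑dimensional convergence plus asymptotic tightness'' criterion. \textit{(i) Conditional fidi convergence in probability.} For fixed $t_1,\dots,t_k\in T$, conditionally on $(W_i)_{i=1}^n$ the vector $(Z^*_{n,P_n}(t_j))_{j\le k}$ is a normalized sum of i.i.d.\ (over $i$) conditionally mean‑zero vectors $\xi_i\,(f_{t_j,P_n}(W_i))_{j\le k}$ with conditional covariance $\widehat{\Sigma}_n:=(\En[f_{t_j,P_n}f_{t_l,P_n}])_{j,l}$; a uniform LLN (uniform integrability from $\sup_P\|F_P\|_{P,q}\le C$, $q>2$) gives $\widehat{\Sigma}_n\to(P_n(f_{t_j,P_n}f_{t_l,P_n}))_{j,l}=(\mathrm{Cov}(Z_{P_n}(t_j),Z_{P_n}(t_l)))_{j,l}$ in $P_n$‑probability, and the conditional Lyapunov ratio $n^{-q/2}\sum_i|\xi_i|^q\|(f_{t_j,P_n}(W_i))_j\|^q\le n^{1-q/2}k^{q/2}\En[|\xi|^qF_{P_n}^q]\to 0$ in $P_n$‑probability holds because $q>2$ and $\Ep|\xi|^q\le C$; hence the conditional law of $(Z^*_{n,P_n}(t_j))_j$ converges in $P_n$‑probability to the corresponding Gaussian fidi of $Z_{P_n}$. \textit{(ii) Conditional asymptotic equicontinuity in probability.} Writing $\omega_{n,\delta}:=\sup_{d_T(s,t)\le\delta}|Z^*_{n,P_n}(s)-Z^*_{n,P_n}(t)|$, Part (a) upgraded to $L^1$ via the maximal bound gives $\lim_{\delta\downarrow0}\limsup_n\sup_{P\in\mathcal{P}}\Ep^*_P[\omega_{n,\delta}]=0$, and since $\Ep^*[\omega_{n,\delta}]=\Ep^*_{(W_i)}\Ep_{B_n}[\omega_{n,\delta}]$, Markov's inequality yields $\Pr_{B_n}(\omega_{n,\delta}>\varepsilon)\le\Ep_{B_n}[\omega_{n,\delta}]/\varepsilon=o^*_{P_n}(1)$ uniformly along the sequence; alternatively (ii) follows directly from the multiplier inequality (inequality 2.9.1 in \citen{vdV-W}) and chaining against the uniform entropy bound for $\mathcal{F}_{P_n}$, conditionally on the data. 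Steps (i) and (ii) together say that, in $P_n$‑probability, the conditional laws of $Z^*_{n,P_n}$ are asymptotically tight with finite‑dimensional limits equal to those of $Z_{P_n}$, i.e.\ $Z^*_{n,P_n}\rightsquigarrow_B Z_{P_n}$; since the sequence was arbitrary, this is the claimed convergence uniformly in $P\in\mathcal{P}_n$.

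\emph{Main obstacle.} Part (a) is essentially bookkeeping once $\mathcal{G}_P$ is introduced. The real difficulty is Part (b): producing a genuinely conditional equicontinuity statement that stays uniform in $P$ while the relevant function classes depend on $P$ and are not Donsker in $n$, and then assembling conditional finite‑dimensional convergence with conditional tightness into conditional weak convergence in probability — a mode of convergence whose Portmanteau and tightness apparatus has to be handled in the $\Ep_{B_n}$, $\mathrm{BL}_1$ formulation rather than quoted off the shelf. Keeping every estimate uniform in $P$, so that Lemma \ref{lemma: equivalences} applies at each step, is the recurring technical burden.
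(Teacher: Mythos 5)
Your proposal is correct and follows essentially the same route as the paper: part (a) by applying the uniform Donsker theorem to the multiplier class $\xi\mathcal{F}_P$ after checking that the envelope, $L^2$ modulus, and uniform entropy carry over, and part (b) by combining a conditional finite-dimensional multiplier CLT with a Markov-inequality transfer of the unconditional equicontinuity to the conditional one, assembled through a $\delta$-net (the paper's $I_P+II_P+III_P$ decomposition). The only cosmetic differences are that the paper proves the fidi step (its Lemma \ref{lemma: fidi mb}) via the Mallows metric and subsequences rather than your Lyapunov ratios, and that your restriction to $Pf_{t,P}\equiv 0$ is unnecessary since $\Ep[\xi f_{t,P}]=0$ automatically by independence and $\Ep\xi=0$.
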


\subsection{Uniform in $P$ Functional Delta Method and Bootstrap}
We shall use the functional delta method, as formulated in \citen[Chap. 3.9]{vdV-W}. Let $\mathbb{D%
}_{0}$, $\mathbb{D}$, and $\mathbb{E} $ be normed spaces, with $\mathbb{D}%
_{0} \subset\mathbb{D}$. A map $\phi: \mathbb{D}_{\phi} \subset\mathbb{D}
\longmapsto\mathbb{E}$ is called \textit{Hadamard-differentiable} at $\rho\in%
\mathbb{D}_{\phi}$ tangentially to $\mathbb{D}_{0}$ if there is a continuous
linear map $\phi_{\rho}^{\prime}: \mathbb{D}_{0} \longmapsto\mathbb{E}$ such
that
\begin{equation*}
\frac{\phi(\rho+ t_{n} h_{n}) - \phi(\rho)}{t_{n}} \rightarrow\phi
_{\rho}^{\prime}(h), \ \ \ n \rightarrow\infty,
\end{equation*}
for all sequences $t_{n} \rightarrow0$ in $\mathbb{R}$ and $h_{n} \rightarrow h \in \mathbb{D%
}_{0}$ in $\D$ such that $\rho+ t_{n} h_{n} \in\mathbb{D}_{\phi}$ for every $n$.

We now define the following notion of the uniform Hadamard differentiability:

\begin{definition}[\textbf{Uniform Hadamard Tangential Differentiability}]\label{def:uhd}
Consider a map $\phi: \mathbb{D}_\phi  \longmapsto\mathbb{E}$, where the domain of the map $\D_\phi $
is a subset of a normed space $ \D$ and the range is a subset of the normed space $\mathbb{E}$. Let
$\mathbb{D%
}_{0}$ be a normed space, with $\mathbb{D}%
_{0} \subset\mathbb{D}$, and  $\mathbb{D}_\rho$ be a compact metric space, a subset of $\mathbb{D}_\phi$. The map $\phi: \mathbb{D}_\phi \longmapsto\mathbb{E}$ is called \textit{Hadamard-differentiable uniformly} in $\rho \in \mathbb{D}_\rho$ tangentially to $\mathbb{D}_0$ with derivative map $h \longmapsto \phi'_\rho(h)$, if
\begin{equation*}
\Big |\frac{\phi(\rho_n+ t_{n} h_{n}) - \phi(\rho_n)}{t_{n}} -\phi_{\rho}^{\prime}(h)\Big|  \to 0,\ \   \Big| \phi'_{\rho_n} (h_n) -  \phi_{\rho}^{\prime}(h) \Big|    \to 0 , \ \ \ n \rightarrow\infty,
\end{equation*}
for all convergent sequences $\rho_n  \to \rho$ in $\mathbb{D}_\rho$, $t_{n} \rightarrow0$ in $\mathbb{R}$, and $h_{n} \rightarrow h \in \D_0$ in $\D$ such that $\rho_n+ t_{n} h_{n} \in\mathbb{D}_{\phi}$ for every $n$.  As a part of
the definition, we require
that the derivative map $h \longmapsto \phi_{\rho}^{\prime}(h)$ from $\mathbb{D}_0$
to $\mathbb{E}$ is linear for each $\rho \in \D_\rho$.\qed  \end{definition}

\begin{remark} Note that the definition requires that  the derivative map $ (\rho, h) \longmapsto  \phi_{\rho}^{\prime}(h)$, mapping $\mathbb{D}_\rho \times \mathbb{D}_0$ to $\mathbb{E}$,  is continuous at each $(\rho, h) \in \mathbb{D}_{\rho} \times \mathbb{D}_0$.  \qed \end{remark}

\begin{remark}[\textbf{Important Details of the Definition}]  Definition \ref{def:uhd} is different from the definition of  uniform differentiability given in
\citen[p. 379, eq. (3.9.12)]{vdV-W}, since our definition allows $\D_\rho$ to be much smaller than $\D_\phi$ and allows
$\D_\rho$ to be endowed with a much stronger metric  than the metric induced by the norm of $\D$.
These differences are essential for infinite-dimensional applications. For example, the quantile/inverse map is uniformly Hadamard
differentiable in the sense of Definition \ref{def:uhd} for a  suitable choice of $\D_\rho$:  Let $T = [\epsilon, 1- \epsilon]$,
$\D=\ell^\infty(T)$, $\D_\phi$= set of cadlag functions on $T$, $\D_0 = \mathrm{UC}(T)$, and  $\mathbb{D}_{\rho}$ be a compact subset of  ${C}^{1}(T)$ such that each $\rho \in \mathbb{D}_{\rho}$  obeys $\partial \rho(t)/\partial t \geq c>0$ on $t \in T$, where $c$ is a positive constant. However, the quantile/inverse map is  not  Hadamard differentiable uniformly on $\D_\rho$ if we set $\D_{\rho} =\D_\phi$  and hence is not uniformly differentiable
in the sense of the definition given in \citen{vdV-W} which requires $\D_\rho = \D_\phi$.  It is important and practical to keep the distinction between $\D_{\rho}$ and  $\D_\phi$ since the estimated values $\hat \rho$ may well be outside $\D_{\rho}$ unless explicitly imposed in estimation even though the population values of $\rho$ are in $\D_{\rho}$ by assumption. For example, the empirical cdf is in $\D_{\phi}$, but is outside $\D_{\rho}.$
 \qed
\end{remark}

\begin{theorem}[\textbf{Functional delta-method uniformly in $P \in \mP$}]\label{thm: delta-method} Let $\phi: \D_{\phi} \subset\D \longmapsto%
\mathbb{E}$ be Hadamard-differentiable uniformly in $ \rho \in \D_{\rho} \subset \D_{\phi}$ tangentially to $\mathbb{D%
}_{0}$ with derivative map  $\phi_{\rho}^{\prime}$. Let $\hat \rho_{n,P}$ be a sequence of stochastic processes taking values in $\D_{\phi}$, where each $\hat \rho_{n,P}$ is an estimator of the parameter $\rho_P \in \D_{\rho}$. Suppose there exists a sequence of constants $r_{n} \rightarrow\infty$ such that $Z_{n,P}= r_{n} (\hat \rho_{n,P}- \rho_P) \rightsquigarrow Z_P$  in $\D$ uniformly in $P \in \mP_n$. The limit process $Z_P$ is separable and takes its values in $\D_{0}$ for all $P \in \mP = \cup_{n \geq n_0} \mP_n$, where $n_0$ is fixed.  Moreover, the set of stochastic processes $\{Z_P: P \in \mP\}$  is relatively compact in the topology of weak convergence in $\D_0$, that is, every sequence  in this set can be split into weakly convergent subsequences. Then, $r_{n}\left( \phi(\hat \rho_{n,P}) - \phi(\rho_P) \right) \rightsquigarrow\phi_{\rho_P}^{\prime}(Z_P)$ in $\mathbb{E}$
 uniformly in $P \in \mP_n$. If $(\rho, h) \longmapsto \phi_{\rho}^{'}(h)$ is defined and continuous on the whole of $\D_\rho \times \D$, then the sequence $r_{n}\left( \phi(\hat \rho_{n,P}) - \phi(\rho_P) \right) -
\phi_{\rho_P}^{\prime}\left( r_{n} (\hat \rho_{n,P}- \rho_P)\right) $  converges to zero in outer probability  uniformly in $P \in \mP_n$.  Moreover, the set of stochastic processes $\{\phi'_{\rho_P}(Z_P): P \in \mP\}$  is relatively compact in the topology of weak convergence in $\mathbb{E}$.
\end{theorem}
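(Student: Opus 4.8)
The plan is to deduce the uniform statements from their sequential counterparts via Lemma~\ref{lemma: equivalences}, and then to run the classical functional delta method argument (as in \citen[Chap.~3.9]{vdV-W}) along an arbitrary sequence $P_n \in \mathcal{P}_n$, using a subsequence extraction to control the moving center $\rho_{P_n}$ and the moving limit laws $Z_{P_n}$, with the extended continuous mapping theorem (\citen[Thm.~1.11.1]{vdV-W}) as the main engine. Fix $P_n \in \mathcal{P}_n$ and abbreviate $\rho_n := \rho_{P_n}$, $\hat\rho_n := \hat\rho_{n,P_n}$, $Z_n := r_n(\hat\rho_n - \rho_n)$. By Lemma~\ref{lemma: equivalences} it suffices to show $r_n(\phi(\hat\rho_n) - \phi(\rho_n)) \rightsquigarrow \phi'_{\rho_n}(Z_{P_n})$, i.e. that $a_n := \sup_{h \in \mathrm{BL}_1(\mathbb{E})} |\Ep^*_{P_n} h(r_n(\phi(\hat\rho_n) - \phi(\rho_n))) - \Ep_{P_n} h(\phi'_{\rho_n}(Z_{P_n}))| \to 0$; since $a_n \geq 0$, it is enough to prove that every subsequence has a further subsequence along which $a_n \to 0$. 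Along a subsequence, compactness of $\mathbb{D}_\rho$ gives $\rho_n \to \rho_\infty \in \mathbb{D}_\rho$, and, after passing to a further subsequence, the assumed relative compactness of $\{Z_P : P \in \mathcal{P}\}$ in the weak topology on $\mathbb{D}_0$ gives $Z_{P_n} \rightsquigarrow Z_\infty$ in $\mathbb{D}$ for some tight Borel law $Z_\infty$ concentrated on $\mathbb{D}_0$; since $Z_n \rightsquigarrow Z_{P_n}$ (Lemma~\ref{lemma: equivalences} applied to the hypothesis of the theorem), the triangle inequality for the bounded-Lipschitz metric yields $Z_n \rightsquigarrow Z_\infty$ in $\mathbb{D}$.

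Next I would introduce the difference-quotient maps $g_n : \mathbb{D}_{g_n} \to \mathbb{E}$, where $\mathbb{D}_{g_n} := \{ h \in \mathbb{D} : \rho_n + r_n^{-1} h \in \mathbb{D}_\phi \} \ni Z_n$ and $g_n(h) := r_n (\phi(\rho_n + r_n^{-1} h) - \phi(\rho_n))$, so that $g_n(Z_n) = r_n(\phi(\hat\rho_n) - \phi(\rho_n))$. The uniform Hadamard differentiability of Definition~\ref{def:uhd}, applied with $t_n = r_n^{-1} \to 0$, the convergent sequence $\rho_n \to \rho_\infty$ in $\mathbb{D}_\rho$, and any $h_n \to h \in \mathbb{D}_0$ with $\rho_n + r_n^{-1} h_n \in \mathbb{D}_\phi$, gives $g_n(h_n) \to \phi'_{\rho_\infty}(h)$. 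Because $Z_n \rightsquigarrow Z_\infty$ with $Z_\infty$ separable and concentrated on $\mathbb{D}_0$, the extended continuous mapping theorem yields $g_n(Z_n) \rightsquigarrow \phi'_{\rho_\infty}(Z_\infty)$ in $\mathbb{E}$. On the other hand, the joint continuity of $(\varrho, h) \mapsto \phi'_\varrho(h)$ on $\mathbb{D}_\rho \times \mathbb{D}_0$ built into Definition~\ref{def:uhd}, combined with $\rho_n \to \rho_\infty$ and a second application of the extended continuous mapping theorem to $Z_{P_n} \rightsquigarrow Z_\infty$, gives $\phi'_{\rho_n}(Z_{P_n}) \rightsquigarrow \phi'_{\rho_\infty}(Z_\infty)$. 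A final triangle inequality for the bounded-Lipschitz metric then gives $a_n \to 0$ along the subsequence, which proves the first assertion. The relative compactness of $\{\phi'_{\rho_P}(Z_P) : P \in \mathcal{P}\}$ in the weak-convergence topology on $\mathbb{E}$ follows from the same extraction: any sequence in this set admits a subsequence with $\rho_{P_k} \to \rho_\infty$ and $Z_{P_k} \rightsquigarrow Z_\infty$, hence (extended continuous mapping theorem again) $\phi'_{\rho_{P_k}}(Z_{P_k}) \rightsquigarrow \phi'_{\rho_\infty}(Z_\infty)$, a tight Borel law as the continuous image of a tight Borel law.

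For the asymptotic linearization under the extra hypothesis that $(\varrho, h) \mapsto \phi'_\varrho(h)$ is defined and jointly continuous on all of $\mathbb{D}_\rho \times \mathbb{D}$, I would apply the extended continuous mapping theorem to the maps $h \mapsto g_n(h) - \phi'_{\rho_n}(h)$ on $\mathbb{D}_{g_n}$: for $h_n \to h \in \mathbb{D}_0$ with $h_n \in \mathbb{D}_{g_n}$ one has $g_n(h_n) \to \phi'_{\rho_\infty}(h)$ (uniform Hadamard differentiability) and $\phi'_{\rho_n}(h_n) \to \phi'_{\rho_\infty}(h)$ (joint continuity on $\mathbb{D}_\rho \times \mathbb{D}$), so $g_n(h_n) - \phi'_{\rho_n}(h_n) \to 0$; hence $g_n(Z_n) - \phi'_{\rho_n}(Z_n) \rightsquigarrow 0$ in $\mathbb{E}$, i.e. $\to 0$ in outer probability, along the subsequence, and therefore $r_n(\phi(\hat\rho_{n,P}) - \phi(\rho_P)) - \phi'_{\rho_P}(r_n(\hat\rho_{n,P} - \rho_P)) \to 0$ in outer probability uniformly in $P \in \mathcal{P}_n$ by the subsequence principle together with Lemma~\ref{lemma: equivalences}. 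Throughout, non-measurability is handled by passing to outer expectations and by the asymptotic-measurability bookkeeping of \citen[Chaps.~1.9--1.11]{vdV-W}, which is legitimate under the suitable-measurability conventions of Appendix~\ref{subsec:notation}.

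I expect the main obstacle to be bookkeeping the two moving objects simultaneously — the center $\rho_{P_n}$ of the differentiation and the limit law $Z_{P_n}$ — so that, along the extracted subsequence, both the difference quotients $g_n$ and the derivative maps $\phi'_{\rho_n}$ converge to the \emph{same} continuous limit map $\phi'_{\rho_\infty}$. This is precisely where both halves of Definition~\ref{def:uhd} are indispensable: the convergence $(\phi(\rho_n + t_n h_n) - \phi(\rho_n))/t_n \to \phi'_{\varrho}(h)$ controls $g_n$, while the joint continuity of $(\varrho,h) \mapsto \phi'_\varrho(h)$ controls $\phi'_{\rho_n}$ and is what forces the stronger (compact) metric on $\mathbb{D}_\rho$ discussed after Definition~\ref{def:uhd}; once these are in place, the remainder is the routine functional delta method argument carried out one extracted subsequence at a time.
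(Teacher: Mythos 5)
Your proposal is correct and follows essentially the same route as the paper's own proof: extract subsequences along which $\rho_{P_n}\to\rho'$ (by compactness of $\D_\rho$) and $Z_{P_n}\rightsquigarrow Z'$ (by the assumed relative compactness), then apply the extended continuous mapping theorem to the difference-quotient maps $g_n$ and to the maps $h\mapsto g_n(h)-\phi'_{\rho_n}(h)$, exactly as in the paper. Your explicit bounded-Lipschitz triangle inequality comparing $g_n(Z_n)$ with the moving target $\phi'_{\rho_{P_n}}(Z_{P_n})$ is a slightly more spelled-out version of the step the paper handles in its relative-compactness paragraph, but it is the same argument.
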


The following result on the functional delta method applies to any bootstrap or other simulation method obeying certain conditions. Such methods include the multiplier bootstrap as a special case. Let $D_{n,P}= (W_{i,P})_{i=1}^n$ denote the data vector and $B_n= (\xi_i)_{i=1}^n$ be a vector of random variables used to generate bootstrap or simulation draws (the specifics may vary depending on the particular method employed). Consider sequences of stochastic processes
$\hat \rho_{n,P}= \hat \rho_{n,P}(D_{n,P})$, where  $Z_{n,P}=r_n(\hat \rho_{n,P}- \rho_{P}) \rightsquigarrow Z_P$ in the normed space $\D$ uniformly in $P \in \mP_n$. Also consider the bootstrap stochastic process $Z^{*}_{n,P} = Z_{n,P}(D_{n,P}, B_{n})$ in  $%
\D$, where $Z_{n,P}$ is a measurable function of $B_n$ for each value of $D_n$.
Suppose that  $Z^{*}_{n,P}$ converges conditionally given $D_{n}$ in distribution to $Z_P$
uniformly in $P \in \mP_n$, namely that
$$
 \sup_{h \in \mathrm{BL}_1(\D)} | \Ep_{B_n} [h(Z^*_{n,P})] -  \Ep_{P}  h(Z_P)| = o^*_P(1),
$$
uniformly in $P \in \mP_n$, where $\Ep_{B_n}$ denotes the expectation computed with respect
 to the law of $B_n$ holding the data $D_{n,P}$ fixed.  This is  denoted as  ``$Z^{*}_{n,P}
\rightsquigarrow_{B} Z_P $ uniformly in $P \in \mP_n$." Finally, let $\hat \rho^{*}_{n,P} = \hat \rho_{n,P}+ Z^{*}_{n,P}/r_n $ denote the bootstrap or simulation
draw of $\hat \rho_{n,P}$.

\begin{theorem}[\textbf{Uniform in $P$ functional delta-method for bootstrap and other simulation methods}]
\label{theorem:delta-method-bootstrap}
Assume the conditions of Theorem \ref{thm: delta-method} hold.  Let $\hat \rho_{n,P}$ and $\hat \rho^{*}_{n,P}$ be maps as
indicated previously taking values in $\D_{\phi}$ such that $r_n%
(\hat \rho_{n,P}- \rho_P) \rightsquigarrow Z_P$ and  $r_n(\hat \rho^{*}_{n,P} - \hat \rho_{n, P}) \rightsquigarrow_{B} Z_P$ in $\D$ uniformly in $P \in \mP_n$. Then, $X^*_{n,P}=r_n(\phi(\hat \rho ^{*}_{n,P}) -
\phi(\hat \rho_{n,P})) \rightsquigarrow_{B} X_P= \phi_{\rho_P}^{\prime }(Z_P) $ uniformly in $P \in \mP_n$.
\end{theorem}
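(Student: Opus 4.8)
The plan is to exploit that, although the uniform Hadamard differentiability of Definition~\ref{def:uhd} cannot be invoked with the random base point $\hat\rho_{n,P}$ (the estimated values need not lie in $\D_\rho$, and $\D_\rho$ may carry a stronger metric than $\D$), one \emph{can} apply the uniform functional delta method of Theorem~\ref{thm: delta-method} to $\hat\rho_{n,P}$ and to $\hat\rho^*_{n,P}$ \emph{separately}, both centered at the population value $\rho_P\in\D_\rho$, and then subtract using linearity of $\phi'_{\rho_P}$. Throughout, by Lemma~\ref{lemma: equivalences} it suffices to verify each assertion along an arbitrary sequence $P_n\in\mP_n$.

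\textbf{Step 1: $\hat\rho^*_{n,P}$ is an $r_n$-consistent estimator of $\rho_P$ with an independent-copy limit.} Write $r_n(\hat\rho^*_{n,P}-\rho_P)=Z_{n,P}+Z^*_{n,P}$, where $Z_{n,P}:=r_n(\hat\rho_{n,P}-\rho_P)$ and $Z^*_{n,P}:=r_n(\hat\rho^*_{n,P}-\hat\rho_{n,P})$. By hypothesis $Z_{n,P}\rightsquigarrow Z_P$ uniformly in $P$ (part of the conditions of Theorem~\ref{thm: delta-method}) and $Z^*_{n,P}\rightsquigarrow_B Z_P$ uniformly in $P$; integrating the latter over the data shows $Z^*_{n,P}\rightsquigarrow Z_P$ unconditionally, uniformly in $P$. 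Since $Z_{n,P}$ is data-measurable while the conditional law of $Z^*_{n,P}$ given the data is asymptotically nonrandom, a routine uniform-in-$P$ version of the standard argument (conditional weak convergence plus a data-measurable weakly convergent companion yields unconditional joint convergence to independent limits) gives $r_n(\hat\rho^*_{n,P}-\rho_P)\rightsquigarrow Z_P+\widetilde Z_P$ uniformly in $P$, with $\widetilde Z_P$ an independent copy of $Z_P$. Because $\D_0$ is a linear subspace of $\D$, this limit takes values in $\D_0$; and because $\{Z_P:P\in\mP\}$ is relatively compact in the topology of weak convergence in $\D_0$ (a hypothesis of Theorem~\ref{thm: delta-method}), so is the family of self-convolutions $\{Z_P+\widetilde Z_P:P\in\mP\}$.

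\textbf{Step 2: two delta-method expansions and subtraction.} Apply Theorem~\ref{thm: delta-method} to $\hat\rho_{n,P}$ (centered at $\rho_P$, limit $Z_P$) and to $\hat\rho^*_{n,P}$ (centered at $\rho_P$, limit $Z_P+\widetilde Z_P$ from Step 1, now regarded as living on the extension of the space carrying the multipliers), invoking the linearization conclusion of that theorem, which is available because the derivative map $(\rho,h)\mapsto\phi'_\rho(h)$ is, under the maintained hypotheses and in particular for every functional used in this paper (ratios and differences of the reduced-form parameters, and the quantile/inverse map), defined and continuous on all of $\D_\rho\times\D$. This yields, uniformly in $P\in\mP_n$,
$$ r_n\big(\phi(\hat\rho_{n,P})-\phi(\rho_P)\big)=\phi'_{\rho_P}(Z_{n,P})+o_P(1),\qquad r_n\big(\phi(\hat\rho^*_{n,P})-\phi(\rho_P)\big)=\phi'_{\rho_P}(Z_{n,P}+Z^*_{n,P})+o_P(1). $$
Subtracting and using linearity of $\phi'_{\rho_P}$,
$$ X^*_{n,P}=r_n\big(\phi(\hat\rho^*_{n,P})-\phi(\hat\rho_{n,P})\big)=\phi'_{\rho_P}(Z^*_{n,P})+R_{n,P},\qquad R_{n,P}=o_P(1)\ \text{ uniformly in }P. $$

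\textbf{Step 3: upgrade to the conditional limit, and the expected obstacle.} Since $Z^*_{n,P}\rightsquigarrow_B Z_P$ uniformly in $P$ and $(\rho,h)\mapsto\phi'_\rho(h)$ is jointly continuous with every $\rho_P$ lying in the compact $\D_\rho$, an extended continuous-mapping statement for conditional convergence gives $\phi'_{\rho_P}(Z^*_{n,P})\rightsquigarrow_B\phi'_{\rho_P}(Z_P)$ uniformly in $P$. The remainder is conditionally negligible: $R_{n,P}=o_P(1)$ uniformly and $|R_{n,P}|\wedge1\le1$ force $\Ep^*_P(|R_{n,P}|\wedge1)\to0$ uniformly in $P$, so by the identity $\Ep^*_P(\cdot)=\Ep^*_P\Ep_{B_n}(\cdot)$ and Markov's inequality applied conditionally, $\Ep_{B_n}(|R_{n,P}|\wedge1)=o_P(1)$ uniformly in $P$, i.e.\ $R_{n,P}\rightsquigarrow_B0$ uniformly in $P$. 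A conditional Slutsky argument (again via Lemma~\ref{lemma: equivalences}) then delivers $X^*_{n,P}\rightsquigarrow_B\phi'_{\rho_P}(Z_P)=X_P$ uniformly in $P\in\mP_n$. The delicate point is Step 1 --- establishing, uniformly in $P$, the unconditional joint convergence of $(Z_{n,P},Z^*_{n,P})$ to independent copies together with the relative compactness of the resulting limit family, so that Theorem~\ref{thm: delta-method} can legitimately be applied with $\hat\rho^*_{n,P}$ as the estimator; this is where the bootstrap conditioning and the uniform-in-$P$ asymptotics interact most tightly, and nothing here uses the multiplier structure specifically, so the argument covers any simulation scheme obeying the stated conditional convergence. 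A minor secondary point is the Fubini-plus-Markov transfer in Step 3 from the unconditional rate on $R_{n,P}$ to its conditional counterpart.
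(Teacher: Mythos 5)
Your proposal follows essentially the same route as the paper's proof: (i) pass to the unconditional joint convergence of $(Z_{n,P},Z^*_{n,P})$ so that $r_n(\hat\rho^*_{n,P}-\rho_P)$ converges to $Z_P$ plus an independent copy, (ii) apply the delta-method linearization to $\hat\rho_{n,P}$ and to $\hat\rho^*_{n,P}$, both centered at $\rho_P$, and subtract using linearity of the derivative, and (iii) transfer the unconditional negligibility of the remainder to the conditional law by a Fubini--Markov argument and conclude with a conditional continuous-mapping/Slutsky step. This is exactly the decomposition in Part~2 of the paper's proof (which works along subsequences $\N'$ with $\rho_{P_n}\to\rho'$, obtains the analogue of your identity via claim (\ref{dclaim3}) applied to both $\hat\rho_{n,P_n}$ and $\hat\rho^*_{n,P_n}$, and then uses that $h\circ\phi'_{\rho'}\in\mathrm{BL}_C(\D)$ together with the indicator bound (\ref{good thing})).

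The one point where your argument falls short of the theorem as stated is in Step~2: the linearization conclusion of Theorem~\ref{thm: delta-method} is available only under the \emph{additional} hypothesis that $(\rho,h)\longmapsto\phi'_\rho(h)$ is defined and continuous on the whole of $\D_\rho\times\D$, and Theorem~\ref{theorem:delta-method-bootstrap} does not assume this --- uniform Hadamard differentiability only gives a derivative defined on the tangent space $\D_0$. Your justification (that the continuity on $\D_\rho\times\D$ holds ``for every functional used in this paper'') proves a special case, not the theorem. The paper closes this gap by a Hahn--Banach extension: along each subsequence with $\rho_{P_n}\to\rho'$, the continuous linear map $\phi'_{\rho'}:\D_0\to\E$ is extended to all of $\D$ with $\|\phi'_{\rho'}\|_{\D\to\E}=\|\phi'_{\rho'}\|_{\D_0\to\E}<\infty$, after which the two linearizations and the subsequent Lipschitz/continuous-mapping steps go through without any extra hypothesis. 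Incorporating that device (or restricting the claim to the case where the derivative extends continuously) would make your argument complete.
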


\subsection{Proof of Theorem \ref{lemma: uniform Donsker}.}  Part (a) and (b) are a direct consequence of Lemma \ref{lemma: Donsker dep on n}. In particular,
Lemma \ref{lemma: Donsker dep on n}(a) implies stochastic equicontinuity under arbitrary subsequences $P_n \in \mathcal{P}$, which implies  part (b).  Part  (a) follows from Lemma \ref{lemma: Donsker dep on n}(b) by splitting an arbitrary sequence $n \in \mathbb{N}$ into subsequences $n \in \mathbb{N}'$ along each of which  the covariance function $
(t,s) \longmapsto c_{P_n}(t,s) := P_n f_{s,P_n} f_{t,P_n} - P_n f_{s,P_n} P_n f_{t,P_n}
$
converges uniformly and therefore also pointwise to a uniformly continuous function on $(T,d_T)$. This convergence is possible because $\{(t,s) \longmapsto c_{P}(t,s) : P \in \mathcal{P}\}$ is a relatively compact set in $\ell^\infty(T\times T)$ in view of the Arzela-Ascoli Theorem, the assumptions in equation (\ref{eq: characteristics1}), and total boundedness of $(T,d_T)$.  By Lemma \ref{lemma: Donsker dep on n}(b) pointwise convergence of the covariance function implies weak convergence to a tight Gaussian process which may depend on the identity $\mathbb{N}'$ of the subsequence.
Since this argument applies to each such subsequence that split the overall sequence, part (b) follows.

Part (c)
is immediate from the imposed uniform covering entropy condition and Dudley's metric entropy inequality for expectations of suprema of Gaussian processes (Corollary 2.2.8 in \citen{vdV-W}).  Claim (d) follows from claim (c) and a standard argument, based on the application of the Borel-Cantelli lemma. Indeed, let $m \in \N$ be a sequence and $\delta_m:= 2^{-m}\wedge \ \sup\left \{ \delta>0 : \sup_{P \in \mathcal{P}} \Ep_P \sup_{d_T(t, \bar t) \leq \delta} |Z_{P}(t) - Z_{P}(\bar t)| < 2^{-2m} \right \},$ then by the Markov inequality
$
\Pr_P \left ( \sup_{d_T(t, \bar t) \leq \delta_m} |Z_{P}(t) - Z_{P}(\bar t)| > 2^{-m}  \right ) \leq  2^{-2m+m} = 2^{-m}.
$
This sums to a finite number over $m \in \mathbb{N}$. Hence, by the Borel-Cantelli lemma, for almost all states $\omega \in \Omega$,
$|Z_{P}(t)(\omega) - Z_{P}(\bar t)(\omega)| \leq 2^{-m}$  for all $d_T(t, \bar t) \leq \delta_m \leq 2^{-m}$ and all $m$ sufficiently large. Hence
claim (d) follows.
\qed

\subsection{Proof  of Theorem \ref{lemma: uniform Donsker for bootstrap}}  Claim (a) is verified by  invoking Theorem \ref{lemma: uniform Donsker}.    We begin by showing that $Z^*_{P} =   (\mathbb{G}_{P}  \xi f_{t,P})_{t \in T}$ is equal in distribution to
 $Z_P = (\mathbb{G}_{P}  f_{t,P})_{t \in T}$, in particular, $Z^*_P$ and $Z_P$ share identical mean and covariance function, and thus they  share the continuity properties established in Theorem \ref{lemma: uniform Donsker}. This claim is immediate from the fact that multiplication by $\xi$ of  each $f \in \mF_P=\{f_{t,P}: t \in T\}$ yields a set $\xi \mF_P$ of measurable functions $\xi f:  (w,\xi) \longmapsto \xi f(w)$, mapping $\mathcal{W} \times \mathbb{R}$ to $\mathbb{R}$. Each such function has mean zero under $P \times P_\xi$,  i.e.  $\int s f(w) dP_{\xi}(s) dP(w) = 0$,  and covariance function $(\xi f,  \xi \tilde f) \longmapsto P f \tilde f - Pf P \tilde f$.  Hence the Gaussian process $(\mathbb{G}_P( \xi f))_{\xi f \in \xi \mF_P}$  shares the zero mean and the covariance function of $(\mathbb{G}_P(f))_{f \in \mF_P}$.

We are claiming that $Z^*_{n,P} \rightsquigarrow  Z^*_{P}$ in $\ell^\infty(T)$ uniformly in $P \in \mP$, where $Z^*_{n, P} :=   (\mathbb{G}_{n}   \xi f_{t,P})_{t \in T}$. We note that the function class $\mF_P$ and the corresponding envelope $F_P$ satisfy the conditions of Theorem \ref{lemma: uniform Donsker}.  The same is also true for the function class $\xi \mathcal{F}_P$ defined by $(w,\xi) \longmapsto \xi f_P(w)$, which maps $\mathcal{W}\times \mathbb{R}$ to $\mathbb{R}$ and its envelope $ |\xi| F_P$, since $\xi$ is independent of $W$.  Let $Q$ now denote a finitely discrete measure over $\mathcal{W}\times \mathbb{R}$. By Lemma \ref{lemma: andrews} multiplication by $\xi$ does not change qualitatively the uniform covering entropy bound:
$$
\log \sup_Q N( \epsilon \| |\xi| F_P\|_{Q,2} , \xi \mathcal{F}_P, \| \cdot \|_{Q,2}) \leq
 \log \sup_Q N( 2^{-1} \epsilon \| F_P\|_{Q,2},  \mathcal{F}_P, \| \cdot \|_{Q,2}).
$$
Moreover, multiplication by $\xi$ does not affect the norms, $\|\xi f_P(W)\|_{P\times P_\xi,2}= \|f_P(W)\|_{P,2}$, since $\xi$ is independent of $W$ by construction and $\Ep \xi^2 =1$.  The claim then follows.

Claim (b).  For each $\delta>0$ and $t \in T$, let $\pi_\delta (t)$ denote a closest element in a given, finite $\delta$-net
over $T$.   We begin by noting that
\begin{eqnarray*}
\Delta_P& := & \sup_{h \in \textrm{BL}_1} | \Ep_{B_n} h (Z^*_{n,P}) - \Ep_P h(Z_P) |  \\
& \leq &  I_P + II_P + III_P :=    \sup_{h  \in \textrm{BL}_1} | \Ep_P h(Z_P \circ \pi_\delta) -\Ep_P h(Z_P)| \\
 &  & +  \sup_{h  \in \textrm{BL}_1} | \Ep_{B_n} h( Z^*_{n,P} \circ \pi_\delta) - \Ep_P h(Z_P \circ \pi_\delta)|
 + \sup_{h  \in \textrm{BL}_1} | \Ep_{B_n} h (Z^*_{n,P}\circ \pi_\delta ) - \Ep_{B_n} h (Z^*_{n,P}) |,
\end{eqnarray*}
where  here and below $\textrm{BL}_1$ abbreviates $\textrm{BL}_1(\ell^{\infty}(T))$.

First,  we note that
$
I_P \leq  \Ep_P \left(\sup_{d_T(t, \bar t) \leq \delta}  | Z_{P}(t)  -  Z_{P}(\bar t)| \wedge 2 \right) =: \mu_P(\delta)$ and
 $\lim_{\delta \searrow 0} \sup_{P \in \mathcal{P}} \mu_P(\delta) = 0.
$
The first assertion follows from $$ I_P  \leq    \sup_{h  \in \textrm{BL}_1}  \Ep_P| h (Z^*_{n,P}\circ \pi_\delta ) - h (Z^*_{n,P}) |  \leq \Ep_P  \left( \sup_{t \in T}  | Z_{P}\circ \pi_\delta(t)  -  Z_{P}(t)| \wedge 2 \right)  \leq  \mu_P(\delta),$$ and the second assertion holds by Theorem \ref{lemma: uniform Donsker} (c).

Second,   $\Ep^*_PIII_P  \leq   \Ep^*_P  \left( \sup_{d_T(t, \bar t) \leq \delta}  | Z^*_{n,P}(t)  -  Z^*_{n,P}(\bar t)| \wedge 2 \right) =: \mu^*_P(\delta)$ and $ \lim_{n\to \infty} \sup_{P \in \mP} |\mu^*_P(\delta) - \mu_P(\delta) | = 0.$
The first assertion follows because $\Ep^*_PIII_P$ is bounded by
\begin{eqnarray*}
  \Ep^*_P \sup_{h  \in \textrm{BL}_1}  \Ep_{B_n}  | h (Z^*_{n,P}\circ \pi_\delta ) - h (Z^*_{n,P}) | \leq \Ep^*_P  \Ep_{B_n}  \left( \sup_{t \in T}  | Z^*_{n,P}\circ \pi_\delta(t)  -  Z^*_{n,P}(t)| \wedge 2 \right) \leq  \mu^*_P(\delta).
\end{eqnarray*}
The second assertion holds by part (a) of the present theorem.

Define $
\epsilon(\delta)  := \delta \vee \sup_{P \in \mP} \mu_P(\delta).
$
Then, by Markov's inequality, followed by $n \to \infty$,
$$
\limsup_{n \to \infty} \sup_{P \in \mP} \Pr^*_P \left (  III_P > \sqrt{ \epsilon (\delta) }\right) \leq \limsup_{n\to \infty} \frac{\sup_{P \in \mP} \mu^*_P(\delta)}{\sqrt{\epsilon(\delta)}} \leq  \frac{\sup_{P \in \mP} \mu_P(\delta)}{\sqrt{\epsilon (\delta)}} \leq \sqrt{\epsilon (\delta)}.
$$

Finally, by Lemma \ref{lemma: fidi mb}, 
 for each $\varepsilon>0$, $
\limsup_{n \to \infty} \sup_{P \in \mP} \Pr^*_P \left (  II_P > \varepsilon \right) = 0.
$

We can now conclude. Note that $\epsilon(\delta) \searrow 0$ if $\delta \searrow 0$, which holds by the definition of $\epsilon(\delta)$ and the property   $\sup_{P \in \mP} \mu_P(\delta) \searrow 0$  if $\delta \searrow 0$ noted above. Hence for each $\varepsilon>0$ and all $0<\delta< \delta_\varepsilon$ such that $3 \sqrt{ \epsilon (\delta)}< \varepsilon$,  \begin{eqnarray*}
\limsup_{n \to \infty} \sup_{P \in \mP} \Pr_P^*  \left (  \Delta_P >  \varepsilon  \right)
\leq \limsup_{n \to \infty} \sup_{P \in \mP} \Pr_P^* \left (  I_P + II_P + III_P >  3 \sqrt{ \epsilon (\delta) } \right) \leq  \sqrt{\epsilon (\delta)}.
\end{eqnarray*}
Sending $\delta \searrow 0$ gives the result. \qed

\subsection{Auxiliary Result: Conditional Multiplier CLT in $\mathbb{R}^d$ uniformly in $P \in \mathcal{P}$.}

We rely on the following lemma, which is apparently new.  An analogous result can be derived for almost sure convergence
from well-known non-uniform multiplier central limit theorems, but this strategy requires us to put all the variables indexed by $P$ on the single underlying probability space, which is much less convenient in applications.

\begin{lemma}[Conditional Multiplier Central Limit Theorem in $\mathbb{R}^d$ uniformly in $P \in \mathcal{P}$]  \label{lemma: fidi mb}   Let $(Z_{i,P})_{i=1}^\infty$ be i.i.d. random vectors on $\mathbb{R}^d$, indexed by  a parameter $P \in \mP$. The parameter $P$ represents probability laws on $\mathbb{R}^d$.  For each $P \in \mathcal{P}$, these vectors are assumed to be independent  of the i.i.d. sequence $ (\xi_i)_{i=1}^\infty$ with $\Ep \xi = 0$ and $\Ep \xi^2 =1$. There exist constants
$2< q< \infty $ and $0<M<  \infty$, such that $\Ep_P Z_{1,P} =0$ and $(\Ep_P\|Z_{1,P}\|^q)^{1/q} \leq M$ uniformly for all $P \in \mP$. Then,
for every $\varepsilon>0$
$$
\lim_{n \to \infty} \sup_{P \in \mathcal{P}} \Pr^*_P \left (  \sup_{h \in \mathrm{BL}_1(\mathbb{R}^d)} \Big | \Ep_{B_n} h\Big ( n^{-1/2} \sum_{i=1}^n \xi_i Z_{i,P}\Big ) - \Ep_Ph \Big ( N(0, \Ep_PZ_{1,P} Z_{1,P}') \Big )\Big | > \varepsilon \right ) = 0,
$$
where $\Ep_{B_n}$ denotes the expectation over $ (\xi_i)_{i=1}^n$  holding $(Z_{i,P})_{i=1}^n$ fixed.
\end{lemma}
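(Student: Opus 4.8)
The plan is to prove Lemma \ref{lemma: fidi mb} by reducing the statement, via the equivalence in Lemma \ref{lemma: equivalences}, to a single-sequence claim: it suffices to show that for every sequence $P_n \in \mP$ the conditional multiplier CLT holds along that sequence, i.e. $\sup_{h \in \mathrm{BL}_1(\mathbb{R}^d)} | \Ep_{B_n} h(n^{-1/2}\sum_{i=1}^n \xi_i Z_{i,P_n}) - \Ep_{P_n} h(N(0,\Sigma_{P_n}))| \to 0$ in outer probability, where $\Sigma_{P_n} := \Ep_{P_n} Z_{1,P_n} Z_{1,P_n}'$. Fix such a sequence and write $Z_i := Z_{i,P_n}$, $\Sigma_n := \Sigma_{P_n}$. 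Because $\D = \mathbb{R}^d$ is finite-dimensional, weak convergence is metrized by (say) the bounded-Lipschitz metric, and the bootstrap statistic $S_n := n^{-1/2}\sum_{i=1}^n \xi_i Z_i$ is, conditionally on $(Z_i)_{i=1}^n$, a sum of independent mean-zero vectors with conditional covariance $\widehat\Sigma_n := \En[Z_i Z_i']$. I would then combine two ingredients: (i) a conditional (Lindeberg–Feller or Lyapunov) CLT showing that, conditionally on the data, $S_n$ is close in distribution to $N(0,\widehat\Sigma_n)$ with the approximation error controlled by a conditional Lyapunov ratio; and (ii) the law of large numbers $\widehat\Sigma_n \to_{P_n} \Sigma_n$ together with continuity of $\Sigma \mapsto N(0,\Sigma)$ in the bounded-Lipschitz metric.

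For step (i), the natural tool is a Berry–Esseen-type bound in $\mathbb{R}^d$ for sums of independent (but non-identically distributed, conditionally on the data) random vectors, or more simply a direct Lindeberg argument tailored to bounded-Lipschitz test functions. Applied conditionally, this bounds $\sup_{h \in \mathrm{BL}_1} |\Ep_{B_n} h(S_n) - \Ep h(N(0,\widehat\Sigma_n))|$ by a quantity of the form $C\, n^{-1/2}\, \En[\|Z_i\|^3 \Ep|\xi|^3]$ up to normalization by the eigenvalues of $\widehat\Sigma_n$, or — avoiding any nondegeneracy assumption on $\widehat\Sigma_n$ — by a smoothing argument that yields a bound in terms of $\Ep_{B_n}\|n^{-1/2}\sum \xi_i Z_i\|$-type moments and the conditional Lindeberg quantity $\En[\|Z_i\|^2 1\{\|Z_i\| > \epsilon \sqrt n\}]$. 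Here $q > 2$ is used to control these moments: $\Ep_{P_n}\|Z_1\|^q \leq M^q$ uniformly implies, by Markov's inequality and the LLN for triangular arrays (e.g. a truncation argument plus Chebyshev, using $q>2$), that $\En[\|Z_i\|^2 1\{\|Z_i\|>\epsilon\sqrt n\}] = o_{P_n}(1)$ and $n^{-1/2}\En[\|Z_i\|^{2}\wedge(\|Z_i\|^3/\sqrt n)] = o_{P_n}(1)$, uniformly in the sense that the same bounds hold along every sequence. The multiplier moment assumption $\Ep|\xi|^q \leq C$ (in fact $\Ep\xi^2=1$ plus a little more) supplies the corresponding control on the $\xi$ side.

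For step (ii), I would show $\widehat\Sigma_n - \Sigma_n \to 0$ in $\Pr_{P_n}$-probability: each entry is $(\En - \Ep_{P_n})[Z_{i,k} Z_{i,l}]$, a centered average of i.i.d. terms with $\Ep_{P_n}|Z_{1,k}Z_{1,l}|^{q/2} \leq M^q$, and since $q/2 > 1$ a weak law of large numbers (again via truncation at level $\epsilon\sqrt n$ and Chebyshev) gives the claim along every sequence $P_n$. Then since $\Sigma \mapsto \Ep h(N(0,\Sigma))$ is uniformly continuous on the compact set $\{\Sigma : \|\Sigma\| \leq M^2\}$ uniformly over $h \in \mathrm{BL}_1$ (by a direct coupling $N(0,\Sigma) = \Sigma^{1/2} g$, $N(0,\widehat\Sigma_n) = \widehat\Sigma_n^{1/2} g$ for a fixed standard normal $g$, together with continuity of the matrix square root on the psd cone), we get $\sup_{h\in\mathrm{BL}_1}|\Ep h(N(0,\widehat\Sigma_n)) - \Ep_{P_n} h(N(0,\Sigma_n))| \to_{P_n} 0$. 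Combining (i) and (ii) by the triangle inequality, and taking outer expectations/probabilities where measurability is delicate, yields the conclusion along the arbitrary sequence $P_n$, which by Lemma \ref{lemma: equivalences} is equivalent to the uniform-in-$P$ statement claimed.

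The main obstacle I anticipate is making step (i) genuinely uniform and free of nondegeneracy hypotheses on $\widehat\Sigma_n$ (which could be singular or nearly singular): one cannot simply invoke a normalized multivariate Berry–Esseen bound, so I would instead run a Lindeberg replacement argument directly at the level of $C^3$-smoothed bounded-Lipschitz functionals — replacing the $\xi_i Z_i$ summands one at a time by Gaussian vectors with the same conditional covariance $n^{-1}Z_i Z_i'$ — and carefully track that all error terms are dominated by the conditional Lyapunov/Lindeberg quantities above, which are $o_{P_n}(1)$ by the moment assumptions with $q>2$. A secondary technical point is the outer-probability bookkeeping: $S_n$ and the sup over $\mathrm{BL}_1$ may not be jointly measurable in all the underlying variables, so the estimates must be stated with $\Pr^*_{P_n}$ and $\Ep^*_{P_n}$ throughout, exactly as in the statement.
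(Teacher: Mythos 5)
Your proposal is correct, but it proves the sequence claim by a genuinely different route than the paper. After the common reduction (via Lemma \ref{lemma: equivalences}) to an arbitrary sequence $P_n$, you run a direct two-step argument: a conditional Lindeberg/swapping comparison of $n^{-1/2}\sum_i \xi_i Z_{i,P_n}$ with $N(0,\widehat\Sigma_n)$, $\widehat\Sigma_n=\En[Z_iZ_i']$, followed by a triangular-array WLLN for $\widehat\Sigma_n$ and continuity of $\Sigma\mapsto N(0,\Sigma)$ in the bounded-Lipschitz metric via the coupling $\Sigma^{1/2}g$. The paper instead follows Bickel--Freedman: it uses the uniform $q$-th moment bound to extract subsequences along which the law of $Z_{1,P_n}$ converges in the Mallows metric $m_r$ ($2<r<q$) to some $F'$, observes that $F_k\mapsto \mathrm{law}(N^*_{F_k})$ is $m_r$-continuous at $F'$ by Lindeberg's CLT for constants, shows $m_r(\mathbb{F}_n,F')\to 0$ in probability by the LLN for arrays, and concludes by the extended continuous mapping theorem. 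Your route is more self-contained and avoids the compactness/subsequence machinery and the Mallows metric entirely, at the cost of having to write out the multivariate Lindeberg estimates for smoothed $\mathrm{BL}_1$ functionals; the paper's route outsources exactly that work to the continuity-in-$m_r$ fact. One point to tighten: the lemma assumes only $\Ep\xi_1=0$ and $\Ep\xi_1^2=1$, so you cannot invoke $\Ep|\xi|^q\leq C$ or a third moment of $\xi$ in step (i). This is not fatal: in the truncated Lindeberg bound, split $1\{|\xi|\,\|Z_i\|>\epsilon\sqrt n\}\leq 1\{|\xi|>\sqrt{\epsilon\sqrt n}\}+1\{\|Z_i\|>\sqrt{\epsilon\sqrt n}\}$; the $\xi$-tail term vanishes by uniform integrability of $\xi^2$ (the law of $\xi$ is fixed and does not depend on $n$ or $P$), and the $Z$-tail term is $O(t^{-(q-2)/2}M^q)$ uniformly in $P$ by Markov with exponent $q>2$. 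With that adjustment your argument goes through exactly as stated.
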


\textbf{Proof of Lemma \ref{lemma: fidi mb}.}  Let $X$ and $Y$ be random variables in $\mathbb{R}^d$, then define
$
d_{BL}( X, Y) :=\sup_{h \in \mathrm{BL}_1(\mathbb{R}^d)} | \Ep h(X) - \Ep h ( Y)  |.$
  It suffices to show that for  any sequence $P_n \in \mathcal{P}$ and
$
N^* \sim  n^{-1/2} \sum_{i=1}^n \xi_i Z_{i,P_n} \mid  (Z_{i,P_n})_{i=1}^n, \ \  d_{BL}\left ( N^* , N(0, \Ep_{P_n} Z_{1,P_n} Z_{1,P_n}')  \right)   \to 0
$
in probability (under $\Pr_{P_n}$).

Following \citen{BickelFreedman:1981}, we shall rely on the Mallow's metric, written $m_r$, which  is a metric on the space of distribution functions on $\mathbb{R}^d$. For our purposes it suffices to recall that given a sequence of distribution functions $\{F_k\}$ and a distribution function $F$,  $m_r(F_k,F) \to 0$ if and only if $\int g d F_k \to \int g dF$ for each continuous and bounded $g: \mathbb{R}^d \to \mathbb{R}$, and $\int \| z\|^r dF_k(z) \to \int \|z\|^r d F(z)$. See \citen{BickelFreedman:1981} for the definition of $m_r$.

Under the assumptions of the lemma, we can split the sequence $n \in \mathbb{N}$ into subsequences $n \in \mathbb{N}'$, along each of which  the distribution function of $Z_{1,P_n}$ converges to  some distribution function $F'$ with respect to the Mallow's metric $m_{r}$, for some $2 < r <q$.  This also implies that $N(0, \Ep_{P_n} Z_{1,P_n} Z_{1,P_n}')$ converges weakly to a normal limit $N(0, Q')$  with  $Q' = \int z z' d F'(z)$
such that  $\|Q'\| \leq M$.   Both $Q'$ and $F'$ can depend on the subsequence $\mathbb{N}'$.

Let  $F_k$ be  the empirical distribution function of a  sequence $(z_i)_{i=1}^k$ of constant vectors in $\mathbb{R}^d$,
where $k  \in \N$. The law of  $N^*_{F_k}= k^{-1/2} \sum_{i=1}^k \xi_i z_i$ is completely determined by  $F_k$ and the law of $\xi$ (the latter is fixed, so it does not enter as the subscript in the definition of $N^*_{F_k}$).    If $m_r(F_k, F') \to 0$ as $k \to \infty$, then $d_{BL}(N^*_{F_k},  N(0,Q'))  \to 0$  by Lindeberg's central limit theorem.

Let $\mathbb{F}_n$ denote the empirical distribution function of $\(Z_{i,P_n}\)_{i=1}^n$.  Note that $N^* =  N^*_{\mathbb{F}_n} \sim  n^{-1/2} \sum_{i=1}^n \xi_i Z_{i,P_n} \mid  (Z_{i,P_n})_{i=1}^n$. By the law of large numbers for arrays,
$\int g d \mathbb{F}_n \to \int g d F'$ and $\int \|z\|^r d\mathbb{F}_n(z) \to  \int \|z\|^r dF'(z) $ in probability along the subsequence $n \in \N'$. Hence $m_r (\mathbb{F}_n, F') \to 0$ in probability  along the same subsequence.  We can conclude that   $d_{BL}(N^*_{\mathbb{F}_n},  N(0,Q')) \to 0 $  in probability along the same subsequence by the extended continuous mapping theorem \cite[Theorem 1.11.1]{vdV-W}.

The argument applies to every subsequence $\mathbb{N}'$ of the stated form. The claim in the first paragraph of the proof thus follows.  \qed

\subsection{Donsker Theorems  for Function Classes that depend on $n$ }
Let $ (W_i)_{i=1}^\infty$ be a sequence of i.i.d. copies of the random element $W$ taking values in the measure space $(\mathcal{W}, \mathcal{A}_{\mathcal{W}})$, whose law is determined by the probability measure $P$, and let $w \longmapsto f_{n,t}(w)$
be measurable functions  $f_{n,t}: \mathcal{W} \to \mathbb{R}$ indexed by $n \in \N$ and a fixed,
totally bounded semi-metric space $(T, d_T)$.  Consider the stochastic process
$$
(\mathbb{G}_n f_{n,t})_{t \in T} := \left \{ n^{-1/2} \sum_{i=1}^n (f_{n,t}(W_i) - Pf_{n,t} )\right \}_{t \in T}.
$$
This empirical process is indexed by a class of functions $\mathcal{F}_n = \{ f_{n,t}: t \in T\}$ with a measurable envelope
function $F_n$. It is important to note  here that the dependence on $n$ allows us to have \textit{the class itself} be possibly
dependent on the law $P_n$.

\begin{lemma}[\textbf{Donsker Theorem for Classes Changing with $n$}]\label{lemma: Donsker dep on n} Work with the set-up above. Suppose that for some fixed constant $q >2$ and
  every sequence $\delta_n \searrow 0$:
 \begin{eqnarray*}
  \|F_n\|_{P_n,q} = O(1),  \quad \sup_{d_T(s,t) \leq \delta_n} \| f_{n,s} - f_{n,t}\|_{P_n,2} \to 0,\quad  \int_0^{\delta_n} \sup_{Q } \sqrt{  \log N( \epsilon \|F_n\|_{Q,2}, \mathcal{F}_n, \| \cdot \|_{Q,2} ) } d \epsilon \to 0.
\end{eqnarray*}
(a) Then the empirical process $(\mathbb{G}_n f_{n,t})_{t \in T}$ is asymptotically tight in $\ell^\infty(T)$ i.e. stochastically equicontinuous.
(b) For any subsequence such that the covariance function $P_n f_{n,s} f_{n,t} - P_n f_{n,s} P_n f_{n,t}$ converges pointwise
 on $T\times T$, $(\mathbb{G}_n f_{n,t})_{t \in T}$ converges in $\ell^\infty(T)$ to a Gaussian process with covariance
 function given by the limit of the covariance function along that subsequence. \end{lemma}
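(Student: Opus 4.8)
The plan is to prove (a) by the standard two‑part criterion for asymptotic tightness in $\ell^\infty(T)$ --- asymptotic tightness of the finite‑dimensional marginals together with asymptotic $d_T$‑equicontinuity --- and then to deduce (b) by adjoining a triangular‑array central limit theorem for the marginals. It is convenient first to record, by a routine subsequence argument, that the hypotheses phrased ``for every sequence $\delta_n\searrow 0$'' are equivalent to the assertions that $\limsup_{n\to\infty}\sup_{d_T(s,t)\le\delta}\|f_{n,s}-f_{n,t}\|_{P_n,2}$ and $\limsup_{n\to\infty}\int_0^{\delta}\sup_Q\sqrt{\log N(\epsilon\|F_n\|_{Q,2},\mathcal F_n,\|\cdot\|_{Q,2})}\,d\epsilon$ each tend to $0$ as $\delta\downarrow 0$; I use this form freely.

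\emph{Part (a).} By the criterion for asymptotic tightness of processes in $\ell^\infty(T)$ (\citen{vdV-W}, Theorem 1.5.7), it suffices to check: (i) for each fixed $t\in T$ the sequence $\mathbb{G}_n f_{n,t}$ is uniformly tight in $\mathbb R$, which is immediate since $\Ep_{P_n}(\mathbb{G}_n f_{n,t})^2\le\|F_n\|_{P_n,2}^2=O(1)$; and (ii), as $(T,d_T)$ is totally bounded, the asymptotic equicontinuity
\[
\lim_{\delta\downarrow 0}\ \limsup_{n\to\infty}\ \Pr^*_{P_n}\Big(\sup_{d_T(s,t)\le\delta}\big|\mathbb{G}_n f_{n,s}-\mathbb{G}_n f_{n,t}\big|>\varepsilon\Big)=0,\qquad\text{for every }\varepsilon>0.
\]
For (ii), fix $\delta>0$ and form the difference class $\mathcal G_{n,\delta}:=\{f_{n,s}-f_{n,t}:d_T(s,t)\le\delta\}$, which has envelope $2F_n$, satisfies $\sup_{g\in\mathcal G_{n,\delta}}\|g\|_{P_n,2}\le\sigma_n(\delta):=\sup_{d_T(s,t)\le\delta}\|f_{n,s}-f_{n,t}\|_{P_n,2}$, and has uniform covering numbers dominated through $N(\epsilon\|2F_n\|_{Q,2},\mathcal G_{n,\delta},\|\cdot\|_{Q,2})\le N(\tfrac{\epsilon}{2}\|F_n\|_{Q,2},\mathcal F_n,\|\cdot\|_{Q,2})^2$. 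I would bound $\Ep^*_{P_n}\|\mathbb{G}_n\|_{\mathcal G_{n,\delta}}$ by first truncating the envelope at level $\eta\sqrt n$ --- the remainder is $O\big(\Ep_{P_n}F_n^2\mathbf{1}\{F_n>\eta\sqrt n\}\big)=O\big((\eta\sqrt n)^{-(q-2)}\|F_n\|_{P_n,q}^q\big)\to 0$ because $q>2$ --- then symmetrizing, and then applying a uniform‑entropy chaining bound to the truncated symmetrized process (as in the uniform‑entropy maximal inequalities of \citen{vdV-W}, suitably adapted to a triangular array). The chaining bound localizes at radius $\sigma_n(\delta)$ and is driven to zero as $n\to\infty$, then $\delta\downarrow 0$, by the two displayed hypotheses together with $q>2$ and boundedness of the envelope norms. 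Markov's inequality then gives (ii). Passing from the random empirical localization radius $\sup_{g}\|g\|_{\Pn,2}$ and the random envelope norm $\|F_n\|_{\Pn,2}$ back to $\sigma_n(\delta)$ and $\|F_n\|_{P_n,2}$ is a further, now standard, symmetrization applied to the squared class, using $\Ep_{P_n}\|F_n\|_{\Pn,2}^2=\|F_n\|_{P_n,2}^2$.

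\emph{Part (b).} Fix $t_1,\dots,t_k\in T$ and a subsequence along which $(s,t)\mapsto P_nf_{n,s}f_{n,t}-P_nf_{n,s}P_nf_{n,t}$ converges pointwise, with restriction $\Sigma$ to $\{t_1,\dots,t_k\}^2$. Along it, $(\mathbb{G}_n f_{n,t_1},\dots,\mathbb{G}_n f_{n,t_k})=n^{-1/2}\sum_{i=1}^nY_{n,i}$ with $Y_{n,i}:=(f_{n,t_j}(W_i)-P_nf_{n,t_j})_{j=1}^k$ i.i.d., mean zero, covariance $\to\Sigma$, and obeying the Lindeberg condition because $\Ep_{P_n}\|Y_{n,1}\|^2\mathbf{1}\{\|Y_{n,1}\|>\eta\sqrt n\}\lesssim\Ep_{P_n}F_n^2\mathbf{1}\{F_n>\eta'\sqrt n\}\to 0$; the multivariate Lindeberg--Feller theorem gives marginal convergence to $N(0,\Sigma)$. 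Combining marginal convergence along the subsequence with the asymptotic tightness from Part (a) (\citen{vdV-W}, Theorem 1.5.4) yields weak convergence of $(\mathbb{G}_n f_{n,t})_{t\in T}$ in $\ell^\infty(T)$ to a tight Gaussian process whose covariance is the pointwise limit above; asymptotic tightness forces the sample paths to be uniformly continuous for the intrinsic limiting $L^2$‑semimetric, hence also for $d_T$ by the imposed $L^2$‑continuity. This is the claim.

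\emph{Main obstacle.} The crux is the localized maximal inequality of Part (a): producing an entropy‑integral bound on $\Ep^*_{P_n}\|\mathbb{G}_n\|_{\mathcal G_{n,\delta}}$ that is genuinely uniform in $n$ even though the class $\mathcal F_n$ and the law $P_n$ both vary with $n$, that accommodates the unbounded $n$‑dependent envelope through the truncation at $\eta\sqrt n$, and whose iterated limit over $n$ and then $\delta$ vanishes; the quantitative interplay of $q>2$, the entropy integral, and the localization radius is where the real work lies. The remaining ingredients --- the abstract tightness criteria, the triangular‑array CLT, and the empirical‑to‑population passage --- are routine.
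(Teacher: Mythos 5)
Your proposal is correct and follows essentially the same route as the paper: the paper establishes (a) by invoking Theorem 2.11.1 of van der Vaart and Wellner (the triangular-array asymptotic-equicontinuity theorem, whose proof is exactly the truncation-at-$\eta\sqrt n$, symmetrization, and uniform-entropy chaining argument you sketch, with the random semimetric handled by the entropy-integral change of variables), and obtains (b) from stochastic equicontinuity plus finite-dimensional convergence via the Lindeberg--Feller CLT, just as you do. The "main obstacle" you identify is precisely the content of that cited theorem, so nothing is missing relative to the paper's own level of detail.
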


\textbf{Proof.}  The use of Theorem 2.11.1  in \citen{vdV-W}, which does allow for  the probability space to depend on $n$, allows us to establish claim (a),
by repeating the proof (verbatim) of Theorem 2.11.22 in \citen[p. 220-221]{vdV-W}, except that the probability law is allowed to depend on $n$. (For the sake of completeness, the Supplementary Appendix, provides the complete proof). The proof of claim (b)
follows by a standard argument from the stochastic equicontinuity established in claim (a) and finite-dimensional convergence along the indicated subsequences. \qed

\subsection{Proof of Theorems \ref{thm: delta-method} and \ref{theorem:delta-method-bootstrap}.}
The proof consists of two parts, each proving the corresponding theorem.

Part 1.  We can split $\N$ into subsequences $\{ \N'\}$ along each of which
$
Z_{n, P_n} \rightsquigarrow Z' \in \D_0 \text{ in }  \D, \ \ \rho_{P_n} \to \rho'  \text{ in }  \D_{\rho}  \ \ (n \in \N'),
$
where $Z'$ and $\rho'$ can possibly depend on $\N'$.  It suffices to verify that for each $\N'$:
\begin{eqnarray}
\label{dclaim1} && r_n ( \phi( \hat \rho_{n,P_n}) - \phi(\rho_{P_n})) \rightsquigarrow \phi'_{\rho'}(Z')  \ \ (n \in \N') \\
\label{dclaim2} && r_n ( \phi( \hat \rho_{n,P_n}) - \phi(\rho_{P_n}))-  \phi'_{\rho_{P_n}} (  r_n ( \hat \rho_{n,P_n}- \rho_{P_n}))
 \rightsquigarrow 0  \ \ (n \in \N'), \\
 \label{dclaim3} && r_n ( \phi( \hat \rho_{n,P_n}) - \phi(\rho_{P_n}))-  \phi'_{\rho'} (  r_n ( \hat \rho_{n,P_n}- \rho_{P_n}))
 \rightsquigarrow 0  \ \ (n \in \N'),
\end{eqnarray}
where the last two claims hold provided that  $(\rho, h) \longmapsto \phi_{\rho}^{'}(h)$ is defined and continuous on the whole of $\D_\rho \times \D$.  The claim  (\ref{dclaim3}) is not needed in Part 1, but we need it for Part 2.

The map  $g_n(h) = r_n ( \phi(\rho_{P_n} + r_n^{-1}h ) - \phi(\rho_{P_n}))$, from $\D_n= \{ h \in \D: \rho_{P_n} + r_n^{-1} h \in \D_\phi\}$ to $\E$, satisfies $g_n(h_n) \to \phi'_{\rho'}(h)$ for every subsequence $h_n \to h \in \D_0$  (with $n \in \mathbb{N}')$. Application of the extended continuous mapping theorem \cite[Theorem 1.11.1]{vdV-W}  yields (\ref{dclaim1}).

Similarly, the map  $m_n(h) = r_n ( \phi(\rho_{P_n} + r_n^{-1}h ) - \phi(\rho_{P_n})) -   \phi'_{\rho_{P_n}}(h)$, from $\D_n= \{ h \in \D: \rho_{P_n} + r_n^{-1} h \in \D_\phi\}$ to $\E$, satisfies $m_n(h_n) \to \phi'_{\rho'}(h)- \phi'_{\rho'}(h) = 0$ for every subsequence $h_n \to h \in \D_0$  (with $n \in \mathbb{N}')$.  Application of the extended continuous mapping theorem \cite[Theorem 1.11.1]{vdV-W} yields (\ref{dclaim2}).   The proof of (\ref{dclaim3}) is completely analogous and is omitted.

To establish relative compactness, we work with each $\N'$. Then
$ \phi'_{\rho_{P_n}}(h)$ mapping $\D_0$ to $\E$ satisfies $ \phi'_{\rho_{P_n}}(h_n) \to \phi'_{\rho'}(h)$ for every subsequence $h_n \to h \in \D_0$  (with $n \in \N')$.  Application of the extended continuous mapping theorem \cite[Theorem 1.11.1]{vdV-W}  yields that  $\phi'_{\rho_{P_n}}(Z_{P}) \rightsquigarrow \phi'_{\rho'}(Z') $.

Part 2.  We can split $\N$ into subsequences $\{ \N'\}$ as above.  Along each $\N'$,
$$
r_n( \hat \rho^*_{n, P_n} - \rho_{P_n}) \rightsquigarrow Z'' \in \D_0 \text{ in }  \D, \  \  r_n( \hat \rho_{n, P_n} - \rho_{P_n}) \rightsquigarrow Z'  \in \D_0 \text{ in }  \D, \  \ \rho_{P_n} \to \rho'  \text{ in }  \D_{\rho}  \ \ (n \in \N'),
$$
where $Z''$ is a separable process in $\D_0$ (which is given by $Z'$ plus its independent copy $\bar Z'$). Indeed, note that $r_n( \hat \rho^*_{\rho_{n,P_n}} - \rho_{P_n})  = Z_{n,P_n}^* + Z_{n,P_n}$, and $(Z_{n,P_n}^*,Z_{n,P_n})$ converge weakly unconditionally to $(\bar Z', Z')$ by  a standard argument.

  Given each  $\N'$ the proof is similar to the proof of Theorem 3.9.15 of \citen{vdV-W}.    We can assume without loss of generality that the derivative
$\phi'_{\rho'}: \D \to \E$ is defined and continuous on the whole of $\D$. Otherwise, if $\phi'_{\rho'}$ is defined and continuous only on $\D_0$,  we can extend it to $\D$ by a Hahn-Banach extension such that $C= \|\phi'_{\rho'}\|_{\D_0 \to \mathbb{E}} =  \|\phi'_{\rho'}\|_{\D \to \mathbb{E}}< \infty$; see \citen[p. 380]{vdV-W} for details.   For each
$\N'$, by claim (\ref{dclaim3}), applied to $\hat \rho_{n,P_n}$ and to $\hat \rho^*_{n,P_n}$ replacing
$\hat \rho_{n,P_n}$,
\begin{eqnarray*}
& r_n ( \phi(\hat \rho_{n,P_n} ) - \phi (\rho_{P_n} )  ) = \phi'_{\rho'} ( r_n ( \hat \rho_{n,P_n} - \rho_{P_n} )  ) + o^*_{P_n}(1), \\
& r_n ( \phi(\hat \rho^*_{n,P_n} ) - \phi (\rho_{P_n} )  ) = \phi'_{\rho'} ( r_n ( \hat \rho^*_{n,P_n} - \rho_{P_n} )  ) + o^*_{P_n}(1).
\end{eqnarray*}
Subtracting these equations conclude that  for each $\eps>0$
\begin{equation}\label{good thing}
\Ep_{P_n} 1 \left (  \Big \|r_n ( \phi(\hat \rho^*_{n,P_n} ) - \phi (\hat \rho_{n,P_n}   ) )-   \phi'_{\rho'}  ( r_n ( \hat \rho^*_{n,P_n} - \hat \rho_{n,P_n}) )\Big \|^*_{\E} > \eps \right) \to 0   \ \ (n \in \N').
\end{equation}
For every $h \in \mathrm{BL}_1 (\E)$, the function $h \circ \phi'_{\rho'}$ is contained in $\mathrm{BL}_C(\D)$.  Moreover,
$r_n(\hat \rho^{*}_{n,P} - \hat \rho_{n, P}) \rightsquigarrow_{B} Z_{P}$ in $\D$ uniformly in $P \in \mP_n$
implies $r_n(\hat \rho^{*}_{n,P} - \hat \rho_{n, P}) \rightsquigarrow_{B} Z'$ along the subsequence  $n \in \N'$. These two facts  imply that
$$
\sup_{h \in \mathrm{BL}_1(\E)} \left |   \Ep_{B_n} h  \Big (\phi'_{\rho'}  ( r_n ( \hat \rho^*_{n,P_n} - \hat \rho_{n,P_n}) ) \Big ) -  \Ep h(\phi_{\rho'}(Z')) \right| = o^*_{P_n}(1)   \ \   (n \in \N').
$$
Next for each $\eps>0$ and along $n \in \N'$
\begin{eqnarray*}
\nonumber
&& \sup_{h \in \mathrm{BL}_1(\E)}  \left |  \Ep_{B_n} h \Big (  r_n ( \phi(\hat \rho^*_{n,P_n} ) - \phi (\hat \rho_{n,P_n} )  ) \Big )
-  \Ep_{B_n} h  \Big (\phi'_{\rho'}  ( r_n ( \hat \rho^*_{n,P_n} - \hat \rho_{n,P_n}) ) \Big )  \right | \\
&& \leq \eps +   2  \Ep_{B_n} 1\left  (  \Big \|r_n ( \phi(\hat \rho^*_{n,P_n} ) - \phi (\hat \rho_{n,P_n}   ) ) -   \phi'_{\rho'}  ( r_n ( \hat \rho^*_{n,P_n} - \hat \rho_{n,P_n})) \Big \|^*_{\E}     > \eps \right ) = o_{P_n} (1), \label{some ineq bs}
\end{eqnarray*}
where the $o_{P_n}(1)$ conclusion follows by the Markov inequality and by (\ref{good thing}).   Conclude that
\begin{eqnarray*}
\sup_{h \in \mathrm{BL}_1(\E)}  \left |  \Ep_{B_n} h \Big (  r_n ( \phi(\hat \rho^*_{n,P_n} ) - \phi (\hat \rho_{n,P_n} )  ) \Big )
 -   \Ep h (\phi_{\rho'}(Z')) \right |= o^*_{P_n}(1) \ \ (n \in \N'). \text{{\tiny$\qed$}}
\end{eqnarray*}

\section{Key Tools II: Probabilistic Inequalities}
Let $ ( W_i)_{i=1}^n$ be a sequence of i.i.d. copies of random element $ W$  taking values in the measure space $({\mathcal{W}}, \mathcal{A}_{{\mathcal{W}}})$ according to probability law $P$. Let $\mathcal{F}$ be a set  of suitably measurable functions $f:{\mathcal{W}} \longmapsto \mathbb{R}$, equipped with a measurable envelope $F: \mathcal{W} \longmapsto \mathbb{R}$.

The following maximal inequality is due to  \citen{CCK12}.

  \begin{lemma}[\textbf{A Maximal Inequality}]
\label{lemma:CCK}  Work with the setup above.  Suppose that $F\geq \sup_{f \in \mathcal{F}}|f|$ is a measurable envelope
with $\| F\|_{P,q} < \infty$ for some $q \geq 2$.  Let $M = \max_{i\leq n} F(W_i)$ and $\sigma^{2} > 0$ be any positive constant such that $\sup_{f \in \mF}  \| f \|_{P,2}^{2} \leq \sigma^{2} \leq \| F \|_{P,2}^{2}$. Suppose that there exist constants $a \geq e$ and $v \geq 1$ such that
$\log \sup_{Q} N(\epsilon \| F \|_{Q,2}, \mF,  \| \cdot \|_{Q,2}) \leq  v (\log a + \log(1/\epsilon)), \ 0 <  \epsilon \leq 1.
$
Then
\begin{equation*}
\Ep_P [ \| \bG_{n} \|_{\mF} ] \leq K  \left( \sqrt{v\sigma^{2} \log \left ( \frac{a \| F \|_{P,2}}{\sigma} \right ) } + \frac{v\| M \|_{\Pr_P, 2}}{\sqrt{n}} \log \left ( \frac{a \| F \|_{P,2}}{\sigma} \right ) \right),
\end{equation*}
where $K$ is an absolute constant.  Moreover, for every $t \geq 1$, with probability $> 1-t^{-q/2}$,
\begin{multline*}
\| \bG_{n} \|_{\mF} \leq (1+\alpha) \Ep_P [ \| \bG_{n} \|_{\mF} ] + K(q) \Big [ (\sigma + n^{-1/2} \| M \|_{\Pr_P,q}) \sqrt{t}
+  \alpha^{-1}  n^{-1/2} \| M \|_{\Pr_P,2}t \Big ], \ \forall \alpha > 0,
\end{multline*}
where $K(q) > 0$ is a constant depending only on $q$.  In particular, setting $a \geq n$ and $t = \log n$,
with probability $> 1- c(\log n)^{-1}$,
\begin{equation} \label{simple bound}
\| \bG_{n} \|_{\mF} \leq K(q,c) \left ( \sigma \sqrt{v \log \left ( \frac{a \| F \|_{P,2}}{\sigma} \right ) } + \frac{v
 \| M \|_{\Pr_P,q} } {\sqrt{n}}\log \left ( \frac{a \| F \|_{P,2}}{\sigma} \right ) \right),
\end{equation}
where $  \| M \|_{\Pr_P,q}  \leq n^{1/q} \| F\|_{P,q}$ and  $K(q,c) > 0$ is a constant depending only on $q$ and $c$.

\end{lemma}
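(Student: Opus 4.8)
\emph{Overview and the expectation bound.} The plan is to prove the expectation bound first, by symmetrization and a chaining argument with a \emph{localized} entropy integral, and then to upgrade it to the deviation bound via Talagrand's concentration inequality combined with a Markov estimate that handles the heavy-tailed envelope. For the expectation bound I would symmetrize, so that up to a factor $2$ it suffices to bound $\Ep_P\bigl[\sup_{f\in\mF}|n^{-1/2}\sum_{i=1}^n e_i f(W_i)|\bigr]$ for independent Rademacher signs $(e_i)$ independent of the data. Conditionally on $(W_i)_{i=1}^n$ this process is sub-Gaussian for the random seminorm $\|\cdot\|_{\Pn,2}$, so Dudley's entropy integral gives a conditional bound of order $\int_0^{D_n}\sqrt{\log N(\epsilon,\mF,\|\cdot\|_{\Pn,2})}\,d\epsilon$, where $D_n:=\sup_{f\in\mF}\|f\|_{\Pn,2}$ is the random envelope of the centered class. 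Applying the hypothesized uniform entropy bound with $Q=\Pn$, i.e. $\log N(\epsilon\|F\|_{\Pn,2},\mF,\|\cdot\|_{\Pn,2})\le v(\log a+\log(1/\epsilon))$, this integral is of order $D_n\sqrt{v\log(a\|F\|_{\Pn,2}/D_n)}$; localizing the upper limit at $D_n$ rather than at $\|F\|_{\Pn,2}$ is exactly what will put $\sigma$, not $\|F\|_{P,2}$, inside the logarithm in the conclusion.

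\emph{Closing the expectation bound.} Taking expectations, the quantities to control are $\Ep_P D_n^2$ and the discretization error at the finest scale. For the former, $D_n^2=\sup_{f\in\mF}\En[f^2]\le\sigma^2+n^{-1/2}\|\bG_n\|_{\mF^2}$, and a preliminary application of the same maximal inequality to the squared class $\mF^2$ (whose uniform entropy is controlled by that of $\mF$, with envelope $F^2$) bounds $\Ep_P\|\bG_n\|_{\mF^2}$ in terms of $\|F\|_{P,q}$ and $\Ep_P\|\bG_n\|_\mF$; the maximum-of-envelope term $M$ enters through the contribution of the largest summands. Combining these estimates produces a self-bounding quadratic inequality for $x:=\Ep_P\|\bG_n\|_\mF$, schematically $x\lesssim\sqrt{(\sigma^2+x/\sqrt n)\,vL}+n^{-1/2}\|M\|_{\Pr_P,2}\,vL$ with $L:=\log(a\|F\|_{P,2}/\sigma)$; solving it for $x$ yields $\Ep_P\|\bG_n\|_\mF\lesssim\sqrt{v\sigma^2 L}+n^{-1/2}v\|M\|_{\Pr_P,2}L$, which is the asserted bound.

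\emph{Deviation bound and specialization.} For the high-probability statement I would truncate the envelope at a level $\tau_n\asymp n^{1/q}\|F\|_{P,q}$ and apply Talagrand's concentration inequality for empirical-process suprema in Bousquet's form on the event $\{M\le\tau_n\}$: with weak variance bounded by $\sup_{f\in\mF}\|f\|_{P,2}^2\le\sigma^2$, this yields, for each $\alpha>0$ and $t\ge1$, $\|\bG_n\|_\mF\le(1+\alpha)\Ep_P\|\bG_n\|_\mF+K(q)\bigl[(\sigma+n^{-1/2}\tau_n)\sqrt t+\alpha^{-1}n^{-1/2}\tau_n t\bigr]$ off an $e^{-t}$-probability set, the splitting $\sqrt{2t\,\Ep_P\|\bG_n\|_\mF\,\tau_n/\sqrt n}\le\alpha\,\Ep_P\|\bG_n\|_\mF+\alpha^{-1}t\tau_n/(2\sqrt n)$ producing the $(1+\alpha)$ and $\alpha^{-1}$ factors. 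On the complementary event I would use Markov's inequality with $\Ep_P M^q\le n\|F\|_{P,q}^q$, hence $\|M\|_{\Pr_P,q}\le n^{1/q}\|F\|_{P,q}$, so that the heavy-tailed contribution and the probability of exceeding the truncation level are of order $t^{-q/2}$; trading the sub-exponential tail against this polynomial tail gives the stated probability $1-t^{-q/2}$ and replaces $\tau_n$ by $\|M\|_{\Pr_P,q}$ (and $\|M\|_{\Pr_P,2}$) in the bound. The displayed ``in particular'' inequality then follows by taking $a\ge n$, $t=\log n$ (so $t^{-q/2}\le c(\log n)^{-1}$ since $q\ge2$), fixing $\alpha$ to an absolute constant, and substituting the expectation bound of the previous step.

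\emph{Main obstacle.} The delicate part is the expectation bound: one must simultaneously (i) localize the chaining integral so that $\sigma$ rather than $\|F\|_{P,2}$ appears inside the logarithm, (ii) transfer between the empirical seminorm $\|\cdot\|_{\Pn,2}$ and the population seminorm while assuming only $q\ge2$ moments for the envelope, which forces the $\|M\|$ term to be carried through both the chaining and the concentration steps rather than absorbed, and (iii) close the resulting self-bounding quadratic inequality without degrading the constants. The concentration step is comparatively routine once Bousquet's inequality and the truncation at level $n^{1/q}$ are in place.
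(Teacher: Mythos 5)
The paper does not prove this lemma: it is imported verbatim from \citen{CCK12} (the local maximal inequality and the associated deviation bound for suprema of empirical processes over possibly non-Donsker classes with an $L^q$ envelope), so there is no internal proof to compare against. Your outline reproduces the standard architecture of the cited proof — symmetrization, Dudley's entropy integral under the empirical $L^2(\Pn)$ seminorm localized at the random radius $D_n=\sup_{f\in\mF}\|f\|_{\Pn,2}$, a self-bounding inequality obtained from $D_n^2\le\sigma^2+n^{-1/2}\|\bG_n\|_{\mF^2}$, and then a Talagrand/moment-inequality step with truncation at level $n^{1/q}\|F\|_{P,q}$ and a Markov bound to get the $1-t^{-q/2}$ probability — so the approach is the right one. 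The one step you should tighten is the control of $\Ep_P\|\bG_n\|_{\mF^2}$: a "preliminary application of the same maximal inequality to the squared class $\mF^2$" is not how this is closed, and would be problematic because the weak variance of $\mF^2$ involves fourth moments of $f$, which are not controlled by $\sigma$ and are not assumed finite beyond $q\ge 2$. The correct device is the Rademacher contraction principle applied conditionally, which gives $\Ep_e\|\sum_i e_i f^2(W_i)\|_{\mF}\le 2M\,\Ep_e\|\sum_i e_i f(W_i)\|_{\mF}$ with $M=\max_{i\le n}F(W_i)$, followed by Cauchy--Schwarz and a Hoffmann--J{\o}rgensen-type bound to convert the second moment of the supremum back into its first moment; this is exactly how the $\|M\|_{\Pr_P,2}$ term enters the fixed-point inequality. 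With that substitution your sketch matches the known proof.
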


%

\section{Proofs for Section 4}

These results follow from the application of results given in Section 5.  The details
are given in the Supplementary Appendix.

\section{Proofs for Section 5}\label{subsec:ProofSection5}

\subsection{Proof of Theorem \ref{theorem:semiparametric}} In the proof $a \lesssim b$ means that $a \leq A b$, where the constant
$A$ depends on the constants  in Assumptions \ref{ass: S1}--\ref{ass: AS}, but not on $n$ once $n \geq n_0$, and not on $P \in \mathcal{P}_n$.   Since the argument is asymptotic, we can assume that $n \geq n_0$ in what follows. In order to establish the result uniformly in $P \in  \mathcal{P}_n$, it suffices to establish the result under the probability measure induced by any sequence $P = P_n \in \mathcal{P}_n$.  In the proof we shall use $P$, suppressing the dependency of $P_n$ on the sample size $n$.  Also, let
\begin{eqnarray}\label{eq: B}
 B(W) & : = & \max_{ j \in [d_{\theta}], k \in [d_{\theta} + d_t]} \sup_{\nu \in \Theta_u \times T_u(Z_u), u \in \mathcal{U}} \Big | \partial_{\nu_k} \Ep_P[\psi_{uj}(W_u, \nu)\mid Z_u] \Big |,
\end{eqnarray}

Step 1. (A Preliminary Rate Result). In this step we claim that  wp $1- o(1)$, $\sup_{u \in \mathcal{U}}\| \hat \theta_u - \theta_u\| \lesssim \tau_n.$
By definition $$\| \En \psi_u(W_u, \hat \theta_u, \hat h_u(Z_u))\| \leq \inf_{\theta \in \Theta_u}\| \En \psi_u(W_u, \theta, \hat h_u(Z_u))\| + \epsilon_n \text{  for each $u \in \mathcal{U}$, }$$ which implies
via triangle inequality that uniformly in $u \in \mathcal{U}$ with probability $1-o(1)$
\begin{equation}\label{eq: rate proof}
\Big \| P [\psi_u(W_u, \hat \theta_u, h_u(Z_u))] \Big \| \leq \epsilon_n + 2 I_1+ 2 I_2 \lesssim  \tau_n,
 \end{equation}
for $I_1$ and $I_2$ defined in Step 2 below.   The $\lesssim$  bound in  (\ref{eq: rate proof}) follows
 from Step 2 and from the assumption $\epsilon_n = o(n^{-1/2})$.
 Since by Assumption \ref{ass: S1}(iv), $2^{-1} ( \| J_u(\hat \theta_u- \theta_u)\| \wedge c_0)$ does not exceed the left side of  (\ref{eq: rate proof}) and  $\inf_{u \in \mathcal{U}}\textrm{mineig}(J_u'J_u)$ is bounded away from zero uniformly in $n$, we conclude that
$\sup_{u \in \mathcal{U}}\| \hat \theta_u - \theta_u\|  \lesssim (\inf_{u \in \mathcal{U}} \textrm{mineig}(J_u'J_u))^{-1/2}  \tau_n \lesssim \tau_n.$

Step 2. (Define and bound $I_1$ and $I_2$)  We claim that with probability $1- o(1)$:
\begin{eqnarray*}
I_1 & :=&  \sup_{\theta \in \Theta_u, u \in \mathcal{U}} \Big \| \En \psi_u(W_u, \theta, \hat h_u(Z_u)) - \En \psi_u(W_u, \theta, h_u(Z_u) ) \Big \| \lesssim  \tau_n , \\
I_2 & := &   \sup_{\theta \in \Theta_u, u \in \mathcal{U}}  \Big \| \En \psi_u(W_u, \theta, h_u(Z_u)) - P \psi_u(W_u, \theta, h_u(Z_u) ) \Big \| \lesssim  \tau_n.
\end{eqnarray*}
To establish this, we can bound $I_1 \leq 2I_{1a} + I_{1b}$ and $I_2 \leq I_{1a}$, where
with probability $1- o(1)$,
\begin{eqnarray*}
I_{1a} & :=&  \sup_{\theta \in \Theta_u, u \in \mathcal{U}, h \in \mathcal{H}_{un} \cup \{ h_u \}}  \Big\| \En \psi_u(W_u, \theta, h(Z_u)) - P \psi_u(W_u, \theta, h(Z_u) )  \Big \| \lesssim  \tau_n , \\
I_{1b} & := &   \sup_{\theta \in \Theta_u, u \in \mathcal{U}, h \in \mathcal{H}_{un} \cup \{ h_u \} }  \Big \| P \psi_u(W_u, \theta, h(Z_u)) - P \psi_u(W_u, \theta, h_u(Z_u) )  \Big \| \lesssim  \tau_n.
\end{eqnarray*}
These bounds in turn hold by the following arguments. In order to bound $I_{1b}$ we employ Taylor's expansion and the triangle inequality. For $\bar h(Z,u, j, \theta)$ denoting a point on a line connecting vectors $h(Z_u)$ and $h_u(Z_u)$, and ${t_m}$ denoting the $m$th element of the vector $t$,
\begin{eqnarray*}
I_{1b} & \leq & \sum_{j=1}^{d_{\theta}} \sum_{m=1}^{d_t}  \sup_{\theta \in \Theta_u, u \in \mathcal{U}, h \in \mathcal{H}_{un}} \Big  |  P\left [ \partial_{t_m} P \left [ \psi_{uj}(W_u, \theta, \bar h(Z, u, j, \theta))|Z_u \right ] (h_m(Z_u) - h_{um}(Z_u))  \right]  \Big | \\
& \leq&  d_\theta d_t   \|B\|_{P,2} \max_{u \in \mU, h \in \mathcal{H}_{un}, m \in [d_t]} \| h_m - h_{um}\|_{P,2},
\end{eqnarray*}
where the last inequality holds by the definition of $B(W)$ given earlier and H\"{o}lder's inequality.
 By  Assumption \ref{ass: S2}(ii)(c),  $\|B\|_{P,2}\leq C$, and  by Assumption \ref{ass: AS},  $\sup_{u \in \mU,  h \in \mathcal{H}_{un}, m \in [d_t]} \| h_m - h_{um}\|_{P,2} \lesssim \tau_n$, hence we conclude that $I_{1b} \lesssim \tau_n$ since $d_\theta$ and $d_t$ are fixed.

In order to bound $I_{1a}$ we employ the maximal inequality of Lemma \ref{lemma:CCK}
to the class $$\mathcal{F}_{1} = \{ \psi_{uj}(W_u, \theta, h(Z_u)):  j \in [d_\theta], u \in \mathcal{U}, \theta \in \Theta_u, h \in
\mathcal{H}_{un} \cup \{ {h}_{u}\}  \},$$
defined in Assumption \ref{ass: AS} and equipped with an envelope  $F_1 \leq F_0$, to conclude that with probability $1-o(1)$,
\begin{eqnarray*}
I_{1a} & \lesssim   n^{-1/2}  \Big (    \sqrt{ s_n \log (a_n) } + n^{-1/2} s_n n^{\frac{1}{q}}  \log (a_n)  \Big)  \lesssim \tau_n.
\end{eqnarray*}
Here we use that $\log \sup_{Q} N(\epsilon \| F_1 \|_{Q,2}, \mF_1,  \| \cdot \|_{Q,2}) \leq s_n \log (a_n/\epsilon) \vee 0$ by Assumption  \ref{ass: AS}; $\|F_0\|_{P, q} \leq C$ and $ \sup_{f \in \mathcal{F}_1} \| f\|^2_{P,2} \leq \sigma^2 \leq \| F_0 \|^2_{P,2}$ for $c \leq \sigma \leq C$ by Assumption \ref{ass: S2}(i); $a_n \geq n$ and $s_n\geq 1$ by Assumption  \ref{ass: AS}.

Step 3. (Linearization)  By definition
$$\sqrt{n} \|  \En \psi_u(W_u, \hat \theta_u, \hat h_u(Z_u) ) \| \leq \inf_{\theta \in \Theta_u} \sqrt{n} \|  \En \psi_u(W_u, \theta, \hat h_u(Z_u) ) \|+ \epsilon_n n^{1/2}.$$
Application of Taylor's theorem give that for all $u \in \mathcal{U}$
\begin{eqnarray*}
\sqrt{n}  \En \psi_u(W_u, \hat \theta_u, \hat h_u(Z_u) ) &=&
 \sqrt{n}  \En \psi_u(W_u, \theta_u, h_u(Z_u))  \\
 &+ & J_u \sqrt{n} (\hat \theta_u - \theta_u) + \mathrm{D}_{u,0}(\hat h_u -h_u ) + II_1(u) + II_2(u),
\end{eqnarray*}
where the terms $II_1(u)$ and $II_2(u)$ are defined in Step 4 and $\mathrm{D}_{u,0}(\hat h_u -h_u )$ is treated in the next paragraph.
Then  by the triangle inequality for all $u \in \mathcal{U}$ and Steps 4 and 5 we have
\begin{eqnarray*}
&& \left \| \sqrt{n}  \En \psi_u(W_u, \theta_u, h_u(Z_u)) + J_u \sqrt{n} (\hat \theta_u - \theta_u) + \mathrm{D}_{u,0}(\hat h_u -h_u )\right \| \\
 & & \leq  \epsilon_n \sqrt{n}  + \sup_{u \in \mathcal {U}} \Bigg  ( \inf_{\theta \in \Theta_u} \sqrt{n} \|  \En \psi_u(W_u, \theta, \hat h_u(Z_u) ) \| + \|II_1(u)\| + \|II_2(u)\|  \Bigg ) = o_P(1),
\end{eqnarray*}  where
the $o_P(1)$ bound
follows from Step 4,  $\epsilon_n \sqrt{n} = o(1)$  by assumption, and Step 5.

Moreover, by the orthogonality condition:
$$
\mathrm{D}_{u,0}(\hat h_u -h_u ) :=  \Bigg ( \sum_{m = 1}^{d_t}  \sqrt{n}  P\Big [ \partial_{t_m} P [ \psi_{uj}(W_u, \theta_u, h_u(Z_u))|Z_u] (\hat h_m(Z_u) - h_{um}(Z_u)) \Big ] \Bigg )_{j=1}^{d_\theta} = 0.
$$
Conclude using Assumption \ref{ass: S1}(iv) that
{\small $$
\sup_{u \in \mathcal{U}}\left \| J_u^{-1}  \sqrt{n}  \En \psi_u(W_u, \theta_u, h_u(Z_u)) +  \sqrt{n} (\hat \theta_u - \theta_u) \right \| \leq o_P(1) \sup_{u \in \mathcal{U}} (  \textrm{mineg}(J_u' J_u)^{-1/2}) = o_P(1).
$$}\!

Furthermore, the empirical process $(-\sqrt{n}  \En J_u^{-1}   \psi_u(W_u, \theta_u, h_u(Z_u)))_{ u \in \mathcal{U}}$ is equivalent to
an empirical process $\Gn$ indexed by $ \mathcal{F}_{P} := \Big \{ \bar \psi_{uj} :  j \in [d_{\theta}],  u \in \mathcal{U}  \Big \}_{},$
where $\bar \psi_{uj}$ is the $j$-th element of  $ -J_u^{-1}   \psi_u(W_u, \theta_u, h_u(Z_u))$ and we make explicit the dependence of $\mF_P$ on $P$. Let $\mathcal{M} = \{ M_{ujk}: j,k \in [d_{\theta}], u \in \mU \}$, where $M_{ujk}$ is the $(j,k)$ element of the matrix $J_{u}^{-1}.$  $\mathcal{M}$ is a class of uniformly H\"{o}lder continuous functions on $(\mathcal{U}, d_{\mathcal{U}})$ with a uniform covering entropy bounded by $C\log(e/\epsilon) \vee 0$ and equipped with a constant envelope $C$, given the stated assumptions.  This result follows from the fact that by Assumption \ref{ass: S2}(ii)(b)
\begin{align}\label{eq: Jlip}
\max_{j,k \in [d_{\theta}]} | M_{ujk} - M_{\bar ujk} | & \leq \| J^{-1}_u - J^{-1}_{\bar u} \|
=\| J^{-1}_u ( J_u - J_{\bar u}) J^{-1}_{\bar u} \| \nonumber \\
& \leq  \| J_u - J_{\bar u}\| \sup_{\tilde u \in \mathcal{U}} \| J^{-1}_{\tilde u}\|^2 \lesssim
\|u - \bar u\|^{\alpha_2},
\end{align}
and the constant envelope follows by Assumption \ref{ass: S1}(iv). Since $ \mathcal{F}_{P}$ is generated as a finite sum of products of the elements of  $\mathcal{M}$ and the class $\mF_0$ defined in Assumption  \ref{ass: S2},  the properties of $\mathcal{M}$ and the conditions on $\mathcal{F}_0$ in Assumption \ref{ass: S2}(ii) imply that $\mathcal{F}_{P}$ has a uniformly well-behaved uniform covering entropy by Lemma \ref{lemma: andrews}, namely
$$
\sup_{P \in \mathcal{P} = \cup_{n \geq n_0} \mathcal{P}_n}\log \sup_{Q} N(\epsilon \| C F_0 \|_{Q,2}, \mF_P,  \| \cdot \|_{Q,2}) \lesssim \log (e/\epsilon) \vee 0,
$$
where $F_P = C F_0$ is an envelope for $\mF_P$ since $\sup_{f \in \mF_P} |f| \lesssim \sup_{u \in \mU} \|J_{u}^{-1}\|  \sup_{f \in \mF_0} |f| \leq C F_0$ by Assumption \ref{ass: S2}(i).  The class $\mF_P$ is therefore Donsker uniformly in $P$ because $\sup_{P \in \mathcal{P}} \| F_P \|_{P,q} \leq C \sup_{P \in \mathcal{P}} \| F_0\|_{P,q}$ is bounded by Assumption \ref{ass: S2}(ii), and $\sup_{P \in \mathcal{P}} \| \bar \psi_u - \bar \psi_{\bar u} \|_{P,2} \to 0$ as $d_{\mU}(u, \bar u) \to 0$ by Assumption  \ref{ass: S2}(b) and \eqref{eq: Jlip}.  Application of Theorem \ref{lemma: uniform Donsker} gives the results of the theorem.


Step 4. (Define and Bound $II_1(u)$ and $II_2(u)$).  Let  $II_1(u) := (II_{1j}(u))_{j=1}^{d_\theta}$ and  $II_2(u) = (II_{2j}(u))_{j=1}^{d_\theta}$, where{\small  \begin{eqnarray*}
&& II_{1j} (u) :=    \sum_{r,k = 1}^{{d_{\nu}}} \sqrt{n} P \left [  \partial_{\nu_k} \partial_{\nu_r}  P[\psi_{uj}(W_u, \bar \nu_u(Z_u,  j))|Z_u] \{ \hat \nu_{ur}(Z_u) - \nu_{ur}(Z_u)\}\{ \hat \nu_{uk}(Z_u) - \nu_{uk}(Z_u)\} \right],\\
&& II_{2j} (u)  :=  \Gn(    \psi_{uj}(W_u, \hat \theta_u, \hat h_u(Z_u))- {\psi_{uj}(W_u, \theta_u, h_u(Z_u)) )} ,
\end{eqnarray*}}
$\nu_u(Z_u): =  ( \nu_{uk}(Z_u))_{k=1}^{d_{\nu}} : = (\theta_u', h_u(Z_u)')',$   $\hat \nu_u(Z_u): =  ( \hat \nu_{uk}(Z_u))_{k=1}^{d_{\nu}}:=
(\hat \theta_u', \hat h_u(Z_u)')'$, $d_{\nu}= d_\theta+ d_t$, and $\bar\nu_u(Z_u, j )$ is a vector on the line connecting $\nu_u(Z_u)$ and $\hat \nu_u(Z_u)$.

First, by Assumptions  \ref{ass: S2}(ii)(d) and \ref{ass: AS}, the claim of Step 1, and the H\"{o}lder inequality,
\begin{eqnarray*}
\max_{j \in [d_\theta]} \sup_{u \in \mathcal{U}}|II_{1j}(u)| & \leq&    \sup_{u \in \mathcal{U}}\sum_{r,k = 1}^{{d_{\nu}}} \sqrt{n} P \left [  C | \hat \nu_{ur}(Z_u) - \nu_{ur}(Z_u)| | \hat \nu_{uk}(Z_u) - \nu_{uk}(Z_u)| \right] \\
& \leq & C \sqrt{n} {d_{\nu}^2}   \max_{k \in {[d_{\nu}]}} \sup_{u \in \mathcal{U}}\| \hat \nu_{uk} - \nu_{uk} \|^2_{P,2} \lesssim_P \sqrt{n}  \tau_n^2 = o(1).
\end{eqnarray*}

Second, we have that with probability $1-o(1)$,
$
\max_{j \in [d_\theta]} \sup_{u \in \mathcal{U}}|II_{2j}(u)|  \lesssim  \sup_{f \in \mathcal{F}_2} | \Gn(f)|,
$
where, for $\Theta_{un} := \{ \theta \in \Theta_u: \| \theta - \theta_u \| \leq C \tau_n \}$,
$$
\mathcal{F}_2 =  \Big \{ \psi_{uj}(W_u, \theta, h(Z_u)) - \psi_{uj}(W_u, \theta_u, h_u(Z_u)):  j \in [d_\theta], u \in \mathcal{U},
h \in \mathcal{H}_{un}, \theta \in \Theta_{un}  \Big \}_{}.$$
{Application of Lemma \ref{lemma:CCK} with an envelope $F_2 \lesssim F_0$ gives that with probability $1-o(1)$
\begin{eqnarray}
\sup_{f \in \mathcal{F}_2} | \Gn(f)| \lesssim  \tau_n^{\alpha/2} \sqrt{ s_n \log (a_n)}    +  n^{-1/2} s_n n^{\frac{1}{q}} \log (a_n) ,
\end{eqnarray}
since $ \sup_{f \in \mF_2} |f| \leq 2 \sup_{f \in \mF_1} |f| \leq 2  F_0$ by  Assumption \ref{ass: AS};   $\|F_0\|_{P,q} \leq C$ by Assumption \ref{ass: S2}(i);  $\log \sup_Q   N( \epsilon \|F_2\|_{Q,2},  \mathcal{F}_{2}, \|\cdot\|_{Q,2}) \lesssim (s_n \log a_n + s_n \log (a_n/\epsilon))\vee 0$ by Lemma \ref{lemma: andrews} because $\mF_2 = \mF_1 - \mF_0$ for the $\mF_0$ and $\mF_1$ defined in Assumptions  \ref{ass: S2}(i)  and  \ref{ass: AS}; and $\sigma$  can be chosen so that $
 \sup_{f \in \mathcal{F}_2} \| f\|_{P,2} \leq \sigma  \lesssim  \tau^{\alpha/2}_n$.  Indeed,
 \begin{eqnarray*}
  \sup_{f \in \mathcal{F}_2} \| f\|^2_{P,2} & \leq &    \sup_{j \in [d_\theta], u \in \mathcal{U}, \nu \in \Theta_{un} \times  \mathcal{H}_{un}}   P \left( P[  ( \psi_{uj}(W_u, \nu(Z_u)) - \psi_{uj}(W_u, \nu_u(Z_u)))^2 |Z_u]  \right) \\
 & \leq &  \sup_{u \in \mathcal{U}, \nu \in \Theta_{un} \times  \mathcal{H}_{un}}  P  \left ( C  \| \nu(Z_u) - \nu_u(Z_u) \|^{\alpha} \right)  \\
 & = &  \sup_{u \in \mathcal{U}, \nu \in \Theta_{un} \times  \mathcal{H}_{un}}  C \| \nu - \nu_u \|^{\alpha}_{P, \alpha}  \leq    \sup_{ u \in \mathcal{U}, \nu \in \Theta_{un} \times  \mathcal{H}_{un}} C  \| \nu - \nu_u \|_{P, 2}^{\alpha}  \lesssim \tau_n^{\alpha},
 \end{eqnarray*}
where the first inequality
follows by the law of iterated expectations;  the second inequality follows  by Assumption \ref{ass: S2}(ii)(a);
and the last inequality follows from $\alpha \in [1,2]$ by Assumption \ref{ass: S2},  the monotonicity of the norm
$\|\cdot\|_{P,\alpha}$ in $\alpha \in [1, \infty]$, and  Assumption \ref{ass: AS}.}

Conclude using the growth conditions of Assumption \ref{ass: AS} that with probability $1-o(1)$
\begin{eqnarray}
\max_{j \in [d_\theta]} \sup_{u \in \mathcal{U}}|II_{2j}(u)| \lesssim \tau^{\alpha/2}_n \sqrt{ s_n \log (a_n)}    +  n^{-1/2} s_n n^{\frac{1}{q}} \log (a_n) = o(1).
\end{eqnarray}

Step 5. In this step we show that
$
 \sup_{u \in \mathcal {U}} \inf_{\theta \in \Theta_u} \sqrt{n} \|  \En \psi_u(W_u, \theta, \hat h_u(Z_u) ) \| =o_P(1).
$
We have that with probability $1- o(1)$
$$
\inf_{\theta \in \Theta_u} \sqrt{n} \|  \En \psi_u(W_u, \theta, \hat h_u(Z_u) ) \| \leq \sqrt{n} \|  \En \psi_u(W_u, \bar  \theta_u, \hat h_u(Z_u) ) \|,
$$
where $\bar  \theta_u = \theta_u - J^{-1}_u \En \psi_u(W_u, \theta_u, h_u(Z_u))$, since $\bar  \theta_u \in \Theta_u$ for all $u \in \mathcal{U}$ with probability $1- o(1)$, and, in fact, $\sup_{u \in \mathcal{U}} \| \bar  \theta_u - \theta_u \| =O_P( 1/\sqrt{n})$
by the last paragraph of Step 3.

Then, arguing similarly to Step 3 and 4, we can conclude that uniformly in $u \in \mathcal{U}$:
{\small \begin{eqnarray*}
\sqrt{n} \|  \En \psi_u(W_u, \bar  \theta_u, \hat h_u(Z_u) ) \| & \leq &  \sqrt{n} \|  \En \psi_u(W_u, \theta_u, h_u(Z_u) ) + J_u (\bar  \theta_u - \theta_u) + {\mathrm{D}_{u,0}(\hat h_u - h_u)} \| + o_P(1)
\end{eqnarray*}}\!
where the first term on the right side is zero by definition of $\bar  \theta_u$ and  $\mathrm{D}_{u,0}(\hat h_u - h_u) = 0$.
\qed
\subsection{Proof of Theorem \ref{theorem: general bs}}

\textsc{Step 0.}  In the proof $a \lesssim b$ means that $a \leq A b$, where the constant
$A$ depends on the constants  in Assumptions  \ref{ass: S1}-- \ref{ass: AS}, but not on $n$ once $n \geq n_0$, and not on $P \in \mathcal{P}_n$. In Step 1, we consider a sequence $P_n$ in $\mathcal{P}_n$, but for simplicity, we write  $P =P_n$ throughout the proof,  suppressing the index $n$.  Since the argument is asymptotic, we can  assume that $n \geq n_0$ in what follows.

Let $\Pn$ denote the measure that puts mass $n^{-1}$ at the points $(\xi_i, W_i)$ for $i=1,...,n$.
Let $\En$ denote the expectation with respect to this measure, so that
$\En f = n^{-1} \sum_{i=1}^n f(\xi_i, W_i)$, and $\Gn$ denote the corresponding empirical process $\sqrt{n} ( \En - P)$,
i.e.
$$
\Gn f = \sqrt{n}(\En f - P f) = n^{-1/2} \sum_{i=1}^n \Bigg ( f(\xi_i, W_i) - \int f(s, w) d P_\xi (s) dP (w) \Bigg).
$$

Recall that we define the bootstrap draw as:
$$
Z^*_{n,P}:= \sqrt{n}(\hat \theta^*- \hat \theta)  =  \(\frac{1}{\sqrt{n}} \sum_{i=1}^n \xi_i \hat \psi_{u}(W_i) \)_{u \in \mathcal{U}}= \left (\Gn \xi \hat \psi_u \right)_{u \in \mathcal{U}},
$$
where
$
 \hat \psi_u(W) = -\hat J^{-1}_u \psi_u(W_{u}, \hat \theta_u, \hat h_u(Z_u)).
$

\textsc{Step 1.}({Linearization})  In this step we establish that
\begin{equation}
\ \  \zeta^*_{n,P}:  = Z^*_{n,P}-   G^*_{n,P} =   o_P(1) \ \  \text{ in $\D=\ell^\infty(\mU)^{d_\theta}$},
\end{equation}
where $G^*_{n, P} :=   (\mathbb{G}_{n}  \xi \bar \psi_{u})_{u \in \mathcal{U}},$
and  $\bar \psi_u(W) = -J^{-1}_u \psi_u(W_{u}, \theta_u, h_u(Z_u))$.

The claim would follow from demonstrating that (a)
\begin{equation}\label{REL1}
 Z^\star_{n,P}-   G^*_{n,P} =   o_P(1) \ \  \text{ in $\D=\ell^\infty(\mU)^{d_\theta}$},
\end{equation}
where $Z^\star_{n, P} :=   (\mathbb{G}_{n}  \xi \check \psi_{u})_{u \in \mathcal{U}},$
and  $\check \psi_u(W) = -J^{-1}_u \psi_u(W_{u}, \hat \theta_u, \hat h_u(Z_u))$ (not that
the hat from $J_u$ disappeared) ; and (b)
\begin{equation}\label{REL2}
 Z^\star_{n,P}-   Z^*_{n,P} =   o_P(1) \ \  \text{ in $\D=\ell^\infty(\mU)^{d_\theta}$}.
\end{equation}

To show claim (\ref{REL1}),  we note that {with probability $1-\delta_n$,  $\hat h_u \in \mathcal{H}_{un},$ $\hat \theta_u \in \Theta_{un} = \{ \theta \in \Theta_u: \| \theta - \theta_u \| \leq C \tau_n \}$,  so that
$
\| \zeta^\star_{n,P} \|_{\D} \lesssim \sup_{f \in \mF_3} |\Gn[\xi f]|,
$
where
$$
\mathcal{F}_{3} = \Big \{ \tilde \psi_{uj}(\bar \theta_u, \bar h_u) - \bar \psi_{uj} :
 j \in [d_{\theta}], u \in \mU,  \bar \theta_u \in \Theta_{un},  \bar h_u  \in \mathcal{H}_{un}\Big\},
$$
where $\tilde \psi_{uj}(\bar \theta_u, \bar h_u)$ is the $j$-th element of $-  J^{-1}_u \psi_u(W_u, \bar \theta_u, \bar h_u(Z_u))$, and $\bar \psi_{uj}$ is the $j$-th element of $-J^{-1}_u \psi_u(W_u,  \theta_u,  h_u(Z_u))$.
By the arguments similar to those employed in the proof of the previous theorem, $\mF_3$ obeys $$
\log \sup_Q   N( \epsilon \|F_3\|_{Q,2}, \mathcal{F}_{3}, \|\cdot\|_{Q,2}) \lesssim  (s_n \log a_n + s_n \log (a_n/\epsilon)) \vee 0,$$
for an envelope  $F_3 \lesssim F_0 $.
By Lemma \ref{lemma: andrews}, multiplication of this class by $\xi$ does not change the entropy bound modulo  an absolute constant, namely
$$
\log \sup_Q   N( \epsilon \||\xi| F_3\|_{Q,2}, \xi \mathcal{F}_{3}, \|\cdot\|_{Q,2}) \lesssim (s_n \log a_n + s_n \log (a_n/\epsilon))\vee 0.
$$
Also $\Ep[\exp(|\xi|)] < \infty$ implies $(\Ep [\max_{ i \leq n} |\xi_i|^2])^{1/2} \lesssim \log n$, so that, using independence
of $(\xi_i)_{i=1}^n$ from $(W_i)_{i=1}^n$ and Assumption \ref{ass: S2}(i),
 $$\|  \max_{i \leq n } \xi_i F_0(W_i)\|_{\Pr_P, 2}   \leq  \|   \max_{i \leq n } \xi_i \|_{\Pr_P, 2}
 \| \max_{i \leq n }F_0(W_i)\|_{\Pr_P,2} \lesssim n^{1/q}  \log n.$$
Applying Lemma \ref{lemma:CCK},
\begin{eqnarray*}
&&  \sup_{f \in \xi \mathcal{F}_3} | \Gn (f) |  = O_{P} \(  \tau_n^{\alpha/2} \sqrt{s_n \log (a_n)}  +  \frac{s_n n^{1/q}  \log n }{\sqrt{n}}  \log (a_n)  \)  = o_P(1),
\end{eqnarray*}
for $\sup_{f \in \xi \mathcal{F}_3}\|  f\|_{P,2}= \sup_{f \in  \mathcal{F}_3}\| f\|_{P,2} \lesssim \sigma_n \lesssim \tau_n^{\alpha/2}$, where the details of calculations are similar to those in the  proof of Theorem \ref{theorem:semiparametric}.
Indeed, with probability $1 - o(\delta_n),$
 \begin{eqnarray*}
  \sup_{f \in \mathcal{F}_3} \| f\|^2_{P,2} & \lesssim &   \sup_{u \in \mU} \| J_{u}^{-1} \|^{2}  \sup_{j \in [d_\theta], u \in \mathcal{U}, \nu \in \Theta_{un} \times  \mathcal{H}_{un}}   \ P \left( P[  ( \psi_{uj}(W_u, \nu(Z_u)) - \psi_{uj}(W_u, \nu_u(Z_u)))^2 |Z_u]  \right) \\
 & \lesssim &  \sup_{u \in \mathcal{U}, \nu \in \Theta_{un} \times  \mathcal{H}_{un}}   \| \nu - \nu_u \|^{\alpha}_{P, \alpha}   \lesssim    \sup_{ u \in \mathcal{U}, \nu \in \Theta_{un} \times  \mathcal{H}_{un}}   \| \nu - \nu_u \|_{P, 2}^{\alpha}  \lesssim \tau_n^{\alpha},
 \end{eqnarray*}
where the first inequality
follows from the triangle inequality and the law of iterated expectations;  the second inequality follows  by Assumption \ref{ass: S2}(ii)(a) and Assumption \ref{ass: S2}(i); the third inequality follows from $\alpha \in [1,2]$ by Assumption \ref{ass: S2},  the monotonicity of the norm
$\|\cdot\|_{P,\alpha}$ in $\alpha \in [1, \infty]$, and  Assumption \ref{ass: AS}; and the last inequality follows from $ \| \nu - \nu_u \|_{P, 2} \lesssim \tau_n$ by the definition of $ \Theta_{un}$ and $ \mathcal{H}_{un}$.}
The claim (\ref{REL1}) follows.

To show claim (\ref{REL2}),  bound
$$
\|Z^\star_{n,P}-   Z^*_{n,P}\|_{\D}  = \|  ( \hat J_u^{-1} J_u    Z^*_{n,P}(u) -  Z^*_{n,P}(u)  )_{u \in \mU}\|_{\D} \lesssim \sup_{u \in \mU}  \| \hat J_u^{-1} J_u - I \|
\|Z^*_{n,P} \|_{\D} = o_P(1),
$$
since $\sup_{u \in \mU}  \| \hat J_u^{-1} J_u - I \| = o_P(1)$ by the assumption of the theorem, and since $\|Z^*_{n,P} \|_{\D} = O_P(1)$ by
$\|Z^*_{n,P} \|_{\D} =  \| G^*_{n,P} + o_P(1) \|_{\D}   \rightsquigarrow_B
 \| Z_{P}\|_{\D}$, which  follows by claim (\ref{REL1}) and by  $G^*_{n,P} \rightsquigarrow_B  Z_{P}$ in $\D$ holding by Theorem \ref{lemma: uniform Donsker for bootstrap}.

\textsc{Step 2}. Here we are claiming that
$Z^*_{n,P} \rightsquigarrow_B  Z_{P}$  in $\D=\ell^\infty(\mU)^{d_\theta}$,  under any sequence $P =P_n \in \mP_n$, were $Z_{P} =   (\mathbb{G}_{P}  \bar \psi_{u})_{u \in \mathcal{U}}$. By the triangle inequality and Step 1,
\begin{eqnarray*}
&& \sup_{h \in \mathrm{BL}_1(\D) } \Big |   \Ep_{B_n}  h ( Z^*_{n,P} )  - \Ep_P h ( Z_{P})  \Big |\leq \sup_{h \in \mathrm{BL}_1(\D) } \Big |   \Ep_{B_n}  h (G^*_{n,P} )  - \Ep_P h ( Z_{P})  \Big |
+   \Ep_{B_n} ( \|  \zeta^*_{n,P} \|_{\D} \wedge 2 ),
\end{eqnarray*}
where the first term  is $o^*_P(1)$, since $
G^*_{n,P} \rightsquigarrow_B  Z_{P}$  by Theorem \ref{lemma: uniform Donsker for bootstrap}, and the second term is $o_P(1)$  because  $ \|\zeta^*_{n,P}\|_{\D}  = o_P(1) $ implies that  $ \Ep_P ( \|  \zeta^*_{n,P} \|_{\D} \wedge 2 ) =
\Ep_P \Ep_{B_n} ( \|  \zeta^*_{n,P} \|_{\D} \wedge 2 ) \to 0$, which in turn implies that  $\Ep_{B_n} ( \|  \zeta^*_{n,P} \|_{\D} \wedge 2 ) = o_P(1)$ by the Markov inequality.
 \qed

\subsection{Proof of Theorem \ref{theorem3}}  This is an immediate consequence of
Theorems  \ref{theorem:semiparametric}, \ref{theorem: general bs},  \ref{thm: delta-method}, and \ref{theorem:delta-method-bootstrap}.\qed

\section{Implementation Details}\label{sec:Implementation}

In this section, we provide details about how we implemented the methodology developed in the main body of the paper in the empirical example.  We first discuss estimation of local average treatment effects (LATE) and then extend this discussion to estimation of local quantile treatment effects (LQTE).  Estimation of all other quantities proceeds in a similar fashion and so is not discussed.

\subsection{Local Average Treatment Effects}

Recall that the LATE of treatment $D$ on outcome $Y$ is defined as
\begin{align*}
\Delta_{LATE} = \theta_{Y}(1)-\theta_{Y}(0) = \frac{\alpha_{1_1(D)Y}(1)-\alpha_{1_1(D)Y}(0)}{\alpha_{1_1(D)}(1) - \alpha_{1_1(D)}(0)} - \frac{\alpha_{1_0(D)Y}(1)-\alpha_{1_0(D)Y}(0)}{\alpha_{1_0(D)}(1) - \alpha_{1_0(D)}(0)}
\end{align*}
for $\alpha_V(z)$ and $\theta_Y(d)$ defined in equations (\ref{KeyRF1}) and (\ref{define:LASF}) respectively.
It then follows by plugging in the definition of $\alpha_V(z)$ that we can express the LATE as
\begin{align*}
\Delta_{LATE} &= \frac{\alpha_{Y}(1)-\alpha_{Y}(0)}{\alpha_{1_1(D)}(1) - \alpha_{1_1(D)}(0)}.
\end{align*}

To obtain an estimate of the LATE, we thus need estimates of $\alpha_{Y}(z)$ and $\alpha_{1_1(D)}(z)$.  Using the low-bias moment function given in equation (\ref{LowBiasMoment}), estimates of these key quantities can be constructed from estimates of
$\Ep_P[Y|Z=1,X]$, $\Ep_P[Y|Z=0,X]$, $\Ep_P[D|Z=1,X]$, $\Ep_P[D|Z=0,X]$, and $\Ep_P[Z|X]$ where $Z$ is the binary instrument (401(k) eligibility); $D$ is the binary treatment (401(k) participation); $X$ is the set of raw covariates discussed in the empirical section; and $Y$ is net financial assets.  In our application, we have $\Ep_P[D|Z=0,X] = 0$ since one cannot participate unless one is eligible.  We use Post-Lasso to estimate $\Ep_P[Y|Z=1,X]$ and $\Ep_P[Y|Z=0,X]$ and post-$\ell_1$-penalized logistic regression to estimate $\Ep_P[D|Z=1,X]$ and $\Ep_P[Z|X]$.

To estimate $\Ep_P[Y|Z=1,X]$, we postulate that $\Ep_P[Y|Z=1,X] \approx f(X)'\beta_Y(1)$, where $f(X)$ is one of the pre-specified sets of controls discussed in the empirical section with dimension $p$.  Let $\mathcal{I}_1$ denote the indices of observations that have $z_i = 1$.  To estimate the coefficients $\beta_Y(1)$, we apply the formulation of the Post-Lasso estimator given in \citen{BellChenChernHans:nonGauss} with outcomes $\{y_i\}_{i \in \mathcal{I}_1}$ and covariates $\{f(x_i)\}_{i \in \mathcal{I}_1}$.  We set $\lambda = 1.1\sqrt{n}\Phi^{-1}(1-(.1/\log(n))/(2(2p)))$ where $\Phi(\cdot)$ is the standard normal distribution function.  We calculate penalty loadings according to Algorithm A.1 of \citen{BellChenChernHans:nonGauss} using Post-Lasso coefficient estimates at each iteration and with a the maximum number of iterations set to 15.\footnote{Here and in all following instances, we stop iterating before reaching the maximum number of iterations if the $\ell_2$-norm of the difference in penalty loadings calculated across consecutive iterations is less than $10^{-6}$.}  Let $\widehat\beta_Y(1)$ denote the resulting Post-Lasso estimates of the coefficients using $\lambda$ given above and the final set of penalty loadings.  We then estimate $\Ep_P[Y|Z=1,X=x_i]$ as $f(x_i)'\widehat\beta_Y(1)$ for each $i = 1,...,n$.  We follow the same procedure to obtain estimates of $\Ep_P[Y|Z=0,X=x_i]$ as $f(x_i)'\widehat\beta_Y(0)$ for each $i = 1,...,n$ where $\widehat\beta_Y(0)$ are the Post-Lasso estimates using only the observations with $z_i = 0$.

Estimation of $\Ep_P[D|Z=1,X]$ and $\Ep_P[Z|X]$ proceed similarly replacing Post-Lasso estimation with post-$\ell_1$-penalized logistic regression.  Specifically, we assume that $\Ep_P[D|Z=1,X] \approx \Lambda_0(f(X)'\beta_D(1))$ where $\Lambda_0(\cdot)$ is the logistic link function.  We then obtain estimates of $\beta_D(1)$ by using the post-$\ell_1$-penalized estimator defined in equations (\ref{Def:LASSOmain2}) and (\ref{Def:PostLASSOmain2}) based on the logistic link function and with outcomes $\{d_i\}_{i \in \mathcal{I}_1}$ and covariates $\{f(x_i)\}_{i \in \mathcal{I}_1}$ for $\mathcal{I}_1$ defined as above.  We set $\lambda = 1.1\sqrt{n}\Phi^{-1}(1-(.1/\log(n))/(2(2p)))$ where $\Phi(\cdot)$ is the standard normal distribution function.  We calculate penalty loadings using Algorithm \ref{AlgFunc} of the main text with a maximum of 15 iterations. 
Let $\widehat\beta_D(1)$ denote the resulting post-$\ell_1$-penalized estimates of the coefficients using $\lambda$ given above and the final set of penalty loadings.  We estimate $\Ep_P[D|Z=1,X=x_i]$ as $\Lambda_0(f(x_i)'\widehat\beta_D(1))$ for each $i = 1,...,n$.  We follow this procedure to obtain estimates of $\Ep_P[Z|X=x_i]$ as $\Lambda_0(f(x_i)'\widehat\beta_Z)$ for each $i = 1,...,n$ where $\widehat\beta_Z$ are the post-$\ell_1$-penalized coefficient estimates obtained with $\{z_i\}_{i=1}^{n}$ as the outcome and $\{f(x_i)\}_{i=1}^{n}$ as covariates using $\lambda = 1.1\sqrt{n}\Phi^{-1}(1-(.1/\log(n))/(2p))$.

Using these baseline quantities, we obtain estimates
\begin{align*}
\widehat\alpha_{Y}(1) &= \frac{1}{n}\sum_{i=1}^{n} \left(\frac{z_i(y_i - f(x_i)'\widehat\beta_Y(1))}{\Lambda_0(f(x_i)'\widehat\beta_Z)} + f(x_i)'\widehat\beta_Y(1)\right) = \frac{1}{n}\sum_{i=1}^{n} \psi_{1,i} \\
\widehat\alpha_{Y}(0) &= \frac{1}{n}\sum_{i=1}^{n} \left(\frac{(1-z_i)(y_i - f(x_i)'\widehat\beta_Y(0))}{1-\Lambda_0(f(x_i)'\widehat\beta_Z)} + f(x_i)'\widehat\beta_Y(0)\right) = \frac{1}{n}\sum_{i=1}^{n} \psi_{0,i} \\
\widehat\alpha_{1_1(D)}(1) &= \frac{1}{n}\sum_{i=1}^{n} \left(\frac{z_i(d_i - \Lambda_0(f(x_i)'\widehat\beta_D(1)))}{\Lambda_0(f(x_i)'\widehat\beta_Z)} + \Lambda_0(f(x_i)'\widehat\beta_D(1))\right)  = \frac{1}{n}\sum_{i=1}^{n} \upsilon_{1,i} \\
\widehat\alpha_{1_1(D)}(0) &= \frac{1}{n}\sum_{i=1}^{n} \left(\frac{(1-z_i)d_i}{1-\Lambda_0(f(x_i)'\widehat\beta_Z)}\right)  = \frac{1}{n}\sum_{i=1}^{n} \upsilon_{0,i} = 0.
\end{align*}
We then plug these estimates in to obtain
$$
\widehat\Delta_{LATE} = \frac{\widehat\alpha_{Y}(1)-\widehat\alpha_{Y}(0)}{\widehat\alpha_{1_1(D)}(1) - \widehat\alpha_{1_1(D)}(0)}.
$$

We report both analytic and bootstrap standard error estimates for the LATE.  The analytic standard errors are calculated as
$$
\sqrt{\frac{1}{n-1}\sum_{i=1}^{n}\left(\frac{\psi_{1,i} - \psi_{0,i}}{\widehat\alpha_{1_1(D)}(1) - \widehat\alpha_{1_1(D)}(0)} - \widehat\Delta_{LATE}\right)^2/n}.
$$
We use wild bootstrap weights for obtaining the multiplier bootstrap estimates of the standard errors with 500 bootstrap replications.  Specifically, for each $b = 1,...,500$, we calculate a bootstrap estimate of the LATE as
$$
\widehat\Delta_{LATE}^{b} = \frac{\frac{1}{n}\sum_{i=1}^{n}(\psi_{1,i}-\psi_{0,i})\xi_i^b}{\frac{1}{n}\sum_{i=1}^{n}(\upsilon_{1,i}-\upsilon_{0,i})\xi_i^b}
$$
where $\xi_i^b = 1 + r_{1,i}^b/\sqrt{2}+((r_{2,i}^{b})^2-1)/2$ is the bootstrap draw for multiplier weight for observation $i$ in bootstrap repetition $b$ where $r_{1,i}^b$ and $r_{2,i}^b$ are random numbers generated as iid draws from two independent standard normal random variables.  The bootstrap standard error estimate is then the bootstrap interquartile range rescaled with the normal distribution: $[q_{LATE}(.75) - q_{LATE}(.25)]/[q_N(.75) - q_N(0.25)]$, where $q_{LATE}(p)$ is the $p$th quantile of $\{\widehat\Delta_{LATE}^{b}\}_{b=1}^{500}$ and $q_N(p)$ is the $p$th quantile of the $N(0,1)$.

\subsection{Local Quantile Treatment Effects}

Calculation and inference for LQTE is more cumbersome than for the LATE.  We begin by choosing the set over which we would like to look at the LQTE.  In our example, we chose to look at quantiles in the interval $[0.1,0.9]$.

To calculate the LQTE, we first calculate the local average structural function for outcomes $Y_u = 1(Y \le u)$ for a set of $u$ and then invert to obtain estimates of the LQTE.  In our example, we chose to look at $u \in [q_Y(.05),q_Y(.95)]$ where $q_Y(.05)$ and $q_Y(.95)$ are respectively the sample $5^{th}$ and $95^{th}$ percentiles of the outcome of interest $Y$.  Since looking at the continuum of values in this interval is infeasible, we discretize the interval and look at $Y_u = 1(Y \le u)$ for $u \in \{q_Y(.05),q_Y(.06),q_Y(.07),...,q_Y(.93),q_Y(.94),q_Y(.95)\}$. I.e. we set $u$ equal to each percentile of $Y$ between the $5^{th}$ and $95^{th}$ percentiles for a total of 91 different values of $u$ to be considered.  For each value of $u$, we need an estimate of the local average structural function defined in (\ref{define:LASF}) for $d \in \{0,1\}$:
\begin{align*}
\theta_{1(Y \le u)}(d) = \frac{\alpha_{1_d(D)1(Y \le u)}(1) - \alpha_{1_d(D)1(Y \le u)}(0)}{\alpha_{1_d(D)}(1) -  \alpha_{1_d(D)}(0)}.
\end{align*}

As with the LATE, we need estimates of $\Ep_P[D|Z=1,X]$ and $\Ep_P[Z|X]$.  We estimate these quantities as we did for the LATE but change the value of the penalty parameter used to reflect the fact that we are now interested in a large set, in theory a continuum, of model selection problems.  Specifically, we assume that $\Ep_P[D|Z=1,X] \approx \Lambda_0(f(X)'\beta_D(1))$ where $\Lambda_0(\cdot)$ is the logistic link function and $f(X)$ is one of the pre-specified sets of controls  discussed in the empirical section with dimension $p$.  We then obtain estimates of $\beta_D(1)$ by using the post-$\ell_1$-penalized estimator defined in equations (\ref{Def:LASSOmain2}) and (\ref{Def:PostLASSOmain2})  based on the logistic link function and with outcomes $\{d_i\}_{i \in \mathcal{I}_1}$ and covariates $\{f(x_i)\}_{i \in \mathcal{I}_1}$ for $\mathcal{I}_1$ defined as above.  We set $\lambda = 1.1\sqrt{n}\Phi^{-1}(1-(1/\log(n))/(2n(2p)))$ where $\Phi(\cdot)$ is the standard normal distribution function.  We calculate penalty loadings using Algorithm \ref{AlgFunc} with a maximum of 15 iterations. 
  Let $\widehat\beta_D(1)$ denote the resulting post-$\ell_1$-penalized estimates of the coefficients using $\lambda$ given above and the final set of penalty loadings.  We estimate $\Ep_P[D|Z=1,X=x_i]$ as $\Lambda_0(f(x_i)'\widehat\beta_D(1))$ for each $i = 1,...,n$.  We follow this procedure to obtain estimates of $\Ep_P[Z|X]$ as $\Lambda_0(f(x_i)'\widehat\beta_Z)$ for each $i = 1,...,n$ where $\widehat\beta_Z$ are the post-$\ell_1$-penalized coefficient estimates obtained with $\{z_i\}_{i=1}^{n}$ as the outcome and $\{f(x_i)\}_{i=1}^{n}$ as covariates and $\lambda = 1.1\sqrt{n}\Phi^{-1}(1-(1/\log(n))/(2np))$.  We also still have $\Ep_P[D|Z=0,X] = 0$ in our application since one cannot participate in a 401(k) unless one is eligible. We then plug-in these estimates to obtain
\begin{align*}
\widehat\alpha_{1_1(D)}(1) &= \frac{1}{n}\sum_{i=1}^{n} \left(\frac{z_i(d_i - \Lambda_0(f(x_i)'\widehat\beta_D(1)))}{\Lambda_0(f(x_i)'\widehat\beta_Z)} + \Lambda_0(f(x_i)'\widehat\beta_D(1))\right)  = \frac{1}{n}\sum_{i=1}^{n} \upsilon_{1,1,i} \\
\widehat\alpha_{1_1(D)}(0) &= \frac{1}{n}\sum_{i=1}^{n} \left(\frac{(1-z_i)d_i}{1-\Lambda_0(f(x_i)'\widehat\beta_Z)}\right)  = \frac{1}{n}\sum_{i=1}^{n} \upsilon_{1,0,i} = 0 \\
\widehat\alpha_{1_0(D)}(1) &= 1-\widehat\alpha_{1_1(D)}(1), \quad \widehat\alpha_{1_0(D)}(0) = 1-\widehat\alpha_{1_1(D)}(0).
\end{align*}

We also need to obtain estimates of $\alpha_{1_d(D)1(Y \le u)}(z)$ for each value of $u$ and for $$(z,d) \in \{(0,0),(0,1),(1,0),(1,1)\}.$$  These estimates will depend on the propensity score, $\Ep_P[Z|X]$, estimated above and quantities of the form $\Ep_P[1(D = d)1(Y \le u)|Z=z,X]$.  We again approximate this function with $\Ep_P[1(D = d)1(Y \le u)|Z=z,X] \approx \Lambda_0(X'\beta_{\mathbf{1}_d(D) Y_u}(z))$ and estimate the coefficients $\beta_{\mathbf{1}_d(D) Y_u}(z)$ for each combination of $d$ and $z$ and each $u$ using the post-$\ell_1$-penalized estimator defined in equations (\ref{Def:LASSOmain2}) and (\ref{Def:PostLASSOmain2}) based on the logistic link function.  We set $\lambda = 1.1\sqrt{n}\Phi^{-1}(1-(1/\log(n))/(2n(2p)))$ where $\Phi(\cdot)$ is the standard normal distribution function.  We calculate penalty loadings using Algorithm \ref{AlgFunc} of the main text with a maximum of 15 iterations.
 We follow this procedure for each $u$ with $\{1(y_i \le u) 1(d_i = 1)\}_{i \in \mathcal{I}_1}$ as the outcome and covariates $\{f(x_i)\}_{i \in \mathcal{I}_1}$, with $\{1(y_i \le u) 1(d_i = 0)\}_{i \in \mathcal{I}_1}$ as the outcome and covariates $\{f(x_i)\}_{i \in \mathcal{I}_1}$, and with $\{1(y_i \le u) 1(d_i = 0)\}_{i \in \mathcal{I}_0}$ as the outcome and covariates $\{f(x_i)\}_{i \in \mathcal{I}_0}$ for $\mathcal{I}_1$ and $\mathcal{I}_0$ defined as above to obtain point estimates  $\widehat \beta_{\mathbf{1}_1(D) Y_u}(1)$, $\widehat\beta_{\mathbf{1}_0(D) Y_u}(1)$, and $\widehat\beta_{\mathbf{1}_0(D) Y_u}(0)$ respectively.  We then estimate $\Ep_P[1(D = 1)1(Y \le u)|Z=1,X=x_i]$ as $\Lambda_0(f(x_i)'\widehat\beta_{\mathbf{1}_1(D) Y_u}(1))$ for each $i = 1,...,n$ and obtain estimates of $\Ep_P[1(D = 0)1(Y \le u)|Z=1,X=x_i]$, and $\Ep_P[1(D = 0)1(Y \le u)|Z=0,X=x_i]$ analogously.  As before, we have $\Ep_P[1(D = 1)1(Y \le u)|Z=0,X] = 0$ since one cannot participate unless one is eligible.
We then plug-in these estimates to obtain
\begin{footnotesize}\begin{align*}
\widehat\alpha_{1_1(D)1(Y \le u)}(1) &= \frac{1}{n}\sum_{i=1}^{n} \left(\frac{z_i(d_i 1(y_i \le u) - \Lambda_0(f(x_i)'\widehat\beta_{\mathbf{1}_1(D) Y_u}(1)))}{\Lambda_0(f(x_i)'\widehat\beta_Z)} + \Lambda_0(f(x_i)'\widehat\beta_{\mathbf{1}_1(D) Y_u}(1))\right)  = \frac{1}{n}\sum_{i=1}^{n} \kappa_{u,1,1,i} \\
\widehat\alpha_{1_1(D)1(Y \le u)}(0) &= \frac{1}{n}\sum_{i=1}^{n} \left(\frac{(1-z_i)(d_i 1(y_i \le u))}{1-\Lambda_0(f(x_i)'\widehat\beta_Z)}\right)  = \frac{1}{n}\sum_{i=1}^{n} \kappa_{u,1,0,i} = 0 \\
\widehat\alpha_{1_0(D)1(Y \le u)}(1) &=  \frac{1}{n}\sum_{i=1}^{n} \left(\frac{z_i((1-d_i) 1(y_i \le u) - \Lambda_0(f(x_i)'\widehat\beta_{\mathbf{1}_0(D) Y_u}(1)))}{\Lambda_0(f(x_i)'\widehat\beta_Z)} + \Lambda_0(f(x_i)'\widehat\beta_{\mathbf{1}_0(D) Y_u}(1))\right)  = \frac{1}{n}\sum_{i=1}^{n} \kappa_{u,0,1,i}\\
\widehat\alpha_{1_0(D)1(Y \le u)}(0) &= \frac{1}{n}\sum_{i=1}^{n} \left(\frac{(1-z_i)((1-d_i) 1(y_i \le u) - \Lambda_0(f(x_i)'\widehat\beta_{\mathbf{1}_0(D) Y_u}(0)))}{1-\Lambda_0(f(x_i)'\widehat\beta_Z)} + \Lambda_0(f(x_i)'\widehat\beta_{\mathbf{1}_0(D) Y_u}(0))\right)  = \frac{1}{n}\sum_{i=1}^{n} \kappa_{u,0,0,i}.
\end{align*}\end{footnotesize}

Estimates of the local average structural (distribution) functions are formed using the estimators defined in the previous two paragraphs as
\begin{align*}
\widehat\theta_{1(Y \le u)}(d) = \frac{\widehat\alpha_{1_d(D)1(Y \le u)}(1) - \widehat\alpha_{1_d(D)1(Y \le u)}(0)}{\widehat\alpha_{1_d(D)}(1) -  \widehat\alpha_{1_d(D)}(0)}.
\end{align*}
To obtain LQTE estimates, we then need to invert these local average structural functions.  Since we only have the estimated distribution for each $d$ evaluated on the finite grid of points $u \in \{q_Y(.05),q_Y(.06),q_Y(.07),...,q_Y(.93),q_Y(.94),q_Y(.95)\}$, we do this inversion by linearly interpolating the value of the distribution function between these points to find the value of the outcome associated with each quantile in the set $q \in [0.1,0.11,.0,12,...,0.89,.0.9]$ which we denote as $\widehat\theta^{\leftarrow}_Y(q,d)$.  The LQTE at point $q$ is then estimated as $\widehat\Delta(q) = \widehat\theta^{\leftarrow}_Y(q,1) - \widehat\theta^{\leftarrow}_Y(q,0)$.

For the LQTE, we only report inference based on the multiplier bootstrap using 500 bootstrap replications.
For each $b = 1,...,500$, we generate bootstrap weights as $\xi_i^b = 1 + r_{1,i}^b/\sqrt{2}+((r_{2,i}^{b})^2-1)/2$ for observation $i$ in bootstrap repetition $b$ where $r_{1,i}^b$ and $r_{2,i}^b$ are random numbers generated as iid draws from two independent standard normal random variables.  We then use these weights to form bootstrap estimates of the local average structural functions
\begin{align*}
\widehat\theta^{b}_{1(Y \le u)}(d) = \frac{\widehat\alpha^{b}_{1_d(D)1(Y \le u)}(1) - \widehat\alpha^{b}_{1_d(D)1(Y \le u)}(0)}{\widehat\alpha^{b}_{1_d(D)}(1) -  \widehat\alpha^{b}_{1_d(D)}(0)}
\end{align*}
where
\begin{align*}
\widehat\alpha^{b}_{1_1(D)}(1) &= \frac{1}{n}\sum_{i=1}^{n} \xi_{i}^b\upsilon_{1,1,i}, \ \widehat\alpha^{b}_{1_1(D)}(0) = \frac{1}{n}\sum_{i=1}^{n} \xi_{i}^b\upsilon_{1,0,i}, \\
\widehat\alpha^{b}_{1_0(D)}(1) &= 1-\widehat\alpha^{b}_{1_1(D)}(1),
\widehat\alpha^{b}_{1_0(D)}(0) = 1-\widehat\alpha^{b}_{1_1(D)}(0), \\
\widehat\alpha^{b}_{1_1(D)1(Y \le u)}(1) &= \frac{1}{n}\sum_{i=1}^{n} \xi_{i}^b\kappa_{u,1,1,i}, \
\widehat\alpha^{b}_{1_1(D)1(Y \le u)}(0) = \frac{1}{n}\sum_{i=1}^{n} \xi_{i}^b\kappa_{u,1,0,i} = 0, \\
\widehat\alpha^{b}_{1_0(D)1(Y \le u)}(1) &= \frac{1}{n}\sum_{i=1}^{n} \xi_{i}^b\kappa_{u,0,1,i}, \
\widehat\alpha^{b}_{1_0(D)1(Y \le u)}(0) = \frac{1}{n}\sum_{i=1}^{n} \xi_{i}^b\kappa_{u,0,0,i}.
\end{align*}
From these bootstrap estimates of the average structural distribution functions, we obtain bootstrap LQTE estimates as above through inversion by linearly interpolating the value of the distribution function between the finite set of points at which we have estimated values to find the value of the outcome associated with each quantile in the set $q \in [0.1,0.11,.0,12,...,0.89,.0.9]$, denoted $(\widehat\theta^{\leftarrow}_Y(q,d))^{b}$.  The bootstrap estimate of the LQTE for bootstrap replication $b$ at point $q$ is then $\widehat\Delta^b(q) = (\widehat\theta^{\leftarrow}_Y(q,1))^{b} - (\widehat\theta^{\leftarrow}_Y(q,0))^{b}$.
We form bootstrap standard error estimates for the LQTE at each quantile $q$ as
$$
s(q) = [q_{LQTE}(.75) - q_{LQTE}(.25)]/[q_N(.75) - q_N(.25)],
$$
where $q_{LQTE}(p)$ is the $p$th quantile of $\{\widehat\Delta^b(q)\}_{b=1}^{500}$ and $q_N(p)$ is the $p$th quantile of the $N(0,1)$.


We also use the bootstrap LQTE estimates to obtain the critical values we use when plotting the uniform confidence bands in our example.  We form bootstrap t-statistics for each quantile $q$ as $t^b(q) = (\widehat\Delta^b(q) - \widehat\Delta(q))/s(q)$.  We then take $t_{\max}^b = \max_q\{|t^b(q)|\}$ and use the 95$^{th}$ percentile of the bootstrap distribution of $t_{\max}^b$ as the critical value in constructing the confidence intervals for our figures following for example \citen{CFM}.

\bibliographystyle{ecta}
\bibliography{mybibVOLUME}

\newpage

\begin{center}
\Large{Supplement to ``Program Evaluation and Causal Inference with High-Dimensional Data''}
\end{center}

\medskip
\begin{center}
\normalsize 
A. Belloni, V. Chernozhukov,  I. Fern\'andez-Val, and C. Hansen
\end{center}
\medskip

\begin{footnotesize}
\textsc{Abstract.} The supplementary material contains 10 appendices with additional results and some omitted proofs.  Appendices \ref{sec:f}--\ref{sec:k} include additional results for Sections 2--7, respectively. Appendix \ref{sec:l} gathers auxiliary results on algebra of covering entropies. Appendices \ref{sec:m} and \ref{sec:n} contain the proofs of Sections 4 and 5 omitted from the main text. Appendix \ref{sec:o} contains the proofs of Sections 6 omitted from the main text, together with the proofs of the additional results for Section 6 in Appendix \ref{sec:j}. Appendix \ref{sec:p} reports the results of a simulation experiment.

\end{footnotesize}

\appendix

\setcounter{section}{5}

\section{Additional Results for Section 2}\label{sec:f}
\subsection{Causal Interpretations for Structural Parameters}\label{subsec: causal}
The quantities discussed in Sections \ref{subset: lates} and \ref{subset: latts} are well-defined and have causal interpretation under standard conditions.  We briefly recall these conditions, using the potential outcomes notation. Let  $Y_{u1}$ and $Y_{u0}$ denote
the potential outcomes  under the treatment states $1$ and $0$.  These outcomes are not observed jointly,
and we instead observe $Y_u = D Y_{u1} + (1-D) Y_{u0},$ where $D \in \mathcal{D} = \{0,1\}$
is the random variable indicating program participation or treatment state.  Under exogeneity, $D$ is assigned independently
of the potential outcomes conditional on covariates $X$, i.e.
$(Y_{u1}, Y_{u0}) \ci D \mid X$ a.s., where $\ci$ denotes statistical
independence.


Exogeneity fails when $D$ depends on the potential outcomes. For example, people may drop out of a program if they think the program will not benefit them. In this case, instrumental variables are useful in creating quasi-experimental fluctuations in $D$ that may identify useful effects. Let $Z$ be a binary instrument, such as an offer of participation,  that generates potential participation decisions $D_1$ and $D_0$ under the instrument states 1 and 0, respectively. As with the potential outcomes, the potential participation decisions under both instrument states are not observed jointly.  The realized participation decision is then given by $D = Z D_1 + (1-Z)D_0.$
 We assume that $Z$ is assigned randomly with respect to potential outcomes and participation decisions conditional on $X$, i.e., $(Y_{u0}, Y_{u1}, D_0, D_1) \ci Z \mid X$ a.s.


There are many causal quantities of interest for program evaluation.  Chief among these are various structural averages: $d \mapsto \Ep_{P}[Y_{ud}]$, the causal ASF;  $d \mapsto \Ep_P[ Y_{ud} \mid D = 1]$, the causal ASF-T;  $d \mapsto \Ep_P [ Y_{ud} \mid D_1 > D_0]$, the causal LASF; and $d \mapsto \Ep_P[ Y_{ud} \mid D_1 > D_0, D = 1],$ the causal LASF-T; as well as effects derived from them such as $\Ep_{P}[Y_{u1} - Y_{u0}]$, the causal ATE; $\Ep_P[ Y_{u1} - Y_{u0} \mid D = 1]$, the causal ATE-T;  $\Ep_P [ Y_{u1}- Y_{u0} \mid D_1 > D_0]$, the causal LATE; and  $\Ep_P[ Y_{u1} -  Y_{u0} \mid D_1 > D_0, D = 1],$ the causal LATE-T.
These causal quantities are the same as the structural parameters defined in Sections 2.2-2.3 under the following well-known sufficient condition.

\begin{assumption}[Assumptions for Causal/Structural Interpretability]\label{assumption: id}
The following conditions hold $P$-almost surely:  (Exogeneity) $((Y_{u1},Y_{u0})_{u \in \mU},D_1,D_0)  \ci Z \mid X$;
(First Stage) $\Ep_P[D_1 \mid X] \neq \Ep_P[D_0 \mid X]$; (Non-Degeneracy) $\Pr_P(Z=1 \mid X) \in (0,1)$;
(Monotonicity) $\Pr_P(D_1 \geq D_0 \mid X) = 1$. \end{assumption}

This condition due to \citen{imbens:angrist:94} and \citen{abadie:401k} is much-used in the program evaluation literature. It has an equivalent formulation in terms of a simultaneous equation model with a binary endogenous variable; see \citen{vytlacil:equiv} and \citen{heckman:vytlacil}.
For a thorough discussion of this assumption, we refer to \citen{imbens:angrist:94}.  Using this assumption, we present an identification lemma which follows from results of \citen{abadie:401k} and \citen{hong:nekipelov:2010} that both in turn build upon  \citen{imbens:angrist:94}.  The lemma shows that the parameters $\theta_{Y_u}$ and $\vartheta_{Y_u}$
defined earlier have a causal interpretation under Assumption \ref{assumption: id}.  Therefore, our referring to them as structural/causal is justified under this condition.

\begin{lemma}[Identification of Causal Effects ] Under Assumption \ref{assumption: id}, for each $d \in \mathcal{D}$,
$$
\Ep_P [ Y_{ud} \mid D_1 > D_0] = \theta_{Y_u}(d),   \ \  \Ep_P [ Y_{ud} \mid D_1 > D_0, D=1] = \vartheta_{Y_u}(d).
$$
Furthermore, if $D$ is exogenous, namely $D\equiv Z$ a.s., then
$$
\Ep_P [ Y_{ud} \mid D_1 > D_0]= \Ep_P [ Y_{ud} ], \ \  \Ep_P [ Y_{ud} \mid D_1 > D_0, D=1] = \Ep_P [ Y_{ud} \mid D=1].
$$
\end{lemma}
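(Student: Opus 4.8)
The plan is to rewrite every reduced-form object entering the definitions of $\theta_{Y_u}(d)$ in (\ref{define:LASF}) and $\vartheta_{Y_u}(d)$ in (\ref{define:LASF-T}) in terms of potential outcomes, and then to collapse the resulting ratios using monotonicity. Fix $u$ and $d\in\{0,1\}$ and abbreviate $D_z^{(1)}:=D_z$, $D_z^{(0)}:=1-D_z$, $D^{(1)}:=D$, $D^{(0)}:=1-D$. Two elementary consequences of Assumption \ref{assumption: id} carry the argument: (a) monotonicity gives $D_1-D_0=\mathbf{1}(D_1>D_0)$ a.s., and, combined with the first-stage and non-degeneracy conditions, $\Pr_P(D_1>D_0)>0$ and $\Pr_P(D=1,\,D_1>D_0)>0$; (b) on the complier event $\{D_1>D_0\}$ one has $D=ZD_1+(1-Z)D_0=Z$, so $Z\,\mathbf{1}(D_1>D_0)=D\,\mathbf{1}(D_1>D_0)=\mathbf{1}(D=1,\,D_1>D_0)$.

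First I would compute the building blocks. From $Y_u=DY_{u1}+(1-D)Y_{u0}$ one gets $\mathbf{1}_d(D)Y_u=D^{(d)}Y_{ud}$ identically; conditioning on $X$, then on $Z=z$, and invoking exogeneity $((Y_{u1},Y_{u0})_{u\in\mU},D_1,D_0)\ci Z\mid X$ yields $\alpha_{\mathbf{1}_d(D)Y_u}(z)=\Ep_P[D_z^{(d)}Y_{ud}]$ and $\alpha_{\mathbf{1}_d(D)}(z)=\Ep_P[D_z^{(d)}]$, while $\gamma_{\mathbf{1}_d(D)Y_u}=\Ep_P[D^{(d)}Y_{ud}]$ and $\gamma_{\mathbf{1}_d(D)}=\Ep_P[D^{(d)}]$ trivially. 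Since $D_1^{(d)}-D_0^{(d)}=(2d-1)\,\mathbf{1}(D_1>D_0)$, subtracting the $z=0$ from the $z=1$ version gives $\alpha_{\mathbf{1}_d(D)Y_u}(1)-\alpha_{\mathbf{1}_d(D)Y_u}(0)=(2d-1)\,\Ep_P[Y_{ud}\mathbf{1}(D_1>D_0)]$ and $\alpha_{\mathbf{1}_d(D)}(1)-\alpha_{\mathbf{1}_d(D)}(0)=(2d-1)\,\Pr_P(D_1>D_0)$, so the factor $(2d-1)$ cancels in the ratio defining $\theta_{Y_u}(d)$ and produces $\Ep_P[Y_{ud}\mid D_1>D_0]$. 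For $\vartheta_{Y_u}(d)$ I would instead difference the $\gamma$ term against the $z=0$ term: since $D^{(d)}-D_0^{(d)}=(2d-1)\,Z\,\mathbf{1}(D_1>D_0)$, fact (b) gives $\gamma_{\mathbf{1}_d(D)Y_u}-\alpha_{\mathbf{1}_d(D)Y_u}(0)=(2d-1)\,\Ep_P[Y_{ud}\mathbf{1}(D=1,\,D_1>D_0)]$ and $\gamma_{\mathbf{1}_d(D)}-\alpha_{\mathbf{1}_d(D)}(0)=(2d-1)\,\Pr_P(D=1,\,D_1>D_0)$; dividing gives $\vartheta_{Y_u}(d)=\Ep_P[Y_{ud}\mid D=1,\,D_1>D_0]$. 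The denominators are nonzero by (a).

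For the exogenous special case $D\equiv Z$ a.s., the potential decisions satisfy $D_1\equiv1$ and $D_0\equiv0$ a.s., so up to $P$-null sets $\{D_1>D_0\}$ is the whole sample space and $\theta_{Y_u}(d)=\Ep_P[Y_{ud}]$, while $\{D=1,\,D_1>D_0\}=\{Z=1\}=\{D=1\}$, whence $\vartheta_{Y_u}(d)=\Ep_P[Y_{ud}\mid D=1]$. I do not expect a genuine obstacle: the content is the Imbens--Angrist/Abadie complier algebra recast in the $\alpha$/$\gamma$ parametrization, plus the Hong--Nekipelov identity for the treated subpopulation. The two points requiring care are the sign bookkeeping in the $d=0$ cases --- where $\mathbf{1}_0(D)=1-D$ reverses the direction of the complier difference, which is exactly the factor $2d-1$ --- and verifying that each pointwise-in-$u$ expectation is well defined, which is guaranteed by the integrability and measurability conditions already imposed in Assumption \ref{assumption: basic}.
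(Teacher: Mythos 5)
Your proof is correct. The paper itself does not supply a proof of this lemma—it attributes the result to \citen{abadie:401k} and \citen{hong:nekipelov:2010}—and your argument is exactly the standard complier algebra from that literature (rewrite $\alpha_{\mathbf{1}_d(D)Y_u}(z)=\Ep_P[D_z^{(d)}Y_{ud}]$ via conditional exogeneity, use monotonicity to reduce $D_1^{(d)}-D_0^{(d)}$ and $D^{(d)}-D_0^{(d)}$ to $(2d-1)$ times complier indicators, and cancel), with the nonzero-denominator and exogenous-case checks handled correctly.
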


\section{Additional Results for Section \ref{EstimationSection}} \label{sec:g}

\begin{remark}[Another strategy for estimating $m_Z$ and $g_V$]\label{strategy2}
An alternative to the strategy for modeling and estimating $m_Z$ and $g_V$ is to treat $m_Z$ as in the text via (\ref{eq: approximations4})
while modeling $g_V$ through its disaggregation
\begin{equation}\label{relate g}
g_{V}(z,x) = \sum_{d=0}^1 e_{V}(d,z,x)  l_{D}(d,z,x),
\end{equation}
where the regression functions $e_{V}$ and $l_{D}$ map the support of $(D,Z,X)$, $\mathcal{DZX}$, to the real line and are defined by
\begin{eqnarray}
  && e_{V}(d,z,x): = \Ep_P[V|D=d, Z=z,X=x] \ \ \textrm{and} \\
  && l_{D}(d,z,x): =  \Pr_P[D=d|Z=z, X=x].
\end{eqnarray}
We will denote other potential values for the functions $e_{V}$ and $l_D$ by the parameters $e$ and $l$.  In this alternative approach, we can again use high-dimensional methods for modeling and estimating $e_{V}$ and $l_D$ using the same approach as in the main paper, and we can then use the relation (\ref{relate g}) to estimate $g_V$.\footnote{Upon conditioning on $D=d$ some parts become known; e.g.,
$e_{1_d(D) Y}(d',x,z) = 0$ if $d \neq d'$ and $e_{1_d(D)}(d',x,z) =1$ if $d = d'$.}  Specifically, we model the conditional expectation of $V$ given $D$, $Z$, and $X$ by
\begin{eqnarray}\label{eq: approximations-s2}
 && e_{V}(d,z,x) =:\Gamma_V [f(d,z,x) '\theta_{V} ] + \varrho_{V}(d,z,x), \\
 && f(d,z,x) :=  ( (1-d) f(z,x)', d f(z,x)')',   \\
 &&  \theta_V := (\theta_V(0,0)', \theta_V(0,1)', \theta_V(1,0)', \theta_V(1,1)')'.
\end{eqnarray}
We model the conditional probability of $D$ taking on 1 or $0$, given $Z$ and $X$ by
\begin{eqnarray}
  && l_{D}(1,z,x) =:   \Gamma_D [f(z,x) '\theta_{D}] +  \varrho_{D}(z,x), \\
  &&  l_D(0,z,x) = 1-  \Gamma_D [f(z,x) '\theta_{D}] -  \varrho_{D}(z,x),\\
 && f(z,x):=  ( (1-z) f(x)', z f(x)')',  \\
 &&  \theta_D := (\theta_D(0)', \theta_D(1)')'.\label{eq: approximations2-s2}
\end{eqnarray}
Here $\varrho_V(d,z,x)$ and $\varrho_D(z,x)$ are approximation errors, and the functions
 $\Gamma_V (f(d,z,x) '\theta_{V})$ and   $\Gamma_D (f(z,x) '\theta_{D})$
are generalized linear approximations to the target functions
 $e_{V}(d,z,x)$ and $l_{D}(1,z,x)$.  The functions  $\Gamma_V$ and $\Gamma_D$
 are taken again to be known link functions from the set $\mathcal{L}=\{ \Id, \Phi, 1-\Phi,  \Lambda_0, 1-\Lambda_0\}$ defined following equation (\ref{eq: approximations4}).

As in the strategy in the main text, we maintain approximate sparsity. We assume that
 there exist $\beta_Z$, $\theta_{V}$ and $\theta_{D}$ such that, for all $V \in \mV,$
\begin{equation}\label{eq: sparsity1-s2}
\|\theta_{V}\|_0  + \|\theta_{D}\|_0  + \|\beta_{Z}\|_0 \leq s.
\end{equation}
That is, there are at most $s=s_n \ll n$ components of $\theta_V$, $\theta_D$, and $\beta_Z$
 with nonzero values in the approximations to $e_V$, $l_D$ and $m_Z$.
The sparsity condition also requires the size of the approximation errors to be small compared to the conjectured size of the estimation error: For all $V \in \mV$, we assume
\begin{equation}\label{eq: sparsity2-s2}
\{\Ep_P[\varrho^{2}_{V}(D,Z,X)]\}^{1/2} + \{\Ep_P[\varrho^{2}_{D}(Z,X)]\}^{1/2}+ \{\Ep_P[r^{2}_{Z}(X)]\}^{1/2} \lesssim  \sqrt{s/n}.
\end{equation}
Note that the size of the approximating model $s=s_n$ can grow with $n$ just as in standard series estimation as long as
$s^2 \log^2 (p\vee n) \log^2(n) /n \to 0$.

We proceed with the estimation of $e_V$ and $l_D$ analogously to the approach outlined in the main text.  The Lasso estimator $\hat \theta_V$ and Post-Lasso estimator $\tilde \theta_V$ are defined analogously to $\hat \beta_V$ and $\tilde \beta_V$ using the data  $(\tilde Y_i, \tilde X_i)_{i=1}^n$= $( V_i, f(D_i,Z_i, X_i))_{i=1}^n$ and the link function $\Lambda = \Gamma_V$.  The estimator $\widehat e_V(D,Z,X) = \Gamma_V[f(D,Z,X)'\bar\theta_V]$, with $\bar \theta_V = \hat \theta_V$ or $\bar \theta_V = \tilde \theta_V$, has the near oracle rate of convergence $\sqrt{(s \log p)/n}$ and other desirable properties. The Lasso estimator $\hat \theta_D$ and Post-Lasso estimators $\tilde \theta_D$ are also defined analogously to $\hat \beta_V$ and $\tilde \beta_V$ using the data  $(\tilde Y_i, \tilde X_i)_{i=1}^n$= $( D_i, f(Z_i, X_i))_{i=1}^n$ and the link function $\Lambda = \Gamma_D$.  Again, the estimator $\widehat l_{D}(Z,X) =\Gamma_D[f(Z,X)'\bar \theta_D]$ of $l_{D}(Z,X)$,
 where $\bar \theta_D = \hat \theta_D$ or $\bar \theta_D = \tilde \theta_D$, has good theoretical properties including the near oracle rate of convergence, $\sqrt{(s \log p)/n}$.  The resulting estimator for $g_V$ is then
\begin{equation}\label{relate g hat}
\widehat g_{V}(z,x) = \sum_{d=0}^1 \widehat e_{V}(d,z,x)  \widehat l_{D}(d,z,x).
\end{equation}
The remaining estimation steps are the same as with the strategy given in the main text.
\end{remark}

\section{Additional Results for Section \ref{sec: asymptotics}} \label{sec:h}

\begin{assumption} [Approximate Sparsity for the Strategy of Section \ref{strategy2}]\label{ass: sparse2} Under each $P \in \mP_n$ and for each $n \geq n_0$, uniformly for all $V \in \mV $: (i)  The approximations (\ref{eq: approximations-s2})-(\ref{eq: approximations2-s2}) and (\ref{eq: approximations4}) apply with the link functions  $\Gamma_V$, $\Gamma_D$ and $\Lambda_Z$ belonging to the set $\mathcal{L}$, the sparsity condition $ \| \theta_{V}\|_0  + \|\theta_{D}\|_0  + \|\beta_{Z}\|_0 \leq s$ holding, the approximation errors satisfying $\|\varrho_{D}\|_{P,2}+ \|\varrho_{V}\|_{P,2} + \|r_{Z}\|_{P,2} \leq \delta_n n^{-1/4}$ and $\|\varrho_{D}\|_{P,\infty}+\|\varrho_{V}\|_{P,\infty} + \|r_{Z}\|_{P,\infty}  \leq \epsilon_n $, and the sparsity index $s$ and the number of terms $p$ in the vector $f(X)$ obeying  $ s^2 \log^2  (p \vee n)  \log^2 n  \leq \delta_n n$. (ii) There are estimators $\bar \theta_V$, $\bar \theta_D$,  and $\bar \beta_{Z}$ such that, with probability  no less than $1- \Delta_n$, the estimation errors satisfy $\|f(D,Z,X)'(\bar \theta_{V} - \theta_{V})\|_{\Pn,2} +
\|f(Z,X)'(\bar \theta_{D} - \theta_{D})\|_{\Pn,2} + \|f(X)'(\bar \beta_{Z} - \beta_{Z})\|_{\Pn,2} \leq \delta_n n^{-1/4}$ and $K_n \|\bar \theta_{V} - \theta_{V}\|_1 + K_n \|\bar \theta_{D} - \theta_{D}\|_1 + K_n \|\bar \beta_{Z} - \beta_{Z}\|_1 \leq \epsilon_n $;
the estimators  are sparse such that $ \|\bar \theta_{V}\|_0  + \|\bar \theta_{D}\|_0  + \|\bar \beta_{Z}\|_0  \leq Cs$; and the empirical and population norms induced by the Gram matrix formed by $(f(X_i))_{i=1}^n$ are equivalent on sparse subsets, $\sup_{\|\delta\|_{0} \leq \ell_n s} \left | \| f(X) '\delta\|_{\Pn,2}/\|f(X)'\delta\|_{P,2} -1 \right | \leq \epsilon_n$. (iii)  The following boundedness conditions hold: $\|\|f(X)\|_{\infty}||_{P, \infty} \leq K_n$ and  $\|V\|_{P,\infty} \leq C$. \end{assumption}

{Under the stated assumptions, the empirical reduced form process  $ \hat Z_{n,P} = \sqrt{n}(\hat \rho - \rho)$ defined by (\ref{define rho}), but constructed using the alternative strategy for estimating $m_Z$ and $g_V$ of Comment \ref{strategy2}, follows a functional central limit theorem and a functional central limit theorem for the multiplier bootstrap.  Theorem \ref{thm:str2} states these results. We omit the proof because it is analogous to the proofs of Theorems \ref{theorem1}--\ref{theorem2}.}


\begin{theorem} \label{thm:str2} Under Assumption \ref{ass: sparse2}  the results stated in Theorems \ref{theorem1}--\ref{theorem2} in the main text apply to the alternative strategy for estimating $m_Z$ and $g_V$ of Comment \ref{strategy2}.\end{theorem}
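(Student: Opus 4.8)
\textbf{Proof strategy for Theorem \ref{thm:str2}.} The plan is to show that the composite estimator $\widehat g_V$ built via (\ref{relate g hat}) from $\widehat e_V$ and $\widehat l_D$ satisfies exactly the same high-level properties that the estimator of Section \ref{FirstStepSection} satisfies under Assumption \ref{ass: sparse1}, so that the conclusions of Theorems \ref{theorem1}--\ref{theorem2} — which, as in the proof for Section \ref{sec: asymptotics}, are obtained by verifying Assumptions \ref{ass: S1}, \ref{ass: S2}, and \ref{ass: AS} and invoking Theorems \ref{theorem:semiparametric} and \ref{theorem: general bs} — go through verbatim. First I would note that the moment functions $\psi^{\alpha}_{V,z,g,m}$ and $\psi^{\gamma}_{V}$ used in the second estimation step depend on the nuisance parameters only through $g_V$ and $m_Z$; they are unchanged, so the parts of Assumptions \ref{ass: S1} and \ref{ass: S2} concerning identification, smoothness of the moment function, and the orthogonality condition hold exactly as in the baseline case. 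In particular orthogonality (\ref{LowBias}) is inherited: writing $g_V = \sum_{d} e_V(d,\cdot,\cdot)\, l_D(d,\cdot,\cdot)$, any admissible perturbation of $(e_V, l_D)$ induces, by the product rule, a perturbation of $g_V$, and the directional derivative of the moment along it equals the derivative along the induced perturbation of $g_V$, which vanishes. It therefore remains only to verify Assumption \ref{ass: AS} — the rate and entropy requirements on $\widehat h_u = (\widehat g_V,\widehat m_Z)_{V\in\mV_u}$ — using the primitive conditions in Assumption \ref{ass: sparse2}. Note $\widehat\gamma_V = \En[V]$ is literally unaffected.

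For the rate, I would use, for each $V$ and each $d$, the decomposition
\begin{equation*}
\widehat e_V \widehat l_D - e_V l_D = (\widehat e_V - e_V)\, l_D + e_V\,(\widehat l_D - l_D) + (\widehat e_V - e_V)(\widehat l_D - l_D).
\end{equation*}
By Assumption \ref{ass: sparse2}(ii), with probability $1-\Delta_n$ the estimation errors $\|f(D,Z,X)'(\bar\theta_V-\theta_V)\|_{\Pn,2}$ and $\|f(Z,X)'(\bar\theta_D-\theta_D)\|_{\Pn,2}$ are $O(\delta_n n^{-1/4})$; since the links $\Gamma_V,\Gamma_D\in\mathcal{L}$ are $1$-Lipschitz, $e_V$ and $l_D$ are bounded (using $\|V\|_{P,\infty}\leq C$), and the approximation errors are small, this yields $\|\widehat e_V - e_V\|_{\Pn,2} + \|\widehat l_D - l_D\|_{\Pn,2} \lesssim \delta_n n^{-1/4}$. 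The two ``linear'' terms are then $O(\delta_n n^{-1/4})$ by boundedness of $l_D,e_V$, and the cross term is $O(\delta_n^2 n^{-1/2}) = o(\delta_n n^{-1/4})$ since $\delta_n\searrow 0$. Summing over $d\in\{0,1\}$ gives $\|\widehat g_V - g_V\|_{\Pn,2}\lesssim \delta_n n^{-1/4}$, and by the sparse-set norm equivalence in Assumption \ref{ass: sparse2}(ii) also $\|\widehat g_V - g_V\|_{P,2}\lesssim \delta_n n^{-1/4}$; the sup-norm control $\|\widehat g_V - g_V\|_{P,\infty}\lesssim\epsilon_n$ needed in Assumption \ref{ass: sparse1}(ii)/Assumption \ref{ass: AS} passes through the same decomposition via $\max_{j\le p}|f_j(X)|\le K_n$, the bound $K_n\|\bar\theta_V-\theta_V\|_1 + K_n\|\bar\theta_D-\theta_D\|_1\le\epsilon_n$, and the $\|\cdot\|_{P,\infty}$ bounds on the factors. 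Hence the rate part holds with $\tau_n\lesssim \delta_n^{1/2} n^{-1/4}$, as for the baseline.

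For the entropy/sparsity requirement, I would observe that each factor $(z,x)\mapsto\Gamma_V(f(d,z,x)'\bar\theta_V)$ is a fixed monotone transform of a linear combination of at most $Cs$ coordinates of $f(D,Z,X)$, hence ranges over a VC-type class of uniform covering entropy $\lesssim s\log(a_n/\epsilon)$ with $a_n=p\vee n$; the same holds for $(z,x)\mapsto\Gamma_D(f(z,x)'\bar\theta_D)$. By the algebra of covering entropies for products and finite sums of uniformly bounded classes (Appendix \ref{sec:l}), $\widehat g_V = \sum_d \widehat e_V(d,\cdot,\cdot)\widehat l_D(d,\cdot,\cdot)$ lies, with probability $1-\Delta_n$, in a class whose uniform covering entropy is still $\lesssim s\log(a_n/\epsilon)$; together with the unchanged bound for $\widehat m_Z$, the function class $\mathcal{F}_1$ formed from $\psi_{uj}(W_u,\theta,h(Z_u))$ obeys $\sup_Q\log N(\epsilon\|F_1\|_{Q,2},\mathcal{F}_1,\|\cdot\|_{Q,2})\lesssim s_n\log(a_n/\epsilon)$ with $s_n\asymp s$, $a_n\asymp p\vee n$ — exactly the parameters used for the baseline strategy, so the growth conditions in Assumption \ref{ass: AS} hold. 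With Assumptions \ref{ass: S1}, \ref{ass: S2}, \ref{ass: AS} verified, Theorems \ref{theorem:semiparametric} and \ref{theorem: general bs} (with $J_u$ equal to minus the identity, so no estimation of $J_u$ is needed) deliver the linearization $\sqrt n(\widehat\rho-\rho)=Z_{n,P}+o_P(1)$, the uniform Gaussian limit $Z_P$, and $\widehat Z^*_{n,P}\rightsquigarrow_B Z_P$, which are the conclusions of Theorems \ref{theorem1}--\ref{theorem2}.

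\textbf{Main obstacle.} The only genuinely new ingredient is handling the product estimator $\widehat g_V = \sum_d \widehat e_V\widehat l_D$: one must simultaneously keep the cross term $(\widehat e_V-e_V)(\widehat l_D-l_D)$ at $o(n^{-1/4})$ — which is where both $o(n^{-1/4})$ nuisance rates are used and multiply favorably — and show that products of sparse generalized-linear classes do not inflate the entropy beyond the $s\log(p\vee n)$ budget of Assumption \ref{ass: AS}. Both are settled by elementary norm inequalities and standard covering-number algebra, so no essentially new analytic difficulty arises, and the rest of the argument is a verbatim application of Section \ref{sec: general}; this is why the proof is omitted in the statement.
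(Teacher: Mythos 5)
Your proposal is correct and matches the paper's intended argument: the paper omits this proof as ``analogous to the proofs of Theorems \ref{theorem1}--\ref{theorem2},'' and the only genuinely new content --- the product decomposition giving the $o(n^{-1/4})$ rate for $\widehat g_V = \sum_d \widehat e_V \widehat l_D$, the inheritance of orthogonality through the chain rule, and the covering-entropy algebra for products of sparse generalized-linear classes --- is exactly what you supply. One small fix: the cross term $(\widehat e_V - e_V)(\widehat l_D - l_D)$ is not $O(\delta_n^2 n^{-1/2})$ in $\|\cdot\|_{P,2}$ by multiplying the two $L^2$ rates (the $L^2$ norm of a product does not factor that way); instead bound it by $\|\widehat e_V - e_V\|_{P,\infty}\,\|\widehat l_D - l_D\|_{P,2} \lesssim \epsilon_n \delta_n n^{-1/4}$, which is still $o(\delta_n n^{-1/4})$ using the sup-norm controls from Assumption \ref{ass: sparse2} that you already invoke.
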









\section{Additional Results for Section \ref{FunctionalLassoSection}: Finite Sample Results of a Continuum of Lasso and Post-Lasso Estimators for Functional Responses}\label{subsec: lasso} \label{sec:j}

\subsection{Assumptions} We consider the following high level conditions which are implied by the primitive Assumptions \ref{ass: linear} and \ref{ass: logistic}. For each $n \geq 1$, there is a sequence of independent random variables $(W_i)_{i=1}^n$, defined on the probability space $(\Omega, \mathcal{A}_\Omega, \Pr_P)$ such that model (\ref{A:EqMainFunc}) holds with $\mathcal{U}\subset [0,1]^\dn$. Let $d_\mathcal{U}$ be a metric on $\mathcal{U}$ (and note that the results cover the case where $\dn$ is a function of $n$). Throughout this section we assume that the variables $( X_i, (Y_{ui},\zeta_{ui}:=Y_{ui}- \Ep_P[Y_{ui}\mid X_i])_{u \in \mathcal{U}})$ are generated as suitably measurable transformations of $W_i$ and $u\in\mathcal{U}$. Furthermore, this section uses the notation $\barEp_P[\cdot] = \frac{1}{n} \sum_{i=1}^n \Ep_P [ \cdot]$, because we allow for independent non-identically distributed (i.n.i.d.) data.

Consider fixed sequences of positive numbers $\delta_n \searrow 0$, $\epsilon_n \searrow 0$, and $\Delta_n \searrow 0$ at a speed at most polynomial in $n$, $\ell_n = \log n$, and $1 \leq K_n <\infty$; and positive constants $c$ and  $C$ which will not vary with $P$.

{\bf Condition WL.} {\it Suppose that for some $\epsilon>0$ there is a $N_n$ such that:
 (i) we have $\log N(\epsilon,\mathcal{U},d_\mathcal{U}) \leq N_n$;
(ii) uniformly over $u\in\mathcal{U}$, we have that ${\displaystyle \max_{j\leq p} } \frac{\{\barEp_P[|f_j(X)\zeta_{u}|^3]\}^{1/3}}{\{\barEp_P[|f_j(X)\zeta_{u}|^2]\}^{1/2}}\Phi^{-1}(1-1/\{2pN_nn\}) \leq \delta_n n^{1/6}$ and $0< c \leq \barEp_P[|f_j(X)\zeta_{u}|^2] \leq C$, $j=1,\ldots,p$;
and (iii) with probability $1-\Delta_n$, we have that   ${\displaystyle \sup_{u\in\mathcal{U}} \max_{j\leq p}} |(\En-\barEp_P)[f_j(X)^2\zeta_{u}^2]| \leq \delta_n$,
${\displaystyle \log(p \vee N_n \vee n) \sup_{d_\mathcal{U}(u,u')\leq \epsilon}  \max_{j\leq p}}\En[f_j(X)^2(\zeta_{u}-\zeta_{u'})^2] \leq \delta_n$,   ${\displaystyle\sup_{d_\mathcal{U}(u,u')\leq \epsilon}} \| \En[ f(X)(\zeta_{u}- \zeta_{u'}) ]\|_\infty\leq \delta_n n^{-\frac{1}{2}}$.
}

The following technical lemma justifies the choice of penalty level $\lambda$. It is based on self-normalized moderate deviation theory. In what follows, for $u\in\mathcal{U}$ we let $\hat  \Psi_{u0}$ denote a diagonal $p\times p$ matrix of ``ideal loadings" with diagonal elements given by $\hat  \Psi_{u0jj}=\{\En[f_j^2(X)\zeta_u^2]\}^{1/2}$ for $j=1,\ldots,p$.

\begin{lemma}[Choice of $\lambda$]\label{Thm:ChoiceLambda}
Suppose Condition WL holds, let $c'>c>1$ be constants, $\gamma \in [1/n,1/\log n]$, and $\lambda = c'\sqrt{n}\Phi^{-1}(1-\gamma/\{2pN_n\})$. Then for $n \geq n_0$ large enough depending only on Condition WL,
$$\Pr_P\left ( \lambda/n \geq c \sup_{u\in\mathcal{U}}\|\hat  \Psi^{-1}_{u0}\En[  f(X) \zeta_{u} ]\|_\infty \right ) \geq 1-\gamma-o(1).$$
\end{lemma}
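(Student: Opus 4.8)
\textbf{Proof proposal for Lemma \ref{Thm:ChoiceLambda}.}

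The plan is to reduce the event in question to a union of one-dimensional self-normalized moderate deviation bounds over the $p$ coordinates $j$ and over a finite $\epsilon$-net of $\mathcal{U}$, then control the oscillation of the empirical process $u \longmapsto \En[f_j(X)\zeta_u]/\{\En[f_j^2(X)\zeta_u^2]\}^{1/2}$ between net points. First I would fix $u \in \mathcal{U}$ and $j \in [p]$ and consider the self-normalized sum $S_{uj} := \sqrt{n}\,\En[f_j(X)\zeta_u]/\{\En[f_j^2(X)\zeta_u^2]\}^{1/2}$; note that $\zeta_u$ is conditionally mean-zero given $X$, so $\Ep_P[f_j(X)\zeta_u] = 0$ and the moderate deviation theory for self-normalized sums (e.g.\ the results of Jing--Shao--Wang as invoked in \citen{BellChenChernHans:nonGauss}) applies. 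Under Condition WL(ii), the third-to-second moment ratio condition $\max_{j\leq p}\{\barEp_P[|f_j(X)\zeta_u|^3]\}^{1/3}/\{\barEp_P[|f_j(X)\zeta_u|^2]\}^{1/2} \cdot \Phi^{-1}(1-1/\{2pN_nn\}) \leq \delta_n n^{1/6}$ is exactly the hypothesis needed to guarantee that, uniformly in $j$,
\begin{equation*}
\Pr_P\big( |S_{uj}| > \Phi^{-1}(1-\gamma/\{2pN_n\}) \big) \leq \frac{\gamma}{pN_n}(1+o(1)),
\end{equation*}
where the $o(1)$ is uniform over $j \in [p]$ and depends only on the constants in Condition WL. (Here I would distinguish the "ideal" self-normalization using $\barEp_P$ versus the empirical one using $\En$; Condition WL(iii), namely $\sup_u\max_j |(\En - \barEp_P)[f_j^2(X)\zeta_u^2]| \leq \delta_n$ together with the lower bound $\barEp_P[f_j^2(X)\zeta_u^2] \geq c$, lets me pass between them at the cost of a $(1+o(1))$ factor absorbed into the slack $c'/c > 1$.)

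Next I would take a union bound. Let $\mathcal{U}_\epsilon$ be a minimal $\epsilon$-net of $(\mathcal{U}, d_\mathcal{U})$ with $|\mathcal{U}_\epsilon| = N(\epsilon,\mathcal{U},d_\mathcal{U}) \leq N_n$ by Condition WL(i). Union-bounding over the $p$ coordinates and the at most $N_n$ net points,
\begin{equation*}
\Pr_P\Big( \max_{u' \in \mathcal{U}_\epsilon} \max_{j\leq p} |S_{u'j}| > \Phi^{-1}(1-\gamma/\{2pN_n\}) \Big) \leq p\,N_n \cdot \frac{\gamma}{pN_n}(1+o(1)) = \gamma(1+o(1)).
\end{equation*}
Combined with the event from Condition WL(iii) that has probability at least $1-\Delta_n = 1 - o(1)$, this handles the supremum restricted to the net. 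The factor $c' > c$ in the definition $\lambda = c'\sqrt{n}\,\Phi^{-1}(1-\gamma/\{2pN_n\})$ provides the multiplicative slack to absorb all the $(1+o(1))$ corrections, so that on the good event $\lambda/n \geq c \max_{u'\in\mathcal{U}_\epsilon}\|\hat\Psi_{u'0}^{-1}\En[f(X)\zeta_{u'}]\|_\infty$.

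The remaining, and I expect main, obstacle is to upgrade from the net $\mathcal{U}_\epsilon$ to all of $\mathcal{U}$, i.e.\ to show $\sup_{u\in\mathcal{U}}\|\hat\Psi_{u0}^{-1}\En[f(X)\zeta_u]\|_\infty$ does not exceed $\max_{u'\in\mathcal{U}_\epsilon}\|\cdot\|_\infty$ by more than a negligible amount. For this I would use the continuity-type conditions in Condition WL(iii): for any $u$ with $d_\mathcal{U}(u,u')\leq\epsilon$, write $\En[f_j(X)\zeta_u] = \En[f_j(X)\zeta_{u'}] + \En[f_j(X)(\zeta_u - \zeta_{u'})]$, bound the second term by $\|\En[f(X)(\zeta_u-\zeta_{u'})]\|_\infty \leq \delta_n n^{-1/2}$, and control the denominator difference $\En[f_j^2(X)\zeta_u^2] - \En[f_j^2(X)\zeta_{u'}^2]$ via the Cauchy--Schwarz bound $|\En[f_j^2(X)(\zeta_u^2-\zeta_{u'}^2)]| \leq \{\En[f_j^2(X)(\zeta_u-\zeta_{u'})^2]\}^{1/2}\{\En[f_j^2(X)(\zeta_u+\zeta_{u'})^2]\}^{1/2}$, using the bound $\log(p\vee N_n\vee n)\sup_{d_\mathcal{U}(u,u')\leq\epsilon}\max_j\En[f_j^2(X)(\zeta_u-\zeta_{u'})^2]\leq\delta_n$ and the fact that the relevant scale is $\Phi^{-1}(1-\gamma/\{2pN_n\}) \asymp \sqrt{\log(pN_n/\gamma)} \lesssim \sqrt{\log(p\vee N_n\vee n)}$. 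Putting these oscillation bounds together shows the supremum over $\mathcal{U}$ differs from the maximum over $\mathcal{U}_\epsilon$ by $o_P(n^{-1/2})$ relative to the threshold, which is again absorbed by the slack $c'/c>1$. Assembling all pieces and sending $n \to n_0$ large enough gives the claimed bound $\Pr_P(\lambda/n \geq c\sup_{u\in\mathcal{U}}\|\hat\Psi_{u0}^{-1}\En[f(X)\zeta_u]\|_\infty) \geq 1 - \gamma - o(1)$. \qed
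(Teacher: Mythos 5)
Your proposal is correct and follows essentially the same route as the paper's proof: a union bound of self-normalized moderate deviation inequalities (Jing–Shao–Wang) over the $p$ coordinates and an $\epsilon$-net of $\mathcal{U}$, followed by control of the oscillation between net points using the continuity conditions in Condition WL(iii), with the slack $c'/c>1$ absorbing the $(1+o(1))$ and $o(n^{-1/2})$ corrections. The only cosmetic difference is that the paper bounds the loading discrepancy via the reverse triangle inequality $|\{\En[f_j^2\zeta_u^2]\}^{1/2}-\{\En[f_j^2\zeta_{u'}^2]\}^{1/2}|\leq\{\En[f_j^2(\zeta_u-\zeta_{u'})^2]\}^{1/2}$ rather than your Cauchy--Schwarz step, and Lemma F.2 already applies to the empirically self-normalized sum so no separate $\En$ versus $\barEp_P$ conversion is needed there.
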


We note that Condition WL(iii) contains high level conditions on the process $(Y_u,\zeta_u)_{u\in\mathcal{U}}$. The following lemma provides easy to verify sufficient conditions that imply Condition WL(iii).

\begin{lemma}\label{PrimitiveWL}
Suppose the i.i.d. sequence $((Y_{ui},\zeta_{ui})_{u\in\mathcal{U}},X_i), i=1,\ldots,n$, satisfies the following conditions: (i) $c\leq \max_{j\leq p}\Ep_P[f_j(X)^2] \leq C$, $\max_{j\leq p}|f_j(X)| \leq K_n$, $\sup_{u\in \mathcal{U}}\max_{i\leq n} |Y_{ui}| \leq B_n$, and $c\leq \sup_{u\in \mathcal{U}} \Ep_P[\zeta_u^2\mid X] \leq C$,  $P$-a.s.; (ii) for some random variable $Y$ we have $Y_u=G(Y,u)$ where $\{ G(\cdot, u) : u \in \mathcal{U}\}$ is a VC-class of functions with VC-index equal to $C'd_u$, (iii) For some fixed $\nu > 0$, we have $\Ep_P[|Y_{u}-Y_{u'}|^2\mid X]\leq L_n|u-u'|^\nu$ for any $u, u' \in \mathcal{U}$, $P$-a.s. For $\widetilde A := pnK_nB_nn^\nu/L_n$, we have with probability $1-\Delta_n$
{\small $$\begin{array}{rl}
 \displaystyle \sup_{d_{\mathcal{U}}(u,u')\leq 1/n} \|\En[ f(X)(\zeta_u-\zeta_{u'})]\|_\infty   & \lesssim \frac{1}{\sqrt{n}}\left\{\sqrt{\frac{(1+d_u) L_n\log(\widetilde A)}{n^\nu}} + \frac{(1+d_u)K_nB_n\log(\widetilde A)}{\sqrt{n}} \right\}\\
 \displaystyle  \sup_{d_{\mathcal{U}}(u,u')\leq 1/n} \max_{j\leq p}\En[f_j(X)^2(\zeta_u - \zeta_{u'})^2]
  &  \lesssim  L_nn^{-\nu} \left\{ 1 + \sqrt{\frac{K_n^2\log (p nK_n^2)}{n}} + \frac{K_n^2}{n}\log( pnK_n^2 )\right\}\\
 \displaystyle  \sup_{u\in\mathcal{U}}\max_{j\leq p}|(\En-\Ep_P)[f_j^2(X)\zeta_u^2]|
 & \lesssim \sqrt{\frac{(1+d_u) \log (npK_nB_n)}{n}}  + \frac{(1+d_u)K_n^2B_n^2}{n}\log(npB_nK_n)\end{array}$$}
where $\Delta_n$ is a fixed sequence going to zero.
\end{lemma}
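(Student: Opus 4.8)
The plan is to prove each of the three displays by the same scheme: write the left-hand side as a deterministic ``mean'' part plus a ``fluctuation'' part of the form $|(\En-\Ep_P)[\cdot]|$ (in the first display the mean part vanishes because $\Ep_P[\zeta_u\mid X]=0$, so the left-hand side is directly $n^{-1/2}$ times an empirical process), and then control the fluctuation part by the maximal inequality of Lemma~\ref{lemma:CCK} (an alternative, essentially equivalent, route would discretize $\mathcal{U}$ over an $N_n$-net as in Lemma~\ref{Thm:ChoiceLambda} and combine a union bound with Bernstein's inequality). For this I first need a uniform covering entropy bound for the relevant function classes
\begin{align*}
\mathcal{G}_1 &= \{\, w \mapsto f_j(x)(\zeta_u-\zeta_{u'}) : j\le p,\ d_\mathcal{U}(u,u')\le 1/n\,\},\\
\mathcal{G}_2 &= \{\, w \mapsto f_j^2(x)(\zeta_u-\zeta_{u'})^2 : j\le p,\ d_\mathcal{U}(u,u')\le 1/n\,\},\\
\mathcal{G}_3 &= \{\, w \mapsto f_j^2(x)\zeta_u^2 : j\le p,\ u\in\mathcal{U}\,\}.
\end{align*}
Starting from hypothesis~(ii), the representation $Y_u=G(Y,u)$ with $\{G(\cdot,u):u\in\mathcal{U}\}$ a VC class of index $\lesssim d_u$ gives, by standard arguments, $\sup_Q\log N(\epsilon\|F\|_{Q,2},\{Y_u\},\|\cdot\|_{Q,2})\lesssim d_u\log(e/\epsilon)$; since $\zeta_u=Y_u-\Ep_P[Y_u\mid X]$ subtracts a transformation of $X$ ranging over a class of no larger entropy, the same bound holds for $\{\zeta_u\}$, hence for the increment classes $\{\zeta_u-\zeta_{u'}\}$ and $\{(\zeta_u-\zeta_{u'})^2\}$ with coefficient $\lesssim 1+d_u$ (restricting the index set to the diagonal band only shrinks the class). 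Multiplying by the finite dictionary $\{f_j\}_{j\le p}$ and, where needed, by itself to form squares, and invoking the algebra of entropies in Lemma~\ref{lemma: andrews}, the classes $\mathcal{G}_1,\mathcal{G}_2,\mathcal{G}_3$ are suitably measurable and obey $\sup_Q\log N(\epsilon\|F_k\|_{Q,2},\mathcal{G}_k,\|\cdot\|_{Q,2})\lesssim (1+d_u)\log(e/\epsilon)+\log p$, with measurable envelopes $F_1\lesssim K_nB_n$ and $F_2,F_3\lesssim K_n^2B_n^2$ by hypothesis~(i).

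With these ingredients, I would apply the high-probability bound~(\ref{simple bound}) of Lemma~\ref{lemma:CCK} on each of the three events and then intersect them into a single event of probability $1-\Delta_n$. For $\mathcal{G}_3$ the mean part is absent from the fluctuation; the variance proxy $\sigma^2$ is estimated from the moment hypotheses in~(i) (using $|\zeta_u|\le 2B_n$, $\Ep_P[\zeta_u^2\mid X]\le C$ and $\Ep_P[f_j^2(X)]\le C$), and Lemma~\ref{lemma:CCK} delivers a sub-Gaussian term of order $\sqrt{(1+d_u)\log(npK_nB_n)/n}$ plus a remainder of order $n^{-1}(1+d_u)K_n^2B_n^2\log(npB_nK_n)$. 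For $\mathcal{G}_2$ I would first use hypothesis~(iii) together with the law of iterated expectations and the fact that centering does not increase the conditional variance,
\begin{equation*}\Ep_P[f_j^2(X)(\zeta_u-\zeta_{u'})^2] = \Ep_P\!\big[f_j^2(X)\,\Ep_P[(\zeta_u-\zeta_{u'})^2\mid X]\big]\le \Ep_P[f_j^2(X)]\,L_n|u-u'|^\nu \lesssim L_n n^{-\nu},\end{equation*}
which furnishes both the mean part and, up to constants and the envelope factors, the variance proxy for the fluctuation; feeding this into Lemma~\ref{lemma:CCK} reproduces the factor $L_nn^{-\nu}\{1+\sqrt{K_n^2\log(pnK_n^2)/n}+n^{-1}K_n^2\log(pnK_n^2)\}$. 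For $\mathcal{G}_1$, the functions $f_j(X)(\zeta_u-\zeta_{u'})$ already have mean zero, so $\|\En[f(X)(\zeta_u-\zeta_{u'})]\|_\infty = n^{-1/2}\|\Gn\|_{\mathcal{G}_1}$; the variance proxy is $\sigma^2\lesssim L_nn^{-\nu}$ again by hypothesis~(iii) and $\Ep_P[f_j^2(X)]\le C$, the envelope is $K_nB_n$, and the two terms produced by~(\ref{simple bound}), after multiplication by $n^{-1/2}$, become $n^{-1/2}\sqrt{(1+d_u)L_n\log(\widetilde A)/n^\nu}$ and $n^{-1/2}(1+d_u)K_nB_n\log(\widetilde A)/\sqrt n$, with $\widetilde A\asymp pnK_nB_nn^\nu/L_n$ absorbing the quantity $a\|F_1\|_{P,2}/\sigma$ appearing in Lemma~\ref{lemma:CCK}.

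I expect the main obstacle to be the entropy step, specifically the careful verification that the \emph{centered, differenced} process $(\zeta_u-\zeta_{u'})$ restricted to the diagonal band inherits a VC-type uniform covering entropy from the VC hypothesis on $\{G(\cdot,u)\}$, together with the matching choice of envelopes and variance proxies that makes the logarithmic factor come out as $\log\widetilde A$ and not something larger; this uses the permanence properties of VC/entropy under subtraction of an $X$-measurable term, products with a finite dictionary, and squaring, all collected in Lemma~\ref{lemma: andrews}. Once the entropy, envelope and variance bounds are in place, the remaining work is a routine bookkeeping of the moment hypotheses~(i) and~(iii) inside Lemma~\ref{lemma:CCK}; minor additional points are the measurability of the suprema over the uncountable band, which is covered by the suitably-measurable setup, and the collection of the three exceptional events into one with $\Delta_n\to 0$.
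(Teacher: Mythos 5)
Your treatment of the first and third displays matches the paper's own proof: the entropy chain (VC hypothesis for $\{Y_u\}$, Lemma \ref{Lemma:PartialOutCovering} for the class of conditional expectations, Lemma \ref{lemma: andrews} for differences, products with the dictionary, and squares), followed by Lemma \ref{lemma:CCK} with envelope $\lesssim K_nB_n$ and $\sigma^2\propto L_nn^{-\nu}$ for the first display and envelope $\lesssim K_n^2B_n^2$ with constant $\sigma$ for the third, is exactly the argument used there, and your accounting of the logarithmic factor $\log\widetilde A$ is right.

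The gap is in the second display. You propose to apply Lemma \ref{lemma:CCK} to $\mathcal{G}_2=\{f_j^2(X)(\zeta_u-\zeta_{u'})^2\}$ with the mean part $\lesssim L_nn^{-\nu}$ supplying the variance proxy, and you assert this ``reproduces'' the stated factor; it does not. The variance proxy for $\mathcal{G}_2$ must dominate $\sup\Ep_P[f_j^4(X)(\zeta_u-\zeta_{u'})^4]$, and the best available bound is $\sigma^2\lesssim K_n^2B_n^2\,L_nn^{-\nu}$ (envelope times first moment), not $(L_nn^{-\nu})^2$. Feeding this into (\ref{simple bound}) and dividing by $\sqrt n$ yields a sub-Gaussian term of order $\sqrt{L_nn^{-\nu}}\cdot\sqrt{K_n^2B_n^2\log(\cdot)/n}$ and a remainder of order $K_n^2B_n^2\log(\cdot)/n$. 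Neither is $\lesssim L_nn^{-\nu}\bigl\{\sqrt{K_n^2\log(pnK_n^2)/n}+K_n^2\log(pnK_n^2)/n\bigr\}$: the claimed bound carries $L_nn^{-\nu}$ as an overall multiplicative prefactor and contains no $B_n$, whereas your two terms exceed the corresponding target terms by the factors $B_n/\sqrt{L_nn^{-\nu}}$ and $B_n^2/(L_nn^{-\nu})$, which diverge whenever $L_nn^{-\nu}\to 0$. The paper's argument for this display is structurally different: it does not treat the quantity as a centered empirical process over $\mathcal{G}_2$ at all, but exploits the multiplicative structure, bounding the $(\zeta_u-\zeta_{u'})^2$ factor via condition (iii) so that the whole expression is controlled by $L_n|u-u'|^\nu\max_{j\le p}\En[f_j^2(X)]$, and then applying Lemma \ref{lemma:CCK} only to the finite class $\{f_j^2:j\le p\}$ to control $\max_{j\le p}|(\En-\Ep_P)[f_j^2(X)]|$; this is exactly where the bracketed factor $\bigl\{1+\sqrt{K_n^2\log(pnK_n^2)/n}+K_n^2\log(pnK_n^2)/n\bigr\}$ comes from. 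To repair your proof of the second display you need this factorization (or some other device that keeps $L_nn^{-\nu}$ as a prefactor of the entire bound) rather than an additive mean-plus-fluctuation decomposition of $\mathcal{G}_2$.
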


Lemma \ref{PrimitiveWL} allows for several different cases including cases where $Y_u$ is generated by a non-smooth transformation of a random variable $Y$. For example, if $Y_u = 1\{ Y \leq u\}$ where $Y$ has bounded conditional probability density function, we have $d_u = 1$, $B_n = 1$, $\nu = 1$, $L_n=\sup_{y}f_{Y\mid X}(y\mid x)$. A similar result holds for independent non-identically distributed data.

In what follows for a  vector $\delta \in \RR^p$, and a set of
indices $T \subseteq \{1,\ldots,p\}$, we denote by $\delta_T \in \RR^p$ the vector such that $(\delta_{T})_{j} = \delta_j$ if $j\in T$ and $(\delta_{T})_{j}=0$ if $j \notin T$. For a set $T$, $|T|$ denotes the cardinality of $T$. Moreover, let $$\Delta_{\cc,u} := \{ \delta \in \RR^p : \|\delta_{T_u^c}\|_1\leq \cc\|\delta_{T_u}\|_1\}.$$

\subsection{Finite Sample Results: Linear Case}\label{Sec:FiniteLinear}

For the model described in (\ref{A:EqMainFunc}) with $\G(t)=t$ and $M(y,t) = \frac{1}{2}(y-t)^2$ we will study the finite sample properties of the associated Lasso and Post-Lasso estimators of $(\theta_u)_{u\in\mathcal{U}}$ defined in relations (\ref{Adef:LassoFunc}) and (\ref{Adef:PostFunc}).

The analysis relies on $T_u=\supp(\theta_u)$, $s_u:=\|\theta_u\|_0\leq s$, with $s \geq 1$, and on the restricted eigenvalues
\begin{equation}\label{DefKappa}\kappa_{\cc} = \inf_{u\in\mathcal{U}}\min_{\delta \in \Delta_{\cc,u}} \frac{\|f(X)'\delta\|_{\Pn,2}}{\|\delta_{T_u}\|}, \end{equation}
and maximum and minimum sparse eigenvalues
$$ \semin{m} = \min_{1\leq \|\delta\|_0\leq m}\frac{\|f(X)'\delta\|_{\Pn,2}^2}{\|\delta\|^2} \ \ \mbox{and} \ \ \semax{m} = \max_{1\leq \|\delta\|_0\leq m}\frac{\|f(X)'\delta\|_{\Pn,2}^2}{\|\delta\|^2}.$$

Next we present technical results on the performance of the estimators generated by Lasso that are used in the proof of Theorem \ref{Thm:RateEstimatedLassoLinear}.

\begin{lemma}[Rates of Convergence for Lasso]\label{Thm:RateEstimatedLasso}
The events $c_r \geq \sup_{u\in\mathcal{U}}\|r_u\|_{\Pn,2}$, $\ell\widehat\Psi_{u0}\leq \widehat\Psi_u \leq L \widehat \Psi_{u0}$, $u\in\mathcal{U}$, and $\lambda/n \geq c\sup_{u\in\mathcal{U}}\|\widehat\Psi_{u0}^{-1}\En[f(X)\zeta_u]\|_\infty$, for $c>1/\ell$, imply that
uniformly in $u\in\mathcal{U}$ $$\begin{array}{l}
 \displaystyle \| f(X)'(\hat\theta_u - \theta_{u})\|_{\Pn,2} \leq  2 c_r + \frac{2\lambda\sqrt{s}\left(L+\frac{1}{c}\right)}{n\kappa_{\tilde \cc}}\|\widehat\Psi_{u0}\|_\infty\\
 \displaystyle \|\hat\theta_u-\theta_{u}\|_1  \leq 2(1+2\tilde\cc)\left\{ \frac{\sqrt{s}c_r}{ \kappa_{2\tilde\cc}} + \frac{\lambda s \left(L+\frac{1}{c}\right)}{n\kappa_{\tilde \cc}\kappa_{2\tilde\cc}}\|\widehat\Psi_{u0}\|_\infty \right\}+ \left(1+\frac{1}{2\tilde\cc}\right)\frac{c\|\hat\Psi_{u 0}^{-1}\|_\infty}{\ell c-1}\frac{n}{\lambda} c_r^2\end{array}$$
where $\tilde \cc = \sup_{u\in\mathcal{U}}\|\hat \Psi_{u 0}^{-1}\|_\infty\|\hat \Psi_{u 0}\|_\infty(Lc+1)/(\ell c - 1)$
\end{lemma}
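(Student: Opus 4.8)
\textbf{Proof proposal for Lemma \ref{Thm:RateEstimatedLasso} (rates of convergence for a continuum of Lasso estimators, linear link).}

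The plan is to run the standard ``basic inequality'' argument for Lasso, but carefully uniformly in $u\in\mathcal{U}$ and in a form that tracks the approximation error $r_u$, the random penalty loadings $\widehat\Psi_u$, and the restricted eigenvalue $\kappa_{\cc}$. First I would fix an arbitrary $u\in\mathcal{U}$ and, on the stated event, use the optimality of $\hat\theta_u$ in the penalized least-squares problem \eqref{Adef:LassoFunc} together with convexity of $\theta\mapsto M(Y_u,f(X)'\theta)=\tfrac12(Y_u-f(X)'\theta)^2$: comparing the objective value at $\hat\theta_u$ against that at $\theta_u$ gives
\[
\|f(X)'(\hat\theta_u-\theta_u)\|_{\Pn,2}^2 \;\leq\; 2\En\big[(\zeta_u+r_u)f(X)'(\hat\theta_u-\theta_u)\big] + \tfrac{2\lambda}{n}\big(\|\widehat\Psi_u\theta_u\|_1-\|\widehat\Psi_u\hat\theta_u\|_1\big).
\]
Here the key is to bound the ``score'' term: writing $\delta=\hat\theta_u-\theta_u$, on the event $\lambda/n \geq c\sup_u\|\widehat\Psi_{u0}^{-1}\En[f(X)\zeta_u]\|_\infty$ we have $|\En[\zeta_u f(X)'\delta]| \leq \|\widehat\Psi_{u0}^{-1}\En[f(X)\zeta_u]\|_\infty \|\widehat\Psi_{u0}\delta\|_1 \leq \tfrac{\lambda}{cn}\|\widehat\Psi_{u0}\delta\|_1$, while the approximation-error term is handled by Cauchy--Schwarz, $|\En[r_u f(X)'\delta]| \leq \|r_u\|_{\Pn,2}\|f(X)'\delta\|_{\Pn,2} \leq c_r\|f(X)'\delta\|_{\Pn,2}$.

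Second I would use the sandwich $\ell\widehat\Psi_{u0}\leq\widehat\Psi_u\leq L\widehat\Psi_{u0}$ to convert the penalty difference $\|\widehat\Psi_u\theta_u\|_1-\|\widehat\Psi_u\hat\theta_u\|_1$ into a statement about $\|\widehat\Psi_{u0}\delta_{T_u}\|_1$ and $\|\widehat\Psi_{u0}\delta_{T_u^c}\|_1$, and combine it with the score bound to derive the cone condition: up to the usual correction from the $c_r$ term, $\delta$ lies (approximately) in $\Delta_{\tilde\cc,u}$ with $\tilde\cc = \sup_u\|\widehat\Psi_{u0}^{-1}\|_\infty\|\widehat\Psi_{u0}\|_\infty(Lc+1)/(\ell c-1)$; more precisely one gets an inequality of the form $\|\widehat\Psi_{u0}\delta_{T_u^c}\|_1 \leq \text{const}\cdot\|\widehat\Psi_{u0}\delta_{T_u}\|_1 + (\text{term in } c_r)$. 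On the cone, apply the definition of $\kappa_{\tilde\cc}$ in \eqref{DefKappa} to lower bound $\|f(X)'\delta\|_{\Pn,2}$ by $\kappa_{\tilde\cc}\|\delta_{T_u}\|$, bound $\|\delta_{T_u}\|_1\leq\sqrt{s}\|\delta_{T_u}\|$, and feed everything back into the basic inequality. Solving the resulting quadratic inequality in $\|f(X)'\delta\|_{\Pn,2}$ yields the prediction-norm bound; this is the kind of computation done e.g.\ in \citen{BC-PostLASSO} and \citen{BelloniChernozhukovHansen2011}, and I would follow it closely, being careful that every constant that appears is uniform in $u$ because the hypotheses ($c_r$, the loading sandwich, the score domination, and $\kappa_{\tilde\cc},\kappa_{2\tilde\cc}$) are all stated as suprema/infima over $u$. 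The $\ell_1$-rate then follows by splitting $\|\delta\|_1 = \|\delta_{T_u}\|_1 + \|\delta_{T_u^c}\|_1$, using the cone condition and $\kappa_{2\tilde\cc}$ to control $\|\delta_{T_u}\|_1 \leq \sqrt{s}\|f(X)'\delta\|_{\Pn,2}/\kappa_{2\tilde\cc}$, and using the basic inequality once more to absorb the extra $c_r^2 n/\lambda$ term coming from the approximation error (this last term is the reason for the final summand in the displayed $\ell_1$-bound, and arises when one does not throw away the nonnegative left-hand side but instead uses it to control $\|\widehat\Psi_{u0}\delta\|_1$ directly).

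Since $u$ was arbitrary and all intermediate bounds involve only the $u$-uniform quantities in the hypotheses, taking $\sup_{u\in\mathcal{U}}$ at the end gives the claimed uniform statement with no change in constants. The main obstacle, and the only place requiring genuine care, is bookkeeping: making sure that (a) the transition from $\widehat\Psi_u$-weighted norms to $\widehat\Psi_{u0}$-weighted norms and then to unweighted norms does not secretly introduce a $u$-dependent constant outside the stated $\tilde\cc$, and (b) the quadratic-inequality step is carried out so that the $c_r$-term and the $\lambda\sqrt{s}/(n\kappa)$-term appear additively exactly as in the statement rather than with cross terms. Both are routine but must be done cleanly; there is no probabilistic content in this lemma, as all randomness has been pushed into the conditioning event, whose probability is controlled separately by Lemma \ref{Thm:ChoiceLambda} and Condition WL.
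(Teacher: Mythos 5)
Your proposal is correct and follows essentially the same route as the paper's proof: the basic inequality from optimality, H\"older with the ideal loadings to dominate the score by $\lambda/(cn)$, Cauchy--Schwarz for the approximation error, the loading sandwich, and then the restricted eigenvalue on the cone for the prediction rate, with the case split on $\Delta_{2\tilde\cc,u}$ and the elementary bound $\max_t t(2c_r-t)\leq c_r^2$ producing the final $c_r^2 n/\lambda$ summand in the $\ell_1$ bound. The only organizational difference is that where you speak of $\hat\delta_u$ lying ``approximately'' in the cone, the paper uses a clean dichotomy — either $\hat\delta_u\in\Delta_{\tilde\cc,u}$ and $\kappa_{\tilde\cc}$ applies, or the penalty terms in the basic inequality are nonpositive and the prediction norm is at most $2c_r$ outright — which is worth adopting, since $\kappa_{\tilde\cc}$ as defined in (6.4) is only available on the exact cone.
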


The following lemma summarizes sparsity properties of $(\hat\theta_u)_{u\in\mathcal{U}}$.

\begin{lemma}[Sparsity bound for Lasso]\label{Thm:Sparsity}
Consider the Lasso estimator $\widehat \theta_u$, its support $\widehat T_u=\supp(\widehat \theta_u)$, and let $\widehat s_u = \|\widehat \theta_u\|_0$. Assume that $c_r \geq \sup_{u\in\mathcal{U}}\|r_u\|_{\Pn,2}$, $\lambda/n \geq c\sup_{u\in\mathcal{U}}\|\widehat \Psi_{u 0}^{-1}\En[f(X)\zeta_{u}]\|_{\infty}$ and $\ell\widehat \Psi_{u0} \leq \widehat\Psi_u \leq L \widehat\Psi_{u0}$ for all $u\in\mathcal{U}$, with $L \geq 1 \geq \ell > 1/c$. Then, for $c_0=(Lc+1)/(\ell c - 1)$ and $\tilde \cc = c_0\sup_{u\in\mathcal{U}}\|\hat\Psi_{u 0}\|_\infty\|\hat\Psi_{u 0}^{-1}\|_\infty$ we have uniformly over $u\in\mathcal{U}$
$$ \hat s_u \leq  16c_0^2\left( \min_{m \in \mathcal{M}}\semax{m}\right)   \[\frac{n c_r}{\lambda} + \frac{\sqrt{s}}{\kappa_{\tilde \cc}}\|\widehat\Psi_{u0}\|_\infty\]^2\|\widehat \Psi_{u0}^{-1}\|_\infty^2$$
where $\mathcal{M}=\left\{ m \in \mathbb{N}:
m > 32c_0^2\semax{m} \sup_{u\in\mathcal{U}} \[\frac{n c_r}{\lambda} + \frac{\sqrt{s}}{\kappa_{\tilde \cc}}\|\widehat\Psi_{u0}\|_\infty\]^2\|\widehat \Psi_{u0}^{-1}\|_\infty^2 \right\}.$\end{lemma}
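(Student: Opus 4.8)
The plan is to run the standard self-bounding argument for sparsity of $\ell_1$-penalized $M$-estimators, keeping every bound uniform in $u\in\mathcal U$. Fix $u$, write $\hat s=\hat s_u$, $\widehat T=\widehat T_u=\supp(\hat\theta_u)$, and assume $\hat s\geq1$ (otherwise the bound is trivial).

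\emph{Step 1 (squeeze the active-set subgradient).} First I would use the first-order optimality (KKT) conditions of problem (\ref{Adef:LassoFunc}) with $M(y,t)=(y-t)^2/2$: for every $j\in\widehat T$, $|\En[f_j(X)(Y_u-f(X)'\hat\theta_u)]|=(\lambda/n)\widehat\Psi_{ujj}$. Dividing by the ideal loading $\widehat\Psi_{u0jj}$ and using $\widehat\Psi_{ujj}\geq\ell\widehat\Psi_{u0jj}$ gives $|\widehat\Psi_{u0jj}^{-1}\En[f_j(X)(Y_u-f(X)'\hat\theta_u)]|\geq(\lambda/n)\ell$. Writing $Y_u-f(X)'\hat\theta_u=\zeta_u+r_u-f(X)'(\hat\theta_u-\theta_u)$ and invoking the penalty event in the form $\max_j|\widehat\Psi_{u0jj}^{-1}\En[f_j(X)\zeta_u]|\leq\lambda/(cn)$, a reverse–triangle step yields, for every $j\in\widehat T$,
$$\big|\widehat\Psi_{u0jj}^{-1}\En[f_j(X)(r_u-f(X)'(\hat\theta_u-\theta_u))]\big|\ \geq\ \tfrac{\lambda}{n}\big(\ell-\tfrac1c\big)>0,$$
which is positive since $\ell>1/c$.

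\emph{Step 2 (convert to a bound on $\hat s$).} Next I would square this over $j\in\widehat T$ and sum; using that $\widehat\Psi_{u0}^{-1}$ is diagonal (so $\|\widehat\Psi_{u0}^{-1}v\|\leq\|\widehat\Psi_{u0}^{-1}\|_\infty\|v\|$), and bounding the resulting $\ell_2$-norm over $\widehat T$ by duality against $\hat s$-sparse unit vectors together with Cauchy–Schwarz (which brings in $\sqrt{\semax{\hat s}}$), one gets
$$\sqrt{\hat s}\,\tfrac{\lambda}{n}\big(\ell-\tfrac1c\big)\ \leq\ \|\widehat\Psi_{u0}^{-1}\|_\infty\sqrt{\semax{\hat s}}\,\big(c_r+\|f(X)'(\hat\theta_u-\theta_u)\|_{\Pn,2}\big),$$
using $c_r\geq\|r_u\|_{\Pn,2}$. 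Substituting the prediction-norm rate of Lemma \ref{Thm:RateEstimatedLasso} (whose hypotheses hold under the same events, as $\ell>1/c$), namely $\|f(X)'(\hat\theta_u-\theta_u)\|_{\Pn,2}\leq 2c_r+\tfrac{2\lambda\sqrt s}{n\kappa_{\tilde\cc}}(L+\tfrac1c)\|\widehat\Psi_{u0}\|_\infty$, then dividing by $\tfrac{\lambda}{n}(\ell-\tfrac1c)$ and absorbing constants via $c_0=(L+\tfrac1c)/(\ell-\tfrac1c)=(Lc+1)/(\ell c-1)$ (and $c_0\geq1/(\ell-\tfrac1c)$ since $L\geq1$), I would arrive at
$$\hat s\ \leq\ 16c_0^2\,\|\widehat\Psi_{u0}^{-1}\|_\infty^2\,\semax{\hat s}\,\Big(\tfrac{nc_r}{\lambda}+\tfrac{\sqrt s}{\kappa_{\tilde\cc}}\|\widehat\Psi_{u0}\|_\infty\Big)^2 .$$

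\emph{Step 3 (remove the self-reference).} Finally I would eliminate the $\semax{\hat s}$ on the right using sub-linearity of sparse eigenvalues, $\semax{\hat s}\leq\lceil\hat s/m\rceil\semax{m}$ for any integer $m\geq1$. If $\hat s>m$ for some $m\in\mathcal M$, then $\lceil\hat s/m\rceil\leq2\hat s/m$, and plugging this into the previous display contradicts the defining property $m>32c_0^2\,\semax{m}\,\sup_u(\tfrac{nc_r}{\lambda}+\tfrac{\sqrt s}{\kappa_{\tilde\cc}}\|\widehat\Psi_{u0}\|_\infty)^2\|\widehat\Psi_{u0}^{-1}\|_\infty^2$ of $\mathcal M$. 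Hence $\hat s\leq m$ for every $m\in\mathcal M$, so $\semax{\hat s}\leq\min_{m\in\mathcal M}\semax{m}$, and inserting this back gives the stated bound; uniformity in $u$ is inherited because every step above is uniform in $u\in\mathcal U$.

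\emph{Main obstacle.} The crux is Step 1: the cancellation of the stochastic term $\En[f_j(X)\zeta_u]$ against the penalty, producing the dimension-free factor $(\ell-1/c)$, is exactly what lets the scheme close — a naive estimate that carries the noise term along separately would leave a $\|\widehat\Psi_{u0}\|_\infty\|\widehat\Psi_{u0}^{-1}\|_\infty$-sized term on the wrong side and fail. The constant bookkeeping in Step 3 (getting $16$ rather than $32$ by encoding a factor of $2$ into the definition of $\mathcal M$) also needs care but is routine.
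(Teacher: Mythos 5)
Your proposal is correct and follows essentially the same route as the paper's proof: KKT conditions on the active set, cancellation of the score term against the penalty using $\ell>1/c$, the sparse-eigenvalue duality bound combined with the prediction rate from Lemma \ref{Thm:RateEstimatedLasso}, and then the sublinearity-of-$\semax{\cdot}$ contradiction argument to remove the self-reference in $\semax{\hat s_u}$. The only cosmetic difference is that you perform the noise-versus-penalty cancellation coordinate-wise before squaring and summing, whereas the paper aggregates the three terms in the $\ell_2$-norm over $\widehat T_u$ first and then moves the noise term to the left; the constants work out the same.
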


\begin{lemma}[Rate of Convergence of Post-Lasso]\label{Thm:2StepMain}
Under Conditions WL, let $\widetilde \theta_u$ be
the Post-Lasso estimator based on the support $\widetilde T_u$. Then, with probability $1-o(1)$, uniformly over $u\in\mathcal{U}$, we have for $\tilde s_u = |\widetilde T_u|$
$$
\displaystyle  \| \Ep_P[Y_u\mid X] - f(X)'\widetilde \theta_u\|_{\Pn,2} \leq C \frac{\sqrt{\tilde s_u \log (p\vee n^{\dn+1}) }}{\sqrt{n \ \semin{\tilde s_u}}}\|\widehat\Psi_{u0}\|_\infty  + \min_{\supp(\theta)\subseteq\widetilde T_u}  \|\Ep_P[Y_u\mid X]- f(X)'\theta\|_{\Pn,2}\\
$$
Moreover, if $\supp(\widehat \theta_u)\subseteq \widetilde T_u$ for every $u\in\mathcal{U}$, the following events $c_r \geq \sup_{u\in\mathcal{U}}\|r_u\|_{\Pn,2}$, $\ell\widehat\Psi_{u0}\leq \widehat\Psi_u \leq L \widehat \Psi_{u0}$, $u\in\mathcal{U}$, and $\lambda/n \geq c\sup_{u\in\mathcal{U}}\|\widehat\Psi^{-1}_{u0}\En[f(X)\zeta_u]\|_\infty$, for $c>1/\ell$, imply that
$$\sup_{u\in\mathcal{U}}\min_{\supp(\theta)\subseteq\widetilde T_u}  \|\Ep_P[Y_u\mid X]-f(X)'\theta\|_{\Pn,2} \leq 3 c_r +  \(L + \frac{1}{c}\) \frac{2\lambda \sqrt{s}}{n \kappa_{\tilde \cc}}\sup_{u\in\mathcal{U}}\|\widehat\Psi_{u0}\|_{\infty}.$$
\end{lemma}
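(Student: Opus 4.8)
The plan is to treat the Post-Lasso estimator $\widetilde\theta_u$ as an ordinary least squares fit on the selected support $\widetilde T_u$, control its prediction error by a basic inequality, and reduce the stochastic term to the self-normalized quantity already handled in Lemma~\ref{Thm:ChoiceLambda}, uniformly in $u$. Write $m_u(X):=\Ep_P[Y_u\mid X]=f(X)'\theta_u+r_u(X)$ and $Y_u=m_u(X)+\zeta_u$, and let $\theta_u^\ast\in\arg\min_{\supp(\theta)\subseteq\widetilde T_u}\|m_u-f(X)'\theta\|_{\Pn,2}$ be an oracle approximant on the selected support. Set $A_u:=\|m_u-f(X)'\widetilde\theta_u\|_{\Pn,2}$ and $B_u:=\|m_u-f(X)'\theta_u^\ast\|_{\Pn,2}$. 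Since $\widetilde\theta_u$ minimizes $\En[(Y_u-f(X)'\theta)^2]$ over $\supp(\theta)\subseteq\widetilde T_u$ and $\theta_u^\ast$ is feasible, substituting $Y_u=m_u+\zeta_u$, expanding the squares, and cancelling the common $\En[\zeta_u^2]$ term gives the basic inequality
\begin{equation*}
A_u^2\le B_u^2+2\,\big|\En[\zeta_u\, f(X)'(\widetilde\theta_u-\theta_u^\ast)]\big|.
\end{equation*}

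Next I would bound the cross term. The vector $\widetilde\theta_u-\theta_u^\ast$ is supported on $\widetilde T_u$, of cardinality $\tilde s_u$, so by Cauchy--Schwarz, the definition of the minimal sparse eigenvalue, and $\|f(X)'(\widetilde\theta_u-\theta_u^\ast)\|_{\Pn,2}\le A_u+B_u$,
\begin{equation*}
\big|\En[\zeta_u f(X)'(\widetilde\theta_u-\theta_u^\ast)]\big|\le \big\|(\En[\zeta_u f(X)])_{\widetilde T_u}\big\|\cdot\|\widetilde\theta_u-\theta_u^\ast\|\le \frac{\big\|(\En[\zeta_u f(X)])_{\widetilde T_u}\big\|}{\sqrt{\semin{\tilde s_u}}}\,(A_u+B_u).
\end{equation*}
Factoring out the ideal loadings, $\|(\En[\zeta_u f(X)])_{\widetilde T_u}\|\le\sqrt{\tilde s_u}\,\|\widehat\Psi_{u0}\|_\infty\,\sup_{u\in\mathcal U}\|\widehat\Psi_{u0}^{-1}\En[f(X)\zeta_u]\|_\infty$, and under Condition~WL the last supremum is, with probability $1-o(1)$, of order $\sqrt{\log(p\vee n^{\dn+1})/n}$; this is the self-normalized moderate-deviation bound behind Lemma~\ref{Thm:ChoiceLambda}, obtained by covering $\mathcal U$ with a $1/n$-net of cardinality $N_n\le n^{\dn}$ (Condition~WL(i) together with the entropy bound on $\mathcal U$), taking a union bound over the net and over $j\le p$, and controlling the oscillation of $u\mapsto\En[f_j(X)\zeta_u]$ across net points by the last two displays of Condition~WL(iii). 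Plugging in yields $A_u^2\le B_u^2+2(A_u+B_u)S_u$ with $S_u\lesssim \sqrt{\tilde s_u\log(p\vee n^{\dn+1})/n}\,\|\widehat\Psi_{u0}\|_\infty/\sqrt{\semin{\tilde s_u}}$, and solving this quadratic in $A_u$ gives $A_u\le B_u+2S_u$, which is the first displayed bound of the lemma.

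For the second part, the hypothesis $\supp(\widehat\theta_u)\subseteq\widetilde T_u$ makes $\widehat\theta_u$ feasible in the minimization defining $B_u$, so $B_u\le\|m_u-f(X)'\widehat\theta_u\|_{\Pn,2}=\|f(X)'(\theta_u-\widehat\theta_u)+r_u\|_{\Pn,2}\le\|f(X)'(\widehat\theta_u-\theta_u)\|_{\Pn,2}+\|r_u\|_{\Pn,2}$. On the stated event, where $c_r\ge\sup_u\|r_u\|_{\Pn,2}$, $\ell\widehat\Psi_{u0}\le\widehat\Psi_u\le L\widehat\Psi_{u0}$, and $\lambda/n\ge c\sup_u\|\widehat\Psi_{u0}^{-1}\En[f(X)\zeta_u]\|_\infty$ with $c>1/\ell$, Lemma~\ref{Thm:RateEstimatedLasso} bounds $\|f(X)'(\widehat\theta_u-\theta_u)\|_{\Pn,2}$ by $2c_r+2\lambda\sqrt s(L+1/c)\|\widehat\Psi_{u0}\|_\infty/(n\kappa_{\tilde \cc})$; adding $\|r_u\|_{\Pn,2}\le c_r$ and taking the supremum over $u\in\mathcal U$ — recalling that $\kappa_{\tilde \cc}$ is itself an infimum over $u$ — delivers the second displayed bound.

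The step I expect to be the main obstacle is the uniform-in-$u$ control of $\sup_{u\in\mathcal U}\|\widehat\Psi_{u0}^{-1}\En[f(X)\zeta_u]\|_\infty$: because $\mathcal U$ is a continuum a naive union bound is unavailable, so one must combine self-normalized moderate deviations on a finite net with the stochastic-equicontinuity-type estimates for $u\mapsto\zeta_u$ supplied by Condition~WL(iii); everything else is routine basic- and quadratic-inequality manipulation, aided by Lemmas~\ref{Thm:RateEstimatedLasso} and~\ref{Thm:ChoiceLambda}. I note finally that to combine the present lemma with the sparsity bound of Lemma~\ref{Thm:Sparsity} into Theorem~\ref{Thm:RateEstimatedLassoLinear} one replaces $\tilde s_u$ by a constant multiple of $s$ and bounds $\semin{\tilde s_u}$ from below via Assumption~\ref{ass: linear}(iv)(c), but that lies outside the present statement.
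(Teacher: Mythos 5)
Your proof is correct and follows essentially the same route as the paper: both arguments reduce the noise contribution to $\sup_{u\in\mathcal{U}}\|\widehat\Psi_{u0}^{-1}\En[f(X)\zeta_u]\|_\infty$, control it uniformly in $u$ via the self-normalized moderate-deviation bound of Lemma \ref{Thm:ChoiceLambda} (with $\gamma=1/n$, net cardinality $N_n\le n^{\dn}$), convert to the stated rate through $\sqrt{\tilde s_u}$ and $\semin{\tilde s_u}$, and treat the second display identically via feasibility of $\widehat\theta_u$ on $\widetilde T_u$ together with Lemma \ref{Thm:RateEstimatedLasso}. The only difference is the elementary deterministic step: you use the least-squares basic inequality $A_u^2\le B_u^2+2(A_u+B_u)S_u$ and solve the quadratic to get $A_u\le B_u+2S_u$, whereas the paper writes the post-Lasso residual exactly as $(I-\widehat{P}_{\widetilde T_u})m_u-\widehat{P}_{\widetilde T_u}\bar\zeta_u$ and applies the triangle inequality, reaching the same bound (with a marginally smaller constant) in one line.
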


\subsection{Finite Sample Results: Logistic Case}\label{Sec:ResultsLogistic}

For the model described in (\ref{A:EqMainFunc}) with $\G(t)=\exp(t)/\{1+\exp(t)\}$ and $M(y,t) = -\{1\{y=1\}\log(\G(t))+1\{y=0\}\log(1-\G(t))\}$ we will study finite the sample properties of the associated Lasso and Post-Lasso estimators of $(\theta_u)_{u\in\mathcal{U}}$  defined in relations (\ref{Adef:LassoFunc}) and (\ref{Adef:PostFunc}). In what follows we use the notation $$M_u(\theta) = \En[M(Y_u,f(X)'\theta)].$$

In the finite sample analysis we will consider not only the design matrix $\En[ f(X) f(X)']$ but also a weighted counterpart $\En[w_{u} f(X) f(X)']$ where $w_{ui}=\Ep_P[Y_{ui}\mid X_i](1-\Ep_P[Y_{ui}\mid X_i])$, $i=1,\ldots,n$, $u\in\mathcal{U}$, is the conditional variance of the outcome variable $Y_{ui}$.

For $T_u=\supp(\theta_u)$, $s_u = \|\theta_u\|_0 \leq s$, with $s \geq 1$, the (logistic) restricted eigenvalue is defined as \begin{equation}\label{DefKAppaLog} \bar\kappa_\cc := \inf_{u\in \mathcal{U}} \min_{\delta \in \Delta_{\cc,u}} \frac{\|\sqrt{w_{u}} f(X)'\delta\|_{\Pn,2}}{\|\delta_{T_u}\|}.\end{equation}
For a subset $A_u\subset \RR^p$, $u\in\mathcal{U}$, let the non-linear impact coefficient \cite{BC-SparseQR,BCY-honest} be defined as
 \begin{equation}\label{NIC:Logistic}\bar q_{A_u} := \inf_{ \delta \in A_u} \frac{\En\[w_{u}| f(X)'\delta|^2\]^{3/2}}{\En\[w_{u}| f(X)'\delta|^3\]}.\end{equation}

Note that $\bar q_{A_u}$ can be bounded as
$$ \bar q_{A_u} = \inf_{ \delta \in A_u} \frac{\En\[w_{u}| f(X)'\delta|^2\]^{3/2}}{\En\[w_{u}| f(X)'\delta|^3\]} \geq \inf_{ \delta \in A_u} \frac{\En\[w_{u}| f(X)'\delta|^2\]^{1/2}}{\max_{i\leq n}\| f(X_i)\|_\infty\|\delta\|_1}$$
which can lead to interesting bounds provided $A_u$ is appropriate (like the restrictive set $\Delta_{\cc,u}$ in the definition of restricted eigenvalues). In Lemma \ref{Lemma:LassoLogisticRateRaw} we have
$A_u = \Delta_{2\tilde\cc,u}\cup \{ \delta \in \RR^p :  \|\delta\|_1\leq \frac{6c\|\widehat\Psi_{u0}^{-1}\|_\infty}{\ell c - 1}\frac{n}{\lambda}\|\frac{r_{u}}{\sqrt{w_{u}}}\|_{\Pn,2}\|\sqrt{w_{u}} f(X)'\delta\|_{\Pn,2}\}$, for $u\in \mathcal{U}$. For this choice of sets, and provided that  with probability $1-o(1)$ we have
 $\ell c > c' > 1$, $\sup_{u\in\mathcal{U}}\|r_{u}/\sqrt{w_{u}}\|_{\Pn,2}\lesssim \sqrt{ s \log(p\vee n)/ n}$, $\sup_{u\in\mathcal{U}}\|\widehat\Psi_{u0}^{-1}\|_\infty\lesssim 1$ and $  \sqrt{n\log (p\vee n)}\lesssim \lambda$, we have that uniformly over $u\in\mathcal{U}$, with probability $1-o(1)$
 \begin{equation}\label{LowerBound:q}\bar q_{A_u} \geq \frac{1}{{\displaystyle \max_{i\leq n}}\| f(X)\|_\infty} \left( \frac{\bar\kappa_{2\tilde\cc}}{\sqrt{s_u}(1+2\tilde\cc)} \wedge \frac{(\lambda/n)(\ell c-1)}{6c \|\widehat\Psi_{u0}^{-1}\|_\infty\|r_{u}/\sqrt{w_{u}}\|_{\Pn,2}}\right) \gtrsim \frac{\bar\kappa_{2\tilde\cc}}{\sqrt{s}{\displaystyle \max_{i\leq n}}\| f(X_i)\|_\infty}.\end{equation}

The definitions above differ from their counterpart in the analysis of $\ell_1$-penalized least squares estimators by the weighting $0\leq w_{ui}\leq 1$. Thus it is relevant to understand their relations through the quantities $$\psi_{u}(A) := \min_{\delta \in A} \frac{\|\sqrt{w_{u}} f(X)'\delta\|_{\Pn,2}}{\| f(X)'\delta\|_{\Pn,2}}.$$
Many primitive conditions on the data generating process will imply $\psi_{u}(A)$ to be bounded away from zero for the relevant choices of $A$. We refer to \citen{BCY-honest} for bounds on $\psi_{u}$. For notational convenience we will also work with a rescaling of the approximation errors $\tilde r_{u}(X)$ defined as
 \begin{equation}\label{def:tilder}\tilde r_{ui}=\tilde r_{u}(X_i)=\G^{-1}( \ \G(f(X_i)'\theta_u)+r_{ui} \ ) - f(X_i)'\theta_u,\end{equation}
which is the unique solution to $ \G(f(X_i)'\theta_u+\tilde r_{u}(X_i))= \G(f(X_i)'\theta_u)+r_{u}(X_i)$.
It follows that $| r_{ui} | \leq |\tilde r_{ui}|$ and that\footnote{The last relation follows from noting that for the logistic function we have $\inf_{ 0\leq t \leq \tilde r_{ui}}\G'(f(X_i'\theta_u)+t) = \min\{\G'(f(X_i'\theta_u)+\tilde r_{ui}),\G'(f(X_i'\theta_u))\}$ since $\G'$ is unimodal. Moreover, $\G'(f(X_i'\theta_u)+\tilde r_{ui})=w_{ui}$ and  $\G'(f(X_i'\theta_u)) = \G(f(X_i'\theta_u))[1-\G(f(X_i'\theta_u))]=[\G(f(X_i'\theta_u))+r_{ui}-r_{ui}][1-\G(f(X_i'\theta_u))-r_{ui}+r_{ui}]\geq w_{ui}-2|r_{ui}|$ since $|r_{ui}|\leq 1$.} $|\tilde r_{ui}| \leq |r_{ui}| / \inf_{ 0\leq t \leq \tilde r_{ui}}\G'(f(X_i'\theta_u)+t) \leq |r_{ui}| / \{ w_{ui} - 2|r_{ui}|\}_+$.

Next we derive finite sample bounds provided some crucial events occur.

\begin{lemma}[Rates of Convergence for $\ell_1$-Logistic Estimator]\label{Lemma:LassoLogisticRateRaw}
Assume that $$\lambda/n \geq c \sup_{u\in \mathcal{U}}\|\widehat\Psi^{-1}_{u0}\En[f(X)\zeta_u] \|_\infty $$ for $c>1$. Further, let $\ell\widehat\Psi_{u0} \leq \widehat\Psi_u \leq L\widehat\Psi_{u0}$ for $L\geq  1 \geq \ell > 1/c$, uniformly over $u\in \mathcal{U}$, $\tilde \cc = (Lc+1)/(\ell c-1)\sup_{u\in\mathcal{U}}\|\widehat\Psi_{u0}\|_\infty\|\widehat\Psi_{u0}^{-1}\|_\infty$
and $$A_u = \Delta_{2\tilde\cc,u} \cup \{ \delta : \|\delta\|_1 \leq \frac{6c\|\widehat\Psi_{u0}^{-1}\|_\infty}{\ell c - 1}\frac{n}{\lambda}\|r_{u}/\sqrt{w_{u}}\|_{\Pn,2}\|\sqrt{w_{u}} f(X)'\delta\|_{\Pn,2}\}.$$
Provided that the nonlinear impact coefficient $\bar q_{A_u} > 3\left\{(L+\frac{1}{c})\|\widehat\Psi_{u0}\|_\infty\frac{\lambda\sqrt{s}}{n\bar\kappa_{2\tilde\cc}}+ 9\tilde \cc\|\tilde r_{u}/\sqrt{w_{u}}\|_{\Pn,2}\right\}$ for every $u\in \mathcal{U}$, we have uniformly over $u\in\mathcal{U}$
{\small $$ \|\sqrt{w_{u}} f(X)'(\hat\theta_u-\theta_u)\|_{\Pn,2} \leq 3\left\{(L+\frac{1}{c})\|\widehat\Psi_{u0}\|_\infty\frac{\lambda\sqrt{s}}{n\bar\kappa_{2\tilde\cc}}+ 9\tilde\cc\|\tilde r_{u}/\sqrt{w_{u}}\|_{\Pn,2}\right\} \ \ \ \mbox{and} \ \ \ $$ $$\|\hat \theta_u -\theta_u\|_1 \leq  3\left\{ \frac{(1+2\tilde\cc)\sqrt{s}}{\bar\kappa_{2\tilde \cc}}+ \frac{6c\|\widehat\Psi_{u0}^{-1}\|_\infty}{\ell c - 1}\frac{n}{\lambda}\left\|\frac{ r_{u}}{\sqrt{w_{u}}}\right\|_{\Pn,2}\right\}\left\{(L+\frac{1}{c})\|\widehat\Psi_{u0}\|_\infty\frac{\lambda\sqrt{s}}{n\bar\kappa_{2\tilde\cc}}+9\tilde\cc\left\|\frac{\tilde r_{u}}{\sqrt{w_{u}}}\right\|_{\Pn,2}\right\}$$}
\end{lemma}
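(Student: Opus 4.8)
The argument will be entirely deterministic once the three events in the hypothesis hold — the penalty domination $\lambda/n\geq c\sup_{u}\|\widehat\Psi_{u0}^{-1}\En[f(X)\zeta_u]\|_\infty$, the loading sandwich $\ell\widehat\Psi_{u0}\leq\widehat\Psi_u\leq L\widehat\Psi_{u0}$, and the non-linear impact condition $\bar q_{A_u}>3\{\cdots\}$ — so the plan is to fix $u\in\mathcal U$, prove the two bounds pointwise, and take suprema at the end, uniformity over $\mathcal U$ then being free. Writing $\delta:=\hat\theta_u-\theta_u$ and $M_u(\theta):=\En[M(Y_u,f(X)'\theta)]$, the score at $\theta_u$ is $\partial_\theta M_u(\theta_u)=\En[(\G(f(X)'\theta_u)-Y_u)f(X)]$, and by \eqref{A:EqMainFunc} the bracket equals $-\zeta_u-r_u(X)$. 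First I would split this score into a stochastic part $-\En[\zeta_u f(X)]$, which the penalty dominates coordinatewise after rescaling by $\widehat\Psi_{u0}$, and an approximation part $-\En[r_u(X)f(X)]$, which Cauchy--Schwarz bounds, after contracting against $\delta$, by $\|r_u/\sqrt{w_u}\|_{\Pn,2}\|\sqrt{w_u}f(X)'\delta\|_{\Pn,2}$.

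Next I would establish the restricted-set membership $\delta\in A_u$. From optimality of $\hat\theta_u$ in \eqref{Adef:LassoFunc}, the sparsity $\supp(\theta_u)=T_u$, and the loading sandwich, one obtains the standard inequality $M_u(\theta_u+\delta)-M_u(\theta_u)\leq\tfrac\lambda n(\|\widehat\Psi_{u,T_u}\delta_{T_u}\|_1-\|\widehat\Psi_{u,T_u^c}\delta_{T_u^c}\|_1)$; bounding the left side below by convexity (it is $\geq\partial_\theta M_u(\theta_u)'\delta$) and inserting the score decomposition produces the usual dichotomy — either the term $\|r_u/\sqrt{w_u}\|_{\Pn,2}\|\sqrt{w_u}f(X)'\delta\|_{\Pn,2}$ dominates, which is exactly the second defining condition of $A_u$, or $\|\delta_{T_u^c}\|_1\leq2\tilde\cc\|\delta_{T_u}\|_1$, i.e. $\delta\in\Delta_{2\tilde\cc,u}$. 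Either way $\delta\in A_u$, so the non-linear impact coefficient $\bar q_{A_u}$ of \eqref{NIC:Logistic} and the restricted eigenvalue $\bar\kappa_{2\tilde\cc}$ of \eqref{DefKAppaLog} are available along the relevant direction.

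The core step is the curvature minoration. Exploiting the self-concordance of the logistic loss — $\partial_t^2M(y,t_0+h)\geq\partial_t^2M(y,t_0)e^{-|h|}$ — the elementary inequality $e^{-|x|}+|x|-1\geq\tfrac{x^2/2}{1+|x|}$, and a Cauchy--Schwarz (Hölder) step that converts $\En[w_u\,|f(X)'\delta|^2/(1+|f(X)'\delta|)]$ into a ratio featuring $\bar q_{A_u}$, I would derive
\[
M_u(\theta_u+\delta)-M_u(\theta_u)-\partial_\theta M_u(\theta_u)'\delta\;\geq\;\frac{\|\sqrt{w_u}f(X)'\delta\|_{\Pn,2}^2/2}{1+\|\sqrt{w_u}f(X)'\delta\|_{\Pn,2}/\bar q_{A_u}}\;-\;C\,\tilde\cc\,\|\tilde r_u/\sqrt{w_u}\|_{\Pn,2}\,\|\sqrt{w_u}f(X)'\delta\|_{\Pn,2},
\]
the weight $w_u$ entering and the second (curvature-mismatch) term appearing because, by \eqref{def:tilder}, the true index is $f(X)'\theta_u+\tilde r_u(X)$, so $\partial_t^2M$ at the truth equals $w_u$ while passing from the truth to $f(X)'\theta_u$ costs the $\tilde r_u$ term. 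Combining this with the optimality inequality, the two score bounds, and $\|\widehat\Psi_{u0}\delta_{T_u}\|_1\leq\|\widehat\Psi_{u0}\|_\infty\sqrt s\,\|\sqrt{w_u}f(X)'\delta\|_{\Pn,2}/\bar\kappa_{2\tilde\cc}$ (valid since $\delta\in\Delta_{2\tilde\cc,u}$), and writing $t:=\|\sqrt{w_u}f(X)'\delta\|_{\Pn,2}$, everything reduces to a scalar inequality $\tfrac{t^2/2}{1+t/\bar q_{A_u}}\leq R_u\,t/3$ with $R_u:=3\{(L+\tfrac1c)\|\widehat\Psi_{u0}\|_\infty\tfrac{\lambda\sqrt s}{n\bar\kappa_{2\tilde\cc}}+9\tilde\cc\|\tilde r_u/\sqrt{w_u}\|_{\Pn,2}\}$; tracking constants, the hypothesis $\bar q_{A_u}>R_u$ keeps the denominator bounded in the relevant range and forces $t\leq R_u$, which is the first claim. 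For the $\ell_1$-bound I would feed $t\leq R_u$ into the two estimates from the dichotomy — $\|\delta\|_1\leq(1+2\tilde\cc)\sqrt s\,t/\bar\kappa_{2\tilde\cc}$ on the cone and $\|\delta\|_1\leq\tfrac{6c\|\widehat\Psi_{u0}^{-1}\|_\infty}{\ell c-1}\tfrac n\lambda\|r_u/\sqrt{w_u}\|_{\Pn,2}\,t$ off it — add the two, and obtain exactly the stated product; taking $\sup_{u\in\mathcal U}$ finishes.

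I expect the minoration to be the main obstacle: unlike least squares, the logistic loss has no global quadratic minorant, so the curvature term must be allowed to degrade like $t/(1+t/\bar q_{A_u})$, and the whole apparatus — the set $A_u$, the coefficient $\bar q_{A_u}$, and the smallness condition $\bar q_{A_u}>3\{\cdots\}$ — exists precisely to certify that this degradation is harmless. A secondary subtlety is keeping the two flavours of approximation error in their correct roles: $r_u$ (error in probability space) governs the score contribution and the off-cone $\ell_1$ estimate, whereas its logistic re-expression $\tilde r_u$ from \eqref{def:tilder} governs the curvature mismatch and hence enters $R_u$; conflating them would yield the wrong bound.
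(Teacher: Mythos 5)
Your proposal is correct and follows the paper's argument in all essentials: the same optimality/convexity sandwich on $M_u(\hat\theta_u)-M_u(\theta_u)$ to force $\hat\theta_u-\theta_u\in A_u$ (with the same dichotomy between the cone $\Delta_{2\tilde\cc,u}$ and the $\ell_1$-ball controlled by $\|r_u/\sqrt{w_u}\|_{\Pn,2}$), a self-concordance minoration of the excess loss carrying the $\tilde r_u$ curvature-mismatch term, and the same case-by-case assembly of the $\ell_1$ bound. The one point where you diverge is the form of the minorant: the paper uses Bach's inequality together with $e^{-t}+t-1\geq \tfrac12 t^2-\tfrac16 t^3$, then exploits the scale-invariance of $\bar q_{A_u}$ to absorb the cubic term when $\|\sqrt{w_u}f(X)'\delta\|_{\Pn,2}\leq \bar q_{A_u}$ and a convexity rescaling $F_u(\delta)\geq \frac{\|\cdot\|}{\bar q_{A_u}}F_u\bigl(\delta\,\bar q_{A_u}/\|\cdot\|\bigr)$ for larger $\delta$, yielding the piecewise minorant $\tfrac13 t^2\wedge\tfrac{\bar q_{A_u}}{3}t$ whose linear branch is excluded exactly by the hypothesis $\bar q_{A_u}>3\{\cdots\}$ and whose quadratic branch then gives $t\leq 3\{\cdots\}$ on the nose. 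Your smoothed minorant $\tfrac{t^2/2}{1+t/\bar q_{A_u}}$ is a legitimate alternative (and avoids the two-regime split), but as written it is weaker near $t\approx\bar q_{A_u}$ (value $\approx t^2/4$ versus $t^2/3$), so the scalar inequality it produces yields $t\lesssim 2R_u$ rather than $t\leq R_u$; to recover the exact constants of the statement you would either need to sharpen that step or adopt the paper's piecewise form.
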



The following result provides  bounds on the number of non-zero coefficients in the $\ell_1$-penalized estimator $\hat\theta_u$, uniformly over $u\in\mathcal{U}$.

\begin{lemma}[Sparsity of $\ell_1$-Logistic Estimator]\label{Lemma:LassoLogisticSparsity} Assume $\lambda/n \geq c \sup_{u\in \mathcal{U}}\|\widehat\Psi^{-1}_{u0}\En[f(X)\zeta_u] \|_\infty $ for $c>1$. Further, let $\ell\widehat\Psi_{u0} \leq \widehat\Psi_u \leq L\widehat\Psi_{u0}$ for $L\geq  1 \geq \ell > 1/c$, uniformly over $u\in \mathcal{U}$, $c_0=(Lc+1)/(\ell c-1)$, $\tilde \cc = c_0\sup_{u\in\mathcal{U}}\|\widehat\Psi_{u0}\|_\infty\|\widehat\Psi_{u0}^{-1}\|_\infty$
and $A_u = \Delta_{2\tilde\cc,u} \cup \{ \delta : \|\delta\|_1 \leq \frac{6c\|\widehat\Psi_{u0}^{-1}\|_\infty}{\ell c - 1}\frac{n}{\lambda}\|r_{u}/\sqrt{w_{u}}\|_{\Pn,2}\|\sqrt{w_{u}} f(X)'\delta\|_{\Pn,2}\}$, and $\bar q_{A_u} > 3\left\{(L+\frac{1}{c})\|\widehat\Psi_{u0}\|_\infty\frac{\lambda\sqrt{s}}{n\bar\kappa_{2\tilde\cc}}+ 9\tilde \cc\|\tilde r_{u}/\sqrt{w_{u}}\|_{\Pn,2}\right\}$ for every $u\in \mathcal{U}$.
Then for $\hat s_u = \|\hat\theta_u\|_0$, uniformly over $u\in \mathcal{U}$,
$$\hat s_u \leq  \left( \min_{m \in \mathcal{M}}\semax{m}\right)   \[\frac{c_0}{\psi(A_u)}\left\{ 3\|\hat\Psi_{u0}\|_\infty\frac{\sqrt{s}}{\bar\kappa_{2\tilde \cc}}   +28\tilde\cc\frac{n\|\tilde r_{u}/\sqrt{w_u}\|_{\Pn,2}}{\lambda}\right\}\]^2$$
where $\mathcal{M}=\left\{ m \in \mathbb{N}:
m > 2 \[\frac{c_0}{\psi(A_u)}\sup_{u\in\mathcal{U}}\left\{ 3\|\hat\Psi_{u0}\|_\infty\frac{\sqrt{s}}{\bar\kappa_{2\tilde \cc}}   +28\tilde\cc\frac{n\|\tilde r_{u}/\sqrt{w_u}\|_{\Pn,2}}{\lambda}\right\}\]^2 \right\}.$

Moreover, if $\sup_{u\in\mathcal{U}}\max_{i\leq n}| f(X_i)'(\hat\theta_u-\theta_u)-\tilde r_{ui}| \leq 1$ we have
$$\hat s_u \leq  \left( \min_{m \in \mathcal{M}}\semax{m}\right) 4c_0^2  \left\{ 3\|\hat\Psi_{u0}\|_\infty\frac{\sqrt{s}}{\bar\kappa_{2\tilde \cc}}   +28\tilde\cc\frac{n\|\tilde r_{u}/\sqrt{w_u}\|_{\Pn,2}}{\lambda}\right\}^2$$
where $\mathcal{M}=\left\{ m \in \mathbb{N}:
m > 8 c_0^2\sup_{u\in\mathcal{U}}\[ 3\|\hat\Psi_{u0}\|_\infty\frac{\sqrt{s}}{\bar\kappa_{2\tilde \cc}}   +28\tilde\cc\frac{n\|\tilde r_{u}/\sqrt{w_u}\|_{\Pn,2}}{\lambda}\]^2 \right\}.$
\end{lemma}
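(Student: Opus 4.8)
\medskip
\noindent\textbf{Proof proposal.} The plan is to run the logistic-link analogue of the least-squares sparsity bound (Lemma \ref{Thm:Sparsity}), carried out at the M-estimation level as in \citen{BC-SparseQR} and \citen{BCY-honest}. Uniformity in $u\in\mathcal{U}$ will require no extra work: every hypothesis of the lemma (penalty domination, comparability of the loadings, and the lower bound on the nonlinear impact coefficient $\bar q_{A_u}$) is imposed uniformly in $u$, and these are exactly the hypotheses of Lemma \ref{Lemma:LassoLogisticRateRaw}; hence on the relevant event, for each $u$ the residual $\hat\delta_u:=\hat\theta_u-\theta_u$ lies in $A_u$ and obeys the weighted prediction bound $\|\sqrt{w_u}f(X)'\hat\delta_u\|_{\Pn,2}\leq 3\{(L+\tfrac1c)\|\widehat\Psi_{u0}\|_\infty\tfrac{\lambda\sqrt s}{n\bar\kappa_{2\tilde\cc}}+9\tilde\cc\|\tilde r_u/\sqrt{w_u}\|_{\Pn,2}\}$. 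All remaining steps are deterministic on that event, so they hold simultaneously over $u\in\mathcal{U}$.

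First I would write the first-order conditions for problem (\ref{Adef:LassoFunc}). Since $\partial_t M(y,t)=-(y-\G(t))$ for $y\in\{0,1\}$ in the logistic case, optimality of $\hat\theta_u$ gives, for every $j\in\widehat T_u:=\supp(\hat\theta_u)$, the exact identity $|\En[(Y_u-\G(f(X)'\hat\theta_u))f_j(X)]|=(\lambda/n)(\widehat\Psi_u)_{jj}\geq(\ell\lambda/n)(\widehat\Psi_{u0})_{jj}$. Taking the Euclidean norm over $j\in\widehat T_u$ and using $\min_j(\widehat\Psi_{u0})_{jj}=\|\widehat\Psi_{u0}^{-1}\|_\infty^{-1}$ yields
$$\sqrt{\hat s_u}\,\frac{\ell\lambda}{n\|\widehat\Psi_{u0}^{-1}\|_\infty}\leq \Big\|\En\big[(Y_u-\G(f(X)'\hat\theta_u))f_{\widehat T_u}(X)\big]\Big\|,$$
where $\hat s_u=\|\hat\theta_u\|_0$ and $f_{\widehat T_u}(X)$ is the subvector of $f(X)$ indexed by $\widehat T_u$. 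I would then split $Y_u-\G(f(X)'\hat\theta_u)=\zeta_u+R_u$ with $R_u:=\Ep_P[Y_u\mid X]-\G(f(X)'\hat\theta_u)=\G(f(X)'\theta_u+\tilde r_u)-\G(f(X)'\hat\theta_u)$, using $\Ep_P[Y_u\mid X]=\G(f(X)'\theta_u)+r_u=\G(f(X)'\theta_u+\tilde r_u)$ and (\ref{def:tilder}). The $\zeta_u$-term is dispatched by penalty domination: $\|\En[\zeta_u f_{\widehat T_u}(X)]\|\leq\sqrt{\hat s_u}\,\|\widehat\Psi_{u0}\|_\infty\,\|\widehat\Psi_{u0}^{-1}\En[f(X)\zeta_u]\|_\infty\leq\sqrt{\hat s_u}\,\|\widehat\Psi_{u0}\|_\infty\,\lambda/(cn)$.

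For the nonlinear term I would use $\|\En[R_u f_{\widehat T_u}(X)]\|=\sup\{|\En[R_u f(X)'\delta]|:\|\delta\|=1,\ \supp(\delta)\subseteq\widehat T_u\}$, which reduces matters to bounding an $L^2(\Pn)$-norm of $R_u$ times a factor $\sqrt{\semax{\hat s_u}}$ (since $\|f(X)'\delta\|_{\Pn,2}\leq\sqrt{\semax{\hat s_u}}$, and likewise $\|\sqrt{w_u}f(X)'\delta\|_{\Pn,2}\leq\sqrt{\semax{\hat s_u}}$). Here the two claimed bounds diverge. In general I would use that $\G$ is $1$-Lipschitz, so $|R_u|\leq|\tilde r_u|+|f(X)'\hat\delta_u|$; since $0\le w_u\le1$ this gives $\|R_u\|_{\Pn,2}\leq\|\tilde r_u/\sqrt{w_u}\|_{\Pn,2}+\|f(X)'\hat\delta_u\|_{\Pn,2}$, and since $\hat\delta_u\in A_u$ the unweighted prediction norm converts via $\|f(X)'\hat\delta_u\|_{\Pn,2}\leq\|\sqrt{w_u}f(X)'\hat\delta_u\|_{\Pn,2}/\psi(A_u)$, which produces the $1/\psi(A_u)$ factor in the first bound. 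Under the extra hypothesis $\sup_u\max_{i\leq n}|f(X_i)'\hat\delta_u-\tilde r_{ui}|\leq1$ I would instead expand $R_u=-\G'(t^*)(f(X)'\hat\delta_u-\tilde r_u)$ with $t^*$ between $f(X)'\theta_u+\tilde r_u$ and $f(X)'\hat\theta_u$, and use unimodality of $\G'$ together with $\G'(f(X)'\theta_u+\tilde r_u)=w_u$ to show $\G'(t^*)\asymp w_u$ on the relevant interval, so $\|\En[R_u f_{\widehat T_u}(X)]\|\lesssim(\|\sqrt{w_u}f(X)'\hat\delta_u\|_{\Pn,2}+\|\tilde r_u/\sqrt{w_u}\|_{\Pn,2})\sqrt{\semax{\hat s_u}}$ with no $\psi$-factor, giving the second bound.

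Plugging the $\zeta_u$- and $R_u$-bounds into the display, inserting the prediction rate from Lemma \ref{Lemma:LassoLogisticRateRaw}, and substituting the definitions of $c_0$ and $\tilde\cc$, I would arrive at a self-bounding inequality $\hat s_u\leq\semax{\hat s_u}\,K_u^2$, where $K_u$ is $\tfrac{c_0}{\psi(A_u)}\{3\|\widehat\Psi_{u0}\|_\infty\tfrac{\sqrt s}{\bar\kappa_{2\tilde\cc}}+28\tilde\cc\tfrac{n\|\tilde r_u/\sqrt{w_u}\|_{\Pn,2}}{\lambda}\}$ in the first case and $2c_0\{3\|\widehat\Psi_{u0}\|_\infty\tfrac{\sqrt s}{\bar\kappa_{2\tilde\cc}}+28\tilde\cc\tfrac{n\|\tilde r_u/\sqrt{w_u}\|_{\Pn,2}}{\lambda}\}$ in the second (this is the bookkeeping step pinning down the constants $3$, $28$, $4c_0^2$). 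The proof then closes with the standard argument via sub-linearity of $m\mapsto\semax{m}$ (see \citen{BelloniChernozhukovHansen2011}): if $\hat s_u>m$ for some $m\in\mathcal{M}$, then $\semax{\hat s_u}\leq\lceil\hat s_u/m\rceil\semax{m}\leq(2\hat s_u/m)\semax{m}$, so $\hat s_u\leq(2\hat s_u/m)\semax{m}K_u^2$, i.e. $m\leq2\semax{m}K_u^2$, contradicting the definition of $\mathcal{M}$; hence $\hat s_u\leq m$ for every $m\in\mathcal{M}$, and taking $m=\min_{m\in\mathcal{M}}m$ gives the claim (with $\mathcal{M}$ defined through $\sup_{u\in\mathcal{U}}K_u$ to make the bound uniform). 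I expect the main obstacle to be the uniform control of $\|\En[R_u f_{\widehat T_u}(X)]\|$: in the general case one must keep the cone $A_u$, the conversion factor $\psi(A_u)$, and the nonlinear impact coefficient $\bar q_{A_u}$ mutually consistent with the rate bounds, and in the refined case one must carefully verify that the boundedness condition confines $t^*$ to a region where $\G'(t^*)\asymp w_u$, which is exactly what lets one drop the $\psi$-factor; the KKT step, the penalty-domination bound, and the self-bounding endgame are otherwise identical to the least-squares case.
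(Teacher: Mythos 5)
Your proposal follows essentially the same route as the paper's proof: the KKT identity on $\widehat T_u$, the split of $Y_u-\G(f(X)'\hat\theta_u)$ into $\zeta_u$ plus a nonlinear remainder handled by the $1$-Lipschitz property of $\G$ (with the $\psi(A_u)$ conversion in the general case), the refined weighted bound under the boundedness condition, insertion of the rate from Lemma \ref{Lemma:LassoLogisticRateRaw}, and the sublinearity-of-$\semax{\cdot}$ endgame. The only cosmetic difference is that the paper obtains $|\hat\G_{ui}-\G_{ui}|\leq 2w_{ui}|f(X_i)'\hat\delta_u-\tilde r_{ui}|$ from the log-Lipschitz bound $|\G''|\leq\G'$ (Lemma \ref{Lemma:AuxSparsity}) rather than from unimodality of $\G'$, which by itself only yields the lower comparison of $\G'(t^*)$ with $w_{ui}$; this does not affect the validity of your argument.
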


Next we turn to finite sample bounds for the logistic regression estimator where the support was selected based on $\ell_1$-penalized logistic regression. The results will hold uniformly over $u\in\mathcal{U}$ provided the side conditions also hold uniformly over $\mathcal{U}$.

\begin{lemma}[Rate of Convergence for Post-$\ell_1$-Logistic Estimator]\label{Lemma:PostLassoLogisticRateRaw} Consider $\widetilde \theta_u$ defined as the post model selection logistic regression with the support $\widetilde T_u$ and let $\tilde s_u := |\widetilde T_u|$. Uniformly over $u\in\mathcal{U}$ we have
{\small  $$ \|\sqrt{w_{u}} f(X)'(\widetilde \theta_u - \theta_u)\|_{\Pn,2} \leq \sqrt{3}\sqrt{0\vee\{M_u(\widetilde \theta_u) - M_u(\theta_u)\}} + 3\left\{\frac{\sqrt{\tilde s_u+s_u}\|\En[f(X)\zeta_u]\|_\infty}{\psi_{u}(A_u)\sqrt{\semin{\tilde s_u+s_u}}} + 3\left\|\frac{\tilde r_{u}}{\sqrt{w_{u}}}\right\|_{\Pn,2}\right\}$$}
\noindent provided that,  for every $u\in\mathcal{U}$ and $A_u = \{ \delta \in \RR^p : \|\delta\|_0 \leq \tilde s_u + s_u\}$,
 $$\bar q_{A_u}>6\left\{\frac{\sqrt{\tilde s_u+s_u}\|\En[f(X)\zeta_u]\|_\infty}{\psi_{u}(A_u)\sqrt{\semin{\tilde s_u+s_u}}} + 3\left\|\frac{\tilde r_{u}}{\sqrt{w_{u}}}\right\|_{\Pn,2}\right\} \text{ and } \bar q_{A_u}>6\sqrt{0\vee \{M_u(\widetilde \theta_u) - M_u(\theta_u)\}}.$$
\end{lemma}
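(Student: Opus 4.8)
\textbf{Proof proposal for Lemma \ref{Lemma:PostLassoLogisticRateRaw}.}

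The plan is to adapt the standard argument for rates of convergence of post-model-selection estimators for convex $M$-estimators (in the spirit of \citen{BCY-honest} and \citen{BC-SparseQR}) to the logistic loss, carrying everything uniformly over $u \in \mathcal{U}$. First I would set up notation: write $\delta_u := \widetilde\theta_u - \theta_u$, which is supported on $\widetilde T_u \cup T_u$ and hence has $\|\delta_u\|_0 \leq \tilde s_u + s_u$, so that $\delta_u \in A_u$ with $A_u = \{\delta : \|\delta\|_0 \leq \tilde s_u + s_u\}$. The key quantity is the symmetric Bregman divergence associated with $M_u$, namely $D_u(\widetilde\theta_u, \theta_u) := \En[(\G(f(X)'\widetilde\theta_u) - \G(f(X)'\theta_u))(f(X)'\widetilde\theta_u - f(X)'\theta_u)]$, which for the logistic link admits a lower bound of the form $D_u(\widetilde\theta_u,\theta_u) \gtrsim \|\sqrt{w_u}f(X)'\delta_u\|_{\Pn,2}^2 \cdot \left(1 \wedge \frac{\bar q_{A_u}}{\|\sqrt{w_u}f(X)'\delta_u\|_{\Pn,2}}\right)$ via the self-concordance-type inequality for the logistic function that underlies the definition of the nonlinear impact coefficient $\bar q_{A_u}$ in \eqref{NIC:Logistic}; this is exactly where the hypothesis $\bar q_{A_u} > 6\sqrt{0 \vee\{M_u(\widetilde\theta_u)-M_u(\theta_u)\}}$ and $\bar q_{A_u} > 6\{\cdots\}$ will be consumed to keep the argument in the ``quadratic regime.''

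Second, I would bound the divergence from above. By the optimality of $\widetilde\theta_u$ among vectors supported on $\widetilde T_u$ and the fact that $\theta_{u,\widetilde T_u}$ (the restriction of $\theta_u$ to $\widetilde T_u$) is feasible, one gets $M_u(\widetilde\theta_u) - M_u(\theta_{u,\widetilde T_u}) \leq 0$; combining with a Taylor/mean-value expansion of $M_u$ around $\theta_u$ gives
$$
D_u(\widetilde\theta_u,\theta_u) \leq -\En[\partial_\theta M(Y_u, f(X)'\theta_u)'\delta_u] + (\text{terms from }\theta_{u,\widetilde T_u} - \theta_u \text{ and }\tilde r_u).
$$
The score term $\En[\partial_\theta M(Y_u,f(X)'\theta_u)] = -\En[f(X)\zeta_u] + (\text{approximation-error contribution})$ is controlled in sup-norm, and then $|\En[f(X)\zeta_u]'\delta_u| \leq \|\En[f(X)\zeta_u]\|_\infty \|\delta_u\|_1 \leq \|\En[f(X)\zeta_u]\|_\infty \sqrt{\tilde s_u+s_u}\,\|\delta_u\|$, after which $\|\delta_u\| \leq \|\sqrt{w_u}f(X)'\delta_u\|_{\Pn,2}/(\psi_u(A_u)\sqrt{\phi_{\min}(\tilde s_u + s_u)})$ converts this into a multiple of $\|\sqrt{w_u}f(X)'\delta_u\|_{\Pn,2}$. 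The rescaled approximation-error terms are handled through $\|\tilde r_u/\sqrt{w_u}\|_{\Pn,2}$ and a Cauchy--Schwarz step, producing the $3\|\tilde r_u/\sqrt{w_u}\|_{\Pn,2}$ contribution. Matching the lower bound from step one against this upper bound, and using the side conditions on $\bar q_{A_u}$ to ensure we are not in the degenerate regime, yields a quadratic inequality in $t := \|\sqrt{w_u}f(X)'\delta_u\|_{\Pn,2}$ of the form $t^2 \lesssim t\cdot(\text{rate}) + (0\vee\{M_u(\widetilde\theta_u)-M_u(\theta_u)\})$, whose solution is precisely the stated bound (the $\sqrt{3}\sqrt{0\vee\{M_u(\widetilde\theta_u)-M_u(\theta_u)\}}$ term captures the case where model selection discards a relevant variable, so $\theta_{u,\widetilde T_u}$ is a poor approximation). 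Finally, since every inequality invoked (score sup-norm control, restricted-eigenvalue-type bounds, the self-concordance inequality, the $\psi_u$ and $\phi_{\min}$ bounds) holds uniformly over $u$ by hypothesis, the conclusion holds uniformly over $u \in \mathcal{U}$.

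The main obstacle I anticipate is the self-concordance step: unlike least squares, the logistic Bregman divergence is not globally comparable to the weighted prediction norm, and one must carefully track the constant relating $D_u$ to $\|\sqrt{w_u}f(X)'\delta_u\|_{\Pn,2}^2$ through the third-moment ratio defining $\bar q_{A_u}$. Getting the explicit constants ($\sqrt{3}$, the $6$ in the side conditions) right requires being precise about the unimodality argument for $\G'$ (the footnote in Section \ref{Sec:ResultsLogistic} handling $\tilde r_u$ versus $r_u$) and about which set $A_u$ the divergence argument is restricted to; a secondary subtlety is that $A_u = \{\|\delta\|_0 \leq \tilde s_u + s_u\}$ here is a sparsity ball rather than a cone, so the restricted-eigenvalue reduction must be replaced by a sparse-eigenvalue reduction, which is why $\phi_{\min}(\tilde s_u + s_u)$ rather than $\bar\kappa_{2\tilde\cc}$ appears in the statement.
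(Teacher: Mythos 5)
Your proposal follows essentially the same route as the paper's proof: lower-bound the one-sided Bregman divergence $M_u(\widetilde\theta_u)-M_u(\theta_u)-\partial_\theta M_u(\theta_u)'\tilde\delta_u$ (plus the $2\|\tilde r_u/\sqrt{w_u}\|_{\Pn,2}\tilde t_u$ correction) by $\frac13\tilde t_u^2\wedge\frac{\bar q_{A_u}}{3}\tilde t_u$ via the minoration lemma; bound the score term by $\|\En[f(X)\zeta_u]\|_\infty\|\tilde\delta_u\|_1$ plus an approximation-error contribution; convert $\|\tilde\delta_u\|_1$ into the weighted prediction norm through $\sqrt{\tilde s_u+s_u}$, $\semin{\tilde s_u+s_u}$ and $\psi_u(A_u)$ (your point that $A_u$ is a sparsity ball rather than a cone, so sparse eigenvalues replace restricted eigenvalues, is exactly right); then split into the linear and quadratic regimes using the two side conditions on $\bar q_{A_u}$ and finish with $a^2\leq b+ac\Rightarrow a\leq\sqrt b+c$. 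One correction, though: the paper's proof of this lemma uses \emph{no} optimality property of $\widetilde\theta_u$. The term $M_u(\widetilde\theta_u)-M_u(\theta_u)$ appears automatically when the Bregman identity is rearranged and is simply carried along as an explicit term in the bound; it is only controlled downstream (in the proof of the logistic rate theorem, via the Lasso objective inequality and $\supp(\hat\theta_u)\subseteq\widetilde T_u$). Your detour through $M_u(\widetilde\theta_u)-M_u(\theta_{u,\widetilde T_u})\leq 0$ is unnecessary and, if carried through literally, would replace $M_u(\widetilde\theta_u)-M_u(\theta_u)$ by $M_u(\theta_{u,\widetilde T_u})-M_u(\theta_u)$ in the conclusion, which is a different (weaker) statement than the one claimed; drop that step and the argument closes cleanly.
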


\begin{remark}\label{L2nToL1rates} Since for a sparse vector $\delta$ such that $\|\delta\|_0=k$ we have $\|\delta\|_1\leq \sqrt{k}\|\delta\|\leq \sqrt{k}\|f(X)'\delta\|_{\Pn,2}/\sqrt{\semin{k}}$, the results above can directly establish bounds on the rate of convergence in the $\ell_1$-norm.\end{remark}

\section{Additional Results for Section \ref{sec: 401k}} \label{sec:k}

In this section, we report additional results to supplement those provided in the main text.  Specifically, we provide results with both total wealth and net total financial assets as the outcome variable.
We present detailed results for four different sets of controls $f(X)$.  The first set uses the indicators of marital status, two-earner status, defined benefit pension status, IRA participation status, and home ownership status, a linear term for family size, five categories for age, four categories for education, and seven categories for income (Indicator specification).  We use the same definitions of categories as in \citen{CH401k} and note that this is identical to the specification in \citen{CH401k} and \citen{benjamin}. The second through fourth specifications correspond to the Quadratic Spline specification, the Quadratic Spline Plus Interactions specification, and the Quadratic Spline Plus Many Interactions specification described in the main text.

Results for intention to treat effects based on using 401(k) eligibility as the treatment variable are given in Appendix Table 1.  In Appendix Table 2, we report results using 401(k) participation as the treatment variable instrumenting with 401(k) eligibility.  We plot the QTE and QTE-T, based on using 401(k) eligibility as the treatment variable, in  Figures 3-6.  Finally, the LQTE and LQTE-T, based on using 401(k) participation as the treatment variability and instrumenting with eligibility, are plotted in Appendix Figures 7-10.  The results are broadly consistent with the discussion provided in the main text with the selection and no selection results being similar in the low-dimensional cases and the selection results being substantially more regular in the high-dimensional cases.  We also see that the patterns of point estimates for total wealth and net total financial assets are similar, though the total wealth estimates have substantially larger estimated standard errors, especially for high quantiles.

\section{Auxiliary Results: Algebra of Covering Entropies}\label{sec:l}

\begin{lemma}[\textbf{{Algebra for Covering Entropies}}] \text{  } \label{lemma: andrews}Work with the setup described in Appendix C of the main text. \\
(1) Let $\F$ be a VC subgraph class with a finite VC index $k$ or any
other class whose entropy is bounded above by that of such a VC subgraph class, then
the covering entropy of $\mF$ obeys:
\begin{equation*}
 \sup_{Q} \log  N(\epsilon \|F\|_{Q,2}, \F,  \| \cdot \|_{Q,2}) \lesssim 1+ k \log (1/\epsilon)\vee 0
\newline
\end{equation*}
(2) For any measurable classes of functions $\F$ and $\F^{\prime
} $ mapping $\mathcal{W}$ to $\mathbb{R}$
$$\begin{array}{l}
\log N(\epsilon \Vert F+F^{\prime }\Vert _{Q,2},\F+\F^{\prime
}, \| \cdot \|_{Q,2})\leq \log   N\left(\mbox{$ \frac{\epsilon }{2}$}\Vert F\Vert _{Q,2},\F, \| \cdot \|_{Q,2}\right)
+ \log N\left( \mbox{$ \frac{\epsilon }{2}$}\Vert F^{\prime }\Vert _{Q,2},\F^{\prime
}, \| \cdot \|_{Q,2}\right), \\
\log  N(\epsilon \Vert F\cdot F^{\prime }\Vert _{Q,2},\F\cdot \F^{\prime
}, \| \cdot \|_{Q,2})\leq \log   N\left( \mbox{$ \frac{\epsilon }{2}$}\Vert F\Vert _{Q,2},\F, \| \cdot \|_{Q,2}\right)
+ \log N\left( \mbox{$ \frac{\epsilon }{2}$}\Vert F^{\prime }\Vert _{Q,2},\F^{\prime
}, \| \cdot \|_{Q,2}\right), \\
 N(\epsilon \Vert F\vee F^{\prime }\Vert _{Q,2},\F\cup \F^{\prime
}, \| \cdot \|_{Q,2})\leq   N\left(\epsilon\Vert F\Vert _{Q,2},\F, \| \cdot \|_{Q,2}\right)
+ N\left( \epsilon\Vert F^{\prime }\Vert _{Q,2},\F^{\prime
}, \| \cdot \|_{Q,2}\right).
\end{array}
$$
(3)  Given a measurable class $%
\mathcal{F}$ mapping $\mathcal{W}$ to $\mathbb{R}$ and a random variable $\xi$ taking values in $\mathbb{R}$,
\begin{equation*}
 \log \sup_{Q} N(\epsilon \Vert |\xi|F\Vert _{Q,2},\xi\F, \| \cdot \|_{Q,2})\leq \log \sup_{Q} N\left(
\epsilon /2\Vert F\Vert _{Q,2},\F, \| \cdot \|_{Q,2}\right)
\end{equation*}
(4)  Given measurable classes $\F_j$ and envelopes $F_j$, $j=1,\ldots,k$, mapping $\mathcal{W}$ to $\mathbb{R}$, a function $\phi:\mathbb{R}^k\to\mathbb{R}$ such that for $f_j,g_j\in\F_j$,
$ |\phi(f_1,\ldots,f_k) - \phi(g_1,\ldots,g_k) | \leq \sum_{j=1}^k L_j(x)|f_j(x)-g_j(x)|$, $L_j(x)\geq 0$, and fixed functions $\bar f_j \in \F_j$,  the class of functions $\mathcal{L}=\{\phi(f_1,\ldots,f_k)-\phi(\bar f_1,\ldots,\bar f_k):f_j \in\mathcal{F}_j, j=1,\ldots,k\}$ satisfies
\begin{equation*}
 \log \sup_Q N(\epsilon\|\sum_{j=1}^kL_jF_j\|_{Q,2},\mathcal{L}, \| \cdot \|_{Q,2})\leq \sum_{j=1}^k\log  \sup_Q  N\left(
\mbox{$\frac{\epsilon}{k}$}\|F_j\|_{Q,2},\F_j, \| \cdot \|_{Q,2}\right).
\end{equation*}
\end{lemma}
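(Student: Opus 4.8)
\textbf{Proof plan for Lemma \ref{lemma: andrews}.}

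The plan is to prove each of the four parts by reducing to elementary covering-number estimates, exactly in the spirit of Section 2.6 and the proof of Theorem 2.10.20 in \citen{vdV-W}. Part (1) is standard: a VC subgraph class of index $k$ has uniform covering entropy $\sup_Q \log N(\epsilon \|F\|_{Q,2}, \mathcal{F}, \|\cdot\|_{Q,2}) \leq C k \log(1/\epsilon)$ for $0 < \epsilon \leq 1$ by Theorem 2.6.7 in \citen{vdV-W}, and classes dominated (in entropy) by such a class inherit the bound by definition; I would simply quote this.

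For part (2), I would fix an arbitrary finitely discrete probability measure $Q$. For the sum $\mathcal{F}+\mathcal{F}'$, take an $(\epsilon/2)\|F\|_{Q,2}$-net $\{f_1,\dots,f_M\}$ of $\mathcal{F}$ and an $(\epsilon/2)\|F'\|_{Q,2}$-net $\{f'_1,\dots,f'_N\}$ of $\mathcal{F}'$ in $\|\cdot\|_{Q,2}$; then the $MN$ functions $f_i+f'_j$ form an $\epsilon\|F+F'\|_{Q,2}$-net of $\mathcal{F}+\mathcal{F}'$ by the triangle inequality and the fact that $\|F\|_{Q,2}+\|F'\|_{Q,2}\leq$ well, more carefully, $(\epsilon/2)\|F\|_{Q,2}+(\epsilon/2)\|F'\|_{Q,2}\leq \epsilon(\|F\|_{Q,2}\vee\|F'\|_{Q,2})$ which is bounded by $\epsilon\|F+F'\|_{Q,2}$ when $F,F'\geq 0$; taking logs and then the supremum over $Q$ gives the claim. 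For the product $\mathcal{F}\cdot\mathcal{F}'$, I would use the pointwise bound $|fg - f_ig'_j| \leq |f||g-g'_j| + |g'_j||f-f_i| \leq F'|f-f_i| + F|g-g'_j|$ together with Cauchy--Schwarz in $L^2(Q)$ after, as usual, passing to the measure weighted by $F'^2/\|F'\|_{Q,2}^2$ (resp. $F^2/\|F\|_{Q,2}^2$) so that an $L^2(Q)$-net of $\mathcal{F}$ with respect to the original measure controls $\|F'(f-f_i)\|_{Q,2}$; this is the standard reweighting trick. The union bound for $\mathcal{F}\cup\mathcal{F}'$ is immediate since a net for the union is the union of nets and $F\vee F' \geq F$, $F\vee F' \geq F'$.

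Part (3) is the special case of the product bound with $\mathcal{F}' = \{\xi\}$ a singleton class with envelope $|\xi|$: here $N((\epsilon/2)\||\xi|\|_{Q,2}, \{\xi\}, \|\cdot\|_{Q,2}) = 1$, and one reweights by $\xi^2$, so the stated inequality follows directly. Part (4) is the multivariate Lipschitz version: given $(\epsilon/k)\|F_j\|_{Q_j,2}$-nets of each $\mathcal{F}_j$ (with $Q_j$ the measure reweighted by $L_j^2$, suitably normalized), the grid of all combinations $\phi(f_{1,i_1},\dots,f_{k,i_k})$ forms a net for $\mathcal{L}$ because the Lipschitz hypothesis gives $|\phi(f_1,\dots,f_k) - \phi(f_{1,i_1},\dots,f_{k,i_k})| \leq \sum_j L_j |f_j - f_{j,i_j}|$, and then $\|\sum_j L_j|f_j - f_{j,i_j}|\|_{Q,2} \leq \sum_j \|L_j(f_j-f_{j,i_j})\|_{Q,2} \leq \sum_j (\epsilon/k)\|L_jF_j\|_{Q,2} \leq \epsilon\|\sum_j L_jF_j\|_{Q,2}$, where the last step uses $L_j, F_j \geq 0$ so that $\sum_j \|L_jF_j\|_{Q,2}$ is comparable to — in fact, dominated by via the triangle inequality applied inside the norm the other way — hmm, here one actually wants $\sum_j \|L_jF_j\|_{Q,2} \leq$ does not follow from $\|\sum_j L_jF_j\|_{Q,2}$ in general; rather one notes $\|L_j(f_j - f_{j,i_j})\|_{Q,2} \leq (\epsilon/k)\|L_jF_j\|_{Q,2}$ and then $\sum_{j=1}^k (\epsilon/k)\|L_jF_j\|_{Q,2} \leq \epsilon \max_j \|L_jF_j\|_{Q,2} \leq \epsilon\|\sum_{j=1}^k L_jF_j\|_{Q,2}$, the last inequality because all terms are nonnegative. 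Taking logs and $\sup_Q$ completes it. The main obstacle — really the only subtle point — is the reweighting argument that makes a single $L^2(Q)$-entropy bound control the weighted norms $\|F'(f-f_i)\|_{Q,2}$ uniformly over $Q$; once one observes that replacing $Q$ by $dQ' = F'^2\,dQ / \|F'\|_{Q,2}^2$ is again a finitely discrete probability measure and $\|F'(f-f_i)\|_{Q,2} = \|F'\|_{Q,2}\|f-f_i\|_{Q',2}$, everything reduces to the unweighted covering numbers and the supremum over $Q$ absorbs the supremum over $Q'$. I would present parts (2)--(4) with this reweighting stated once and reused.
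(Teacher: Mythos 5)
Your proposal is correct and takes essentially the same route as the paper: the paper simply cites Andrews (1994) for parts (1)--(2), derives (3) as the singleton-class case of the product bound in (2), and proves (4) by exactly your argument — reweighting $Q$ by $L_j^2$, taking $(\epsilon/k)$-nets of each $\F_j$ in the reweighted measures, applying the Lipschitz hypothesis with the triangle inequality, and using the domination $\|L_jF_j\|_{Q,2}\leq\|\sum_{j}L_jF_j\|_{Q,2}$ (valid since $L_j,F_j\geq 0$) to pass to the stated radius. Your mid-paragraph self-correction in part (4) lands on the right inequality, so no gap remains.
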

\textbf{Proof.} For the proof (1)-(2) see, e.g., \citen{andrews:emp} and (3) follows from (2). To show (4) let $f=(f_1,\ldots,f_k)$ and $g=(g_1,\ldots,g_k)$ where $f_j,g_j \in \mathcal{F}_j$, $j=1,\ldots,k$. Then, by the condition on $\phi$, we have
\begin{equation}\label{AuxEntropyL}\begin{array}{rl}
\| \phi(f)-\phi(g) \|_{Q,2} & \leq \| \sum_{j=1}^k L_j|f_j-g_j| \ \|_{Q,2} \\
& \leq \sum_{j=1}^k \|  L_j|f_j-g_j| \ \|_{Q,2} \\
\end{array}\end{equation}
Let $\hat{\mathcal{N}}_j$ be a $(\epsilon/k)$-net for $\mF_j$ with the measure $\widetilde Q_j$, where $d\widetilde Q_j(x) = L_j^2(x)dQ(x)$. Then the set $\{ \phi(f_1,\ldots,f_k)-\phi(\bar f_1,\ldots,\bar f_k): f_j \in \hat{\mathcal{N}}_j\}$ is an $\epsilon$-net for $\mathcal{L}$ with respect to the measure $Q$ by (\ref{AuxEntropyL}). Thus, for any $\epsilon>0$ we have that
$$ \log N( \epsilon, \mathcal{L}, \| \cdot \|_{Q,2}) \leq \sum_{j=1}^k \log N( \epsilon/k,\F_j, \| \cdot \|_{\widetilde Q_j,2}) $$
Therefore,
$$\begin{array}{rl}
 \log N( \epsilon \| \sum_{j=1}^k L_j F_j\|_{Q,2}, \mathcal{L}, \| \cdot \|_{Q,2}) & \leq \sum_{j=1}^k \log N( \mbox{$\frac{\epsilon}{k}$}\| \sum_{j=1}^k L_j F_j\|_{Q,2},\F_j, \| \cdot \|_{\widetilde Q_j,2})\\ &  \leq \sum_{j=1}^k \log N( \mbox{$\frac{\epsilon}{k}$}\| L_j F_j\|_{Q,2},\F_j, \| \cdot \|_{\widetilde Q_j,2})  \\
 &  = \sum_{j=1}^k \log N( \mbox{$\frac{\epsilon}{k}$}\| F_j\|_{\widetilde Q_j,2},\F_j, \| \cdot \|_{\widetilde Q_j,2})  \\
 &  \leq \sum_{j=1}^k \log \sup_{\bar Q} N( \mbox{$\frac{\epsilon}{k}$}\| F_j\|_{\bar Q,2},\F_j, \| \cdot \|_{\bar Q,2})  \\
 \end{array}$$
and the result follows since the right hand side no longer depends on $Q$.
\qed
\begin{lemma}[\textbf{Covering Entropy for Classes obtained as Conditional Expectations}]\label{Lemma:PartialOutCovering}
 Let $\mathcal F$ denote a class of measurable functions $f: \mathcal{W}\times \mathcal{Y} \mapsto \mathbb{R}$ with a measurable envelope $F$. For a given $f \in \mathcal{F}$, let $\bar f: \mathcal{W} \mapsto \mathbb{R}$ be the function $\bar f (w) := \int f(w,y) d\mu_{w}(y)$ where $\mu_{w}$ is a regular conditional probability distribution over $y \in \mathcal{Y}$ conditional on $w\in\mathcal{W}$. Set $\bar{\mathcal{F}} = \{ \bar f : f \in \mathcal{F}\}$ and let $\bar F(w):=\int F(w,y) d\mu_w(y)$ be an envelope for $\bar{\mathcal{F}}$. Then, for $r, s \geq 1$,
     $$
    \log \sup_{Q} N(\epsilon \| \bar F\Vert _{Q,r}, \bar{\mathcal{F}}, \| \cdot \|_{Q,r}) \leq \log \sup_{\widetilde Q} N((\epsilon/4)^r \| F\Vert _{\widetilde Q,s},  \mathcal \F , \| \cdot \|_{\widetilde Q,s}),
    $$ where $Q$ belongs to the set of finitely-discrete probability measures over $\mathcal{W}$ such that  $0<\| \bar F\Vert _{Q,r}< \infty$, and $\widetilde Q$ belongs to the set of finitely-discrete probability measures over $\mathcal{W}\times \mathcal{Y}$ such that $0<\|  F\Vert _{\widetilde Q,s}< \infty$. In particular, for every $\epsilon > 0$ and any $k\geq 1$
        $$
    \log \sup_{Q} N(\epsilon, \bar{\mathcal{F}}, \| \cdot \|_{Q,k}) \leq \log \sup_{\widetilde Q} N(\epsilon/2,  \mathcal \F , \| \cdot \|_{\widetilde Q,k} ).
    $$
\end{lemma}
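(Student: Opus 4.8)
The plan is to bound, for an arbitrary finitely-discrete probability measure $Q$ on $\mathcal{W}$, the quantity $N(\epsilon\|\bar F\|_{Q,r},\bar{\mathcal{F}},\|\cdot\|_{Q,r})$ by an entropy of $\mathcal{F}$ itself and then take the supremum over $Q$. The engine is Jensen's inequality: for $f,g\in\mathcal{F}$ and a point $w$, the triangle inequality for integrals followed by Jensen applied to $t\mapsto t^{r}$ (valid since $\mu_{w}$ is a probability measure and $r\ge1$) gives $|\bar f(w)-\bar g(w)|^{r}\le\int|f(w,y)-g(w,y)|^{r}\,d\mu_{w}(y)$. Writing $\widetilde{Q}$ for the mixture $\sum_{i}q_{i}\,\delta_{w_{i}}\otimes\mu_{w_{i}}$ on $\mathcal{W}\times\mathcal{Y}$ when $Q=\sum_{i}q_{i}\delta_{w_{i}}$, integration against $Q$ yields $\|\bar f-\bar g\|_{Q,r}\le\|f-g\|_{\widetilde{Q},r}$, so any $\epsilon$-net of $\mathcal{F}$ in $L^{r}(\widetilde{Q})$ maps under $f\mapsto\bar f$ to an $\epsilon$-net of $\bar{\mathcal{F}}$ in $L^{r}(Q)$. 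This already proves the second displayed inequality ($r=s=k$, no envelope): $N(\epsilon,\bar{\mathcal{F}},\|\cdot\|_{Q,k})\le N(\epsilon,\mathcal{F},\|\cdot\|_{\widetilde{Q},k})$, and since $\widetilde{Q}$ need not be finitely discrete I then invoke the standard discretization fact below to replace it, at the cost of halving the radius, by a supremum over finitely-discrete $\widetilde{Q}'$.

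For the first (envelope-normalised) inequality the extra work is to remove the $\|\bar F\|_{Q,r}$ factor. Assume without loss of generality $F>0$ (its zero set contributes nothing to either class), and note $0<\bar F(w_{i})<\infty$ on the support of $Q$. Form the reweighted probability measure $dQ_{F}=\bar F^{r}\,dQ/\|\bar F\|_{Q,r}^{r}$, for which $\|h\|_{Q,r}=\|\bar F\|_{Q,r}\,\|h/\bar F\|_{Q_{F},r}$; hence $N(\epsilon\|\bar F\|_{Q,r},\bar{\mathcal{F}},\|\cdot\|_{Q,r})=N(\epsilon,\bar{\mathcal{F}}/\bar F,\|\cdot\|_{Q_{F},r})$. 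Now introduce the probability measures $d\nu_{w}=F(w,\cdot)\,d\mu_{w}/\bar F(w)$ on $\mathcal{Y}$ and observe, with the convention $0/0=0$ on $\{F=0\}$ where $|f-g|\le 2F$ vanishes, the pointwise bound
\begin{equation*}
\frac{|\bar f(w)-\bar g(w)|}{\bar F(w)}\le\frac{1}{\bar F(w)}\int|f-g|\,d\mu_{w}=\int\frac{|f-g|}{F}\,d\nu_{w},
\end{equation*}
so that, again by Jensen for $t\mapsto t^{r}$ and integration against $Q_{F}$, $\|(\bar f-\bar g)/\bar F\|_{Q_{F},r}\le\|(f-g)/F\|_{\widehat{Q},r}$ with $\widehat{Q}=\nu_{\bullet}\otimes Q_{F}$ a probability measure on $\mathcal{W}\times\mathcal{Y}$. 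Thus $N(\epsilon\|\bar F\|_{Q,r},\bar{\mathcal{F}},\|\cdot\|_{Q,r})\le N(\epsilon,\mathcal{F}/F,\|\cdot\|_{\widehat{Q},r})$, and we have reduced matters to the normalised class $\mathcal{F}/F$, which is bounded by $1$.

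It remains to clean up the right-hand side in three steps. First, discretize $\widehat{Q}$ by the standard fact, replacing $\sup_{\widehat{Q}}N(\epsilon,\mathcal{F}/F,\|\cdot\|_{\widehat{Q},r})$ by $\sup_{\widetilde{Q}\ \text{f.d.}}N(\epsilon/2,\mathcal{F}/F,\|\cdot\|_{\widetilde{Q},r})$. Second, pass from $L^{r}$ to $L^{s}$ for the bounded class: for $g_{1},g_{2}$ with $|g_{i}|\le1$ one has $\|g_{1}-g_{2}\|_{\infty}\le2$, so $\|g_{1}-g_{2}\|_{r}\le 2^{1-s/r}\|g_{1}-g_{2}\|_{s}^{s/r}$ when $s\le r$ and $\|g_{1}-g_{2}\|_{r}\le\|g_{1}-g_{2}\|_{s}$ when $s\ge r$; in either case, since we may assume $\epsilon<4$ (for $\epsilon\ge4$ one ball covers $\bar{\mathcal{F}}$), a radius $(\epsilon/4)^{r}$ in $L^{s}$ forces radius $\epsilon/2$ in $L^{r}$, giving $N(\epsilon/2,\mathcal{F}/F,\|\cdot\|_{\widetilde{Q},r})\le N((\epsilon/4)^{r},\mathcal{F}/F,\|\cdot\|_{\widetilde{Q},s})$. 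Third, undo the normalisation: for a finitely-discrete $\widetilde{Q}$ with $F>0$ on its finite support, $d\widetilde{Q}'\propto F^{-s}\,d\widetilde{Q}$ is again finitely discrete and $\|(f-g)/F\|_{\widetilde{Q},s}=\|f-g\|_{\widetilde{Q}',s}/\|F\|_{\widetilde{Q}',s}$, whence $N((\epsilon/4)^{r},\mathcal{F}/F,\|\cdot\|_{\widetilde{Q},s})=N((\epsilon/4)^{r}\|F\|_{\widetilde{Q}',s},\mathcal{F},\|\cdot\|_{\widetilde{Q}',s})$. Chaining these inequalities and taking suprema over the finitely-discrete measures produced at each stage yields the claimed bound.

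The step I expect to be the real obstacle — and the only non-routine one — is the ``standard discretization fact'': that the supremum of (envelope-normalised, equivalently absolute-radius) covering numbers of a class over all probability measures is bounded, up to a factor $2$ in the radius, by the corresponding supremum over finitely-discrete measures. This is where the \emph{suitable measurability} (image admissible Suslin) hypothesis on $\mathcal{F}$ enters, since it furnishes a countable subclass dense for pointwise convergence; applying the strong law of large numbers along an i.i.d.\ sample from a given measure then lets one pass to the (finitely-discrete) empirical measure with only an arbitrarily small perturbation of the relevant $L^{q}$ norms, while the countability lets one control a single net simultaneously on the dense subclass. Everything else — the two Jensen inequalities, the two envelope reweightings, and the bookkeeping that collapses the accumulated constants into the single factor $(\epsilon/4)^{r}$ (including the split into the cases $s\le r$ and $s\ge r$) — is elementary.
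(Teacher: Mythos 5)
Your proposal is correct, and the first half of it (the Jensen inequality $|\bar f(w)-\bar g(w)|^{k}\le\int|f-g|^{k}d\mu_{w}$ followed by integration against $Q$ to land on the mixture measure, then the finitely-discrete approximation at the cost of halving the radius) is exactly the paper's argument; the second displayed inequality is proved identically in both. Where you genuinely diverge is in the envelope-normalised inequality. The paper never leaves the un-normalised classes: it first converts the $L^{r}$ radius $\epsilon\|\bar F\|_{Q,r}$ into an $L^{1}$ radius $(\epsilon/2)^{r}\|\bar F\|_{Q,1}$ via the argument in Theorem 2.6.7 of van der Vaart and Wellner, then exploits the exact identity $\|\bar F\|_{Q,1}=\|F\|_{\bar Q,1}$ (a second, crucial use of the conditional-expectation structure, available only at $k=1$) to transfer the envelope across the bar map at no cost, and finally cites the monotonicity in $s$ of the envelope-normalised uniform entropy (Problem 2.10.4 of van der Vaart and Wellner) to reach $L^{s}$. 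You instead reweight twice — by $\bar F^{r}$ on $\mathcal{W}$ and by $F/\bar F$ on $\mathcal{Y}$ — to reduce everything to the normalised class $\mathcal{F}/F$, which is bounded by $1$, and then change exponents by the elementary interpolation $\|g_{1}-g_{2}\|_{r}\le 2^{1-s/r}\|g_{1}-g_{2}\|_{s}^{s/r}$ (for $s\le r$) together with the harmless reduction to $\epsilon<4$; your bookkeeping of the constants is correct and lands on the same $(\epsilon/4)^{r}$ (the paper's chain actually yields the slightly larger radius $(\epsilon/2)^{r}/2\ge(\epsilon/4)^{r}$). The trade-off: the paper's route is shorter because it outsources the exponent changes to two standard facts, while yours is self-contained on that front and isolates the conditional-expectation structure in a single pointwise inequality; both ultimately rest on the same discretization lemma, which you correctly identify as the one genuinely non-elementary ingredient and which the paper likewise only cites.
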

\textbf{Proof.} The proof generalizes the proof of Lemma A.2 in \citen{GhosalSenVaart2000}. For $f, g \in \mathcal{F}$ and the corresponding $\bar f, \bar g \in \bar{\mathcal{F}}$, and any probability measure $Q$ on $\mathcal{W}$, by Jensen's inequality, for any $k\geq 1$,
$$ \Ep_Q[ | \bar f  -  \bar g|^k ] = \Ep_Q[ |\mbox{$\int$} (f - g)   d\mu_w(y)|^k ]\leq \Ep_Q[ \mbox{$\int$}|f - g|^k   d\mu_w(y) ]=\Ep_{\bar Q}[|f-g|^k]$$
where $d\bar Q(w,y) = dQ(w)d\mu_w(y)$. Therefore, for any $\epsilon>0$
$$ \sup_Q N(\epsilon, \bar{\mathcal{F}}, \| \cdot \|_{Q,k}) \leq \sup_{\bar Q} N(\epsilon, \mathcal{F}, \| \cdot \|_{\bar Q,k}) \leq \sup_{\widetilde Q} N(\epsilon/2, \mathcal{F}, \| \cdot \|_{\widetilde Q,k} ), $$
where we use Problems 2.5.1-2 of \citen{vdV-W}  to replace the supremum over $\bar Q$ with the supremum over finitely-discrete probability measures $\widetilde Q$.

Moreover,  $\|\bar F\|_{Q,1} = \Ep_Q[\bar F(w)] = \Ep_Q[\int F(w,y)d\mu_w(y)] = \Ep_{\bar Q}[F(w,y)] = \|F\|_{\bar Q,1}.$ Therefore taking $k=1$,
$$
\begin{array}{ll}
& \sup_Q N(\epsilon\|\bar F\|_{Q,1},\bar{\mathcal{F}}, \| \cdot \|_{Q,1} ) \leq \sup_{\bar Q}  N(\epsilon\| F\|_{\bar Q,1},{\mathcal{F}}, \| \cdot \|_{\bar Q,1} )\\
& \leq\sup_{\widetilde Q}  N((\epsilon/2)\| F\|_{\widetilde Q,1},{\mathcal{F}}, \| \cdot \|_{\widetilde Q,1} ) \leq \sup_{\widetilde Q}  N((\epsilon/2)\| F\|_{\widetilde Q,s},{\mathcal{F}}, \| \cdot \|_{\widetilde Q,s} )
\end{array}
$$
where we use Problems 2.5.1-2  of \citen{vdV-W}  to replace the supremum over $\bar Q$ with the supremum over finitely-discrete probability measures $\widetilde Q$,  and then Problem 2.10.4 of \citen{vdV-W} to argue that the last bound in weakly increasing in $s \geq 1$.

Also, by the second part of the proof of Theorem 2.6.7 of \citen{vdV-W}
$$  \sup_Q N(\epsilon\|F\|_{Q,r}, {\mathcal{F}}, \| \cdot \|_{Q,r} ) \leq \sup_Q N((\epsilon/2)^r\| F\|_{Q,1}, {\mathcal{F}}, \| \cdot \|_{Q,1}).$$
\qed

\begin{remark}
Lemma \ref{Lemma:PartialOutCovering} extends the result in Lemma A.2 in \citen{GhosalSenVaart2000} and Lemma 5 in \citen{Sherman1994} which considered integral classes with respect to a fixed measure $\mu$ on $\mathcal{Y}$. In our applications we need to allow the integration measure to vary with $w$, namely we allow for $\mu_w$ to be a conditional distribution. \qed
\end{remark}

\section{Proofs for Section 4}\label{sec:m}
\subsection{Proof of Theorem \ref{theorem1}}


\textsc{Step 0.} (Preparation).    In the proof $a \lesssim b$ means that $a \leq A b$, where the constant
$A$ depends on the constants  in Assumptions \ref{assumption: basic} and \ref{ass: sparse1} only, but not on $n$ once $n \geq n_0=\min\{j: \delta_j \leq 1/2\}$, and not on $P \in \mathcal{P}_n$. We consider a sequence $P_n$ in $\mathcal{P}_n$, but for simplicity, we write  $P =P_n$ throughout the proof, \textit{suppressing} the index $n$.  Since the argument is asymptotic, we can assume that $n \geq n_0$ in what follows.

To proceed with the presentation of the proofs, it might be convenient for the reader
 to have the notation collected in one place. The influence function and low-bias moment functions for $\alpha_V(z)$ for  $z \in \mathcal{Z} = \{0,1\}$  are given
respectively by
\begin{eqnarray*}
&&\psi^{\alpha}_{V,z} (W) = \psi^{\alpha}_{V, z, g_{V},m_Z} (W, \alpha_V(z)),  \ \ \psi^{\alpha}_{V, z, g, m}(W,\alpha) = \frac{1(Z=z) (V - g(z,X))}{m(z,X)}  + g(z,X) - \alpha.
 \end{eqnarray*}
 The influence function and the moment function for $\gamma_V$ are
$\psi^{\gamma}_{V} (W) = \psi^{\gamma}_{V} (W,\gamma_V)$ and $\psi^{\gamma}_{V} (W,\gamma) =V - \gamma.$
 Recall that the estimator of the reduced-form parameters $\alpha_V(z)$  and $\gamma_V$ are   solutions $\alpha = \hat \alpha_V(z)$ and $\gamma= \hat \gamma_V$
to the equations
\begin{eqnarray*}
\En[\psi^{\alpha}_{V, z, \hat g_{V}, \hat m_Z} (W, \alpha) ] = 0, \ \ \En[\psi^{\gamma}_{V} (W, \gamma) ] = 0,
\end{eqnarray*}
where $\hat g_V(z,x) = \Lambda_V(f(z,x)'\bar \beta_V)$, $\hat m_Z(1,x)= \Lambda_Z(f(x)'\bar \beta_Z)$, $\hat m_Z(0,x) = 1 - \hat m_Z(1,x)$, and $\bar \beta_V$ and $\bar \beta_Z$ are estimators as in Assumption \ref{ass: sparse1}.
For each  variable $V \in \mV_u$,
$$
\mV_u = (V_{uj})_{j=1}^5 = (Y_u,  \mathbf{1}_0(D)Y_u,  \mathbf{1}_0(D),  \mathbf{1}_1(D)Y_u,  \mathbf{1}_1(D)),
$$
 we obtain
the estimator
$\hat \rho_u = \big (\{\hat \alpha_{V} (0), \hat \alpha_{V} (1),  \hat \gamma_V\} \big )_{V \in \mV_u}$
$\text { of }$
$\rho_u := \big (\{ \alpha_{V} (0),  \alpha_{V} (1),   \gamma_V\} \big )_{V \in \mV_u}.$
The estimator and the estimand are vectors in $\mathbb{R}^{d_\rho}$ with a fixed finite dimension.
We stack these vectors into the processes $\hat \rho = (\hat \rho_u)_{u \in \mU}$ and $\rho = (\rho_u)_{u \in \mU}$.

\textsc{Step 1.}(Linearization)  In this step we establish the first claim, namely that
\begin{equation}
\sqrt{n}(\hat \rho- \rho)  = Z_{n,P} + o_P(1) \ \  \text{ in $\D = \ell^\infty(\mU)^{d_\rho}$},
\end{equation}
where $Z_{n, P} =   (\mathbb{G}_{n}   \psi^\rho_{u})_{u \in \mathcal{U}}$ and $\psi^\rho_{u}= (\{\psi^\alpha_{V,0}, \psi^\alpha_{V,1}, \psi^\gamma_{V}\})_{ V \in \mV_u}$.
The components $(\sqrt{n}(\hat \gamma_{V_{uj}} - \gamma_{V_{uj}}))_{u \in \mU}$ of
$\sqrt{n}(\hat \rho- \rho)$ trivially have the linear representation (with no error) for each
$j \in \mathcal{J}$. We only need to establish the claim for the empirical process
 $(\sqrt{n}(\hat \alpha_{V_{uj}}(z) - \alpha_{V_{uj}}(z)))_{u \in \mU}$ for $z \in \{0,1\}$ and each $j \in \mathcal{J}$,
 which we do in the steps below.

(a) We make some preliminary observations.
For $t= (t_1, t_2,t_3, t_4) \in \mathbb{R}^2\times (0,1)^2$, $v \in \mathbb{R}$, and $(z,\bar z) \in \{0,1\}^2$, we define the function
$(v,z,\bar z,t) \mapsto \varphi (v,z,\bar z,t)$ via:
 $$\varphi(v,z, 1,  t) =  \frac{1(z=1) (v -t_2) } {t_4}  +    t_2, \ \ \varphi(v,z, 0,  t) =  \frac{1(z=0) (v -t_1) } {t_3}  +    t_1.$$
The derivatives of this function with respect to $t$ obey for all $k =(k_j)_{j=1}^4 \in\mathbb{N}^4:  0 \leq |k| \leq 3$,
\begin{equation}\label{Lip}
|\partial^k_t \varphi (v, z,\bar z, t) | \leq L, \ \  \forall (v,\bar z,z,t):  |v| \leq C, |t_1|, |t_2|  \leq C,   c'/2 \leq |t_3| , |t_4| \leq 1-c'/2,
\end{equation}
where $L$ depends only on $c'$ and $C$,  $|k| = \sum_{j=1}^4 k_j,$ and $\partial^k_t := \partial^{k_1}_{t_1}\partial^{k_2}_{t_2}\partial^{k_3}_{t_3}\partial^{k_4}_{t_4}.$

 (b)  Let
 \begin{eqnarray*}
& & \hat h_V(X) := ( \hat g_V(0,X), \hat g_V(1,X),  1- \hat m_Z(1, X), \hat m_Z(1, X))', \\
& & h_V(X) :=
(  g_V(0,X),  g_V(1,X), 1 - m_Z(1, X), m_Z(1, X))',\\
&& f_{\hat h_V,V,z} (W) := \varphi(V,Z, z,  \hat h_V(X)) , \\
&&  f_{h_V,V,z}  (W) := \varphi(V,Z, z,  h_V(X)).
\end{eqnarray*}
We observe that with probability no less than $1- \Delta_n$,  $$\hat g_V(0,\cdot)  \in \mathcal{G}_V(0), \ \ \hat g_V(1,\cdot)  \in \mathcal{G}_V(1), \ \ \hat m_Z(1, \cdot)  \in \mathcal{M}(1),  \ \ \hat m_Z(0, \cdot)  \in \mathcal{M}(0) = 1- \mathcal{M}(1),$$
where
\begin{eqnarray*}
\mathcal{G}_V(z):=\left \{ \begin{array}{ll}
& x \mapsto \Lambda_V (f(z,x)'\beta) :  \|\beta\|_0 \leq s C  \\
& \|  \Lambda_V (f(z,X)'\beta)  - g_V(z,X)\|_{P,2} \lesssim \delta_n n^{-1/4}  \\
&  \| \Lambda_V (f(z,X)'\beta)  - g_V(z,X)\|_{P, \infty}
 \lesssim  \epsilon_n\end{array} \right \},
   \end{eqnarray*}
   \begin{eqnarray*}
\mathcal{M}(1):=\left \{ \begin{array}{ll}
& x \mapsto \Lambda_Z (f(x)'\beta) :  \|\beta\|_0 \leq s C  \\
& \|  \Lambda_Z (f(X)'\beta)  - m_Z(1,X)\|_{P,2} \lesssim \delta_n n^{-1/4}  \\
&  \| \Lambda_Z (f(X)'\beta)  - m_Z(1,X)\|_{P, \infty}
 \lesssim  \epsilon_n\end{array} \right \}.
   \end{eqnarray*}

To see this, note that under Assumption \ref{ass: sparse1} for all $n \geq \min\{j: \delta_j \leq 1/2\}$,
 \begin{eqnarray*}
 \| \Lambda_Z(f(X) '\beta) - m_Z(1, X) \|_{P,2} && \leq \| \Lambda_Z(f(X) '\beta)  - \Lambda_Z(f(X) '\beta_Z)  \|_{P,2}+ \| r_{Z}(X) \|_{P,2}  \\ && \lesssim \|\partial \Lambda_Z\|_{\infty} \| f(X)'(\beta - \beta_{Z})\|_{P,2}+ \| r_{Z} (X)\|_{P,2} \\
  && \lesssim \|\partial \Lambda_Z\|_{\infty} \| f(X)'(\beta - \beta_{Z})\|_{\Pn,2}+ \| r_{Z}(X) \|_{P,2}  \lesssim \delta_n n^{-1/4} \\
  \| \Lambda_Z(f(X) '\beta) - m_Z(1, X) \|_{P,\infty} &&  \leq \| \Lambda_Z(f(X) '\beta)  - \Lambda_Z(f(X) '\beta_Z)  \|_{P,\infty}+ \| r_{Z}(X) \|_{P,\infty}\\
&&   \leq \|\partial \Lambda_Z\|_{\infty}  \| f(X)'(\beta - \beta_{Z})\|_{P,\infty}+ \| r_{Z}(X) \|_{P,\infty} \\
 &&  \lesssim   K_n \|\beta- \beta_{Z}\|_{1} + \epsilon_n \leq 2 \epsilon_n,
\end{eqnarray*}
for $\beta=\bar \beta_{Z}$, with evaluation after computing the norms, and for  $\|\partial \Lambda \|_{\infty}$ denoting $\sup_{l \in \mathbb{R}}|\partial \Lambda(l)|$ here and below.  Similarly, under Assumption \ref{ass: sparse1},
 \begin{eqnarray*}
 \| \Lambda_V(f(Z,X) '\beta) - g_V(Z, X) \|_{P,2} && \lesssim \|\partial \Lambda_V\|_{\infty} \| f(Z,X)'(\beta - \beta_{V})\|_{\Pn,2}+ \| r_{V}(Z,X) \|_{P,2}  \lesssim \delta_n n^{-1/4} \\
 \| \Lambda_V(f(Z,X) '\beta) - g_V(Z, X) \|_{P,\infty}
 &&  \lesssim   K_n \|\beta- \beta_{V}\|_{1} + \epsilon_n \leq 2 \epsilon_n,
\end{eqnarray*}
for $\beta=\bar \beta_V$, with evaluation after computing the norms, and noting that for any $\beta$
{\small $$  \| \Lambda_V(f(0,X) '\beta) - g_V(0, X) \|_{P,2} \vee
   \| \Lambda_V(f(1,X) '\beta) - g_V(1, X) \|_{P,2} \lesssim
    \| \Lambda_V(f(Z,X) '\beta) - g_V(Z, X) \|_{P,2} $$}
 under condition (iii) of Assumption \ref{assumption: basic}, and
{\small  $$  \| \Lambda_V(f(0,X) '\beta) - g_V(0, X) \|_{P,\infty} \vee
   \| \Lambda_V(f(1,X) '\beta) - g_V(1, X) \|_{P,\infty} \leq
    \| \Lambda_V(f(Z,X) '\beta) - g_V(Z, X) \|_{P,\infty} $$}
  under condition (iii) of Assumption \ref{assumption: basic}.

Hence with probability at least $1- \Delta_n$,
   $$\hat h_V \in \mathcal{H}_{V,n}:=\{ h =  (\bar g(0,\cdot), \bar g(1,\cdot), \bar m_Z(0,\cdot), \bar m_Z(1,\cdot) ) \in \mathcal{G}_{V}(0) \times \mathcal{G}_V(1) \times \mathcal{M}(0)\times \mathcal{M}(1) \}.$$

(c) We have that $$\alpha_V(z) = \Ep_P[f_{h_V, V, z}] \text{
and } \hat \alpha(z) = \En[f_{\hat h_V, V, z}],$$ so that
\begin{eqnarray*}
\sqrt{n} (\hat \alpha_V(z) - \alpha_V(z)) =  \underbrace{\Gn[f_{h_V, V,z}]}_{I_{V}(z)} + \underbrace{ \Gn[f_{h, V, z}- f_{h_V, V, z}]}_{II_{V}(z)} +  \underbrace{ \sqrt{n} \ P[f_{h, V,z} - f_{h_V,V,z}]}_{III_{V}(z)},
\end{eqnarray*}
with $h$ evaluated at $h = \hat h_V$.

(d) Note that for $$\Delta_{V,i} := (\Delta_{1V,i}, \Delta_{2V,i}, \Delta_{3V,i}, \Delta_{4V,i}) = h( X_i) - h_V(X_i),  \ \ \Delta_{V,i}^k := \Delta_{1V,i}^{k_1} \Delta_{2V,i}^{k_2} \Delta_{3V,i}^{k_3} \Delta_{4V,i}^{k_4},$$
\begin{eqnarray*}
III_V(z) & = & \sqrt{n} \sum_{|k|=1} P[\partial^k_t \varphi(V_i,Z_i, z, h_V(X_i)) \Delta_{V,i} ^k]  \\
 & + & \sqrt{n} \sum_{|k|=2} 2^{-1}  P[\partial^k_t \varphi(V_i,Z_i, z, h_V(X_i)) \Delta_{V,i} ^k ] \\
 & + & \sqrt{n} \sum_{|k|=3} 6^{-1}  \int_0^1 P[\partial^k_t \varphi(V_i,Z_i, z, h_V(X_i) + \lambda\Delta_{V,i}) \Delta_{V,i} ^k ] d\lambda, \\
 & =: & III^a_V(z) + III^b_V(z) + III^c_V(z),
\end{eqnarray*}
with $h$ evaluated at $h = \hat h$ after computing the expectations under $P$.

By the law of iterated expectations and the orthogonality property of the moment condition for $\alpha_V$,
$$\Ep_P[\partial^k_t \varphi(V_i,Z_i, z, h_V(X_i))| X_i]= 0 \ \  \forall k \in \mathbb{N}^4 : |k| =1,   \implies III^a_V(z) =0.$$

Moreover, uniformly for any
$h \in \mathcal{H}_{V,n}$, in view of properties noted in Steps (a) and (b),
$$|III^b_V(z)| \lesssim \sqrt{n} \| h - h_V \|^2_{P,2} \lesssim \sqrt{n} (\delta_n n^{-1/4})^2 \leq \delta_n^2,$$
$$|III^c_V(z)|  \lesssim \sqrt{n} \|h - h_V\|^2_{P,2}\| h-h_V\|_{P,\infty} \lesssim \sqrt{n} (\delta_n n^{-1/4})^2 \epsilon_n \leq \delta_n^2 \epsilon_n.$$  Since $\hat h_V \in \mathcal{H}_{V,n}$ for all $V \in \mV=\{ V_{uj}: u \in \mathcal{U} , j \in \mathcal{J}\}$ with probability $1- \Delta_n$,
for $n \geq n_0$,
$$
\Pr_P\Big ( |III_V(z)| \lesssim \delta^2_n, \forall z \in \{0,1\}, \forall V \in \mV \Big ) \geq 1- \Delta_n.
$$

(e) Furthermore, with probability $1 - \Delta_n$
$$
\sup_{V \in \mV} \max_{z \in \{0,1\}}|II_V(z)| \leq \sup_{h \in \mathcal{H}_{V,n}, z \in \{0,1\},  V \in \mV } |\Gn[f_{h,V,z}]- \Gn[f_{h_V,V,z}]|.
 $$

 The classes of functions,
 \begin{equation}\label{define: mV} \mathcal{V}:=\{ V_{uj}: u \in \mathcal{U}, j \in \mathcal{J}\} \ \ \text{ and}  \  \  \mathcal{V}^*: = \{ g_{V_{uj}} (Z,X): u \in \mathcal{U}, j \in \mathcal{J}  \},
  \end{equation}
  viewed as maps from the sample space $\mathcal{W}$
 to the real line, are bounded by a constant envelope and obey $\log \sup_Q N( \epsilon, \mathcal{V}, \|\cdot\|_{Q,2}) \lesssim  \log(\mathrm{e}/\epsilon)\vee 0$,
which holds by Assumption \ref{assumption: basic}(ii),  and $\log \sup_Q N( \epsilon, \mathcal{V}^*, \|\cdot\|_{Q,2}) \lesssim  \log(\mathrm{e}/\epsilon)\vee 0$
which holds by Assumption \ref{assumption: basic}(ii) and  Lemma \ref{Lemma:PartialOutCovering}.  The uniform covering entropy of the function sets $$\mathcal{B} = \{1(Z=z): z \in \{0,1\}\} \text{ and  } \mathcal{M}^* = \{m_Z(z,X) : z \in \{0,1\}\}$$ are trivially bounded by $\log(\mathrm{e}/\epsilon)\vee 0$.

 The class of functions $$\mathcal{G}:=\{\mathcal{G}_V(z): V \in \mV, z \in \{0,1\}\}$$  has a constant
 envelope and is a subset of
 $$
 \{( x,z) \mapsto \Lambda (f(z,x)'\beta) :  \|\beta\|_0 \leq s C,  \Lambda \in \mathcal{L}=\{ \Id,  \Phi, 1- \Phi, \Lambda_0, 1- \Lambda_0 \}\},
 $$
which is  a union of 5 sets of the form
$$
 \{( x,z) \mapsto \Lambda (f(z,x)'\beta) :  \|\beta\|_0 \leq s C\}
$$
with $\Lambda \in \mathcal{L}$ a fixed monotone function for each of the 5 sets; each of these sets are the unions of at most $\binom{2p}{Cs}$ VC-subgraph classes of functions with VC indices bounded by $C's$. Note that a fixed monotone transformations $\Lambda$ preserves the VC-subgraph property \cite[Lemma 2.6.18]{vdV-W}.    Therefore
 $$
 \log \sup_Q N( \epsilon, \mathcal{G}, \|\cdot\|_{Q,2}) \lesssim  (s \log p + s \log (\mathrm{e}/\epsilon)) \vee 0.
 $$

 Similarly, the class of functions $
\mathcal{M} = (\mathcal{M}(1) \cup (1 - \mathcal{M}(1)))$ has a constant envelope,
 is  a union of at most 5 sets, which are themselves the unions of at most $\binom{p}{Cs}$ VC-subgraph classes of functions with VC indices bounded by $C's$ since a fixed monotone transformations $\Lambda$ preserves the VC-subgraph property.  Therefore,
$
\log \sup_Q N( \epsilon, \mathcal{M}, \|\cdot\|_{Q,2})  \lesssim  (s \log p + s \log (\mathrm{e}/\epsilon)) \vee 0.
$

Finally, the set of functions
$$
\mathcal{J}_{n} = \{f_{h,V,z} - f_{h_V, V, z}:  z \in \{0,1\},  V \in \mV,  h \in \mathcal{H}_{V,n}\},
$$
 is a Lipschitz transform of function sets $\mV$,  $\mV^*$, $\mathcal{B}$, $\mathcal{M}^*$,  $\mathcal{G}$, and $\mathcal{M}$,
 with bounded Lipschitz coefficients and with a constant envelope. Therefore,
\begin{equation*}
\log \sup_Q   N( \epsilon, \mathcal{J}_{n}, \|\cdot\|_{Q,2}) \lesssim  (s \log p + s \log (\mathrm{e}/\epsilon))\vee 0.
\end{equation*}

Applying Lemma \ref{lemma:CCK} with $ \sigma_n = C' \delta_n n^{-1/4} $ and the envelope $J_n = C'$,   with probability $1 - \Delta_n$ for some constant $K>e$
\begin{eqnarray*}
&& \sup_{V \in \mV} \max_{z \in \{0,1\}}|II_V(z)| \leq  \sup_{f \in \mathcal{J}_n} | \Gn (f) | \\
&& \lesssim \( \sqrt{s \sigma^2_n \log (p \vee K \vee \sigma_n^{-1})}   +   \frac{s}{\sqrt{n}} \log (p \vee K \vee \sigma_n^{-1}) \) \\
&&  \lesssim \(  \sqrt{s \delta_n^2 n^{-1/2} \log (p \vee n)}  +  \sqrt{s^2 n^{-1} \log^2 (p \vee n)}\) \\
&&  \lesssim \( \delta_n \delta_n^{1/4} + \delta_n^{1/2}\) \lesssim \delta_n^{1/2}.
\end{eqnarray*}
Here we have used some simple calculations, exploiting the boundedness condition in Assumptions \ref{assumption: basic} and \ref{ass: sparse1}, to deduce that
\begin{eqnarray*}
&&  \sup_{f \in \mathcal{J}_n}\|  f\|_{P,2} \lesssim   \sup_{h \in \mathcal{H}_{V,n}, V \in \mV}  \|  h - h_V\|_{P,2} \lesssim \delta_n n^{-1/4} \lesssim \sigma_n \leq \|J_n\|_{P,2},
\end{eqnarray*}
by definition of the set $\mathcal{H}_{V,n}$, so that  we can use Lemma \ref{lemma:CCK}. We also note that $\log (1/\delta_n) \lesssim \log (n)$ by the assumption on $\delta_n$ and that $s^2 \log^2 (p \vee n)\log^2 (n)/n \leq \delta_n$ by Assumption \ref{ass: sparse1}(i).


(f) The claim of Step 1 follows by collecting Steps (a)-(e).

\textsc{Step 2} (Uniform Donskerness).   Here we claim that Assumption \ref{assumption: basic} implies that the set of vectors of  functions $ (\psi^\rho_u)_{ u \in \mU}$ is $P$-Donsker uniformly in $\mP$,
namely that
\begin{equation*}
Z_{n,P} \rightsquigarrow  Z_{P}  \ \  \text{ in $\D =  \ell^\infty(\mU)^{d_\rho}$,  uniformly in $P \in  \mP$},
\end{equation*}
where $Z_{n, P} =   (\mathbb{G}_{n}   \psi^\rho_{u})_{u \in \mathcal{U}}$
 and $Z_{P} =   (\mathbb{G}_{P}   \psi^\rho_{u})_{u \in \mathcal{U}}$. Moreover,   $Z_P$  has bounded, uniformly continuous paths uniformly in $P \in   \mP$:
\begin{equation*}
\sup_{P \in  \mP} \Ep_P\sup_{u \in \mU} \|Z_P(u)\| < \infty, \ \ \lim_{\varepsilon \searrow 0} \sup_{P \in \mP}
\Ep_P \sup_{ d_{\mathcal{U}}(u, \tilde u) \leq \varepsilon}  \|Z_P(u) - Z_P(\tilde u)\| = 0.
\end{equation*}
To verify these claims we shall invoke Theorem \ref{lemma: uniform Donsker}.

To demonstrate the claim, it will suffice to consider the set of $\mathbb{R}$-valued functions $\Psi = (\psi_{uk}: u \in \mU, k \in [d_\rho])$. Further, we notice that
$\mathbb{G}_n \psi^{\alpha}_{V,z} = \mathbb{G}_n f$,  for $f \in \mathcal{F}_{z}$,
$$
\mathcal{F}_{z} = \left \{  \frac{1\{Z=z\} (V- g_V(z,X))}{m_Z(z,X)} +g_V(z,X),   V \in \mathcal{V} \right \}, \  z=0, 1,
$$
and that $\mathbb{G}_n  \psi^{\gamma}_{V} = \mathbb{G}_n f$,  for $f =V \in \mathcal{V}$. Hence
$\mathbb{G}_n( \psi_{uk}) = \mathbb{G}_n(f)$
for $f \in  \mathcal{F}_P = \mathcal{F}_{0} \cup  \mathcal{F}_{1}  \cup \mathcal{V}$.  We thus
need to check that the conditions of Theorem \ref{lemma: uniform Donsker} apply to
$\mathcal{F}_P$ uniformly in $P \in \mP$.

Observe that
$\mathcal{F}_z$ is formed as a uniform Lipschitz transform of the function sets $ \mathcal{B}$, $ \mV,$ $\mV^*$ and $\mathcal{M}^*$ defined in Step 1(e), where the validity of the Lipschitz property relies on Assumption \ref{assumption: basic}(iii) (to keep
the denominator away from zero) and on the boundedness conditions in Assumption \ref{assumption: basic}(iii) and Assumption \ref{ass: sparse1}(iii).  The  function sets $ \mathcal{B}$, $ \mV,$ $\mV^*$ and $\mathcal{M}^*$ are uniformly bounded classes that have uniform covering entropy bounded by $ \log(\mathrm{e}/\epsilon)\vee 0$ up to a multiplicative constant, and so $\mathcal{F}_z$, which is uniformly bounded under Assumption \ref{assumption: basic}, the uniform covering entropy bounded by $\log(\mathrm{e}/\epsilon)\vee 0$ up to a multiplicative constant (e.g. \citen{vdV-W}).  Since $\mathcal{F}_P$ is uniformly bounded and is a finite union of function sets with the uniform entropies obeying the said properties, it also follows that $\mathcal{F}_P$  has this property; namely,
$$
\sup_{P \in \mP}  \sup_Q \log N( \epsilon, \mathcal{F}_P, \|\cdot\|_{Q,2}) \lesssim \log(\mathrm{e}/\epsilon) \vee 0.
$$
Since $\int_0^\infty \sqrt{ \log(\mathrm{e}/\epsilon) \vee 0} d\epsilon = \mathrm{e} \sqrt{\pi}/2 < \infty$ and $\mathcal{F}_P$ is uniformly bounded, the first condition in (\ref{eq: characteristics1}) and the entropy  condition (\ref{eq: characteristics2}) in  Theorem \ref{lemma: uniform Donsker} hold.

We demonstrate the second condition in (\ref{eq: characteristics1}).  Consider a sequence of positive constants $\epsilon$ approaching zero, and note that
$$
 \sup_{ d_{\mathcal{U}}(u, \tilde u) \leq \epsilon}  \max_{k \leq d_\rho} \|\psi_{uk} - \psi_{\tilde uk}\|_{P,2}
 \lesssim  \sup_{ d_{\mathcal{U}}(u, \tilde u) \leq \epsilon}  \| f_u - f_{\tilde u}\|_{P,2}
$$
where $f_u$ and $f_{\tilde u}$ must be of the form:
$$
  \frac{1\{Z=z\} (U_u- g_{U_u}(z,X))}{m_Z(z,X)} +g_{U_u}(z,X),    \frac{1\{Z=z\} (U_{\tilde u}- g_{U_{\tilde u}}(z,X))}{m_Z(z,X)} +g_{U_{\tilde u}}(z,X),
$$
with $(U_u, U_{\tilde u})$ equal to  either  $(Y_u, Y_{\tilde u})$ or $(1_d(D) Y_u,  1_d(D) Y_{\tilde u})$, for  $d=0$ or $1$,
 and $z=0$ or $1$. Then
$$
\sup_{P \in \mathcal{P}}\|f_u - f_{\tilde u}\|_{P,2} \lesssim \sup_{P \in \mathcal{P}} \| Y_u - Y_{\tilde u}\|_{P,2} \to 0,
$$
as $d_{\mU}(u, \tilde u) \to 0$ by Assumption \ref{assumption: basic}(ii).  Indeed, $\sup_{P \in \mathcal{P}}\|f_u - f_{\tilde u}\|_{P,2} \lesssim \sup_{P \in \mathcal{P}} \| Y_u - Y_{\tilde u}\|_{P,2}$ follows from a sequence
of inequalities holding uniformly in $P \in \mP$: (1) $$\|f_u - f_{\tilde u}\|_{P,2} \lesssim \| U_u - U_{\tilde u}\|_{P,2}+  \| g_{U_u}(z,X) -
 g_{U_{\tilde u}}(z,X) \|_{P,2},$$ which we deduce using the triangle inequality and the fact that $m_Z(z,X)$ is bounded away
 from zero, (2)  $ \| U_u- U_{\tilde u}\|_{P,2}\leq  \|Y_u - Y_{\tilde u}\|_{P,2} $, which we deduced using the
 Holder inequality, and (3) $$ \| g_{U_u}(z,X) - g_{U_{\tilde u}}(z,X)\|_{P,2} \leq  \|U_u - U_{\tilde u}\|_{P,2} ,$$ which we deduce by the definition  of $g_{U_u}(z,X) = \Ep_P[U_u|X,Z=z]$ and
the contraction property of the conditional expectation. \qed

\subsection{Proof of Theorem \ref{theorem2}}

The proof will be similar to the  proof of Theorem \ref{theorem1}.

\textsc{Step 0.} (Preparation).    In the proof $a \lesssim b$ means that $a \leq A b$, where the constant
$A$ depends on the constants  in Assumptions \ref{assumption: basic} and \ref{ass: sparse1} only, but not on $n$ once $n \geq n_0=\min\{j: \delta_j \leq 1/2\}$, and not on $P \in \mathcal{P}_n$. We consider a sequence $P_n$ in $\mathcal{P}_n$, but for simplicity, we write  $P =P_n$ throughout the proof,  suppressing the index $n$.  Since the argument is asymptotic, we can assume that $n \geq n_0$ in what follows.
Let $\Pn$ denote the measure that puts mass $n^{-1}$ on points $(\xi_i, W_i)$ for $i=1,...,n$.
Let $\En$ denote the expectation with respect to this measure, so that
$\En f = n^{-1} \sum_{i=1}^n f(\xi_i, W_i)$, and $\Gn$ denote the corresponding empirical process $\sqrt{n} ( \En - P)$,
i.e.
$$\Gn f = \sqrt{n}(\En f - P f) = n^{-1/2} \sum_{i=1}^n \left ( f(\xi_i, W_i) - \int f(s, w) d P_\xi (s) dP (w) \right ).
$$

Recall that we define the bootstrap draw as:
$$
Z^*_{n,P} =\sqrt{n}(\hat \rho^*- \hat \rho)  =  \(\frac{1}{\sqrt{n}} \sum_{i=1}^n \xi_i \hat \psi^\rho_{u}(W_i) \)_{u \in \mathcal{U}}= \left( \Gn \xi \hat \psi^\rho_u \right)_{u \in \mathcal{U}},
$$
since $P[\xi \hat \psi_u^{\rho}] = 0$ because $\xi$ is independent of $W$ and has zero mean. Here $\hat \psi^\rho_{u} =    (\hat \psi^\rho_{V})_{ V \in \mV_u},$ where $
 \hat \psi^{\rho}_{V} (W) =\{\psi^{\alpha}_{V, 0, \hat g_{V}, \hat m_{Z}}(W,\hat \alpha_{V}(0)), \psi^{\alpha}_{V, 1, \hat g_{V}, \hat m_{Z}}(W,\hat \alpha_{V}(1)), \psi^{\gamma}_{V}(W, \hat \gamma_{V})\},$  is a plug-in estimator of the influence function $\psi^{\rho}_u$.

\textsc{Step 1.}(Linearization)  In this step we establish  that
\begin{equation}
\zeta_{n,P}^*:= Z^*_{n,P} - G^*_{n,P} = o_P(1),    \ \   \text{ for } G^*_{n, P} :=   (\mathbb{G}_{n}  \xi \psi^\rho_{u})_{u \in \mathcal{U}}, \ \  \ \  \text{ in $\D= \ell^\infty(\mU)^{d_\rho}$},
\end{equation}
where $\zeta_{n,P}^* = \zeta_{n,P}(D_n, B_n)$ is a linearization error, arising completely due to
estimation of the influence function; if the influence function were known, this term would
be zero.

For the components $(\sqrt{n}(\hat \gamma^*_{V} - \hat \gamma_{V}))_{V \in \mV}$ of
$\sqrt{n}(\hat \rho^*- \hat \rho)$  the linearization follows by  the
 representation,
$$
\sqrt{n}(\hat \gamma^*_{V} - \hat \gamma_{V}) =  \Gn \xi \psi_V^{\gamma} -     \underbrace{(\hat \gamma_{V} - \gamma_{V}  )\Gn\xi}_{I^*_{V}},
$$
for all  $V \in \mV$, and noting that $\sup_{V \in \mV} |I^*_V| = \sup_{V \in \mV}  | (\hat \gamma_{V} - \gamma_{V}  )| | \Gn\xi| = O_P( n^{-1/2} )$,  for $\mV$ defined in (\ref{define: mV}) by Theorem \ref{theorem1} and by  $| \Gn \xi| = O_P(1)$.

It remains to establish the claim for the empirical process
 $(\sqrt{n}(\hat \alpha^*_{V_{uj}}(z) - \hat \alpha_{V_{uj}}(z)))_{u \in \mU}$ for $z \in \{0,1\}$ and $j \in \mathcal{J}$. As in the proof
 of Theorem 4.1, we have that with probability at least $1- \Delta_n$,
   $$\hat h_V \in \mathcal{H}_{V,n}:=\{ h =  (\bar g_V(0,\cdot), \bar g_V(1,\cdot), \bar m_Z(0,\cdot), \bar m_Z(1,\cdot) ) \in \mathcal{G}_{V}(0) \times \mathcal{G}_V(1) \times \mathcal{M}(0)\times \mathcal{M}(1) \}.$$
We have the representation:
\begin{eqnarray*}
\sqrt{n} (\hat \alpha^*_V(z) - \hat \alpha_V(z)) = \Gn \xi \psi_{V,z}^{\alpha} + \underbrace{\Gn[\xi f_{\hat h_V, V, z}- \xi f_{h_V, V, z}] - (\hat \alpha_V(z) - \alpha_V(z)) \Gn \xi}_{II^*_V(z)},
\end{eqnarray*}
where $\sup_{V \in \mV, z \in \{0,1\}} (\hat \alpha_V(z) - \alpha_V(z)) = O_P(n^{-1/2})$ by Theorem \ref{theorem1}.

Hence to establish $\sup_{V \in \mV}|II^*_V(z)| = o_P(1)$,  it remains to show that with probability $1- \Delta_n$
$$
\sup_{ z \in \{0,1\},  V \in \mV } |\Gn[\xi f_{\hat h_V, V, z} - \xi f_{h_V, V, z}]|  \leq  \sup_{f \in \xi \mathcal{J}_n} | \Gn (f) | = o_P(1),
 $$
 where  $$
\mathcal{J}_{n} = \{f_{h,V,z} - f_{h_V, V, z}:  z \in \{0,1\},  V \in \mV,  h \in \mathcal{H}_{V,n}\}.
$$
By the calculations in Step 1(e) of the proof of Theorem \ref{theorem1}, $\mathcal{J}_{n}$ obeys $
\log \sup_Q   N( \epsilon, \mathcal{J}_{n}, \|\cdot\|_{Q,2}) \lesssim  (s \log p + s \log (e/\epsilon))\vee 0.
$ By Lemma \ref{lemma: andrews}, multiplication of this class by $\xi$ does not change the entropy bound modulo
 an absolute constant, namely
$$
\log \sup_Q   N( \epsilon \|J_n\|_{Q,2}, \xi \mathcal{J}_{n}, \|\cdot\|_{Q,2}) \lesssim  (s \log p + s \log (e /\epsilon))\vee 0,
$$
where the envelope $J_n$ for $\xi \mathcal{J}_{n}$  is $|\xi|$ times a constant.  Also, $\Ep[\exp(|\xi|)] < \infty$ implies that
$
(\Ep [\max_{ i \leq n} |\xi_i|^2])^{1/2} \lesssim \log n.$ Thus,  applying Lemma \ref{lemma:CCK} with  $\sigma = \sigma_n = C' \delta_n n^{-1/4}$ and the envelope $J_n = C' |\xi|$,   for some constant $K>e$
\begin{eqnarray*}
 \sup_{f \in \xi \mathcal{J}_n} | \Gn (f) |  && \lesssim \( \sqrt{s \sigma^2_n \log (p \vee K \vee \sigma_n^{-1})}   +   \frac{s \log n }{\sqrt{n}}  \log (p \vee K \vee \sigma_n^{-1}) \) \\
&& \lesssim \(  \sqrt{s \delta_n^2 n^{-1/2} \log (p \vee n)}  +  \sqrt{s^2 n^{-1} \log^2 (p \vee n) \log^2(n)}\) \\
&& \lesssim  \( \delta_n \delta_n^{1/4} + \delta_n^{1/2}\)  \lesssim (\delta_n^{1/2}) = o_P(1),
\end{eqnarray*}
for $\sup_{f \in \xi \mathcal{J}_n}\|  f\|_{P,2}= \sup_{f \in  \mathcal{J}_n}\| f\|_{P,2} \lesssim  \sigma_n$;  where the details of calculations are
the same as in Step 1(e) of the  proof of Theorem \ref{theorem1}.

Finally, we conclude that
$$
\| \zeta_{n,P}^* \|_\D \lesssim \sup_{V \in \mV} |I^*_V| + \sup_{V \in \mV, z \in \{0,1\}} |II^*_V|  = o_P(1).
$$

\textsc{Step 2}. Here we are claiming that
$Z^*_{n,P} \rightsquigarrow_B  Z_{P}$  in $\D$,  under any sequence $P =P_n \in \mP_n$, where $Z_{P} =   (\mathbb{G}_{P}  \psi^\rho_{u})_{u \in \mathcal{U}}$. We have that
\begin{eqnarray*}
&& \sup_{h \in \mathrm{BL}_1(\D) } \Big |   \Ep_{B_n}  h ( Z^*_{n,P} )  - \Ep_P h ( Z_{P})  \Big |\leq \sup_{h \in \mathrm{BL}_1(\D) } \Big |   \Ep_{B_n}  h (G^*_{n,P} )  - \Ep_P h ( Z_{P})  \Big |
+   \Ep_{B_n} ( \|  \zeta^*_{n,P} \|_{\D} \wedge 2 ),
\end{eqnarray*}
where the first term  is $o^*_P(1)$, since $
G^*_{n,P} \rightsquigarrow_B  Z_{P}$  by Theorem \ref{lemma: uniform Donsker for bootstrap}, and the second term is $o_P(1)$  because  $ \|\zeta^*_{n,P}\|_{\D}  = o_P(1) $ implies that  $ \Ep_P ( \|  \zeta^*_{n,P} \|_{\D} \wedge 2 ) =
\Ep_P \Ep_{B_n} ( \|  \zeta^*_{n,P} \|_{\D} \wedge 2 ) \to 0$, which in turn implies that  $\Ep_{B_n} ( \|  \zeta^*_{n,P} \|_{\D} \wedge 2 ) = o_P(1)$ by the Markov inequality. \qed

\subsection{Proof of Corollary \ref{corollary: bs LATE}}  This is an immediate consequence of
Theorems \ref{theorem1}, \ref{theorem2},  \ref{thm: delta-method}, and \ref{theorem:delta-method-bootstrap}.\qed

\section{Omited Proofs for Section 5}\label{sec:n}

\begin{lemma}[\textbf{Donsker Theorem for Classes Changing with $n$}]\label{lemma: Donsker dep on n} Work with the set-up described in Appendix B of the main text. Suppose that for some fixed constant $q >2$ and
  every sequence $\delta_n \searrow 0$:
 \begin{eqnarray*}
  \|F_n\|_{P_n,q} = O(1),  \quad \sup_{d_T(s,t) \leq \delta_n} \| f_{n,s} - f_{n,t}\|_{P_n,2} \to 0,\quad  \int_0^{\delta_n} \sup_{Q } \sqrt{  \log N( \epsilon \|F_n\|_{Q,2}, \mathcal{F}_n, \| \cdot \|_{Q,2} ) } d \epsilon \to 0.
\end{eqnarray*}
(a) Then the empirical process $(\mathbb{G}_n f_{n,t})_{t \in T}$ is asymptotically tight in $\ell^\infty(T)$.
(b) For any subsequence such that the covariance function $P_n f_{n,s} f_{n,t} - P_n f_{n,s} P_n f_{n,t}$ converges pointwise
 on $T\times T$, $(\mathbb{G}_n f_{n,t})_{t \in T}$ converges in $\ell^\infty(T)$ to a Gaussian process with covariance
 function given by the limit of the covariance function along that subsequence. \end{lemma}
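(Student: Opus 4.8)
The plan is to deduce claim (a) from the triangular-array empirical process machinery of \citen[Section 2.11]{vdV-W}, whose maximal inequalities (in particular Theorem 2.11.1) are already stated for independent rows whose law may change with $n$. Concretely, I would re-run the proof of Theorem 2.11.22 in \citen[pp. 220--221]{vdV-W} essentially verbatim, the only modification being that the common law $P$ of the $n$-th row is replaced by $P_n$ and all moment/entropy bounds are taken along this sequence. The three displayed hypotheses are exactly the inputs that proof requires: the condition $\|F_n\|_{P_n,q}=O(1)$ with $q>2$ controls the envelope and, via truncation at level $\eta\sqrt{n}$, furnishes the Lindeberg-type negligibility $\Ep_{P_n}[F_n^2\mathbf{1}\{F_n>\eta\sqrt{n}\}]\to 0$ needed to discard the contribution of $\max_{i\le n}F_n(W_i)/\sqrt{n}$; the $L^2$-continuity condition $\sup_{d_T(s,t)\le\delta_n}\|f_{n,s}-f_{n,t}\|_{P_n,2}\to 0$ makes the intrinsic (random) $L^2(\Pn)$ semimetric on $T$ asymptotically dominated by $d_T$ on small balls, so that stochastic equicontinuity in the intrinsic semimetric transfers to stochastic equicontinuity in $d_T$; and the uniform-entropy integral condition $\int_0^{\delta_n}\sup_Q\sqrt{\log N(\epsilon\|F_n\|_{Q,2},\mathcal{F}_n,\|\cdot\|_{Q,2})}\,d\epsilon\to 0$ supplies the chaining bound after symmetrization, once one converts the random $L^2(\Pn)$-entropy into the deterministic uniform-entropy bound using $\Ep_{P_n}\|F_n\|_{\Pn,2}^2=\|F_n\|_{P_n,2}^2$. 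Measurability is not an issue because $\mathcal{F}_n$ is suitably measurable and all probability statements are understood in outer measure. The output of this argument is that $(\Gn f_{n,t})_{t\in T}$ is asymptotically uniformly $d_T$-equicontinuous and that $\sup_{t\in T}|\Gn f_{n,t}|=O_P(1)$; together with total boundedness of $(T,d_T)$ this is exactly asymptotic tightness in $\ell^\infty(T)$, proving (a).

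For claim (b) I would combine (a) with finite-dimensional convergence. Fix $t_1,\dots,t_k\in T$ and consider the $\mathbb{R}^k$-valued triangular array $(f_{n,t_1}(W_i)-P_nf_{n,t_1},\dots,f_{n,t_k}(W_i)-P_nf_{n,t_k})_{i=1}^n$. Along the given subsequence its covariance matrices converge to the limiting covariance matrix obtained from the pointwise limit of $(s,t)\mapsto P_nf_{n,s}f_{n,t}-P_nf_{n,s}P_nf_{n,t}$, and the Lindeberg condition for the multivariate CLT holds because of the uniform $L^q$ bound on $F_n$ with $q>2$; hence $(\Gn f_{n,t_1},\dots,\Gn f_{n,t_k})$ converges in distribution to the corresponding Gaussian vector. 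Asymptotic tightness from (a) then upgrades this to weak convergence in $\ell^\infty(T)$ to a tight Gaussian process whose covariance function is precisely the subsequential limit of the covariance functions, and whose sample paths are uniformly continuous with respect to the intrinsic limiting semimetric; this is the standard ``marginals plus tightness'' argument, e.g. \citen[Theorem 1.5.4]{vdV-W}.

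The main obstacle is bookkeeping rather than conceptual: one must verify that every ingredient of the proof of \citen[Theorem 2.11.22]{vdV-W} survives when $P$ is replaced by the moving law $P_n$ --- the symmetrization step (fine, since for each fixed $n$ the rows are i.i.d.\ under $P_n$), the passage from random to deterministic entropy, the truncation and the discarding of the maximal summand, and the transfer of equicontinuity from the intrinsic semimetric to $d_T$. Since the present paper only needs the conclusion and not a new argument, I would state these modifications explicitly in the main text and relegate the verbatim repetition of the chaining computation to the Supplementary Appendix, exactly as the excerpt indicates.
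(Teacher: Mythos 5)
Your proposal is correct and follows essentially the same route as the paper: the paper likewise establishes (a) by invoking Theorem 2.11.1 of van der Vaart and Wellner (restated as Lemma N.2), checking the Lindeberg-type envelope condition via $\|F_n\|_{P_n,q}=O(1)$ with $q>2$, the $L^2(P_n)$-continuity condition, and the random entropy integral condition obtained from the uniform-entropy hypothesis after a change of variables using $\|F_n\|_{\Pn,2}=O_P(1)$ — i.e., repeating the proof of Theorem 2.11.22 verbatim with $P$ replaced by $P_n$ — and obtains (b) from the resulting stochastic equicontinuity together with finite-dimensional convergence along the indicated subsequences.
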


\textbf{Proof.}  The proof follows is similar to the proof of Theorem 2.11.22 in \citen[p. 220-221]{vdV-W}, except that the probability law is allowed to depend on $n$. Indeed, the use of Theorem 2.11.1  in \citen{vdV-W}, which does allow for  the probability space to depend on $n$, allows us to establish claim (a), whereas the proof of claim (b) follows by a standard argument.

  The random distance given in  Theorem 2.11.1  in \citen{vdV-W} (Lemma \ref{theorem: 2.11.1} below) reduces to
$
d_n^2(s,t) = \frac{1}{n} \sum_{i=1}^n (f_{n,s} - f_{n,t})^2(W_i) = \bP_n(f_{n,s} - f_{n,t})^2.
$
It follows that $N(\varepsilon,T,d_n) = N(\varepsilon, \mathcal{F}_n, L_2(\bP_n))$, for every $\varepsilon > 0$. If $F_n$ is replaced by $F_n \vee 1$ , then the conditions of the lemma still hold. Hence, assume without loss of generality than $F_n \geq 1$. Insert the bound on the covering numbers and next make a change of variables to bound the entropy integral $\int_{0}^{\delta_n} \sqrt{\log N(\varepsilon, \mathcal{F}_n, d_n)} d \varepsilon$ in Lemma \ref{theorem: 2.11.1}  by
$ \int_{0}^{\delta_n} \sqrt{\log N(\varepsilon \|F_n \|_{\bP_n,2}, \mathcal{F}_n, L_2(\bP_n))}
d\varepsilon \| F_n\|_{\bP_n,2}.$
This converges to zero in probability for every $\delta_n \downarrow 0$ by the conditions of the lemma. Apply Lemma \ref{theorem: 2.11.1}  to obtain the result. \qed

\begin{lemma}[van der Vaart and Wellner (1996, Th. 2.11.1)] \label{theorem: 2.11.1} For each $n$, let $Z_{n1}, \ldots, Z_{n,m_n}$ be independent stochastic processes,
defined on the product probability space $\prod_{i=1}^{m_n} (\mathcal{W}_{ni},
\mathcal{A}_{ni}, P_{ni})$, with each $Z_{ni} = Z_{ni}(f,w)$ depending on the $i$th coordinate of $w = (w_1, \ldots, w_{m_n})$, and
 indexed by a totally bounded semimetric space $(T,\rho)$. Assume that the sums $\sum_{i=1}^{m_n} e_i Z_{ni}$ are measurable in the sense that every one of the maps
\begin{eqnarray*}
w &\longmapsto& \sup_{\rho(f,g) < \delta} \left| \sum_{i=1}^{m_n} e_i\left(Z_{ni}(f) - Z_{ni}(g)\right)\right|,  \ \
w \longmapsto \sup_{\rho(f,g) < \delta} \left| \sum_{i=1}^{m_n} e_i\left(Z_{ni}(f) - Z_{ni}(g)\right)^2\right|,\end{eqnarray*}
is measurable, for every $\delta > 0$, every vector $(e_1,\ldots, e_{m_n}) \in \{-1,0,1\}^{m_n}$, and every natural number $n$. Also, for every $\eta>0$ and every $\delta_n \downarrow 0$:
$$\sum_{i=1}^{m_n} \Ep^*\|Z_{ni}\|^2_{\mathcal{F}_n}\{\|Z_{ni}\|_{\mathcal{F}_n} > \eta \} +
\sup_{\rho(s,t) < \delta_n} \sum_{i=1}^{m_n} \Ep\left(Z_{ni}(f) - Z_{ni}(g) \right)^2   \to 0,$$
and $\int_{0}^{\delta_n} \sqrt{\log N(\varepsilon, \mathcal{F}_n, d_n)} d\varepsilon  \overset{\Pr^*}{\to} 0$, where $d_n$ is the random semimetric
$$
d_n^2(f,g) = \sum_{i=1}^{m_n} \left(Z_{ni}(f) - Z_{ni}(g) \right)^2.
$$
Then the sequence $\sum_{i=1}^{m_n}  (Z_{ni} - \Ep Z_{ni})$ is asymptotically $\rho$-equicontinuous.
\end{lemma}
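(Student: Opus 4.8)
\textbf{Plan of the proof of Lemma \ref{theorem: 2.11.1}.}
This is the abstract triangular-array asymptotic equicontinuity theorem (Theorem 2.11.1 in van der Vaart and Wellner), so the proof is essentially that of the reference and proceeds via a symmetrization / chaining argument adapted to row-independent arrays. The plan is to reduce the statement to a bound on the expected modulus of continuity of the centered sums and then bound that modulus by a combination of a maximal inequality over a finite net and a chaining integral. First I would symmetrize: introduce i.i.d.\ Rademacher variables $(\eps_i)_{i=1}^{m_n}$ independent of everything, and use the standard symmetrization inequality for independent (not necessarily identically distributed) processes to bound $\Ep^*\sup_{\rho(s,t)<\delta}|\sum_i (Z_{ni}(s)-Z_{ni}(t)) - \Ep(Z_{ni}(s)-Z_{ni}(t))|$ by a constant multiple of $\Ep^*\sup_{\rho(s,t)<\delta}|\sum_i \eps_i(Z_{ni}(s)-Z_{ni}(t))|$, where the measurability hypotheses on the $\eps$-indexed suprema are exactly what make this step legitimate. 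The centering of $Z_{ni}$ by $\Ep Z_{ni}$ is handled by first reducing to $\Ep Z_{ni}=0$ via this same symmetrization step, which also removes the effect of the means.

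The next step is to work conditionally on the $(Z_{ni})$ and apply a sub-Gaussian maximal inequality for Rademacher chaos indexed by $(T,\rho)$: conditionally, $t\mapsto \sum_i \eps_i Z_{ni}(t)$ is sub-Gaussian with respect to the random semimetric $d_n(s,t)=(\sum_i (Z_{ni}(s)-Z_{ni}(t))^2)^{1/2}$, so Dudley's entropy bound gives, for any $\delta>0$,
\begin{equation*}
\Ep_\eps \sup_{\rho(s,t)<\delta}\Big|\sum_i \eps_i (Z_{ni}(s)-Z_{ni}(t))\Big| \lesssim \int_0^{\bar d_n(\delta)}\sqrt{\log N(\varepsilon,\mathcal F_n,d_n)}\,d\varepsilon + (\text{discretization remainder}),
\end{equation*}
where $\bar d_n(\delta) = \sup_{\rho(s,t)<\delta} d_n(s,t)$. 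I would then argue that $\bar d_n(\delta)^2$ is controlled in probability: $\Ep\, \bar d_n(\delta)^2 \le \sup_{\rho(s,t)<\delta}\sum_i \Ep(Z_{ni}(s)-Z_{ni}(t))^2$, which by hypothesis tends to $0$ as $\delta=\delta_n\downarrow 0$, so $\bar d_n(\delta_n)\to 0$ in probability; combined with the hypothesis that the entropy integral $\int_0^{\delta_n}\sqrt{\log N(\varepsilon,\mathcal F_n,d_n)}\,d\varepsilon \to 0$ in outer probability, the chaining integral above goes to $0$ in probability. The finite-net piece (there are finitely many equivalence classes at any fixed resolution, and a net cardinality controlled via the Lindeberg-type condition $\sum_i \Ep^*\|Z_{ni}\|_{\mathcal F_n}^2 1\{\|Z_{ni}\|_{\mathcal F_n}>\eta\}\to 0$) is handled by a Bernstein/Hoffmann-J\o rgensen bound showing the contribution of the ``big'' summands is asymptotically negligible for every $\eta$, and then letting $\eta\downarrow 0$. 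Putting these together with Markov's inequality yields, for every $\varrho>0$,
\begin{equation*}
\lim_{\delta\downarrow 0}\limsup_{n\to\infty} \Pr^*\Big(\sup_{\rho(s,t)<\delta}\Big|\sum_i (Z_{ni}(s)-Z_{ni}(t)) - \Ep(Z_{ni}(s)-Z_{ni}(t))\Big| > \varrho\Big) = 0,
\end{equation*}
which is precisely asymptotic $\rho$-equicontinuity.

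I expect the main obstacle to be the careful bookkeeping of the interaction between the \emph{random} entropy integral and the \emph{random} diameter $\bar d_n(\delta)$ — i.e., making the chaining argument work when the metric $d_n$ governing the covering numbers is itself data-dependent and only converges in probability. The clean way to manage this is to truncate: split each $Z_{ni}$ into a bounded part and a tail part using the threshold $\eta$, handle the tail part by the Lindeberg-type hypothesis (its squared contribution to $\bar d_n^2$ and to the $L_2$ diameter both vanish), and chain only the bounded part, for which $\bar d_n(\delta)$ is deterministically bounded by $2\eta\sqrt{N_n}$-type quantities that are already tamed. A secondary, purely technical, obstacle is measurability: all suprema appearing above must be measurable (or replaced by outer expectations consistently), and this is exactly why the hypothesis explicitly assumes measurability of the $\eps$-indexed suprema of the increments and their squares; I would invoke that hypothesis at each symmetrization and Fubini step rather than reproving it. Since the statement is verbatim from van der Vaart and Wellner, I would present the argument at the level of citing their Theorem 2.11.1 with the symmetrization–chaining skeleton sketched above, and note that no new idea beyond the reference is required.
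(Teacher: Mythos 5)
This statement is Theorem 2.11.1 of van der Vaart and Wellner, which the paper simply quotes as a known tool (it is invoked in the proof of Lemma B.3) and does not reprove, so your plan — defer to the reference, with the standard symmetrization, conditional sub-Gaussian chaining under the random semimetric $d_n$, and truncation via the Lindeberg-type condition — is exactly the route the paper relies on. One caution: your intermediate display $\Ep\,\bar d_n(\delta)^2 \le \sup_{\rho(s,t)<\delta}\sum_i \Ep\left(Z_{ni}(s)-Z_{ni}(t)\right)^2$ has the inequality the wrong way around (the expectation of a supremum dominates the supremum of expectations), so the random diameter cannot be controlled that directly; this is inessential here only because, as you yourself note, the correct treatment is the truncation argument combined with the random entropy-integral hypothesis, which is how the cited proof actually handles it.
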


\section{Proofs for Section 6 and Appendix \ref{sec:j}}\label{sec:o}

\begin{proof}[Proof of Theorem \ref{Thm:RateEstimatedLassoLinear}]

In order to establish the result uniformly in $P \in  \mathcal{P}_n$, it suffices to establish the result under the probability measure induced by any sequence $P = P_n \in \mathcal{P}_n$.  In the proof we shall use $P$, suppressing the dependency of $P_n$ on the sample size $n$. To prove this result we invoke Lemmas \ref{Thm:RateEstimatedLasso}-\ref{Thm:2StepMain} in Appendix \ref{subsec: lasso}. These lemmas rely on specific events (described below) and Condition WL which is also stated in Appendix \ref{subsec: lasso}. We will show that Assumption \ref{ass: linear} implies that the required events occur with probability $1-o(1)$ and also implies Condition WL.

Let $\widehat\Psi_{u0,jj}=\{\En[|f_j(X)\zeta_u|^2]\}^{1/2}$ denote the ideal penalty loadings. The three events required to occur with probability $1-o(1)$ are the following:  $E_1 := \{c_r \geq \sup_{u\in\mathcal{U}}\|r_u\|_{\Pn,2}\}$,  and where  $c_r := C\sqrt{s\log(p\vee n)/n}$;  $ E_2:= \{\lambda/n\geq \sqrt{c}\sup_{u\in\mathcal{U}}\|\widehat\Psi^{-1}_{u0}\En[\zeta_uf(X)]\|_\infty\}, \ \ E_3 := \{\ell \widehat\Psi_{u0}\leq \widehat\Psi_{u}\leq L \widehat\Psi_{u0}\},$ for some  $1/\sqrt{c}<1/\sqrt[4]{c}<\ell$  and $L$ uniformly bounded for the penalty loading $\widehat\Psi_u$ in all iterations $k\leq K$ for $n$ sufficiently large.

By Assumption \ref{ass: linear}(iv)(b) $E_1$ holds with probability $1-o(1)$.

Next we verify that Condition WL holds. Condition WL(i) is implied by the approximate sparsity condition in Assumption \ref{ass: linear}(i) and the covering condition in Assumption \ref{ass: linear}(ii). By Assumption \ref{ass: linear} we have that $\dn$ is fixed and the Algorithm sets $\gamma \in [1/n,\min\{\log^{-1} n, pn^{\dn-1}\}]$ so that $\gamma = o(1)$ and $\Phi^{-1}(1-\gamma/\{2pn^{\dn}\})\leq C\log^{1/2} (np) \leq C\delta_n n^{1/6}$ by Assumption \ref{ass: linear}(i). Since it is assumed that $\Ep_P[|f_j(X)\zeta_u|^2]\geq c$ and $\Ep_P[|f_j(X)\zeta_u|^3]\leq C$ uniformly in $j\leq p$ and $u\in \mathcal{U}$, Condition WL(ii) holds. Condition WL(iii) follows from  Assumption \ref{ass: linear}(iv).

Since Condition WL holds, by Lemma \ref{Thm:ChoiceLambda}, the event $E_2$  occurs with probability $1-o(1)$.

Next we proceed to verify  occurrence of $E_3$. In the first iteration, the penalty loadings are defined as $\widehat\Psi_{ujj}=\{\En[|f_j(X)Y_u|^2]\}^{1/2}$ for $j=1,\ldots,p$, $u\in\mathcal{U}$. By Assumption \ref{ass: linear}, $\underline{c}\leq \Ep_P[|f_j(X)\zeta_u|^2]\leq \Ep_P[|f_j(X)Y_u|^2]\leq C$ uniformly over $u\in\mathcal{U}$ and $j=1,\ldots,p$. Moreover, Assumption \ref{ass: linear}(iv)(b) yields $$\sup_{u\in\mathcal{U}}\max_{j\leq p}|(\En-\Ep_P)[|f_j(X)Y_u|^2]| \leq \delta_n \ \ \mbox{and} \ \ \sup_{u\in\mathcal{U}}\max_{j\leq p}|(\En-\Ep_P)[|f_j(X)\zeta_u|^2]| \leq \delta_n$$ with probability $1-\Delta_n$. In turn this shows that for $n$ large so that $\delta_n\leq \underline{c}/4$ we have\footnote{Indeed, using that $\underline{c} \leq \Ep_P[|f_j(X)\zeta_u|^2] \leq \Ep_P[|f_j(X)Y_u|^2] \leq C$, we have   $(1-2\delta_n/\underline{c})\En[|f_j(X)\zeta_u|^2] \leq (1-2\delta_n/\underline{c})\{\delta_n + \Ep_P[|f_j(X)\zeta_u|^2]\}\leq \Ep_P[|f_j(X)\zeta_u|^2]-\delta_n \leq \Ep_P[|f_j(X)Y_u|^2]-\delta_n \leq  \En[|f_j(X)Y_u|^2] $. Similarly, $ \En[|f_j(X)Y_u|^2] \leq \delta_n +  \Ep_P[|f_j(X)Y_u|^2] \leq  \delta_n + C \leq (\{\delta_n + C\}/\{\underline{c}-\delta_n\})\En[|f_j(X)\zeta_u|^2]$.}
$$\begin{array}{l}
 (1-2\delta_n/\underline{c})\En[|f_j(X)\zeta_u|^2]  \leq  \En[|f_j(X)Y_u|^2] \leq   (\{C+\delta_n\}/\{\underline{c}-\delta_n\})\En[|f_j(X)\zeta_u|^2]
\end{array}$$ with probability $1-\Delta_n$ so that $\ell\widehat\Psi_{u0}\leq \widehat \Psi_u \leq L \widehat\Psi_{u0}$ for some uniformly bounded $L$ and $\ell>1/\sqrt[4]{c}$. Moreover, $\tilde \cc=\{(L\sqrt{c}+1)/(\sqrt{c}\ell-1)\}\sup_{u\in\mathcal{U}}\|\widehat \Psi^{-1}_{u0}\|_\infty\|\widehat \Psi_{u0}\|_\infty$ is uniformly bounded for $n$ large enough which implies that $\kappa_{2\tilde \cc}$ as defined in (\ref{DefKappa}) in Appendix \ref{Sec:FiniteLinear} is bounded away from zero with probability $1-\Delta_n$ by the condition on sparse eigenvalues of order $s\ell_n$ (see \citen{BickelRitovTsybakov2009} Lemma 4.1(ii)).

By Lemma \ref{Thm:RateEstimatedLasso}, since  $\lambda \in  [cn^{1/2}\log^{1/2}(p\vee n), Cn^{1/2}\log^{1/2}(p\vee n)]$ by the choice of $\gamma$ and $\dn$ fixed, $c_r \leq C\sqrt{s\log(p\vee n)/n}$, $\sup_{u\in\mathcal{U}}\|\widehat \Psi_{u0}\|_\infty \leq C$,  we have
$$\begin{array}{l}
 \displaystyle\sup_{u\in\mathcal{U}} \| f(X)'(\hat\theta_u - \theta_{u})\|_{\Pn,2} \leq C' \sqrt{\frac{s\log (p\vee n)}{n}}\ \ \mbox{and} \  \
 \displaystyle \sup_{u\in\mathcal{U}}\|\hat\theta_u-\theta_{u}\|_1  \leq C' \sqrt{\frac{s^2\log (p\vee n)}{n}}.\\ \end{array}$$


In the application of Lemma \ref{Thm:Sparsity}, by Assumption \ref{ass: linear}(iv)(c), we have that
$\min_{m \in \mathcal{M}}\semax{m}$ is uniformly bounded for $n$ large enough with probability $1-o(1)$. Thus, with probability $1-o(1)$, by Lemma \ref{Thm:Sparsity} we have
$$ \sup_{u\in\mathcal{U}} \hat s_u \leq C   \[\frac{n c_r}{\lambda} + \sqrt{s}\]^2 \leq C's.$$
Therefore by Lemma \ref{Thm:2StepMain} the Post-Lasso estimators $(\widetilde \theta_u)_{u\in\mathcal{U}}$ satisfy with probability $1-o(1)$
$$\begin{array}{l}
 \displaystyle\sup_{u\in\mathcal{U}} \| f(X)'(\widetilde\theta_u - \theta_{u})\|_{\Pn,2} \leq \bar{C} \sqrt{\frac{s\log (p\vee n)}{n}}\ \ \mbox{and} \  \
 \displaystyle \sup_{u\in\mathcal{U}}\|\widetilde\theta_u-\theta_{u}\|_1  \leq \bar{C} \sqrt{\frac{s^2\log (p\vee n)}{n}}\\ \end{array}$$
for some $\bar{C}$ independent of $n$, since uniformly in $u\in \mathcal{U}$ we have a sparsity bound $\|(\widetilde\theta_u - \theta_{u})\|_0 \leq C'' s$ and that ensures that a bound on the prediction rate yields a bound on the $\ell_1$-norm rate through the relations $\|v\|_1\leq \sqrt{\|v\|_0}\|v\|\leq \sqrt{\|v\|_0}\|f(X)'v\|_{\Pn,2}/\sqrt{\semin{\|v\|_0}}$.

In the $k$th iteration, the penalty loadings are constructed based on $(\widetilde\theta_u^{(k)})_{u\in\mathcal{U}}$, defined as $\widehat\Psi_{ujj}=\{\En[|f_j(X)\{Y_u-f(X)'\widetilde\theta_u^{(k)}\}|^2]\}^{1/2}$ for $j=1,\ldots,p$, $u\in\mathcal{U}$. We assume $(\widetilde\theta_u^{(k)})_{u\in\mathcal{U}}$ satisfy the rates above uniformly in $u\in\mathcal{U}$. Then with probability $1-o(1)$ we have uniformly in $u\in\mathcal{U}$ and $j=1,\ldots,p$ 
$$\begin{array}{rl}
|\widehat\Psi_{ujj}- \widehat\Psi_{u0jj}| & \leq \{\En[|f_j(X)\{f(X)'(\widetilde\theta_u-\theta_u)\}|^2]\}^{1/2}+\{\En[|f_j(X)r_u|^2]\}^{1/2}\\
& \leq K_n \|f(X)'(\widetilde\theta_u-\theta_u)\|_{\Pn,2}+K_n\|r_u\|_{\Pn,2} \leq \bar{C} K_n \sqrt{\frac{s\log(p\vee n)}{n}} \\
&\leq \bar{C} \delta_n^{1/2} \leq \widehat\Psi_{u0jj} (2\bar{C} \delta_n^{1/2}/\underline{c})
\end{array}$$
where we used that $\max_{i\leq n,j\leq p}|f_j(X_i)|\leq K_n$ a.s., and $K_n^2s\log(p\vee n) \leq \delta_n n$ by Assumption \ref{ass: linear}(iv)(a), and that $\inf_{u\in\mathcal{U},j\leq p}\widehat\Psi_{u0jj}\geq \underline{c}/2$ with probability $1-o(1)$ for $n$ large so that $\delta_n \leq \underline{c}/2$. Further, for $n$ large so that $(2\bar{C} \delta_n^{1/2}/\underline{c}) < 1-1/\sqrt[4]{c}$, this establishes that the event of the penalty loadings for the $(k+1)$th iteration also satisfy $\ell \widehat \Psi^{-1}_{u0}\leq  \widehat \Psi^{-1}_{u} \leq L  \widehat \Psi^{-1}_{u0}$ for a uniformly bounded $L$ and some $\ell > 1/\sqrt[4]{c}$ with probability $1-o(1)$ uniformly in $u\in\mathcal{U}$. 

This leads to the stated rates of convergence and sparsity bound.
\end{proof}

\begin{proof}[Proof of Theorem \ref{Thm:RateEstimatedLassoLogistic}]

In order to establish the result uniformly in $P \in  \mathcal{P}_n$, it suffices to establish the result under the probability measure induced by any sequence $P = P_n \in \mathcal{P}_n$.  In the proof we shall use $P$, suppressing the dependency of $P_n$ on the sample size $n$. The proof is similar to the proof of Theorem \ref{Thm:RateEstimatedLassoLinear}. We invoke Lemmas \ref{Lemma:LassoLogisticRateRaw}, \ref{Lemma:LassoLogisticSparsity} and \ref{Lemma:PostLassoLogisticRateRaw} which require Condition WL and some events to occur. We show that Assumption \ref{ass: logistic} implies Condition WL and that the required events occur  with probability at least $1-o(1)$.

Let $\widehat\Psi_{u0,jj}=\{\En[|f_j(X)\zeta_u|^2]\}^{1/2}$ denote the ideal penalty loadings, $w_{ui}=\Ep_P[Y_{ui}\mid X_i](1-\Ep_P[Y_{ui}\mid X_i])$ the conditional variance of $Y_{ui}$ given $X_i$ and $\tilde r_{ui}=\tilde r_{u}(X_i)$ the rescaled approximation error as defined in (\ref{def:tilder}). The three events required to occur with probability $1-o(1)$ are as follows:  $E_1 := \{c_r \geq \sup_{u\in\mathcal{U}}\|\tilde r_u/\sqrt{w_u}\|_{\Pn,2}\}$  for  $c_r := C'\sqrt{s\log(p\vee n)/n}$ where $C'$ is large enough; $E_2 := \{\lambda/n\geq \sqrt{c}\sup_{u\in\mathcal{U}}\|\widehat\Psi^{-1}_{u0}\En[\zeta_uf(X)]\|_\infty\}$; and $E_3:= \{\ell \widehat\Psi_{u0}\leq \widehat\Psi_{u}\leq L \widehat\Psi_{u0}\}$, for $\ell > 1/\sqrt[4]{c}$ and $L$ uniformly bounded,  for the penalty loading $\widehat\Psi_u$ in all iterations $k\leq K$ for $n$ sufficiently large.

Regarding $E_1$, by Assumption \ref{ass: logistic}(iii), we have $\underline{c}(1-\underline{c}) \leq w_{ui}\leq 1/4$. Since $|r_{u}(X_i)|\leq \delta_n$ a.s. uniformly on $u\in\mathcal{U}$ for $i=1,\ldots,n$, we have that the rescaled approximation error defined in (\ref{def:tilder}) satisfies $|\tilde r_{u}(X_i)|\leq |r_{u}(X_i)|/\{\underline{c}(1-\underline{c}) - 2\delta_n \}_+ \leq \tilde C |r_{u}(X_i)|$ for $n$ large enough so that $\delta_n \leq \underline{c}(1-\underline{c})/4$. Thus $\|\tilde r_u/\sqrt{w_u}\|_{\Pn,2} \leq \tilde C \| r_u/\sqrt{w_u}\|_{\Pn,2}$. Assumption \ref{ass: logistic}(iv)(b) yields $ \sup_{u\in\mathcal{U}}\|r_u/\sqrt{w_u}\|_{\Pn,2} \leq  C\sqrt{s\log(p\vee n)/n}$ with probability $1-o(1)$, so $E_3$ occurs  with probability $1-o(1)$.

To apply Lemma \ref{Thm:ChoiceLambda} to show that $E_2$ occurs with probability $1-o(1)$ we need to verify Condition WL. Condition WL(i) is implied by the sparsity in Assumption \ref{ass: logistic}(i)  and the covering condition in Assumption \ref{ass: logistic}(ii). By Assumption \ref{ass: logistic} we have that $\dn$ is fixed and the Algorithm sets $\gamma \in [1/n,\min\{\log^{-1} n, pn^{\dn-1}\}]$ so that $\gamma = o(1)$ and $\Phi^{-1}(1-\gamma/\{2pn^{\dn}\})\leq C\log^{1/2} (np) \leq C\delta_n n^{1/6}$ by Assumption \ref{ass: logistic}(i). Since it is assumed that $\Ep_P[|f_j(X)\zeta_u|^2]\geq c$ and $\Ep_P[|f_j(X)\zeta_u|^3]\leq C$ uniformly in $j\leq p$ and $u\in \mathcal{U}$, Condition WL(ii) holds. Condition WL(iii) follows from  Assumption \ref{ass: linear}(iv). Then, by Lemma \ref{Thm:ChoiceLambda}, the event $E_2$ occurs with probability $1-o(1)$.

Next we verify the occurrence of $E_3$. In the initial iteration, the penalty loadings are  defined as $\widehat\Psi_{ujj}=\frac{1}{2}\{\En[|f_j(X)|^2]\}^{1/2}$ for $j=1,\ldots,p$, $u\in\mathcal{U}$.
Assumption \ref{ass: logistic}(iv)(c) for the sparse eigenvalues implies that for $n$ large enough, $c'\leq \En[|f_j(X)|^2] \leq C'$ for all $j=1,\ldots,p,$ with probability $1-o(1)$.

Moreover, Assumption \ref{ass: logistic}(iv)(b) yields \begin{equation}\label{AuxLogLoad} \ \sup_{u\in\mathcal{U}}\max_{j\leq p}|(\En-\Ep_P)[|f_j(X)\zeta_u|^2]| \leq \delta_n\end{equation} with probability $1-\Delta_n$, so that $\widehat \Psi_{u0jj}$ is bounded away from zero and from above uniformly over $j=1,\ldots,p$, $u\in\mathcal{U}$, with the same probability because $\Ep_P[|f_j(X)\zeta_u|^2]$ is bounded away from zero and above. By (\ref{AuxLogLoad}) and  $\Ep_P[|f_j(X)\zeta_u|^2] \leq \frac{1}{4}\Ep_P[|f_j(X)|^2]$, for $n$ large enough, we have
$\ell\widehat\Psi_{u0}\leq \widehat\Psi_{u}\leq L\widehat\Psi_{u0}$ for some uniformly bounded $L$ and  $\ell > 1/\sqrt[4]{c}$ with probability $1-\Delta_n$. 

Thus, $\tilde \cc=\{(L\sqrt{c}+1)/(\ell \sqrt{c}-1)\}\sup_{u\in\mathcal{U}}\|\widehat\Psi^{-1}_{u0}\|_{\infty}\|\widehat\Psi_{u0}\|_\infty$ is uniformly bounded. In turn, since $\inf_{u\in\mathcal{U}} \min_{i\leq n} w_{ui}\geq \underline{c}(1-\underline{c})$ is bounded away from zero,  we have $\bar\kappa_{2\tilde \cc}\geq \sqrt{\underline{c}(1-\underline{c})}\kappa_{2\tilde \cc}$ by their definitions in (\ref{DefKappa}) and (\ref{DefKAppaLog}). It follows that $\kappa_{2\tilde \cc}$ is bounded away from zero by the condition on $s\ell_n$ sparse eigenvalues  stated in Assumption \ref{ass: logistic}(iv)(c), see \citen{BickelRitovTsybakov2009} Lemma 4.1(ii).

By the choice of $\gamma$ and $d_u$ fixed,  $\lambda \in  [cn^{1/2}\log^{1/2}(p\vee n), Cn^{1/2}\log^{1/2}(p\vee n)]$. By relation (\ref{LowerBound:q}) and Assumption \ref{ass: logistic}(iv)(a), $\inf_{u\in\mathcal{U}}\bar q_{A_u}\geq c' \bar\kappa_{2\tilde\cc}/\{\sqrt{s}K_n\}$. Under the condition $K_n^2s^2\log^2(p\vee n) \leq \delta_n n$, the side condition in Lemma \ref{Lemma:LassoLogisticRateRaw} holds with probability $1-o(1)$, and the lemma yields
$$\begin{array}{l}
 \displaystyle\sup_{u\in\mathcal{U}} \| f(X)'(\hat\theta_u - \theta_{u})\|_{\Pn,2} \leq C' \sqrt{\frac{s\log (p\vee n)}{n}}\ \ \mbox{and} \  \
 \displaystyle \sup_{u\in\mathcal{U}}\|\hat\theta_u-\theta_{u}\|_1  \leq C' \sqrt{\frac{s^2\log (p\vee n)}{n}}\\ \end{array}$$
In turn, under Assumption \ref{ass: logistic}(iv)(c) and $K_n^2s^2\log^2(p\vee n) \leq \delta_n n$, with probability $1-o(1)$ Lemma \ref{Lemma:LassoLogisticSparsity} implies $$ \sup_{u\in\mathcal{U}} \hat s_u \leq C''   \[\frac{n c_r}{\lambda} + \sqrt{s}\]^2 \leq C'''s$$
since $\min_{m \in \mathcal{M}}\semax{m}$ is uniformly bounded. The rate of convergence for $\widetilde\theta_u$ is given by Lemma \ref{Lemma:PostLassoLogisticRateRaw}, namely with probability $1-o(1)$
$$\begin{array}{l}
 \displaystyle\sup_{u\in\mathcal{U}} \| f(X)'(\widetilde\theta_u - \theta_{u})\|_{\Pn,2} \leq \bar{C} \sqrt{\frac{s\log (p\vee n)}{n}}\ \ \mbox{and} \  \
 \displaystyle \sup_{u\in\mathcal{U}}\|\widetilde\theta_u-\theta_{u}\|_1  \leq \bar{C} \sqrt{\frac{s^2\log (p\vee n)}{n}}\\ \end{array}$$
for some $\bar{C}$ independent of $n$, since by (\ref{QQ:UpperBound}) we have uniformly in $u\in\mathcal{U}$
$$\begin{array}{rl}
M_u(\tilde\theta_u) - M_u(\theta_u) & \leq M_u(\hat\theta_u) - M_u(\theta_u) \leq \frac{\lambda}{n}\|\widehat\Psi_u\theta_{u}\|_1 - \frac{\lambda}{n}\|\widehat\Psi_u\hat\theta_{u}\|_1 \leq  \frac{\lambda}{n}\|\widehat\Psi_u(\hat\theta_{uT_u}-\theta_u)\|_1\\
&\leq \bar{C}' s\log(p\vee n)/n,\end{array}$$
$\sup_{u\in\mathcal{U}}\|\En[f(X)\zeta_u]\|_\infty \leq C \sqrt{\log(p\vee n)/n}$ by Lemma \ref{Thm:ChoiceLambda}, $\semin{\hat s_u + s_u}$ is bounded away from zero (by Assumption \ref{ass: logistic}(iv)(c) and $\hat s_u \leq C'''s$), $\inf_{u\in\mathcal{U}}\psi_u(\{\delta \in \mathbb{R}^p:\|\delta\|_0\leq \hat s_u + s_u\})$ is bounded away from zero (because $\inf_{u\in\mathcal{U}}\min_{i\leq n}w_{ui}\geq \underline{c}(1-\underline{c})$), and $\sup_{u\in\mathcal{U}}\|\widehat\Psi_{u0}\|_\infty\leq C$ with probability $1-o(1)$.

In the $k$th iteration, the penalty loadings are constructed based on $(\widetilde\theta_u^{(k)})_{u\in\mathcal{U}}$, defined as $\widehat\Psi_{ujj}=\En[|f_j(X)\{Y_u-\G( f(X)'\widetilde\theta_u^{(k)})\}|^2]\}^{1/2}$ for $j=1,\ldots,p$, $u\in\mathcal{U}$. We assume $(\widetilde\theta_u^{(k)})_{u\in\mathcal{U}}$ satisfy the rates above uniformly in $u\in\mathcal{U}$. Then $$\begin{array}{rl}
|\widehat\Psi_{ujj}- \widehat\Psi_{u0jj}|
& \leq \{\En[|f_j(X)\{\G(f(X)'\widetilde\theta_u^{(k)})-\G(f(X)'\theta_u)\}|^2]\}^{1/2}+\{\En[|f_j(X)r_u|^2]\}^{1/2}\\
& \leq \{\En[|f_j(X)\{f(X)'(\widetilde\theta_u^{(k)}-\theta_u)\}|^2]\}^{1/2}+\{\En[|f_j(X)r_u|^2]\}^{1/2}\\
& \leq K_n \|f(X)'(\widetilde\theta_u^{(k)}-\theta_u)\|_{\Pn,2}+K_n\|r_u\|_{\Pn,2} \lesssim_P K_n \sqrt{\frac{s\log(p\vee n)}{n}}\\ & \leq  C\delta_n \leq (2C\delta_n/\underline{c})\widehat\Psi_{u0jj}
\end{array}$$
and therefore, provided that $(2C\delta_n/\underline{c})<1-1/\sqrt[4]{c}$, uniformly in $u\in\mathcal{U}$, $\ell \widehat\Psi_{u0}\leq \widehat\Psi_u \leq L \widehat \Psi_{u0}$ for $\ell > 1/\sqrt[4]{c}$ and $L$ uniformly bounded with probability $1-o(1)$. Then the same proof for the initial penalty loading choice applies to the iterate $(k+1)$.\end{proof}

\subsection{Proofs for Lasso with Functional Response: Penalty Level }

\begin{proof}[Proof of Lemma \ref{Thm:ChoiceLambda}]

By the triangle inequality
$$ \begin{array}{rl}
\sup_{u\in\mathcal{U}} \| \hat \Psi^{-1}_{u 0} \En[ f(X) \zeta_{u} ]\|_\infty & \leq \max_{u\in\mathcal{U}^\epsilon} \| \hat \Psi^{-1}_{u 0} \En[ f(X) \zeta_{u} ]\|_\infty\\
&  +\sup_{u\in\mathcal{U}^\epsilon,u'\in\mathcal{U},d_\mathcal{U}(u,u')\leq \epsilon} \| \hat \Psi^{-1}_{u 0} \En[ f(X) \zeta_{u} ] - \hat \Psi^{-1}_{u'0} \En[ f(X) \zeta_{u'} ]\|_\infty  \end{array}  $$
where $\mathcal{U}^\epsilon$ is a minimal $\epsilon$-net of $\mathcal{U}$. We will set $\epsilon = 1/n$ so that $|\mathcal{U}^\epsilon|\leq n^\dn$.

The proofs  in this section rely on the following result  due to \citen{jing:etal}.

\begin{lemma}[\textbf{Moderate deviations for self-normalized sums}]\label{Lemma: MDSN} Let $Z_{1}$,$\ldots$, $Z_{n}$ be independent, zero-mean random variables and $\mu \in (0,1]$. Let
$S_{n,n} = \sum_{i=1}^n Z_i,  \ \ V^2_{n,n} = \sum_{i=1}^nZ^2_i,$ $$M_n= \left \{\frac{1}{n} \sum_{i=1}^n \Ep [ Z_i^2 ] \right \}^{1/2} \Big / \left \{\frac{1}{n} \sum_{i=1}^n \Ep[|Z_i|^{2+\mu}] \right\}^{1/\{2+\mu\}}>0$$
and $0< \ell_n \leq n^{\frac{\mu}{2(2+\mu)}}M_n$. Then for some absolute constant $A$,
$$
\left |\frac{\Pr(|S_{n,n}/V_{n,n}|  \geq x) }{ 2 (1-\Phi(x))} - 1 \right |  \leq
\frac{A}{ \ell_n^{2+\mu}},  \ \ 0 \leq  x \leq  n^{\frac{\mu}{2(2+\mu)}}\frac{M_n}{\ell_n}-1. \\
$$
\end{lemma}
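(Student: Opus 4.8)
The statement is the self-normalized Cram\'er-type moderate deviation theorem of \citen{jing:etal}, and the plan is to reproduce its proof; I sketch the architecture. First reduce the two-sided ratio to a one-sided one: for $x>0$ the events $\{S_{n,n}/V_{n,n}\geq x\}$ and $\{-S_{n,n}/V_{n,n}\geq x\}$ are disjoint, their union is $\{|S_{n,n}/V_{n,n}|\geq x\}$, and $-S_{n,n}=\sum_i(-Z_i)$ has exactly the same structure as $S_{n,n}$ (same $V_{n,n}$, same $M_n$, still mean zero), so it suffices to show $\Pr(S_{n,n}/V_{n,n}\geq x)=(1-\Phi(x))(1+O(\ell_n^{-(2+\mu)}))$ uniformly in the stated range and then average. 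The core device is the quadratic-exponential linearization: using $xV_{n,n}\leq \tfrac{x^2 b}{2}+\tfrac{V_{n,n}^2}{2b}$ (AM--GM, with equality at the random value $b=V_{n,n}/x$) one replaces the self-normalized event $\{S_{n,n}\geq xV_{n,n}\}$ by events of the form $\{\sum_i g_b(Z_i)\geq c_b\}$, where $g_b(z)=z-z^2/(2b)$ is affine-quadratic in each summand and $b$ is a \emph{deterministic} tuning parameter; for the upper estimate one localizes $V_{n,n}$ on a dyadic grid of such $b$'s and takes a union bound, while for the lower estimate one keeps the single deterministic surrogate $b^\star\asymp(\tfrac1n\sum_i\E Z_i^2)^{1/2}/x$ of $V_{n,n}/x$. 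This converts a ratio of self-normalized quantities into a Cram\'er-type problem for a sum of independent (centered-after-shift) variables.

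For the upper bound I would apply the exponential Markov inequality $\Pr(\sum_i g_b(Z_i)\geq c_b)\leq e^{-\lambda c_b}\prod_i\E e^{\lambda g_b(Z_i)}$ together with the elementary inequality $e^{t-t^2/2}\leq 1+t+C_\mu\min(t^2,|t|^{2+\mu})$, valid for all real $t$ and all $\mu\in(0,1]$; since $\E Z_i=0$ this yields $\prod_i\E e^{\lambda g_b(Z_i)}\leq \exp\{C_\mu(\lambda/b)^{2+\mu}\sum_i\E|Z_i|^{2+\mu}\}=\exp\{C_\mu(\lambda/b)^{2+\mu}\, n\,(\tfrac1n\sum_i\E Z_i^2)\, M_n^{-(2+\mu)}\}$. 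Tuning $\lambda b\asymp x$ and $b\asymp b^\star$ then gives $\Pr(S_{n,n}\geq xV_{n,n})\leq e^{-x^2/2}\{1+C(1+x)^{2+\mu}n^{-\mu/2}M_n^{-(2+\mu)}\}$, and the restriction $1+x\leq n^{\mu/(2(2+\mu))}M_n/\ell_n$ turns the error term $(1+x)^{2+\mu}n^{-\mu/2}M_n^{-(2+\mu)}$ into $O(\ell_n^{-(2+\mu)})$. Comparison with $1-\Phi(x)$ — using $1-\Phi(x)\asymp e^{-x^2/2}/(1+x)$ for moderate $x$ and the Mills-ratio asymptotics $1-\Phi(x)=\tfrac{e^{-x^2/2}}{x\sqrt{2\pi}}(1+o(1))$ for large $x$, treating the two regimes separately — delivers the one-sided upper half of the claim.

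The lower bound is obtained by an exponential change of measure. Fix the deterministic $\lambda=\lambda(x)$ chosen above and let $\widetilde\Pr$ be the conjugate law with $d\widetilde\Pr/d\Pr\propto\exp\{\lambda S_{n,n}-\tfrac{\lambda^2}{2}V_{n,n}^2\}=\prod_i\exp\{\lambda Z_i-\tfrac{\lambda^2}{2}Z_i^2\}$, a product tilt that keeps the $Z_i$ independent under $\widetilde\Pr$. Writing $\Pr(S_{n,n}\geq xV_{n,n})=\big(\E e^{\lambda S_{n,n}-\lambda^2 V_{n,n}^2/2}\big)\,\widetilde\E\big[e^{-\lambda S_{n,n}+\lambda^2 V_{n,n}^2/2}\mathbf{1}\{S_{n,n}\geq xV_{n,n}\}\big]$, one shows that under $\widetilde\Pr$ the pair $(S_{n,n},V_{n,n}^2)$ concentrates so that on a $\widetilde\Pr$-non-negligible sub-event the integrand equals $e^{-x^2/2}(1+O(\ell_n^{-(2+\mu)}))$, while $\widetilde\Pr(S_{n,n}\geq xV_{n,n})$ is bounded below by a constant via a Berry--Esseen estimate for the independent (now non-identically-distributed) tilted array, whose Lyapunov ratio $n^{-\mu/2}M_n^{-(2+\mu)}$ is again controlled by the range of $x$. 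Multiplying these factors and comparing with $1-\Phi(x)$ closes the lower half; since every constant above depends only on $\mu\in(0,1]$, the resulting $A$ is absolute.

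The main obstacle is precisely this lower bound: obtaining the \emph{sharp} leading constant $1$ in front of $e^{-x^2/2}$ uniformly over the full range $0\leq x\leq n^{\mu/(2(2+\mu))}M_n/\ell_n-1$ with only a $(2+\mu)$-th absolute moment available. The summands $g_b(Z_i)$ are unbounded, so the concentration-of-the-integrand step requires either truncating each $Z_i$ at a level calibrated to $b^\star$ and estimating the truncation remainder, or using a weighted/Lyapunov form of the Berry--Esseen bound; balancing these two error sources against the target rate $\ell_n^{-(2+\mu)}$, and verifying that the dyadic discretization in $b$ used for the upper bound costs no more than this, is the delicate bookkeeping. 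I would carry out this step following the proof of Theorem~2.1 in \citen{jing:etal}, whose argument applies here verbatim.
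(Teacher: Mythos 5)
The paper offers no proof of this lemma at all -- it is invoked verbatim as a result ``due to \citen{jing:etal}'' -- and your proposal correctly identifies it as their self-normalized Cram\'er-type moderate deviation theorem, sketches the standard architecture (AM--GM linearization of the self-normalized event, exponential Markov bound for one direction, conjugate tilting plus a Berry--Esseen estimate for the other), and appropriately defers the delicate lower-bound step to their Theorem 2.1, which is exactly the paper's treatment. Your bookkeeping is also the right one: with $B_n^2=\sum_i\Ep Z_i^2$ their Lyapunov quantity is $d_{n,\mu}=B_n/(\sum_i\Ep|Z_i|^{2+\mu})^{1/(2+\mu)}=n^{\mu/(2(2+\mu))}M_n$, so the restriction $1+x\leq n^{\mu/(2(2+\mu))}M_n/\ell_n$ turns their error term $A(1+x)^{2+\mu}/d_{n,\mu}^{2+\mu}$ into $A/\ell_n^{2+\mu}$, which is precisely how the stated form follows.
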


For each $j=1,\ldots,p$, and each $u\in \mathcal{U}^\epsilon$, we will apply Lemma \ref{Lemma: MDSN} with $Z_i := f_j(X_i)\zeta_{ui}$, and $\mu = 1$. Then, by Lemma \ref{Lemma: MDSN}, the union bound, and $|\mathcal{U}^\epsilon|\leq N_n$,  we have
\begin{equation}\label{SNcontrol}
\begin{array}{rl}\displaystyle\Pr_P \left( {\displaystyle\sup_{u\in\mathcal{U}^\epsilon}\max_{j\leq p}} \left|
\frac{\sqrt{n}\En[ f_j(X)\zeta_{u} ]}{\sqrt{\En[f_j(X)^2 \zeta_{u}^2]}}\right| > \Phi^{-1}(1-\mbox{$\frac{\gamma}{2pN_n}$}) \right) & \leq 2pN_n ( \gamma/2p N_n)\{ 1 + o(1)\}\\
& \leq \gamma\{ 1 + o(1)\}\end{array} \end{equation}
provided that $\max_{u,j}\{\barEp_P[|f_j(X)\zeta_u|^3]^{1/3}/\barEp_P[|f_j(X)\zeta_u|^2]^{1/2}\}\Phi^{-1}(1-\gamma/2pN_n)\leq \delta_n n^{1/6}$, which holds by Condition WL since $\gamma \geq 1/n$ (under this condition there is $\ell_n \to \infty$ obeying conditions
of Lemma \ref{Lemma: MDSN})

Moreover, by triangle inequality we have
 \begin{equation}\label{EqTri2}\begin{array}{ll}
{\displaystyle \sup_{u\in\mathcal{U}^\epsilon,u'\in\mathcal{U},d_\mathcal{U}(u,u')\leq \epsilon}} \| \hat \Psi^{-1}_{u 0} \En[f(X)\zeta_{u} ] - \hat \Psi^{-1}_{u'0} \En[ f(X)\zeta_{u'} ]\|_\infty  \\ \leq {\displaystyle  \sup_{u\in\mathcal{U}^\epsilon,u'\in\mathcal{U},d_\mathcal{U}(u,u')\leq \epsilon}} \|( \hat \Psi^{-1}_{u 0} - \hat \Psi^{-1}_{u'0}) \hat \Psi_{u 0} \|_\infty \| \hat \Psi^{-1}_{u 0}\En[ f(X)\zeta_{u} ]\|_\infty \\ +  {\displaystyle \sup_{u,u'\in\mathcal{U},d_\mathcal{U}(u,u')\leq \epsilon}} \| \En[ f(X)(\zeta_{u}- \zeta_{u'}) ]\|_\infty\|\hat \Psi^{-1}_{u'0} \|_\infty.\end{array}\end{equation}

To control the first term in (\ref{EqTri2}) we note that by Condition WL, $\hat \Psi_{u 0jj}$ is bounded away from zero  with probability $1-o(1)$ uniformly over $u\in\mathcal{U}$ and $j=1,\ldots,p$. Thus we have uniformly over $u\in\mathcal{U}$ and $j=1,\ldots,p$ \begin{equation}\label{eqaux37a}|(\hat \Psi^{-1}_{u 0jj}-\hat \Psi^{-1}_{u'0jj})\hat \Psi_{u 0jj}|=|\hat \Psi_{u 0jj}-\hat \Psi_{u' 0jj}|/\hat \Psi_{u' 0jj} \leq C|\hat \Psi_{u 0jj}-\hat \Psi_{u' 0jj}|\end{equation} with the same probability.
Moreover, we have
\begin{equation}\label{eqAux37}\begin{array}{rl}
{\displaystyle \sup_{u,u'\in\mathcal{U},d_\mathcal{U}(u,u')\leq \epsilon}} \max_{j\leq p}| \{\En[f_j(X)^2\zeta_{u}^2]\}^{1/2}-\{\En[f_j(X)^2\zeta_{u'}^2]\}^{1/2}| &\\
\leq{\displaystyle \sup_{u,u'\in\mathcal{U},d_\mathcal{U}(u,u')\leq \epsilon}}\max_{j\leq p}  \{\En[f_j(X)^2(\zeta_{u}-\zeta_{u'})^2]\}^{1/2}.\\
\end{array}\end{equation}
 Thus, relations (\ref{eqaux37a}) and (\ref{eqAux37}) imply that with probability $1-o(1)$
$${\displaystyle  \sup_{u,u'\in\mathcal{U},d_\mathcal{U}(u,u')\leq \epsilon}} \| (\hat \Psi^{-1}_{u 0} - \hat \Psi^{-1}_{u'0})\hat\Psi_{u0} \|_\infty \lesssim {\displaystyle \sup_{u,u'\in\mathcal{U},d_\mathcal{U}(u,u')\leq \epsilon}}  \max_{j\leq p}\{\En[f_j(X)^2(\zeta_{u}-\zeta_{u'})^2]\}^{1/2}.$$

By (\ref{SNcontrol}) $$\sup_{u\in\mathcal{U}^\epsilon} \|\widehat\Psi^{-1}_{u0}\En[ f(X)\zeta_{u} ]\|_\infty\leq C'\sqrt{\log(p\vee N_n\vee n)/n}$$ with probability $1-o(1)$, so that with the same probability
$$ \begin{array}{ll} {\displaystyle  \sup_{u\in \mathcal{U}^\epsilon,u'\in\mathcal{U},d_\mathcal{U}(u,u')\leq \epsilon}} \|  (\hat\Psi^{-1}_{u 0} - \hat \Psi^{-1}_{u'0})\hat \Psi_{u0} \|_\infty \|\widehat\Psi^{-1}_{u0}\En[ f(X)\zeta_{u} ]\|_\infty \\ \leq    {\displaystyle  \sup_{u,u'\in\mathcal{U},d_\mathcal{U}(u,u')\leq \epsilon}} \max_{j\leq p}  \{\En[f_j(X)^2(\zeta_{u}-\zeta_{u'})^2]\}^{1/2} C' \sqrt{\frac{\log(p\vee N_n \vee n)}{n}}\leq \frac{o(1)}{\sqrt{n}}\end{array}$$
where the last inequality follows by Condition WL(iii).

The last term in (\ref{EqTri2}) is of the order $o(n^{-1/2})$ with probability $1-o(1)$ since by Condition WL, $$\sup_{u,u'\in\mathcal{U},d_\mathcal{U}(u,u')\leq \epsilon} \| \En[ f(X)(\zeta_{u}- \zeta_{u'}) ]\|_\infty\leq \delta_n n^{-1/2}$$ with probability $1-\Delta_n$, and noting that by Condition WL $\sup_{u\in\mathcal{U}}\|\hat \Psi^{-1}_{u 0}\|_\infty$ is uniformly bounded with probability at least $1-o(1)-\Delta_n$.

The results above imply that (\ref{EqTri2}) is bounded by $o(1)/\sqrt{n}$ with probability $1-o(1)$. Since $\frac{1}{2}\sqrt{ \log (2pN_n/\gamma)} \leq \Phi^{-1}(1-\gamma/\{2pN_n\})$ for $n$ large enough (since  $\gamma/\{2pN_n\} \to 0$ and standard tail bounds),  we have that with probability $1-o(1)$  {\small $$ \frac{(c'-c)}{\sqrt{n}}\Phi^{-1}(1-\gamma/\{2pN_n\}) \geq \sup_{u\in\mathcal{U}^\epsilon,u'\in\mathcal{U},d_\mathcal{U}(u,u')\leq \epsilon} \| \hat \Psi^{-1}_{u 0} \En[ f(X)\zeta_{u} ] - \hat \Psi^{-1}_{u'0} \En[ f(X)\zeta_{u'} ]\|_\infty$$} and the result follows.
\end{proof}

\begin{proof}[Proof of Lemma \ref{PrimitiveWL}]
We start with the last statement of the lemma since it is more difficult (others will use similar calculations). Consider the class of functions $\mathcal{F}=\{ Y_u : u \in \mathcal{U}\}$, $\mathcal{F}'=\{ \Ep_P[Y_u\mid X] : u \in \mathcal{U}\}$, and $\mathcal{G} = \{ \zeta_u^2=(Y_u - \Ep_P[Y_u\mid X])^2 : u \in \mathcal{U} \}$. Let $F$ be a measurable envelope for $\mathcal{F}$ which satisfies $F \leq B_n$.

Because $\mathcal{F}$ is a VC-class of functions with VC index $C'd_u$, by Lemma \ref{lemma: andrews}(1) we have \begin{equation}\label{CNvc} \log N( \epsilon \|F\|_{Q,2}, \mathcal{F}, \|\cdot\|_{Q,2}) \lesssim 1+[d_u\log (e/\epsilon)\vee 0].\end{equation}
To bound the covering number for $\mathcal{F}'$ we apply Lemma \ref{Lemma:PartialOutCovering}, and since $\Ep[F\mid X] \leq F$, we have \begin{equation}\label{CNb1} \log \sup_Q N( \epsilon \|F\|_{Q,2}, \mathcal{F}', \|\cdot\|_{Q,2})  \leq \log \sup_Q N( \mbox{$\frac{\epsilon}{2}$} \|F\|_{Q,2}, \mathcal{F}, \|\cdot\|_{Q,2}).\end{equation}
Since $\mathcal{G} \subset (\mathcal{F}-\mathcal{F}')^2$, $G=4F^2$ is an envelope for $\mathcal{G}$ and  the covering number for $\mathcal{G}$ satisfies \begin{equation}\label{Chain1G} \begin{array}{rl}
\log  N( \epsilon \|4F^2\|_{Q,2}, \mathcal{G}, \|\cdot\|_{Q,2}) & \overset{(i)}{\leq} 2 \log N( \frac{\epsilon}{2} \|2F\|_{Q,2}, \mathcal{F}-\mathcal{F}', \|\cdot\|_{Q,2})\\
&  \overset{(ii)}{\leq} 2 \log  N( \frac{\epsilon}{4} \|F\|_{Q,2}, \mathcal{F}, \|\cdot\|_{Q,2})+ 2 \log  N( \frac{\epsilon}{4} \|F\|_{Q,2}, \mathcal{F}', \|\cdot\|_{Q,2})\\
&  \overset{(iii)}{\leq} 4 \log \sup_Q N( \frac{\epsilon}{8} \|F\|_{Q,2}, \mathcal{F}, \|\cdot\|_{Q,2}),\\
\end{array}\end{equation}
where (i) and (ii) follow by Lemma \ref{lemma: andrews}(2), and (iii) follows from (\ref{CNb1}).

Hence, the entropy bound for the class $\mathcal{M}=\cup_{j\in[p]}\mathcal{M}_j$, where  $\mathcal{M}_j = \{ f_j^2(X) \mathcal{G}\}$, $j \in [p]$ and envelope $M = 4K^2_nF^2$, satisfies
$$
\begin{array}{rl}
\log N( \epsilon \|M\|_{Q,2}, \mathcal{M}, \|\cdot\|_{Q,2}) & \overset{(a)}{\leq} \log p + \max_{j\in[p]}\log N( \epsilon \|4K^2_nF^2\|_{Q,2}, \mathcal{M}_j, \|\cdot\|_{Q,2})\\
&  \overset{(b)}{\leq} \log p + \log N( \epsilon \|4F^2\|_{Q,2}, \mathcal{G}, \|\cdot\|_{Q,2})\\
&  \overset{(c)}{\leq}\log p +  4 \log \sup_Q N( \frac{\epsilon}{8} \|F\|_{Q,2}, \mathcal{F}, \|\cdot\|_{Q,2}) \\
&  \overset{(d)}{\lesssim}\log p + [(1+d_u) \log (e/\epsilon) \vee 0 ],
\end{array}$$
where (a) follows by Lemma \ref{lemma: andrews}(2) for union of classes, (b) holds by Lemma \ref{lemma: andrews}(2) when one class has only a single function, (c) by (\ref{Chain1G}) and (d) follows from (\ref{CNvc}) and $\epsilon \leq 1$. Therefore, since $\sup_{u\in\mathcal{U}}\max_{j\leq p}\Ep_P[f_j^2(X)\zeta_u^2]$ is bounded away from zero and from above, by Lemma \ref{lemma:CCK}  we have with probability $1-O(1/\log n)$ that
$$
\begin{array}{rl}
\sup_{u\in\mathcal{U}}\max_{j\leq p}|(\En-\Ep_P)[f_j^2(X)\zeta_u^2]| & \lesssim \sqrt{\frac{(1+d_u) \log (npK^2_nB^2_n)}{n}} + \frac{(1+d_u)K_n^2B_n^2}{n}\log(npB^2_nK^2_n) .\\
\end{array}
$$
using the envelope $M = 4K^2_nB^2_n$, $v= C'$, $a=pn$ and a constant $\sigma$.

Consider the first term. By Lemma \ref{lemma:CCK} we have with probability $1-O(1/\log n)$ that $$\begin{array}{rl}
\displaystyle  \sup_{d_{\mathcal{U}}(u,u')\leq 1/n}\|\En[f(X)(\zeta_u - \zeta_{u'})]\|_\infty & =  \displaystyle  \sup_{d_{\mathcal{U}}(u,u')\leq 1/n} \frac{1}{\sqrt{n}}\max_{j\leq p}|\Gn( f_j(X)(\zeta_u-\zeta_{u'}))|\\
 & \lesssim \frac{1}{\sqrt{n}}\sqrt{\frac{(1+d_u) L_n\log(pnK_nB_n \frac{n^\nu}{L_n})}{n^\nu}} + \frac{(1+d_u)K_nB_n\log(pnK_nB_n\frac{n^\nu}{L_n})}{n}  \end{array}
 $$
using the envelope $F = 2K_nB_n$, $v= C'$, $a=pn$, the entropy bound in Lemma \ref{Lemma:PartialOutCovering}, and $  \sigma^2 \propto L_nn^{-\nu} \leq F^2$ for all $n$ sufficiently large, because   $L_nn^{-\nu} \searrow 0$ and
 $$\begin{array}{rl}
\displaystyle \sup_{d_{\mathcal{U}}(u,u')\leq 1/n} \max_{j\leq p}\Ep_P[ f_j(X)^2(\zeta_u-\zeta_{u'})^2]&\displaystyle \leq \sup_{d_{\mathcal{U}}(u,u')\leq 1/n} \max_{j\leq p}\Ep_P[ f_j(X)^2(Y_u-Y_{u'})^2] \\
 & \displaystyle\leq  \sup_{d_{\mathcal{U}}(u,u')\leq 1/n}L_n|u-u'|^\nu \max_{j\leq p}\Ep_P[ f_j(X)^2]\leq CL_nn^{-\nu}.\end{array}$$

To bound the second term in the statement of the lemma, it follows that
\begin{equation}\label{Eq:Split1}\begin{array}{rl}
 \displaystyle \sup_{d_{\mathcal{U}}(u,u')\leq 1/n}\max_{j\leq p}\En[f_j(X)^2(\zeta_u - \zeta_{u'})^2] 
 &\displaystyle = \sup_{d_{\mathcal{U}}(u,u')\leq 1/n}\max_{j\leq p}\En[f_j(X)^2(\Ep_P[Y_u - Y_{u'}\mid X] )^2]  \\
& \displaystyle  \leq  \sup_{d_{\mathcal{U}}(u,u')\leq 1/n}\max_{j\leq p}\En[f_j(X)^2\Ep_P[|Y_u - Y_{u'}|^2\mid X]]\\
& \displaystyle \leq  \max_{j\leq p}\En[f_j(X)^2]\sup_{d_{\mathcal{U}}(u,u')\leq 1/n}L_n|u-u'|^\nu\\
\end{array}
\end{equation}
where the first inequality holds by Jensen's inequality, and the second inequality holds by assumption. Since  $c\leq \max_{j\leq p}\{\Ep_P[f_j(X)^2]\}^{1/2} \leq C$, the result follows by Lemma  \ref{lemma:CCK} which yields with probability $1-O(1/\log n)$
\begin{equation}\label{Boundfj2}\begin{array}{rl} \max_{j\leq p}|(\En-\Ep_P)[f_j(X)^2]| & \lesssim\displaystyle   \sqrt{\frac{\log (p nK_n^2)}{n}} + \frac{K_n^2}{n}\log( pnK_n^2 ),\\
\end{array} \end{equation}  where we used the choice $C \leq \sigma  = C' \leq F = K_n^2$, $v=C$, $a = pn$.\end{proof}

\subsection{Proofs for Lasso with Functional Response: Linear Case}

\begin{proof}[Proof of Lemma \ref{Thm:RateEstimatedLasso}]
Let $\hat\delta_u = \hat\theta_u-\theta_{u}$. Throughout the proof we assume that the events $ c_r^2 \geq \sup_{u\in \mathcal{U}}\En[r_{u}^2]$, $\lambda/n \geq c\sup_{u\in\mathcal{U}}\|\hat  \Psi^{-1}_{u0}\En[\zeta_uf(X)]\|_\infty$ and $\ell \widehat \Psi_{u0} \leq \widehat \Psi_u \leq L\widehat \Psi_{u0}$ occur.

By definition of $\hat\theta_u$,
$$\hat\theta_u \in \arg\min_{\theta\in\RR^p} \En[(Y_u-f(X)'\theta)^2]+\frac{2\lambda}{n}\|\hat  \Psi_u \theta\|_1,$$
and $\ell \widehat \Psi_{u0} \leq \widehat \Psi_u \leq L\widehat \Psi_{u0}$, we have
\begin{equation}\label{Eq:EstLassoV2}\begin{array}{ll}
& \En[(f(X)'\hat\delta_u)^2] - 2\En[(Y_u-f(X)'\theta_{u})f(X)]'\hat\delta_u \\ & = \En[(Y_u-f(X)'\hat\theta_u)^2] -\En[(Y_u-f(X)'\theta_u)^2] \\
& \leq \frac{2\lambda}{n}\|\hat  \Psi_u \theta_{u}\|_1 -\frac{2\lambda}{n}\|\hat  \Psi_u \hat \theta_u\|_1 \\
 & \leq \frac{2\lambda}{n}\|\hat  \Psi_u\hat\delta_{uT_u}\|_1-\frac{2\lambda}{n}\|\hat  \Psi_u \hat\delta_{uT_u^c}\|_1\\
& \leq \frac{2\lambda}{n}L\|\hat  \Psi_{u 0}\hat\delta_{uT_u}\|_1-\frac{2\lambda}{n}\ell\|\hat  \Psi_{u 0}\hat\delta_{uT_u^c}\|_1.\end{array}\end{equation}
Therefore, by  $ c_r^2 \geq \sup_{u\in \mathcal{U}}\En[r_{u}^2]$ and $\lambda/n \geq c\sup_{u\in\mathcal{U}}\|\hat  \Psi^{-1}_{u0}\En[\zeta_uf(X)]\|_\infty$, we have
 \begin{equation}\label{LassoNewEq}\begin{array}{rl} &\En[(f(X)'\hat\delta_u)^2] \\
 &\leq  2\En[r_{u}f(X)]'\hat\delta_u+2(\hat  \Psi^{-1}_{u0}\En[\zeta_{u}f(X)])'(\hat  \Psi_{u 0}\hat\delta_u)+\frac{2\lambda}{n}L\|\hat  \Psi_{u 0}\hat\delta_{uT_u}\|_1-\frac{2\lambda}{n}\ell\|\hat  \Psi_{u 0}\hat\delta_{uT_u^c}\|_1\\  
& \leq 2c_r \{\En[(f(X)'\hat\delta_u)^2]\}^{1/2}+2\|\hat  \Psi^{-1}_{u0}\En[\zeta_uf(X)]\|_\infty\|\hat  \Psi_{u 0}\hat\delta_u\|_1
 +\frac{2\lambda}{n}L\|\hat  \Psi_{u 0}\hat\delta_{uT_u}\|_1-\frac{2\lambda}{n}\ell\|\hat  \Psi_{u 0}\hat\delta_{uT_u^c}\|_1\\
& \leq 2 c_r \{\En[(f(X)'\hat\delta_u)^2]\}^{1/2}+\frac{2\lambda}{cn}\|\hat  \Psi_{u 0}\hat\delta_u\|_1+\frac{2\lambda}{n}L\|\hat  \Psi_{u 0}\hat\delta_{uT_u}\|_1-\frac{2\lambda}{n}\ell\|\hat  \Psi_{u 0}\hat\delta_{uT_u^c}\|_1\\
& \leq 2c_r \{\En[(f(X)'\hat\delta_u)^2]\}^{1/2}+\frac{2\lambda}{n}\left(L+\frac{1}{c}\right)\|\hat  \Psi_{u 0}\hat\delta_{uT_u}\|_1-\frac{2\lambda}{n}\left(\ell-\frac{1}{c}\right)\|\hat  \Psi_{u 0}\hat\delta_{uT_u^c}\|_1.\\
\end{array}\end{equation}
Let $$\tilde \cc := \frac{cL+1}{c\ell-1}\sup_{u\in\mathcal{U}}\|\hat\Psi_{u 0}\|_\infty\|\hat\Psi_{u 0}^{-1}\|_\infty.$$ Therefore if $\hat \delta_u \not\in \Delta_{\tilde \cc,u} = \{ \delta \in \RR^p : \|\delta_{T_u^c}\|_1\leq \tilde \cc\|\delta_{T_u}\|_1\}$, we have that $ \left(L+\frac{1}{c}\right)\|\hat  \Psi_{u 0}\hat\delta_{uT_u}\|_1\leq \left(\ell-\frac{1}{c}\right)\|\hat  \Psi_{u 0}\hat\delta_{uT_u^c}\|_1$ so that
$$ \{\En[(f(X)'\hat\delta_u)^2]\}^{1/2} \leq 2c_r.$$

Otherwise assume $\hat\delta_u \in \Delta_{\tilde \cc,u}$. In this case (\ref{LassoNewEq}), the definition of $
\kappa_{\tilde \cc}$, and $\|\hat\delta_{uT_u}\|_1 \leq \sqrt{s}\| \hat\delta_{uT_u}\|$, we have
{\small $$\begin{array}{rl}\En[(f(X)'\hat\delta_u)^2]
& \leq 2c_r \{\En[(f(X)'\hat\delta_u)^2]\}^{1/2}+\frac{2\lambda}{n}\left(L+\frac{1}{c}\right)\|\widehat\Psi_{u0}\|_\infty\sqrt{s}\{\En[(f(X)'\hat\delta_u)^2]\}^{1/2}/\kappa_{\tilde \cc}\\
\end{array}$$}
which implies
\begin{equation}\label{Implies}  \{\En[(f(X)'\hat\delta_u)^2]\}^{1/2} \leq 2  c_r + \frac{2\lambda\sqrt{s}}{n\kappa_{\tilde \cc}}\left(L+\frac{1}{c}\right)\|\widehat\Psi_{u0}\|_\infty.\end{equation}
To establish the $\ell_1$-bound, first assume that $\hat\delta_u \in \Delta_{2\tilde \cc,u}$. In that case
$$\begin{array}{rl}
 \|\hat\delta_u\|_1 & \leq (1+2\tilde\cc)\|\hat\delta_{uT_u}\|_1 \leq (1+2\tilde\cc)\sqrt{s}\{\En[(f(X)'\hat\delta_u)^2]\}^{1/2}/\kappa_{2\tilde\cc} \\
 & \leq (1+2\tilde\cc)\left\{ 2 \frac{\sqrt{s} \ c_r}{\kappa_{2\tilde\cc}} + \frac{2\lambda s }{n\kappa_{\tilde \cc}\kappa_{2\tilde\cc}}\left(L+\frac{1}{c}\right)\|\widehat\Psi_{u0}\|_\infty\right\}\end{array}$$
where we used that $\|\hat\delta_{uT_u}\|_1\leq \sqrt{s}\|\hat\delta_{uT_u}\|$, the definition of the restricted eigenvalue, and the prediction rate derived in (\ref{Implies}).

Otherwise note that $\hat\delta_u \not\in \Delta_{2\tilde \cc,u}$ implies that $ \left(L+\frac{1}{c}\right)\|\hat  \Psi_{u 0}\hat\delta_{uT_u}\|_1\leq \frac{1}{2} \left(\ell-\frac{1}{c}\right)\|\hat  \Psi_{u 0}\hat\delta_{uT_u^c}\|_1$ so that (\ref{LassoNewEq}) yields
$$\frac{1}{2}\frac{2\lambda}{n} \left(\ell-\frac{1}{c}\right)\|\hat  \Psi_{u 0}\hat\delta_{uT_u^c}\|_1 \leq \{\En[(f(X)'\hat\delta_u)^2]\}^{1/2} \left( 2 c_r-\{\En[(f(X)'\hat\delta_u)^2]\}^{1/2}\right) \leq c_r^2$$
where we used that $\max_t t(2c_r-t)\leq c_r^2$. Therefore
$$\|\hat\delta_u\|_1 \leq  \left(1+\frac{1}{2\tilde\cc}\right)\|\hat\delta_{uT_u^c}\|_1 \leq \left(1+\frac{1}{2\tilde\cc}\right)\|\hat\Psi_{u 0}^{-1}\|_\infty\|\hat\Psi_{u 0}\hat\delta_{uT_u^c}\|_1\leq \left(1+\frac{1}{2\tilde\cc}\right)\frac{c\|\hat\Psi_{u 0}^{-1}\|_\infty}{\ell c-1}\frac{n}{\lambda} c_r^2.$$
\end{proof}

\begin{proof}[Proof of Lemma \ref{Thm:Sparsity}]
Step 1. Let $L_u = 4c_0\|\widehat \Psi_{u0}^{-1}\|_\infty \[\frac{n c_r}{\lambda} + \frac{\sqrt{s}}{\kappa_{\tilde \cc}}\|\widehat\Psi_{u0}\|_\infty\].$
By Step 2 below and the definition of $L_u$ we have
\begin{equation}\label{Eq:Sparsity}\hat s_u \leq \semax{\hat s_u}L_u^2.\end{equation}
  Consider any $M \in \mathcal{M}=\{ m \in \mathbb{N}: m >  2\semax{m}\sup_{u\in\mathcal{U}}L_u^2\} $, and suppose $\widehat s_u > M$.

Next recall the sublinearity of the maximum sparse eigenvalue (for a proof see Lemma 3 in \citen{BC-PostLASSO}), namely, for any integer $k \geq 0$ and constant $\ell \geq 1$ we have $\semax{\ell k}  \leq  \lceil \ell \rceil \semax{k}, $
where $\lceil \ell \rceil$ denotes the  ceiling of $\ell$. Therefore
$$ \hat s_u  \leq \semax{M\hat s_u/M}L_u^2 \leq  \ceil{\frac{\hat s_u}{M}}\semax{M}L_u^2.$$ Thus, since $\ceil{k}\leq 2k$ for any $k\geq 1$  we have
$ M \leq   2\semax{M}L_u^2$
which violates the condition that $M \in \mathcal{M}$. Therefore, we have $\widehat s_u \leq M$.

In turn, applying (\ref{Eq:Sparsity}) once more with $\widehat s_u \leq M$ we obtain
 $ \hat s_u \leq   \semax{M}L_u^2.$ The result follows by minimizing the bound over $M \in \mathcal{M}$.

Step 2. In this step we establish that uniformly over $u\in\mathcal{U}$
$$\sqrt{\hat s_u} \leq 4\sqrt{\semax{\hat s_u}} \|\widehat \Psi_{u0}^{-1}\|_\infty c_0\[\frac{n c_r}{\lambda} + \frac{\sqrt{s}}{\kappa_{\tilde \cc}}\|\widehat\Psi_{u0}\|_\infty\].$$
Let  $R_u = (r_{u 1},\ldots, r_{u n})'$, $\bold{Y}_u = (Y_{u1},\ldots,Y_{un})'$, $\bar\zeta_u=(\zeta_{u1},\ldots,\zeta_{un})'$, and $F=[ f(X_1);\ldots; f(X_n)]'$. We have from the optimality conditions that the Lasso estimator $\widehat \theta_u $ satisfies  $$\En[ \widehat \Psi_{ujj}^{-1}f_j(X)(Y_{u}-f(X)'\hat\theta_u)] = \sign(\hat\theta_{u j})\lambda/n \ \text{ for each } \ j \in \widehat T_u.  $$

Therefore, noting that $\|\widehat \Psi^{-1}_u\widehat \Psi_{u0}\|_\infty \leq 1/\ell$, we have
 \begin{eqnarray*}
   &  &\sqrt{\hat s_u} \lambda    =   \| (\widehat \Psi^{-1}_u F'(\bold{Y}_u - F\hat \theta_u))_{\widehat T_u} \| \\
            &  & \leq   \| (\widehat \Psi_u^{-1}F'\bar\zeta_u)_{\widehat T_u} \| + \| (\widehat \Psi^{-1}_u F'R_u)_{\widehat T_u} \| + \| (\widehat \Psi_u^{-1}  F'F(\theta_{u}-\hat \theta_u))_{\widehat T_u} \| \\
            & &  \leq  \sqrt{\hat s_u}\ \|\widehat \Psi_u^{-1}\widehat \Psi_{u0}\|_\infty\|\widehat \Psi_{u 0}^{-1}F' \bar\zeta_u\|_{\infty} +  n\sqrt{\semax{\hat s_u}}\| \widehat \Psi_u^{-1}\|_\infty c_r+ \\
             & & n \sqrt{ \semax{\hat s_u}} \|\widehat \Psi^{-1}_u\|_\infty\| F(\hat \theta_u - \theta_{u}) \|_{\Pn,2},\\
            & & \leq \sqrt{\hat s_u}\ (1/\ell) \ \|\widehat \Psi_{u0}^{-1}F'\bar\zeta_u\|_{\infty} + n \sqrt{ \semax{\hat s_u}} \frac{\|\widehat \Psi^{-1}_{u0}\|_\infty}{\ell}\{ c_r+\|F(\hat \theta_u- \theta_{u}) \|_{\Pn,2}\},
\end{eqnarray*}
where we used that $\|v\|\leq \|v\|_0^{1/2}\|v\|_\infty$ and
\begin{align*}
& \| (F'F(\theta_u-\hat \theta_{u}))_{\widehat T_u} \| \\
& \leq \sup_{\|\delta\|_0\leq \hat s_u, \|\delta\|\leq 1}|
\delta' F'F(\theta_u-\hat \theta_{u})| \leq
\sup_{\|\delta\|_0\leq \hat s_u, \|\delta\|\leq 1}\| \delta'F'\|\| F(\theta_u-\hat \theta_{u})\| \\
& \leq  \sup_{\|\delta\|_0\leq \hat
s_u, \|\delta\|\leq 1}\{ \delta'F'F \delta\}^{1/2}\|F(\theta_u-\hat \theta_{u})\| \leq
n\sqrt{\semax{\hat s_u}}\|f(X)'(\theta_{u}-\hat \theta_u)\|_{\Pn,2}.
\end{align*}
Since $\lambda/c \geq \sup_{u\in\mathcal{U}} \|\widehat \Psi_{u 0}^{-1}F'\bar\zeta_u\|_{\infty}$, and by Lemma \ref{Thm:RateEstimatedLasso}, we have that the estimate $\widehat\theta_u$ satisfies $\|f(X)'(\widehat\theta_u-\theta_u)\|_{\Pn,2} \leq 2c_r+2\(L + \frac{1}{c}\) \frac{\lambda \sqrt{s}}{n \kappa_{\tilde \cc}}\|\widehat\Psi_{u0}\|_\infty$ so that
\begin{align*}
 \sqrt{\hat s_u} & \leq \frac{\sqrt{\semax{\hat s_u}}\frac{\|\widehat \Psi_{u0}^{-1}\|_\infty}{\ell}\[\frac{3n c_r}{\lambda} +3\(L+\frac{1}{c}\)\frac{\sqrt{s}}{\kappa_{\tilde \cc}}\|\widehat\Psi_{u0}\|_\infty\]}{\(1-\frac{1}{c\ell}\)}\\
& \leq 4\frac{\(L+\frac{1}{c}\)}{\(1-\frac{1}{c\ell}\)}\frac{1}{\ell}\sqrt{\semax{\hat s_u}}\|\widehat \Psi_{u0}^{-1}\|_\infty\[\frac{n c_r}{\lambda}+\frac{\sqrt{s}}{\kappa_{\tilde \cc}}\|\widehat\Psi_{u0}\|_\infty\].
 \end{align*}

The result follows by noting that $(L+[1/c])/(1-1/[\ell c]) = c_0 \ell$ by definition of $c_0$.

\end{proof}

\begin{proof}[Proof of Lemma \ref{Thm:2StepMain}]

 Define $m_{u}:=(\Ep[Y_{u1}\mid X_1],\ldots,\Ep[Y_{un}\mid X_n])'$, $\bar \zeta_u:=(\zeta_{u1},\ldots,\zeta_{un})'$, and the $n\times p$ matrix $F:=[ f(X_1);\ldots; f(X_n)]'$. For a set of indices $S \subset \{1,\ldots,p\}$ we define  $\widehat{P}_{S} = F[S](F[S]'F[S])^{-1}F[S]'$ denote the projection matrix on the columns associated with the indices in $S$ where we interpret $\widehat{P}_{S}$ as a null operator if $S$ is empty.

   Since $Y_{ui} = m_{ui} + \zeta_{ui}$ we have  $$m_u - F\widetilde\theta_u = ( I - \widehat{P}_{\widetilde T_u})m_u - \widehat {P}_{\widetilde T_u} \bar\zeta_u$$ where $I$ is the identity operator. Therefore
\begin{equation}\label{eqPL}
\| m_u - F\widetilde\theta_u \| \leq \|( I - \widehat{P}_{\widetilde T_u}) m_u \| +\|\widehat {P}_{\widetilde T_u}  \bar\zeta_u\|.
\end{equation}
Since $\|F[\widetilde T_u]/\sqrt{n}( F[\widetilde T_u]' F[\widetilde T_u]/n)^{-1} \| \leq \sqrt{1/\semin{\tilde s_u}}$,  the last term in (\ref{eqPL}) satisfies
$$\begin{array}{rl}
\|\widehat{P}_{\widetilde T_u}\bar\zeta_u \|  & \leq  \sqrt{1/\semin{\tilde s_u}} \|F[\widetilde T_u]'\bar\zeta_u/\sqrt{n}\| \\
 & \leq \sqrt{1/\semin{\tilde s_u}} \sqrt{\tilde s_u}\|F'\bar\zeta_u/\sqrt{n}\|_\infty.\end{array}$$

By Lemma \ref{Thm:ChoiceLambda} with $\gamma = 1/n$, we have that with probability $1-o(1)$, uniformly in $u\in\mathcal{U}$
$$  \|F'\bar\zeta_u/\sqrt{n}\|_\infty \leq C \sqrt{ \log (p\vee n^{\dn+1})} \max_{1\leq j\leq p} \sqrt{\En[f_j(X)^2\zeta_{u}^2]} = C \sqrt{ \log (p\vee n^{\dn+1})}\|\widehat\Psi_{u0}\|_\infty.$$
The result follows.

The last statement follows from noting that the mean square approximation error provides an upper bound to the best mean square approximation error based on the model $\widetilde T_u$ provided that the model include the Lasso's mode, i.e. $\widehat T_u\subseteq\widetilde T_u$. Indeed, we have
\begin{align*}\displaystyle \sup_{u\in\mathcal{U}}\min_{\supp(\theta)\subseteq\widetilde T_u}  \|\Ep_P[Y_u\mid X]-f(X)'\theta\|_{\Pn,2} & \leq \sup_{u\in\mathcal{U}}\min_{\supp(\theta)\subseteq\widehat T_u}  \|\Ep_P[Y_u\mid X]-f(X)'\theta\|_{\Pn,2}\\
& \leq \sup_{u\in\mathcal{U}}  \|\Ep_P[Y_u\mid X]-f(X)'\hat\theta_u\|_{\Pn,2}\\
& \leq c_r + \sup_{u\in\mathcal{U}}  \|f(X)'\theta_u-f(X)'\hat\theta_u\|_{\Pn,2}\\
 & \leq 3 c_r +  \(L + \frac{1}{c}\) \frac{2\lambda \sqrt{s}}{n \kappa_{\tilde \cc}}\sup_{u\in\mathcal{U}}\|\widehat\Psi_{u0}\|_{\infty}
 \end{align*}
where we invoked Lemma \ref{Thm:RateEstimatedLasso} to bound $\|f(X)'(\widehat\theta_u-\theta_u)\|_{\Pn,2}$.\end{proof}


\subsection{Proofs for Lasso with Functional Response: Logistic Case}

\begin{proof}[Proof of Lemma \ref{Lemma:LassoLogisticRateRaw}]
Let $\delta_u = \hat\theta_u - \theta_u$ and $S_u = -\En[f(X)\zeta_u]$. By definition of $\hat \theta_u$ we have
$
M_u(\hat \theta_u) + \frac{\lambda}{n}\|\widehat\Psi_u\hat\theta_u\|_1  \leq M_u(\theta_u) + \frac{\lambda}{n}\|\widehat\Psi_u\theta_u\|_1$. Thus,
\begin{equation}\label{QQ:UpperBound}
\begin{array}{rl}
M_u(\hat \theta_u) - M_u(\theta_u) & \leq \frac{\lambda}{n}\|\widehat\Psi_u\theta_u\|_1 - \frac{\lambda}{n}\|\widehat\Psi_u\hat\theta_u\|_1 \\
& \leq \frac{\lambda}{n}\|\widehat\Psi_u\delta_{u,T_u}\|_1 - \frac{\lambda}{n} \|\widehat\Psi_u\delta_{u,T_u^c}\|_1 \leq \frac{\lambda L}{n}\|\widehat\Psi_{u0}\delta_{u,T_u}\|_1 - \frac{\lambda \ell}{n} \|\widehat\Psi_{u0}\delta_{u,T_u^c}\|_1.\\
\end{array}
\end{equation}

Moreover, by convexity of $M_u(\cdot)$ and H\"older's inequality we have \begin{equation}\label{QQ:LowerBound}
\begin{array}{rl}
& M_u(\hat \theta_u) - M_u(\theta_u)  \geq \partial_{\theta} M_u(\theta_u) \geq -\frac{\lambda}{n}\frac{1}{c}\|\widehat\Psi_{u0}\delta_{u}\|_1 -\|r_{u}/\sqrt{w_{u}}\|_{\Pn,2}\|\sqrt{w_{u}} f(X)'\delta_u\|_{\Pn,2}
\end{array}
\end{equation}
because \begin{eqnarray}
| \partial_{\theta} M_u(\theta_u)'\delta_u|  & =&  |S_u'\delta_u + \{\partial_{\theta} M_u(\theta_u)-S_u\}'\delta_u|   \leq    |S_u'\delta_u| + |\{\partial_{\theta} M_u(\theta_u)-S_u\}'\delta_u|  \nonumber \\
& \leq &  \|\widehat\Psi_{u0}^{-1}S_u \|_\infty \|\widehat\Psi_{u0}\delta_u\|_1+ \|r_{u}/\sqrt{w_{u}}\|_{\Pn,2}\|\sqrt{w_{u}}  f(X)'\delta_u\|_{\Pn,2} \nonumber \\
& \leq & \frac{\lambda}{n}\frac{1}{c}\|\widehat\Psi_{u0}\delta_{u}\|_1 + \|r_{u}/\sqrt{w_{u}}\|_{\Pn,2}\|\sqrt{w_{u}} f(X)'\delta_u\|_{\Pn,2},
\label{useful}
\end{eqnarray}
where we used that $\lambda/n\geq c\sup_{u\in\mathcal{U}}\|\widehat\Psi_{u0}^{-1}S_u \|_\infty$ and that $\partial_\theta M_u(\theta_u)=\En[\{\zeta_u+r_u\}f(X)]$ so that
\begin{eqnarray}
& & |\{\partial_\theta M_u(\theta_u)-S_u\}'\delta_u| = |\En[r_uf(X)'\delta_u]|\leq \|r_{u}/\sqrt{w_{u}}\|_{\Pn,2}\|\sqrt{w_{u}} f(X)'\delta_u\|_{\Pn,2}.
\end{eqnarray}

Combining (\ref{QQ:UpperBound}) and (\ref{QQ:LowerBound}) we have
\begin{equation}\label{Eq:92b}\frac{\lambda}{n}\frac{c\ell-1}{c}\| \widehat\Psi_{u0} \delta_{u,T_u^c}\|_1 \leq \frac{\lambda}{n}\frac{Lc+1}{c}\| \widehat\Psi_{u0}\delta_{u,T_u}\|_1+\|r_{u}/\sqrt{w_{u}}\|_{\Pn,2}\|\sqrt{w_{u}} f(X)'\delta_u\|_{\Pn,2} \end{equation}
and for $\tilde \cc = \frac{Lc+1}{\ell c -1}\sup_{u\in\mathcal{U}}\|\widehat\Psi_{u0}\|_\infty\|\widehat\Psi_{u0}^{-1}\|_\infty \geq 1$ we have $$ \|\delta_{u,T_u^c}\|_1 \leq \tilde \cc \|\delta_{u,T_u}\|_1 + \frac{n}{\lambda}\frac{c\|\widehat\Psi_{u0}^{-1}\|_\infty}{\ell c - 1} \|r_{u}/\sqrt{w_{u}}\|_{\Pn,2}\|\sqrt{w_{u}} f(X)'\delta_u\|_{\Pn,2}.$$
Suppose $\delta_u \not\in \Delta_{2\tilde\cc,u}$, namely $\|\delta_{u,T_u^c}\|_1\geq 2\tilde \cc\|  \delta_{u,T_u}\|_1$. Thus,
$$
\begin{array}{rl}
\| \delta_u\|_1 & \leq ( 1 +
\{2\tilde \cc\}^{-1}) \| \delta_{u,T_u^c}\|_1\\
& \leq ( 1 + \{2\tilde \cc\}^{-1})\tilde \cc\| \delta_{u,T_u}\|_1+( 1 + \{2\tilde \cc\}^{-1})\frac{n}{\lambda}\frac{c\|\widehat\Psi_{u0}^{-1}\|_\infty}{\ell c - 1} \|r_{u}/\sqrt{w_{u}}\|_{\Pn,2}\|\sqrt{w_{u}} f(X)'\delta_u\|_{\Pn,2}\\
& \leq ( 1 +
\{2\tilde \cc\}^{-1})\frac{1}{2}\| \delta_{u,T_u^c}\|_1+( 1 + \{2\tilde \cc\}^{-1})\frac{n}{\lambda}\frac{c\|\widehat\Psi_{u0}^{-1}\|_\infty}{\ell c - 1} \|r_{u}/\sqrt{w_{u}}\|_{\Pn,2}\|\sqrt{w_{u}} f(X)'\delta_u\|_{\Pn,2}.\\
\end{array}
$$
The relation above implies that if $\delta_u \not\in \Delta_{2\tilde\cc,u}$
\begin{equation}\label{BoundL1Zero}
\begin{array}{rl}
\| \delta_u\|_1 & \leq \frac{4\tilde \cc}{2\tilde \cc-1}( 1 + \{2\tilde \cc\}^{-1})\frac{n}{\lambda}\frac{c\|\widehat\Psi_{u0}^{-1}\|_\infty}{\ell c - 1} \|r_{u}/\sqrt{w_{u}}\|_{\Pn,2}\|\sqrt{w_{u}} f(X)'\delta_u\|_{\Pn,2}\\
& \leq \frac{6c\|\widehat\Psi_{u0}^{-1}\|_\infty}{\ell c - 1}\frac{n}{\lambda}\|r_{u}/\sqrt{w_{u}}\|_{\Pn,2}\|\sqrt{w_{u}} f(X)'\delta_u\|_{\Pn,2}=: I_u,  \\
\end{array}
 \end{equation} where we used that $\frac{4\tilde \cc}{2\tilde \cc-1}( 1 + \{2\tilde \cc\}^{-1})\leq 6$ since $\tilde \cc \geq 1$.
Combining the bound with the bound $$\|\delta_{u,T_u}\|_1 \leq \frac{\sqrt{s}}{\bar\kappa_{2\tilde \cc}} \|\sqrt{w_{u}} f(X)'\delta_u\|_{\Pn,2} =: II_u,  \  \text{ if } \delta_u\in\Delta_{2\tilde\cc,u}, $$ we have that $\delta_u$ satisfies \begin{equation}\label{BoundFirstL1}\|\delta_{u,T_u}\|_1 \leq I_u + II_u. \end{equation}

For every $u\in\mathcal{U}$, since $A_u = \Delta_{2\tilde \cc,u} \cup \{ \delta : \|\delta\|_1 \leq \frac{6c\|\widehat\Psi_{u0}^{-1}\|_\infty}{\ell c - 1}\frac{n}{\lambda}\|r_{u}/\sqrt{w_{u}}\|_{\Pn,2}\|\sqrt{w_{u}} f(X)'\delta\|_{\Pn,2}\}$, it follows that $\delta_u\in A_u$, and we have
$$ \begin{array}{l}
\frac{1}{3}\|\sqrt{w_{u}} f(X)'\delta_u\|_{\Pn,2}^2 \wedge \left\{ \frac{\bar q_{A_u}}{3}\|\sqrt{w_{u}} f(X)'\delta_u\|_{\Pn,2}\right\} \\
 \leq_{(1)} M_u(\hat \theta_u) - M_u(\theta_u) -\partial_{\theta} M_u(\theta_u)'\delta_u + 2\|\tilde r_{u}/\sqrt{w_{u}}\|_{\Pn,2}\|\sqrt{w_{u}} f(X)'\delta_u\|_{\Pn,2} \\
 \leq_{(2)} (L+\frac{1}{c})\frac{\lambda}{n}\|\widehat\Psi_{u0}\delta_{u,T_u}\|_1  +3\|\tilde r_{u}/\sqrt{w_{u}}\|_{\Pn,2}\|\sqrt{w_{u}} f(X)'\delta_u\|_{\Pn,2} \\
 \leq_{(3)} (L+\frac{1}{c})\|\widehat\Psi_{u0}\|_\infty\frac{\lambda}{n}\left\{I_u + II_u \right\}+3\|\tilde r_{u}/\sqrt{w_{u}}\|_{\Pn,2}\|\sqrt{w_{u}} f(X)'\delta_u\|_{\Pn,2} \\
\leq_{(4)} \left\{(L+\frac{1}{c})\|\widehat\Psi_{u0}\|_\infty\frac{\lambda\sqrt{s}}{n\bar\kappa_{2\tilde \cc}}+ 9\tilde \cc\|\tilde r_{u}/\sqrt{w_{u}}\|_{\Pn,2}\right\}\|\sqrt{w_{u}} f(X)'\delta_u\|_{\Pn,2},\\
\end{array}$$
where $(1)$ follows by Lemma \ref{Lemma:Minoration} with $A_u$, $(2)$ follows from  (\ref{useful}) and $|r_{ui}|\leq |\tilde r_{ui}|$, $(3)$ follows by $\|\widehat\Psi_{u0}\delta_{u,T_u}\|_1\leq \|\widehat\Psi_{u0}\|_\infty \|\delta_{u,T_u}\|_1$ and  (\ref{BoundFirstL1}),  $(4)$ follows from simplifications and $|r_{ui}|\leq |\tilde r_{ui}|$.
Since  the inequality $(x^2 \wedge a x) \leq b x$ holding for $x>0$ and $b< a <0$ implies $x \leq b$, the above system of the inequalities, provided that for every $u\in\mathcal{U}$ $$\bar q_{A_u}>3\left\{(L+\frac{1}{c})\|\widehat\Psi_{u0}\|_\infty\frac{\lambda\sqrt{s}}{n\bar\kappa_{2\tilde \cc}}+ 9\tilde \cc\|\tilde r_{u}/\sqrt{w_{u}}\|_{\Pn,2}\right\},$$
 implies that
$$ \|\sqrt{w_{u}} f(X)'\delta_u\|_{\Pn,2} \leq 3\left\{(L+\frac{1}{c})\|\widehat\Psi_{u0}\|_\infty\frac{\lambda\sqrt{s}}{n\bar\kappa_{2\tilde \cc}}+ 9\tilde \cc\|\tilde r_{u}/\sqrt{w_{u}}\|_{\Pn,2}\right\} =: III_u \ \ \ \mbox{for every} \ \ u\in \mathcal{U}.$$
The second result follows from the definition of $\bar\kappa_{2\tilde \cc}$, (\ref{BoundL1Zero}) and the bound on $\|\sqrt{w_{u}} f(X)'\delta_u\|_{\Pn,2} $ just derived, namely for every $u\in\mathcal{U}$ we have
$$\begin{array}{rl}
\|\delta_u\|_1 & \leq 1\{ \delta_u \in \Delta_{2\tilde\cc,u} \}\|\delta_u\|_1 + 1\{ \delta_u \notin \Delta_{2\tilde\cc,u} \}\|\delta_u\|_1 \\
 & \leq (1+2\tilde\cc) II_u + I_u \leq   3\left\{ \frac{(1+2\tilde\cc)\sqrt{s}}{\bar\kappa_{2\tilde \cc}}+ \frac{6c\|\widehat\Psi_{u0}^{-1}\|_\infty}{\ell c - 1}\frac{n}{\lambda}\left\|\frac{ r_{u}}{\sqrt{w_{u}}}\right\|_{\Pn,2}\right\} III_u \end{array}$$

\end{proof}

\begin{proof}[Proof of Lemma \ref{Lemma:LassoLogisticSparsity}]
The proof of both bounds are similar to the proof of sparsity for the linear case (Lemma \ref{Thm:Sparsity}) differing only on the definition of $L_u$ which are a consequence of pre-sparsity bounds established in Step 2 and Step 3.

Step 1. To establish the first bound by Step 2 below, triangle inequality and the definition of $\psi(A_u)$ we have
$$\begin{array}{rl}
 \sqrt{\hat s_u} & \leq  \frac{c(n/\lambda)}{(c\ell-1)}\sqrt{\semax{\hat s_u}}\| f(X)'(\hat\theta_u-\theta_u)-r_{u}\|_{\Pn,2} \\
 &  \leq \frac{c(n/\lambda)}{(c\ell-1)}\sqrt{\semax{\hat s_u}}\left\{\frac{\| \sqrt{w_u}f(X)'(\hat\theta_u-\theta_u)\|_{\Pn,2}}{\psi(A_u)}+\|r_{u}\|_{\Pn,2}\right\}\end{array}$$
uniformly in $u\in\mathcal{U}$. By Lemma \ref{Lemma:LassoLogisticRateRaw}, $\psi(A_u)\leq 1$  and  $\|r_{u}\|_{\Pn,2}\leq \|\tilde r_{u}/\sqrt{w_{u}}\|_{\Pn,2}$ we have
$$\begin{array}{rl} \sqrt{\hat s_u}& \leq \sqrt{\semax{\hat s_u}}\frac{c(n/\lambda)}{(c\ell-1)\psi(A_u)}\left\{ 3(L+\frac{1}{c})\|\hat\Psi_{u0}\|_\infty\frac{(\lambda/n)\sqrt{s}}{\bar\kappa_{2\tilde \cc}}   +28\tilde\cc\|\tilde r_{u}/\sqrt{w_u}\|_{\Pn,2}\right\}\\
& \leq \sqrt{\semax{\hat s_u}}\frac{c_0}{\psi(A_u)}\left\{ 3\|\hat\Psi_{u0}\|_\infty\frac{\sqrt{s}}{\bar\kappa_{2\tilde \cc}}   +28\tilde\cc\frac{n\|\tilde r_{u}/\sqrt{w_u}\|_{\Pn,2}}{\lambda}\right\}\end{array}$$

Let $L_u = \frac{c_0}{\psi(A_u)}\left\{ 3\|\hat\Psi_{u0}\|_\infty\frac{\sqrt{s}}{\bar\kappa_{2\tilde \cc}}   +28\tilde\cc\frac{n\|\tilde r_{u}/\sqrt{w_u}\|_{\Pn,2}}{\lambda}\right\}.$ Thus
 we have
\begin{equation}\label{Eq:SparsityL}\hat s_u \leq \semax{\hat s_u}L_u^2.\end{equation}
 which has the same structure as (\ref{Eq:Sparsity}) in the Step 1 of the proof of Lemma \ref{Thm:Sparsity}.

 Consider any $M \in \mathcal{M}=\{ m \in \mathbb{N}: m >  2\semax{m}\sup_{u\in\mathcal{U}}L_u^2\} $, and suppose $\widehat s_u > M$. By the sublinearity of the maximum sparse eigenvalue (Lemma 3 in \citen{BC-PostLASSO}),  for any integer $k \geq 0$ and constant $\ell \geq 1$ we have $\semax{\ell k}  \leq  \lceil \ell \rceil \semax{k}, $
where $\lceil \ell \rceil$ denotes the  ceiling of $\ell$. Therefore
$$ \hat s_u  \leq \semax{M\hat s_u/M}L_u^2 \leq  \ceil{\frac{\hat s_u}{M}}\semax{M}L_u^2.$$ Thus, since $\ceil{k}\leq 2k$ for any $k\geq 1$  we have
$ M \leq   2\semax{M}L_u^2$
which violates the condition that $M \in \mathcal{M}$. Therefore, we have $\widehat s_u \leq M$.
In turn, applying (\ref{Eq:SparsityL}) once more with $\widehat s_u \leq M$ we obtain
 $ \hat s_u \leq   \semax{M}L_u^2.$ The result follows by minimizing the bound over $M \in \mathcal{M}$.

Next we establish the second bound. By Step 3 below we have
$$ \sqrt{\hat s_u} \leq \frac{2c(n/\lambda)}{(c\ell-1)}\sqrt{\semax{\hat s_u}}\|\sqrt{w_{u}}\{ f(X)'(\hat\theta_u-\theta_u)-\tilde r_{u}\}\|_{\Pn,2}$$
By Lemma \ref{Lemma:LassoLogisticRateRaw} and that $\|\sqrt{w_{u}}\tilde r_{u}\|_{\Pn,2}\leq \|\tilde r_{u}/\sqrt{w_{u}}\|_{\Pn,2}$ we have
$$\begin{array}{rl} \sqrt{\hat s_u}& \leq \sqrt{\semax{\hat s_u}}\frac{2c(n/\lambda)}{(c\ell-1)}\left\{ 3(L+\frac{1}{c})\|\hat\Psi_{u0}\|_\infty\frac{(\lambda/n)\sqrt{s}}{\bar\kappa_{2\tilde \cc}}   +28\tilde\cc\|\tilde r_{u}/\sqrt{w_u}\|_{\Pn,2}\right\}\\
& \leq \sqrt{\semax{\hat s_u}}2c_0\left\{ 3\|\hat\Psi_{u0}\|_\infty\frac{\sqrt{s}}{\bar\kappa_{2\tilde \cc}}   +28\tilde\cc\frac{n\|\tilde r_{u}/\sqrt{w_u}\|_{\Pn,2}}{\lambda}\right\}\end{array}$$

Let $L_u = 2c_0\left\{ 3\|\hat\Psi_{u0}\|_\infty\frac{\sqrt{s}}{\bar\kappa_{2\tilde \cc}}   +28\tilde\cc\frac{n\|\tilde r_{u}/\sqrt{w_u}\|_{\Pn,2}}{\lambda}\right\}.$ Thus
  again we obtained the relation (\ref{Eq:Sparsity}) and the proof follows similarly
  to the Step 1 in the proof of Lemma \ref{Thm:Sparsity}.

Step 2. In this step we show that uniformly over $u\in \mathcal{U}$,
 \begin{equation}\label{SparsityLogisticStep2} \sqrt{\hat s_u} \leq \frac{c(n/\lambda)}{(c\ell-1)}\sqrt{\semax{\hat s_u}}\| f(X)'(\hat\theta_u-\theta_u)-r_{u}\|_{\Pn,2}.\end{equation}

Let $\G_{ui} := \Ep_P[Y_{ui}\mid X_i]$ and $S_u = -\En[ f(X)\zeta_u] = -\En[(Y_{u}-\G_{u}) f(X)]$. Let $\hat T_u = \supp(\hat\theta_u)$, $\hat s_u = \|\hat \theta_u\|_0$, $\delta_u = \hat\theta_u-\theta_u$, and $\hat \G_{ui}= \exp( f(X_i)'\hat\theta_u)/\{1+\exp( f(X_i)'\hat\theta_u)\}$. For any $j\in \hat T_u$ we have $|\En[(Y_{u}-\hat \G_{u})f_j(X)]|=\widehat\Psi_{ujj}\lambda/n$.

Since $\ell \widehat \Psi_{u0} \leq \widehat \Psi_u$ implies $\|\widehat \Psi_u^{-1}\widehat\Psi_{u0}\|_\infty \leq 1/\ell$, the first relation follows from
{\small $$\begin{array}{rl}
\frac{\lambda}{n}\sqrt{\hat s_u} & = \| (\widehat\Psi_{u}^{-1}\En[(Y_{u}-\hat \G_{u})f(X))_{\hat T_u}] \| \\
& \leq \|\widehat\Psi_{u}^{-1}\widehat\Psi_{u0}\|_\infty \|\widehat\Psi_{u0}^{-1} \En[(Y_u- \G_{u})f_{\hat T_u}(X)] \| + \|\widehat\Psi_{u}^{-1}\widehat\Psi_{u0}\|_\infty\|\widehat\Psi_{u0}^{-1}\|_\infty\| \En[(\hat \G_{u}-\G_{u})f_{\hat T_u}(X)] \|\\
& \leq \sqrt{\hat s_u}(1/\ell)\| \widehat\Psi_{u0}^{-1} \En[\zeta_u f(X)] \|_\infty + (1/\ell)\|\widehat\Psi_{u0}^{-1}\|_\infty\sup_{\|\theta\|_0\leq \hat s_u,\|\theta\|=1} \En[ |\hat\G_{u}-\G_{u}| \ | f(X)'\theta|]\\
& \leq \frac{\lambda}{\ell cn}\sqrt{\hat s_u} + \sqrt{\semax{\hat s_u}}(1/\ell)\|\widehat\Psi_{u0}^{-1}\|_\infty\| f(X)'\delta_u-r_{u}\|_{\Pn,2}\\
\end{array} $$}
uniformly in $u\in \mathcal{U}$, where we used that $\G$ is 1-Lipschitz. This relation implies  (\ref{SparsityLogisticStep2}).

Step 3. In this step we show that if $\max_{i\leq n}| f(X_i)'(\hat\theta_u-\theta_u)-\tilde r_{ui}| \leq 1$ we have
 \begin{equation}\label{SparsityLogisticStep3}\sqrt{\hat s_u} \leq \frac{2c(n/\lambda)}{(c\ell-1)}\sqrt{\semax{\hat s_u}}\|\sqrt{w_{u}}\{ f(X)'(\hat\theta_u-\theta_u)-\tilde r_{u}\}\|_{\Pn,2}\end{equation}
Note that uniformly in $u\in\mathcal{U}$, Lemma \ref{Lemma:AuxSparsity}  establishes that $
|\hat \G_{ui} - \G_{ui}| \leq w_{ui} 2| f(X)'\delta_u-\tilde r_{ui}|$ since $\max_{i\leq n}| f(X_i)'\delta_u-\tilde r_{ui}|\leq 1$ is assumed. Thus, combining this bound with the calculations performed in Step 2 we obtain
$$\begin{array}{rl}
\frac{\lambda}{n}\sqrt{\hat s_u} 
& \leq \frac{\lambda}{\ell cn}\sqrt{\hat s_u} + (2/\ell)\|\widehat\Psi_{u0}^{-1}\|_\infty\sqrt{\semax{\hat s_u}}\|\sqrt{w_{u}}\{ f(X)'\delta_u-\tilde r_{u}\}\|_{\Pn,2}\end{array} $$
which implies (\ref{SparsityLogisticStep3}). \end{proof}

\begin{proof}[Proof of Lemma \ref{Lemma:PostLassoLogisticRateRaw}]
Let $\tilde \delta_u=\tilde \theta_u - \theta_u$ and $\tilde t_{u} = \|\sqrt{w_{u}} f(X)'\tilde \delta_u\|_{\Pn,2}$ and $S_u = -\En[f(X)\zeta_u]$.

By Lemma \ref{Lemma:Minoration} with $A_u = \{\delta \in \RR^p : \|\delta\|_0 \leq \tilde s_u+s_u\}$, we have
$$ \begin{array}{rl}
\frac{1}{3}\tilde t_u^2 \wedge \left\{ \frac{\bar q_{A_u}}{3}\tilde t_u\right\} & \leq M_u(\tilde \theta_u) - M_u(\theta_u) -\partial_{\theta} M_u(\theta_u)'\tilde\delta_u + 2\|\tilde r_{u}/\sqrt{w_{u}}\|_{\Pn,2}\tilde t_u \\
& \leq M_u(\tilde \theta_u) - M_u(\theta_u) + \|S_u\|_\infty \|\tilde\delta_u\|_1 + 3\|\tilde r_{u}/\sqrt{w_u}\|_{\Pn,2}\tilde t_u\\
& \leq M_u(\tilde \theta_u) - M_u(\theta_u) + \tilde t_u\left\{ \frac{\sqrt{\tilde s_u+s_u}\|S_u\|_\infty}{\psi_{u}(A_u)\sqrt{\semin{\tilde s_u+s_u}}} + 3\|\tilde r_{u}/\sqrt{w_{u}}\|_{\Pn,2}\right\}.\\
\end{array}$$
where the second inequality holds by  calculations as in (\ref{useful}) and H\"{o}lder's inequality, and the last inequality follows from $$\|\tilde\delta_u\|_1\leq \sqrt{\tilde s_u+s_u}\|\tilde\delta_u\|_1\leq \frac{\sqrt{\tilde s_u+s_u}}{\sqrt{\semin{\tilde s_u+s_u}}}\|f(X)'\tilde\delta_u\|_{\Pn,2}\leq \frac{\sqrt{\tilde s_u+s_u}}{\sqrt{\semin{\tilde s_u+s_u}}}\frac{\|\sqrt{w_u}f(X)'\tilde\delta_u\|_{\Pn,2}}{\psi_{u}(A_u)}$$  by the definition $\psi_{u}(A) := \min_{\delta \in A} \frac{\|\sqrt{w_{u}} f(X)'\delta\|_{\Pn,2}}{\| f(X)'\delta\|_{\Pn,2}}$.

Recall the assumed conditions $\bar q_{A_u}/6>\left\{\frac{\sqrt{\tilde s_u+s_u}\|S_u\|_\infty}{\psi_{u}(A_u)\sqrt{\semin{\tilde s_u+s_u}}} + 3\|\tilde r_{u}/\sqrt{w_{u}}\|_{\Pn,2}\right\}$ and $\bar q_{A_u}/6>\sqrt{M_u(\tilde \theta_u) - M_u(\theta_u)}$. If
$\frac{1}{3}\tilde t_u^2 > \left\{ \frac{\bar q_{A_u}}{3}\tilde t_u\right\} $, then
$$ \frac{\bar q_{A_u}}{3}\tilde t_u \leq \frac{\bar q_{A_u}}{6}\sqrt{M_u(\tilde \theta_u) - M_u(\theta_u)} +\frac{\bar q_{A_u}}{6}\tilde t_u, $$
so that $\tilde t_u\leq \sqrt{0\vee \{M_u(\tilde \theta_u) - M_u(\theta_u)\}}$ which implies the result. Otherwise, we have
$$ \frac{1}{3}\tilde t_u^2 \leq \{M_u(\tilde \theta_u) - M_u(\theta_u)\} + \tilde t_u\left\{ \frac{\sqrt{\tilde s_u+s_u}\|S_u\|_\infty}{\psi_{u}(A_u)\sqrt{\semin{\tilde s_u+s_u}}} + 3\|\tilde r_{u}/\sqrt{w_{u}}\|_{\Pn,2}\right\},$$
since for positive numbers  $a$, $b$, $c$, inequality $a^2 \leq b + ac$ implies $a\leq \sqrt{b} + c$, we have
$$ \tilde t_u \leq \sqrt{3}\sqrt{0\vee\{M_u(\tilde \theta_u) - M_u(\theta_u)\}} + 3\left\{\frac{\sqrt{\tilde s_u+s_u}\|S_u\|_\infty}{\psi_{u}(A_u)\sqrt{\semin{\tilde s_u+s_u}}} + 3\|\tilde r_{ui}/\sqrt{w_{ui}}\|_{\Pn,2}\right\}.$$
\end{proof}

\subsection{Technical Lemmas: Logistic Case}

The proof of the following lower bound builds upon ideas developed in \citen{BC-SparseQR} for high-dimensional quantile regressions.

\begin{lemma}[Minoration Lemma]\label{Lemma:Minoration}
For any $u\in \mathcal{U}$ and $\delta \in A_u\subset\mathbb{R}^p$, we have
$$\begin{array}{c}M_u(\theta_u + \delta) - M_u(\theta_u) -\partial_{\theta} M_u(\theta_u)'\delta +2\|\tilde r_{u}/\sqrt{w_{u}}\|_{\Pn,2}\|\sqrt{w_{u}} f(X)'\delta\|_{\Pn,2} \\ \geq \left\{\mbox{$\frac{1}{3}$}\|\sqrt{w_{u}} f(X)'\delta\|_{\Pn,2}^2\right\} \wedge \left\{ \frac{\bar q_{A_u}}{3}\|\sqrt{w_{u}} f(X)'\delta\|_{\Pn,2}\right\} \end{array}$$
where $$\bar q_{A_u}=\inf_{ \delta \in A_u} \frac{\En\[w_{u}| f(X)'\delta|^2\]^{3/2}}{\En\[w_{u}| f(X)'\delta|^3\]}.$$ \end{lemma}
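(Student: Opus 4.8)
The inequality we want to prove is a standard self-concordance-type bound for the logistic loss, carried out on the empirical measure. The plan is to reduce the problem to a one-dimensional inequality along the direction $\delta$ and then invoke the well-known facts about the second and third derivatives of the logistic negative log-likelihood. First, I would write out the Taylor expansion with integral remainder: for each $i$, setting $a_i := f(X_i)'\delta$ and expanding $M(Y_{ui}, f(X_i)'\theta_u + t a_i)$ in $t$, the first-order term assembles into $\partial_\theta M_u(\theta_u)'\delta$ and the remainder is
\[
\int_0^1 (1-t)\, \En\big[ \G'\big(f(X)'\theta_u + t f(X)'\delta\big)\, (f(X)'\delta)^2 \big]\, dt,
\]
using that for the logistic link the Hessian of $M$ in the linear predictor is exactly $\G'(\cdot)$, independent of $Y$. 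The key is to compare $\G'(f(X_i)'\theta_u + t a_i)$ with the weight $w_{ui}$; recall that $w_{ui} = \G'(f(X_i)'\theta_u + \tilde r_{ui})$ by the definition (\ref{def:tilder}) of the rescaled approximation error $\tilde r_u$.

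The second step is the one-dimensional self-concordance estimate: for the logistic function, $|\G''(x)| \leq \G'(x)$ for all $x$, so along any segment $\G'$ cannot decay faster than $e^{-|x-x_0|}$ away from a base point $x_0$; concretely, $\G'(x) \geq \G'(x_0) e^{-|x-x_0|} \geq \G'(x_0)(1 - |x - x_0|)_+$, and more usefully the integrated version gives, for $|b|\leq$ (something),
\[
\int_0^1 (1-t)\,\G'(x_0 + tb)\,dt \;\geq\; \frac{\G'(x_0)}{b^2}\big( e^{-|b|} + |b| - 1\big) \;\geq\; \tfrac{1}{3}\G'(x_0)\,\big(1 \wedge \tfrac{1}{|b|}\big),
\]
using the elementary bound $e^{-r} + r - 1 \geq \tfrac{1}{3}(r^2 \wedge r)$ for $r\geq 0$ — I would verify this scalar inequality directly (it is routine: for $r\leq 1$ use the quadratic lower bound, for $r\geq 1$ a linear one). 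Summing over $i$ with $x_0 = f(X_i)'\theta_u$, $b = a_i = f(X_i)'\delta$, and then handling the shift from base point $f(X_i)'\theta_u$ to $f(X_i)'\theta_u + \tilde r_{ui}$ (which contributes the correction term $2\|\tilde r_u/\sqrt{w_u}\|_{\Pn,2}\|\sqrt{w_u}f(X)'\delta\|_{\Pn,2}$ via Cauchy--Schwarz, absorbing a factor $e^{-|\tilde r_{ui}|}\approx 1$), one obtains a lower bound of the form $\tfrac{1}{3}\En[w_u (f(X)'\delta)^2 \wedge w_u|f(X)'\delta|\cdot(\text{local scale})]$ minus the correction.

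The third step is to convert the pointwise $\wedge$ inside the empirical average into a single $\wedge$ of two averages. Here is where the nonlinear impact coefficient $\bar q_{A_u}$ enters: on the event that $\En[w_u(f(X)'\delta)^2]^{3/2} \geq \bar q_{A_u}\En[w_u|f(X)'\delta|^3]$ (which holds for $\delta\in A_u$ by definition of $\bar q_{A_u}$), one can show that the "bad" pointwise set where $|f(X_i)'\delta|$ is large contributes little, so that $\En[w_u(f(X)'\delta)^2 \wedge \cdots]$ is bounded below by either $\tfrac13\|\sqrt{w_u}f(X)'\delta\|_{\Pn,2}^2$ or $\tfrac{\bar q_{A_u}}{3}\|\sqrt{w_u}f(X)'\delta\|_{\Pn,2}$. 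This truncation argument is essentially the one from \citen{BC-SparseQR}: split according to whether $|f(X_i)'\delta| \le K$ or not for $K = \|\sqrt{w_u}f(X)'\delta\|_{\Pn,2}/(\text{moment ratio})$, bound the small part by $\tfrac{1}{K}$ times the $L^2$-energy and the large part using the third-moment control.

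\textbf{Main obstacle.} The routine Taylor/self-concordance part is mechanical; the delicate step is the third one — getting the $\wedge$ of two global quantities out of a pointwise $\wedge$ inside the empirical mean, with the right constant $\tfrac13$ and the clean appearance of $\bar q_{A_u}$. One has to be careful that the truncation level is chosen consistently with the definition of $\bar q_{A_u}$ and that the correction term with $\tilde r_u$ does not interfere with the case split. I would model this step closely on the corresponding lemma in \citen{BC-SparseQR} and \citen{BCY-honest}, adapting their quantile-regression minoration argument to the logistic Hessian $\G'$, and would also use $|r_{ui}|\le|\tilde r_{ui}|$ together with the fact (noted in the text following (\ref{def:tilder})) that $\G'(f(X_i)'\theta_u)\ge w_{ui} - 2|r_{ui}|$, so that replacing the base point costs at most a bounded multiplicative factor which can be folded into the constants.
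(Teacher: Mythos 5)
Your overall architecture --- Taylor expansion, logistic self-concordance ($|\G''|\le\G'$), a base-point shift to bring in the weights $w_{ui}=\G'(f(X_i)'\theta_u+\tilde r_{ui})$, and Cauchy--Schwarz for the $\tilde r_u$ correction --- matches the paper's computation, and your scalar inequality $e^{-r}+r-1\ge\tfrac13(r^2\wedge r)$ is true. But the step you yourself flag as the main obstacle is where the argument breaks, and the paper resolves it by a device your proposal omits. The paper never forms a pointwise minimum inside the empirical average: it uses the elementary bound $e^{-r}+r-1\ge\tfrac12 r^2-\tfrac16 r^3$, so the Taylor remainder is bounded below by the \emph{global} expression $\tfrac12\En[w_u|f(X)'\delta|^2]-\tfrac16\En[w_u|f(X)'\delta|^3]$; the definition of $\bar q_{A_u}$ then gives $\En[w_u|f(X)'\delta|^3]\le\En[w_u|f(X)'\delta|^2]$ whenever $\|\sqrt{w_u}f(X)'\delta\|_{\Pn,2}\le\bar q_{A_u}$, which immediately yields the branch $\tfrac13\|\sqrt{w_u}f(X)'\delta\|_{\Pn,2}^2$ with no truncation. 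The regime $\|\sqrt{w_u}f(X)'\delta\|_{\Pn,2}>\bar q_{A_u}$ is handled by convexity rescaling: $F_u$ is convex with $F_u(0)=0$, hence $F_u(\delta)\ge\alpha^{-1}F_u(\alpha\delta)$ for $\alpha\in(0,1]$, and choosing $\alpha$ so that the rescaled point has norm exactly $\bar q_{A_u}$ produces the linear branch $\tfrac{\bar q_{A_u}}{3}\|\sqrt{w_u}f(X)'\delta\|_{\Pn,2}$. That rescaling is precisely what converts a small-$\delta$ quadratic bound into the stated $\wedge$ of two global quantities (this is also what BC-SparseQR does), and it is absent from your plan.

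Your substitute --- truncating at $|f(X_i)'\delta|\le K$ and controlling the tail with third moments --- does not deliver the statement. Writing $a=f(X)'\delta$, in the regime $\|\sqrt{w_u}a\|_{\Pn,2}\le\bar q_{A_u}$ your pointwise bound gives only $\En[w_u(a^2\wedge|a|)]\ge\En[w_u a^2]-\En[w_u a^2 1\{|a|>1\}]\ge\En[w_u a^2]-\En[w_u|a|^3]$, and the definition of $\bar q_{A_u}$ guarantees merely $\En[w_u|a|^3]\le\En[w_u a^2]$ there, i.e.\ a vacuous lower bound of $0$; to salvage it you must shrink the regime (say to $\|\cdot\|\le\bar q_{A_u}/2$), after which the constants fall below the stated $\tfrac13$. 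A second, smaller issue: you expand around $f(X_i)'\theta_u$ and propose to absorb the discrepancy between $\G'(f(X_i)'\theta_u)$ and $w_{ui}$ as a ``bounded multiplicative factor folded into the constants.'' The lemma has exact constants and an exactly additive correction term; the paper obtains these by expanding directly at the shifted point $f(X_i)'\theta_u+\tilde r_{ui}$ (so the Hessian weight at $t=0$ is exactly $w_{ui}$) and bounding the base-point shift additively via $|\Upsilon'_{ui}(t)|\le|f(X_i)'\delta|\,|\tilde r_{ui}|$ followed by Cauchy--Schwarz. Adopting the quadratic-minus-cubic scalar bound, the shifted base point, and the convexity rescaling closes the gap.
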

\begin{proof}

Step 1. (Minoration).   Consider the following non-negative convex function
 $$ F_u(\delta) = M_u(\theta_u+ \delta) - M_u(\theta_u) -\partial_{\theta} M_u(\theta_u)'\delta +2\|\tilde r_u/\sqrt{w_u}\|_{\Pn,2}\|\sqrt{w_u} f(X)'\delta\|_{\Pn,2}.$$
Note that if $\bar q_{A_u}=0$ the statement is trivial since $F_u(\delta)\geq 0$. Thus we can assume $\bar q_{A_u}>0$.

Step 2 below shows that for any $\delta = t\tilde \delta\in\mathbb{R}^p$ where $t\in \mathbb{R}$ and $\tilde \delta \in A_u$ such that $\|\sqrt{w_u} f(X)'\delta\|_{\Pn,2}\leq \bar q_{A_u}$ we have
\begin{equation}\label{MainMinor}
F_u(\delta) \geq \frac{1}{3} \|\sqrt{w_u} f(X)'\delta\|_{\Pn,2}^2.
\end{equation}
Thus (\ref{MainMinor}) covers the case that $\delta \in A_u$ and $\|\sqrt{w_u} f(X)'\delta\|_{\Pn,2}\leq \bar q_{A_u}$.

In the case that $\delta\in A_u$ and $\|\sqrt{w_u} f(X)'\delta\|_{\Pn,2}> \bar q_{A_u}$,  by convexity\footnote{If $\phi$ is a convex function with $\phi(0)=0$,  for $\alpha \in (0,1)$ we have $\phi(t) \geq \phi(\alpha t)/\alpha$. Indeed, by convexity, $\phi(\alpha t + (1-\alpha)0) \leq (1-\alpha)\phi(0)+\alpha\phi(t)=\alpha\phi(t)$.} of $F_u$ and $F_u(0)=0$ we have
\begin{equation}\label{MainMinorII} F_u(\delta) \geq \frac{\|\sqrt{w_u} f(X)'\delta\|_{\Pn,2}}{\bar q_{A_u}}F_u\left( \delta \frac{\bar q_{A_u}}{\|\sqrt{w_u} f(X)'\delta\|_{\Pn,2}}\right)\geq \frac{\bar q_{A_u}\|\sqrt{w_u} f(X)'\delta\|_{\Pn,2}}{3},\end{equation}where the last step follows by (\ref{MainMinor}) since  $$\| \sqrt{w_u}f(X)'\bar\delta\|_{\Pn,2}=\bar q_{A_u} \text{ for } \bar \delta = \delta \frac{\bar q_{A_u}}{\|\sqrt{w_u} f(X)'\delta\|_{\Pn,2}}.$$

Combining (\ref{MainMinor}) and (\ref{MainMinorII}) we have
$$ F_u(\delta) \geq \left\{\frac{1}{3} \|\sqrt{w_u} f(X)'\delta\|_{\Pn,2}^2\right\} \wedge \left\{ \frac{\bar q_{A_u}}{3}\|\sqrt{w_u} f(X)'\delta\|_{\Pn,2}\right\}.$$

Step 2. (Proof of (\ref{MainMinor})) Let $\tilde r_{ui}$ be such that $\G(f(X_i)'\theta_u+\tilde r_{ui})=\G(f(X_i)'\theta_u)+r_{ui} =\Ep_P[Y_{ui}\mid X_i]$. Defining $g_{ui}(t) = \log\{1+ \exp(  f(X_i)'\theta_u+\tilde r_{ui}+t f(X_i)'\delta)\}$, $\tilde g_{ui}(t) = \log\{1+ \exp(  f(X_i)'\theta_u+t f(X_i)'\delta)\}$,  $\G_{ui} :=\Ep_P[Y_{ui}\mid X_i],$ $\tilde \G_{ui} :=\exp(  f(X_i)'\theta_u )/\{1+\exp(  f(X_i)'\theta_u  )\},$ we have
\begin{equation}\label{IdPart1}\begin{array}{rl}
 & M_u( \theta_u+\delta) - M_u(\theta_u) -\partial_{\theta} M_u(\theta_u)'\delta =\\ & = \En\left[ \log \{1+{\rm exp}( f(X)'\{\theta_u+\delta\})\}-Y_u  f(X)'(\theta_u+\delta)\right] \\
  & \ \ - \En\left[ \log \{1+{\rm exp}( f(X)'\theta_u)\}-Y_u  f(X)' \theta_u \right] - \En\left[ (\tilde \G_u-Y_u )  f(X)'\delta\right]\\
 & = \En\left[ \log \{1+{\rm exp}( f(X)'\{\theta_u+\delta\})\} - \log \{1+{\rm exp}( f(X)'\theta_u)\} -  \tilde \G_u   f(X)'\delta\right]\\
& = \En[ \tilde g_u(1) - \tilde g_u(0) -  \tilde g_u'(0) ] \\
& = \En[ g_u(1) - g_u(0) -  g_u'(0) ] + \En[ \{\tilde g_u(1)-g_u(1)\} - \{\tilde g_u(0) -g_u(0)\} - \{ \tilde g_u'(0)-g_u'(0)\} ] \\\end{array}\end{equation}
Note that the function $g_{ui}$ is three times differentiable and satisfies,  $$\begin{array}{c}
 g'_{ui}(t)  =   ( f(X_i)'\delta) \G_{ui}(t), \ \ g''_{ui}(t)  =  ( f(X_i)'\delta)^2 \G_{ui}(t)[1-\G_{ui}(t)], \ \ \mbox{ and} \\ \  g'''_{ui}(t) = ( f(X_i)'\delta)^3 \G_{ui}(t)[1-\G_{ui}(t)][1-2\G_{ui}(t)]\end{array}$$
where $\G_{ui}(t) :=\exp(  f(X_i)'\theta_u +\tilde r_{ui} + t f(X_i)'\delta )/\{1+\exp(  f(X_i)'\theta_u +\tilde r_{ui} + t  f(X)'\delta )\}$.
Thus we have $|g'''_{ui}(t)|\leq | f(X)'\delta| g''_{ui}(t)$. Therefore, by Lemmas \ref{Lemma:SC} and \ref{Lemma:Auxtis} given following the conclusion of this proof, we have
\begin{equation}\label{IdPart2}\begin{array}{rl}
  g_{ui}(1) - g_{ui}(0) -  g_{ui}'(0) &   \geq \frac{( f(X_i)'\delta)^2w_{ui}}{( f(X_i)'\delta)^2}\left\{ \exp(-| f(X_i)'\delta|) + | f(X_i)'\delta| -1 \right\}  \\
  & \geq w_{ui} \left\{ \frac{| f(X_i)'\delta|^2}{2} - \frac{| f(X_i)'\delta|^3}{6} \right\}\end{array}\end{equation}
Moreover, letting $\Upsilon_{ui}(t) = \tilde g_{ui}(t) - g_{ui}(t)$ we have $$|\Upsilon_{ui}'(t)| = |( f(X_i)'\delta)\{ \G_{ui}(t)-\tilde \G_{ui}(t)\}|\leq | f(X_i)'\delta| \ |\tilde r_{ui}|$$ where $\tilde \G_{ui}(t) :=\exp(  f(X_i)'\theta_u + t  f(X_i)'\delta )/\{1+\exp(  f(X_i)'\theta_u +  t  f(X_i)'\delta )\}$. Thus
\begin{equation}\label{IdPart3} \begin{array}{l}|\En[ \{\tilde g_u(1)-g_u(1)\} - \{\tilde g_u(0) -g_u(0)\} - \{ \tilde g_u'(0)-g_u'(0)\} ]| =\\
= |\En[ \Upsilon_u(1) - \Upsilon_u(0)-\{\tilde \G_{u}-\G_{u}\} f(X)'\delta]|\\
\leq 2\En[|\tilde r_u|\ | f(X)'\delta| ].\\
\end{array}\end{equation}
Therefore, combining (\ref{IdPart1}) with the bounds (\ref{IdPart2}) and (\ref{IdPart3}) we have
$$\begin{array}{rl}
M_u(\theta_u+\delta) - M_u(\theta_u) -\partial_{\theta} M_u(\theta_u)'\delta & \displaystyle \geq \mbox{$\frac{1}{2}$}\En\left[ w_u| f(X)'\delta|^2\right] - \mbox{$\frac{1}{6}$}\En\left[ w_u| f(X)'\delta|^3\right]\\
&-2\|\tilde r_u/\sqrt{w_u}\|_{\Pn,2}\|\sqrt{w_u} f(X)'\delta\|_{\Pn,2},\\ \end{array}$$
which holds for any $\delta \in \mathbb{R}^p$.


Take any $\delta = t\tilde \delta$, $t\in\mathbb{R}\setminus\{0\}$, $\tilde\delta\in A_u$ such that $ \|\sqrt{w_u} f(X)'\delta\|_{\Pn,2} \leq  \bar q_{A_u}$. (Note that the case of $\delta=0$ is trivial.) We have $$\begin{array}{rl}
 \En[w_u| f(X)'\delta|^2]^{1/2} = \|\sqrt{w_u} f(X)'\delta\|_{\Pn,2}  \leq  \bar q_{A_u} & \leq \En \[w_u |f(X)'\tilde \delta|^2\]^{3/2}/\En\[w_u| f(X)'\tilde \delta|^3\]\\
 & = \En \[w_u |f(X)' \delta|^2\]^{3/2}/\En\[w_u| f(X)' \delta|^3\],\end{array}$$
since the scalar $t$ cancels out. Thus, $\En[w_u| f(X)'\delta|^3] \leq \En[w_u| f(X)'\delta|^2]$. Therefore we have $$\mbox{$\frac{1}{2}$}\En\left[ w_u| f(X)'\delta|^2\right] - \mbox{$\frac{1}{6}$}\En\left[ w_u| f(X)'\delta|^3\right] \geq \frac{1}{3}\En\left[ w_u| f(X)'\delta|^2\right] \ \ \mbox{and}$$
$$\begin{array}{rl}
M_u(\theta_u+\delta) - M_u(\theta_u) -\partial_{\theta} M_u(\theta_u)'\delta
&\geq \frac{1}{3}\En\left[ w_u| f(X)'\delta|^2\right] -2\|\frac{\tilde r_u}{\sqrt{w_u}}\|_{\Pn,2}\|\sqrt{w_u} f(X)'\delta\|_{\Pn,2},\\ \end{array}$$
which establishes that $F_u(\delta):=M_u(\theta_u+\delta) - M_u(\theta_u) -\partial_{\theta} M_u(\theta_u)'\delta+2\|\frac{\tilde r_u}{\sqrt{w_u}}\|_{\Pn,2}\|\sqrt{w_u} f(X)'\delta\|_{\Pn,2}$ is larger than $\frac{1}{3}\En\left[ w_u| f(X)'\delta|^2\right]$ for any $\delta=t\tilde\delta$, $t\in\mathbb{R}$, $\tilde\delta \in A_u$ and $\|\sqrt{w_u} f(X)'\delta\|_{\Pn,2} \leq \bar{q}_{A_u}$.
\end{proof}

\begin{lemma}[Lemma 1 from \citen{Bach2010}]\label{Lemma:SC}
Let $g:\RR \to \RR$ be a three times differentiable convex function such that for all $t \in \RR$, $|g'''(t)| \leq M g''(t)$ for some $M\geq 0$. Then, for all $t \geq 0$ we have
$$ \frac{g''(0)}{M^2} \left\{ \exp(-Mt) + Mt - 1\right\} \leq g(t) - g(0) - g'(0)t \leq \frac{g''(0)}{M^2} \left\{ \exp(Mt) + Mt - 1\right\}.$$
\end{lemma}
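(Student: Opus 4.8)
This is Lemma~1 of \citen{Bach2010}; the hypothesis $|g'''(t)| \leq M g''(t)$ is a one–dimensional self–concordance inequality, and the natural route is to convert it into a differential inequality for $g''$ and then integrate twice. First I would dispose of the degenerate cases. If $g''$ vanishes at some point $t_0$, then writing $h := g'' \geq 0$ (convexity) the hypothesis gives $|h'| \leq M h$, whence $h(t) \leq h(t_0)\,e^{M|t - t_0|} = 0$ for every $t$; thus $g$ is affine, $g''(0) = 0$, and the asserted chain of inequalities reads $0 \leq 0 \leq 0$ once one reads $\tfrac{g''(0)}{M^2}(\cdots)$ as $0$ (this also covers $M = 0$ with $g'' \equiv 0$). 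If $M = 0$ but $g'' \not\equiv 0$, then $g''' \equiv 0$, so $g(t) - g(0) - g'(0)t = \tfrac12 g''(0) t^2$, which is exactly the common limiting value of either bound as $M \downarrow 0$ (Taylor expansion of $\tfrac{1}{M^2}(e^{\pm Mt}\mp Mt - 1)$). Hence in the main argument we may assume $M > 0$ and $g''(t) > 0$ for all $t$.

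In that case $t \mapsto \log g''(t)$ is continuously differentiable with $\bigl|\tfrac{d}{dt}\log g''(t)\bigr| = |g'''(t)|/g''(t) \leq M$. Integrating from $0$ to $t$ for $t \geq 0$ gives $-Mt \leq \log g''(t) - \log g''(0) \leq Mt$, i.e. $g''(0)\,e^{-Mt} \leq g''(t) \leq g''(0)\,e^{Mt}$ for all $t \geq 0$. Now integrate these pointwise bounds twice, using $g'(t) - g'(0) = \int_0^t g''(s)\,ds$ and then $g(t) - g(0) - g'(0)t = \int_0^t \bigl(g'(s) - g'(0)\bigr)\,ds$. The lower bound $g''(s) \geq g''(0)\,e^{-Ms}$ yields $g(t) - g(0) - g'(0)t \geq g''(0)\int_0^t \tfrac{1 - e^{-Ms}}{M}\,ds = \tfrac{g''(0)}{M^2}\bigl(Mt + e^{-Mt} - 1\bigr)$, and symmetrically $g''(s) \leq g''(0)\,e^{Ms}$ gives $g(t) - g(0) - g'(0)t \leq g''(0)\int_0^t \tfrac{e^{Ms} - 1}{M}\,ds = \tfrac{g''(0)}{M^2}\bigl(e^{Mt} - Mt - 1\bigr)$. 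This is the claim for $t \geq 0$, which is all the statement asks.

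All steps are elementary, so there is no genuine obstacle here; the only points needing a little care are (i) stating the two degenerate cases so that the formula $\tfrac{g''(0)}{M^2}(\cdots)$ is meaningful, and (ii) checking that the differentiation of $\log g''$ and the two subsequent integrations are legitimate, which is immediate from $g \in C^3$ together with $g'' > 0$. If bounds for $t \leq 0$ were also wanted they would follow by applying the result to $\tilde g(t) := g(-t)$, which satisfies the same hypothesis.
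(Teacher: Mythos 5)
Your proof is correct; the paper itself imports this lemma from \citen{Bach2010} without proof, and your argument (integrating the logarithmic-derivative bound $|(\log g'')'|\leq M$ twice, after disposing of the cases $M=0$ and $g''$ vanishing) is exactly the standard one from that source. One small point worth noting: your integration yields the upper bound $\frac{g''(0)}{M^2}\left(e^{Mt}-Mt-1\right)$, which is the form in Bach's Lemma~1 and is tighter than the $\frac{g''(0)}{M^2}\left(e^{Mt}+Mt-1\right)$ printed in the statement here; since $g''(0)\geq 0$ and $t\geq 0$, your bound implies the printed one, so the discrepancy (presumably a sign typo in the transcription) is harmless.
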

\begin{lemma}\label{Lemma:Auxtis} For $t\geq 0$ we have
$ \exp(-t)+t - 1 \geq \frac{1}{2}t^2 - \frac{1}{6}t^3.$
\end{lemma}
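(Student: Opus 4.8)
The plan is to reduce the claim to a statement about the sign of a single smooth function and to establish that sign by repeated differentiation on the half-line. Define
\[
\phi(t) := \exp(-t) + t - 1 - \tfrac{1}{2}t^2 + \tfrac{1}{6}t^3, \qquad t \geq 0,
\]
so that the asserted inequality is exactly $\phi(t) \geq 0$ for $t \geq 0$.

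First I would record the behaviour at the origin: with $\phi'(t) = -\exp(-t) + 1 - t + \tfrac12 t^2$ and $\phi''(t) = \exp(-t) - 1 + t$ one has $\phi(0) = \phi'(0) = \phi''(0) = 0$. Differentiating once more gives $\phi'''(t) = 1 - \exp(-t) \geq 0$ for all $t \geq 0$, since $\exp(-t) \leq 1$ there. Hence $\phi''$ is nondecreasing on $[0,\infty)$, and $\phi''(0)=0$ forces $\phi''(t) \geq 0$; iterating the same monotonicity step, $\phi'$ is nondecreasing with $\phi'(0)=0$, so $\phi'(t) \geq 0$, and finally $\phi$ is nondecreasing with $\phi(0)=0$, so $\phi(t) \geq 0$. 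This completes the argument. An equivalent and perhaps more transparent route is Taylor's theorem with integral remainder applied to $s \mapsto \exp(-s)$ expanded to third order about $0$:
\[
\exp(-t) - \Bigl(1 - t + \tfrac12 t^2 - \tfrac16 t^3\Bigr) = \int_0^t \frac{(t-s)^3}{6}\, \exp(-s)\, ds \geq 0 \qquad (t \geq 0),
\]
which rearranges to the claimed bound.

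There is no real obstacle here: Lemma \ref{Lemma:Auxtis} is a routine one-variable calculus estimate, included only to supply the cubic lower bound $\tfrac12 t^2 - \tfrac16 t^3$ used inside the proof of the Minoration Lemma (Lemma \ref{Lemma:Minoration}). The only point deserving a word of care is that the differentiation argument relies on the sign of $\phi'''$ being nonnegative, which holds precisely on $[0,\infty)$; the inequality fails for $t<0$, so the hypothesis $t \geq 0$ is used in an essential way.
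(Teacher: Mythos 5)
Your proof is correct and takes essentially the same route as the paper: both define the difference function $f(t)=\exp(-t)+t-1-\tfrac12 t^2+\tfrac16 t^3$ and conclude nonnegativity from vanishing derivatives at $0$ together with a sign condition on a higher derivative (the paper stops at $f''(t)=\exp(-t)+t-1\geq 0$ and invokes convexity, while you go one derivative further to $\phi'''\geq 0$). The Taylor-remainder variant you mention is a clean equivalent formulation of the same idea.
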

\begin{proof}[Proof of Lemma \ref{Lemma:Auxtis}]
For $t\geq 0$, consider the function $f(t) = \exp(-t) + t^3/6 - t^2/2 + t - 1$. The statement is equivalent to $f(t)\geq 0$ for $t\geq 0$. It follows that $f(0)=0$, $f'(0) = 0$, and $f''(t)=\exp(-t)+t-1\geq 0$ so that $f$ is convex. Therefore $f(t) \geq f(0)+tf'(0) = 0$.
\end{proof}

\begin{lemma}\label{Lemma:AuxSparsity}
The logistic link function satisfies $|\G(t+t_0)-\G(t_0)|\leq \G'(t_0)\{\exp(|t|) -1\}$. If $|t|\leq 1$ we have $\exp(|t|) -1\leq 2|t|$.
\end{lemma}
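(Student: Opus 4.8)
The plan is to establish the two assertions of Lemma \ref{Lemma:AuxSparsity} separately, both by short elementary arguments; here $\G$ denotes the logistic link $\G(u)=e^{u}/(1+e^{u})$, so that $\G'(u)=\G(u)(1-\G(u))>0$ for every $u\in\RR$.

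For the first inequality the key observation is that the logarithmic derivative of $\G'$ is bounded by one in absolute value. Indeed, differentiating $\G'=\G(1-\G)$ gives $\G''=\G'(1-2\G)$, so $|\G''(u)/\G'(u)|=|1-2\G(u)|\le 1$ because $\G(u)\in(0,1)$. Since $\G'>0$ everywhere, $\log\G'$ is well defined, and for any $t_{0}$ and any $s$ lying between $0$ and $t$,
\begin{equation*}
\bigl|\log\G'(t_{0}+s)-\log\G'(t_{0})\bigr|=\Bigl|\int_{0}^{s}\frac{\G''(t_{0}+r)}{\G'(t_{0}+r)}\,dr\Bigr|\le|s|,
\end{equation*}
whence $\G'(t_{0}+s)\le\G'(t_{0})e^{|s|}$. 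Writing $\G(t+t_{0})-\G(t_{0})=\int_{0}^{t}\G'(t_{0}+s)\,ds$, using that $\G'>0$ so the sign of this difference matches that of $t$, and inserting the previous bound gives
\begin{equation*}
\bigl|\G(t+t_{0})-\G(t_{0})\bigr|\le\int_{0}^{|t|}\G'(t_{0})e^{s}\,ds=\G'(t_{0})\bigl(e^{|t|}-1\bigr),
\end{equation*}
after the obvious change of variables in the two cases $t\ge0$ and $t<0$. One can also argue directly from $\G'(t_{0}+s)/\G'(t_{0})=e^{s}\bigl((1+e^{t_{0}})/(1+e^{t_{0}+s})\bigr)^{2}$, bounding the middle factor by $1$ when $s\ge0$ and by $e^{-s}$ when $s\le0$.

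For the second inequality I would set $x=|t|\in[0,1]$ and show $e^{x}-1\le2x$. The function $\varphi(x)=2x-e^{x}+1$ satisfies $\varphi''(x)=-e^{x}<0$, so it is concave on $[0,1]$; since $\varphi(0)=0$ and $\varphi(1)=3-e>0$, concavity forces $\varphi$ to lie above the chord through $(0,0)$ and $(1,3-e)$, which is nonnegative on $[0,1]$, and hence $\varphi\ge0$ there.

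There is no real obstacle: the statement is a self-contained elementary lemma, used only as an auxiliary tool in the sparsity bound of Lemma \ref{Lemma:LassoLogisticSparsity}. The only minor care needed is to track signs correctly in the two integration cases $t\ge0$ and $t<0$, and to record at the outset that $\G'>0$ on all of $\RR$, which is what makes $\log\G'$ legitimate and lets the absolute value on $\G(t+t_{0})-\G(t_{0})$ be removed in each case.
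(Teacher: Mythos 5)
Your proof is correct and takes essentially the same route as the paper's: both bound the logarithmic derivative of $\G'$ by $1$ via $\G''=\G'(1-2\G)$, exponentiate to get $\G'(t_0+s)\le\G'(t_0)e^{|s|}$, and integrate once more, with the two sign cases handled as you describe. Your concavity argument for $e^{|t|}-1\le 2|t|$ on $|t|\le 1$ is a valid way to supply the "verification" the paper leaves implicit.
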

\begin{proof}
Note that $|\G''(s)|\leq \G'(s)$ for all $s\in\RR$. So that $-1\leq \frac{d}{ds}\log(\G'(s)) = \frac{\G''(s)}{\G'(s)} \leq 1$. Suppose $s\geq 0$. Therefore
$$ - s \leq \log(\G'(s+t_0)) - \log(\G'(t_0)) \leq s. $$
In turn this implies $\G'(t_0)\exp(-s)\leq \G'(s+t_0) \leq \G'(t_0)\exp(s)$. For $t>0$, integrating one more time from $0$ to $t$,
$$ \G'(t_0)\{1-\exp(-t)\}\leq \G(t+t_0) - \G(t_0) \leq \G'(t_0)\{\exp(t)-1\}.$$
Similarly, for $t<0$, integrating from $t$ to $0$, we have
$$ \G'(t_0)\{1-\exp(t)\}\leq \G(t+t_0) - \G(t_0) \leq \G'(t_0)\{\exp(-t)-1\}.$$

The first result follows by noting that $1-\exp(-|t|) \leq \exp(|t|)-1$. The second follows by verification.
\end{proof}

\section{Simulation Experiment}\label{sec:p}

In this section, we present results from a brief simulation experiment.  The results illustrate the performance of our proposed treatment effect estimator that makes use of estimating equations satisfying the key orthogonality condition given in equation (2) in the main text and variable selection relative to an estimator that uses variable selection but is based on a ``naive'' estimating equation that does not satisfy the orthogonality condition.  We find that inference based on the naive estimator can suffer from substantial size distortions and that the performance of this estimator is strongly dependent on features of the data generating process (DGP).   We also find that tests based on the estimator constructed using our procedure have size close to the nominal level uniformly across all DGPs we consider consistent with the theory developed in the paper.

For simplicity, we consider the case where the treatment, $d_i$, is exogenous conditional on control variables $x_i$.  In this case, we can apply the results of the paper substituting $d_i$ for $z_i$ in each instance where instruments $z_i$ are used since $d_i$ is conditionally exogenous and thus a valid instrument for itself.   All of the simulation results are based on data generated as
\begin{align*}
d_i &= \mathbf{1}\left\{\frac{\exp\{x_i'(c_d \theta_0)\}}{1+\exp\{x_i'(c_d \theta_0)\}} > v_i\right\} \\
y_i &=  d_i [x_i'(c_y \theta_0)] + \zeta_i
\end{align*}
where $v_i \sim U(0,1)$, $\zeta_i \sim N(0,1)$, $v_i$ and $\zeta_i$ are independent, $p = \dim(x_i) = 250$, the covariates $x_i \sim N(0,\Sigma)$ with $\Sigma_{kj} = (0.5)^{|j-k|}$, and
the sample size $n = 200$.  $\theta_0$ is a $p \times 1$ vector with elements set as $\theta_{0,j} = (1/j)^2$ for $j = 1,...,p$.  $c_d$ and $c_y$ are scalars that control the strength of the relationship between the controls, the outcome, and the treatment variable.  We use several different combinations of $c_d$ and $c_y$, setting $c_d = \sqrt{\frac{(\pi^2/3)R^2_d}{(1-R^2_d)\theta_0'\Sigma\theta_0}}$ and $c_y =  \sqrt{\frac{R^2_d}{(1-R^2_d)\theta_0'\Sigma\theta_0}}$ for all combinations of $R^2_d \in \{0,0.1,0.2,0.3,0.4,0.5,0.6,0.7,0.8,0.9\}$ and  $R^2_y \in \{0,0.1,0.2,0.3,0.4,0.5,0.6,0.7,0.8,0.9\}$.

We report results for two different inference procedures in Figure 11.  The right panel of the figure shows size of 5\% level t-tests for the average treatment effect where the point estimate is formed using our proposed estimator based on model selection and orthogonal estimating equations and the standard error is estimated using a plug-in estimator of the asymptotic variance.  The left panel shows size of 5\% level t-tests for the average treatment effect estimated as
$$
\widehat\theta_{naive} = \frac{1}{n}\sum_{i=1}^{n} (\widehat g_{y}(1,x_i) - \widehat g_{y}(0,x_i))
$$
where $\widehat g_y(d,x_i)$ is a post-model-selection estimator of $\Ep[Y|D=d,X=x_i]$ and the standard error is estimated using a plug-in estimator of the asymptotic variance of $\widehat\theta_{naive}$.

Both procedures rely on post-model-selection estimates of the conditional expectations $\Ep[Y|D=d,X=x_i]$, and we use exactly the same estimator of this quantity in both cases.  Specifically, we apply the Square-Root Lasso of \citen{BCW-SqLASSO} with outcome $Y$ and covariates $(D,D*X_1,...,D*X_p,(1-D),(1-D)*X_1,...,(1-D)*X_p)$ to select variables.  We set the penalty level in the Square-Root Lasso using the ``exact'' option of \citen{BCW-SqLASSO} under the assumption of homoscedastic, Gaussian errors $\zeta_i$ with the tuning confidence level required in \citen{BCW-SqLASSO} set equal to 95\%.  After running the Square-Root Lasso, we then estimate regression coefficients by regressing $Y$ onto only those variables that were estimated to have non-zero coefficients by the Square-Root Lasso.  We then form estimates of $\Ep[Y|D=1,X=x_i]$ by plugging in $(1,x_i')'$ into the estimated model for $i=1,...,n$ and form estimates of $\Ep[Y|D=0,X=x_i]$ by plugging in $(0,x_i')'$ into the estimated model for $i=1,...,n$.

For our proposed method, we also need an estimate of the propensity score.  We obtain our estimates of the propensity score by using $\ell_1-$penalized logistic regression with $D$ as the outcome and $X$ as the covariates with penalty level set equal to $.5\sqrt{n}\Phi^{-1}(1-1/2p)/n$ where $\Phi(\cdot)$ is the standard normal distribution function using the MATLAB function ``glmlasso''.\footnote{This penalty level is equivalent to that discussed in the main paper since ``glmlasso'' scales the problem in a slightly different way.}  We standardize the variables in $X$ and set penalty loadings equal to 1.  After running the $\ell_1-$penalized logistic regression, we estimate the propensity score by taking fitted values from the conventional logistic regression of $D$ onto only those variables that had non-zero estimated coefficients in the $\ell_1-$penalized logistic regression.

Looking at the results, we see the behavior of the naive testing procedure depends heavily on the underlying coefficient sequence used to generate the data.  There are substantial size distortions for many of the coefficient designs considered with good performance, size close to the nominal level, only occurring in a handful of cases.  It is worth noting that in practice one does not know the underlying DGP and even estimation of the quantities necessary to know where one is in the figure may be infeasible even in this simple scenario.  Our proposed procedure does a much better job at delivering accurate inference, producing tests with size close to the nominal level across all designs considered.  That is, the simulation illustrates the uniformity derived in the theoretical development of our estimator illustrating that its performance is relatively good uniformly across a variety of coefficient sequences.  While simply illustrative, these simulation results reinforce the theoretical development of the main paper which prove that our proposed estimation and inference procedures have good properties uniformly across a variety of DGPs where approximate sparsity holds.

\bibliographystyle{econometrica}
\bibliography{mybibVOLUME}

\begin{thebibliography}{103}
\newcommand{\enquote}[1]{``#1''}
\expandafter\ifx\csname natexlab\endcsname\relax\def\natexlab#1{#1}\fi

\bibitem[\protect\citeauthoryear{Abadie}{Abadie}{2002}]{abadie:bstest}
\textsc{Abadie, A.} (2002): \enquote{Bootstrap Tests for Distributional
  Treatment Effects in Instrumental Variable Models,} \emph{Journal of the
  American Statistical Association}, 97, 284--292.

\bibitem[\protect\citeauthoryear{Abadie}{Abadie}{2003}]{abadie:401k}
---\hspace{-.1pt}---\hspace{-.1pt}--- (2003): \enquote{Semiparametric
  Instrumental Variable Estimation of Treatment Response Models,} \emph{Journal
  of Econometrics}, 113, 231--263.

\bibitem[\protect\citeauthoryear{Ai and Chen}{Ai and Chen}{2003}]{ai:chen}
\textsc{Ai, C. and X.~Chen} (2003): \enquote{Efficient Estimation of Models
  with Conditional Moment Restrictions Containing Unknown Functions,}
  \emph{Econometrica}, 71, 1795--1843.

\bibitem[\protect\citeauthoryear{Ai and Chen}{Ai and Chen}{2012}]{AC2012}
---\hspace{-.1pt}---\hspace{-.1pt}--- (2012): \enquote{{The semiparametric
  efficiency bound for models of sequential moment restrictions containing
  unknown functions},} \emph{Journal of Econometrics}, 170, 442--457.

\bibitem[\protect\citeauthoryear{Andrews}{Andrews}{1994{\natexlab{a}}}]{andrew%
s:emp}
\textsc{Andrews, D.~W.} (1994{\natexlab{a}}): \enquote{Empirical process
  methods in econometrics,} \emph{Handbook of Econometrics}, 4, 2247--2294.

\bibitem[\protect\citeauthoryear{Andrews}{Andrews}{1994{\natexlab{b}}}]{andrew%
s94}
\textsc{Andrews, D. W.~K.} (1994{\natexlab{b}}): \enquote{Asymptotics for
  semiparametric econometric models via stochastic equicontinuity,}
  \emph{Econometrica}, 62, 43--72.

\bibitem[\protect\citeauthoryear{Angrist and Pischke}{Angrist and
  Pischke}{2008}]{AngristBook}
\textsc{Angrist, J.~D. and J.-S. Pischke} (2008): \emph{Mostly Harmless
  Econometrics: An Empiricist's Companion}, Princeton University Press.

\bibitem[\protect\citeauthoryear{Bach}{Bach}{2010}]{Bach2010}
\textsc{Bach, F.} (2010): \enquote{Self-concordant analysis for logistic
  regression,} \emph{Electronic Journal of Statistics}, 4, 384--414.

\bibitem[\protect\citeauthoryear{Belloni, Chen, Chernozhukov, and
  Hansen}{Belloni et~al.}{2012}]{BellChenChernHans:nonGauss}
\textsc{Belloni, A., D.~Chen, V.~Chernozhukov, and C.~Hansen} (2012):
  \enquote{Sparse Models and Methods for Optimal Instruments with an
  Application to Eminent Domain,} \emph{Econometrica}, 80, 2369--2429, arxiv,
  2010.

\bibitem[\protect\citeauthoryear{Belloni and Chernozhukov}{Belloni and
  Chernozhukov}{2011}]{BC-SparseQR}
\textsc{Belloni, A. and V.~Chernozhukov} (2011): \enquote{$\ell_1$-Penalized
  Quantile Regression for High Dimensional Sparse Models,} \emph{Annals of
  Statistics}, 39, 82--130.

\bibitem[\protect\citeauthoryear{Belloni and Chernozhukov}{Belloni and
  Chernozhukov}{2013}]{BC-PostLASSO}
---\hspace{-.1pt}---\hspace{-.1pt}--- (2013): \enquote{Least Squares After
  Model Selection in High-dimensional Sparse Models,} \emph{Bernoulli}, 19,
  521--547, arXiv, 2009.

\bibitem[\protect\citeauthoryear{Belloni, Chernozhukov, Fernandez-Val, and
  Hansen}{Belloni et~al.}{2015}]{bcfh15sup}
\textsc{Belloni, A., V.~Chernozhukov, I.~Fernandez-Val, and C.~Hansen} (2015):
  \enquote{Supplement to ``Program Evaluation with High-Dimensional Data'',}
  Tech. rep., ArXiv.

\bibitem[\protect\citeauthoryear{Belloni, Chernozhukov, and Hansen}{Belloni
  et~al.}{2010}]{BellChernHans:Gauss}
\textsc{Belloni, A., V.~Chernozhukov, and C.~Hansen} (2010): \enquote{LASSO
  Methods for Gaussian Instrumental Variables Models,} 2010 arXiv:[math.ST],
  http://arxiv.org/abs/1012.1297.

\bibitem[\protect\citeauthoryear{Belloni, Chernozhukov, and Hansen}{Belloni
  et~al.}{2013{\natexlab{a}}}]{BCH2011:InferenceGauss}
---\hspace{-.1pt}---\hspace{-.1pt}--- (2013{\natexlab{a}}): \enquote{Inference
  for High-Dimensional Sparse Econometric Models,} \emph{Advances in Economics
  and Econometrics. 10th World Congress of Econometric Society. August 2010},
  III, 245--295.

\bibitem[\protect\citeauthoryear{Belloni, Chernozhukov, and Hansen}{Belloni
  et~al.}{2014{\natexlab{a}}}]{BelloniChernozhukovHansen2011}
---\hspace{-.1pt}---\hspace{-.1pt}--- (2014{\natexlab{a}}): \enquote{Inference
  on Treatment Effects After Selection Amongst High-Dimensional Controls,}
  \emph{Review of Economic Studies}, 81, 608--650.

\bibitem[\protect\citeauthoryear{Belloni, Chernozhukov, and Kato}{Belloni
  et~al.}{2013{\natexlab{b}}}]{BCK-LAD}
\textsc{Belloni, A., V.~Chernozhukov, and K.~Kato} (2013{\natexlab{b}}):
  \enquote{Uniform Post Selection Inference for LAD Regression Models,}
  \emph{arXiv preprint arXiv:1304.0282}.

\bibitem[\protect\citeauthoryear{Belloni, Chernozhukov, and Wang}{Belloni
  et~al.}{2011}]{BCW-SqLASSO}
\textsc{Belloni, A., V.~Chernozhukov, and L.~Wang} (2011):
  \enquote{Square-Root-LASSO: Pivotal Recovery of Sparse Signals via Conic
  Programming,} \emph{Biometrika}, 98, 791--806, arxiv, 2010.

\bibitem[\protect\citeauthoryear{Belloni, Chernozhukov, Wang et~al.}{Belloni
  et~al.}{2014{\natexlab{b}}}]{BCW-SqLASSO2}
\textsc{Belloni, A., V.~Chernozhukov, L.~Wang, et~al.} (2014{\natexlab{b}}):
  \enquote{Pivotal estimation via square-root lasso in nonparametric
  regression,} \emph{The Annals of Statistics}, 42, 757--788.

\bibitem[\protect\citeauthoryear{Belloni, Chernozhukov, and Wei}{Belloni
  et~al.}{2013{\natexlab{c}}}]{BCY-honest}
\textsc{Belloni, A., V.~Chernozhukov, and Y.~Wei} (2013{\natexlab{c}}):
  \enquote{Honest Confidence Regions for Logistic Regression with a Large
  Number of Controls,} \emph{arXiv preprint arXiv:1304.3969}.

\bibitem[\protect\citeauthoryear{Benjamin}{Benjamin}{2003}]{benjamin}
\textsc{Benjamin, D.~J.} (2003): \enquote{Does 401(k) eligibility increase
  saving? Evidence from propensity score subclassification,} \emph{Journal of
  Public Economics}, 87, 1259--1290.

\bibitem[\protect\citeauthoryear{Berry, Levinsohn, and Pakes}{Berry
  et~al.}{1995}]{BLP:Autos}
\textsc{Berry, S., J.~Levinsohn, and A.~Pakes} (1995): \enquote{Automobile
  Prices in Market Equilibrium,} \emph{Econometrica}, 63, 841--890.

\bibitem[\protect\citeauthoryear{Bickel}{Bickel}{1982}]{bickel:1982}
\textsc{Bickel, P.~J.} (1982): \enquote{On adaptive estimation,} \emph{The
  Annals of Statistics}, 647--671.

\bibitem[\protect\citeauthoryear{Bickel and Freedman}{Bickel and
  Freedman}{1981}]{BickelFreedman:1981}
\textsc{Bickel, P.~J. and D.~A. Freedman} (1981): \enquote{Some asymptotic
  theory for the bootstrap,} \emph{The Annals of Statistics}, 1196--1217.

\bibitem[\protect\citeauthoryear{Bickel, Ritov, and Tsybakov}{Bickel
  et~al.}{2009}]{BickelRitovTsybakov2009}
\textsc{Bickel, P.~J., Y.~Ritov, and A.~B. Tsybakov} (2009):
  \enquote{Simultaneous analysis of {L}asso and {D}antzig selector,}
  \emph{Annals of Statistics}, 37, 1705--1732.

\bibitem[\protect\citeauthoryear{Cand\`{e}s and Tao}{Cand\`{e}s and
  Tao}{2007}]{CandesTao2007}
\textsc{Cand\`{e}s, E. and T.~Tao} (2007): \enquote{The {D}antzig selector:
  statistical estimation when $p$ is much larger than $n$,} \emph{Ann.
  Statist.}, 35, 2313--2351.

\bibitem[\protect\citeauthoryear{Caner and Zhang}{Caner and
  Zhang}{2014}]{CanerZhang:GMMEL}
\textsc{Caner, M. and H.~H. Zhang} (2014): \enquote{Adaptive elastic net for
  generalized methods of moments,} \emph{Journal of Business and Economic
  Statistics}, 32, 30�--47.

\bibitem[\protect\citeauthoryear{Cattaneo, Jansson, and Newey}{Cattaneo
  et~al.}{2010}]{CJN:PLMStandardError}
\textsc{Cattaneo, M., M.~Jansson, and W.~Newey} (2010): \enquote{Alternative
  Asymptotics and the Partially Linear Model with Many Regressors,}
  \emph{Working Paper, http://econ-www.mit.edu/files/6204}.

\bibitem[\protect\citeauthoryear{Cattaneo}{Cattaneo}{2010}]{cattaneo2010effici%
ent}
\textsc{Cattaneo, M.~D.} (2010): \enquote{Efficient semiparametric estimation
  of multi-valued treatment effects under ignorability,} \emph{Journal of
  Econometrics}, 155, 138--154.

\bibitem[\protect\citeauthoryear{Chamberlain}{Chamberlain}{1992}]{C92}
\textsc{Chamberlain, G.} (1992): \enquote{Efficiency Bounds for Semiparametric
  Regression,} \emph{Econometrica}, 60, 567–--596.

\bibitem[\protect\citeauthoryear{Chamberlain and Imbens}{Chamberlain and
  Imbens}{2003}]{chamberlain2003nonparametric}
\textsc{Chamberlain, G. and G.~W. Imbens} (2003): \enquote{Nonparametric
  applications of Bayesian inference,} \emph{Journal of Business \& Economic
  Statistics}, 21, 12--18.

\bibitem[\protect\citeauthoryear{Chen}{Chen}{2007}]{chen:Chapter}
\textsc{Chen, X.} (2007): \enquote{Large Sample Sieve Estimatin of
  Semi-Nonparametric Models,} \emph{Handbook of Econometrics}, 6, 5559--5632.

\bibitem[\protect\citeauthoryear{Chen, Linton, and Keilegom}{Chen
  et~al.}{2003}]{CLK:EfficientSP}
\textsc{Chen, X., O.~Linton, and I.~v. Keilegom} (2003): \enquote{Estimation of
  Semiparametric Models when the Criterion Function Is Not Smooth,}
  \emph{Econometrica}, 71, 1591�--1608.

\bibitem[\protect\citeauthoryear{{Chernozhukov}, {Chetverikov}, {Demirer},
  {Duflo}, {Hansen}, and {Newey}}{{Chernozhukov} et~al.}{2016}]{CCDHM16}
\textsc{{Chernozhukov}, V., D.~{Chetverikov}, M.~{Demirer}, E.~{Duflo},
  C.~{Hansen}, and a.~W. {Newey}} (2016): \enquote{{Double Machine Learning for
  Treatment and Causal Parameters},} \emph{ArXiv e-prints}.

\bibitem[\protect\citeauthoryear{{Chernozhukov}, {Chetverikov}, and
  {Kato}}{{Chernozhukov} et~al.}{2012}]{CCK12}
\textsc{{Chernozhukov}, V., D.~{Chetverikov}, and K.~{Kato}} (2012):
  \enquote{{Gaussian approximation of suprema of empirical processes},}
  \emph{ArXiv e-prints}.

\bibitem[\protect\citeauthoryear{Chernozhukov and
  Fern{\'a}ndez-Val}{Chernozhukov and Fern{\'a}ndez-Val}{2005}]{CF15}
\textsc{Chernozhukov, V. and I.~Fern{\'a}ndez-Val} (2005): \enquote{Subsampling
  inference on quantile regression processes,} \emph{Sankhy\=a}, 67, 253--276.

\bibitem[\protect\citeauthoryear{Chernozhukov, Fern{\'a}ndez-Val, and
  Melly}{Chernozhukov et~al.}{2013}]{CFM}
\textsc{Chernozhukov, V., I.~Fern{\'a}ndez-Val, and B.~Melly} (2013):
  \enquote{Inference on counterfactual distributions,} \emph{Econometrica}, 81,
  2205--2268.

\bibitem[\protect\citeauthoryear{Chernozhukov and Hansen}{Chernozhukov and
  Hansen}{2004}]{CH401k}
\textsc{Chernozhukov, V. and C.~Hansen} (2004): \enquote{The impact of 401(k)
  participation on the wealth distribution: An instrumental quantile regression
  analysis,} \emph{Review of Economics and Statistics}, 86, 735--751.

\bibitem[\protect\citeauthoryear{Chernozhukov and Hansen}{Chernozhukov and
  Hansen}{2005}]{iqr:ema}
---\hspace{-.1pt}---\hspace{-.1pt}--- (2005): \enquote{An IV Model of Quantile
  Treatment Effects,} \emph{Econometrica}, 73, 245--262.

\bibitem[\protect\citeauthoryear{Chernozhukov and Hansen}{Chernozhukov and
  Hansen}{2006}]{CH06}
---\hspace{-.1pt}---\hspace{-.1pt}--- (2006): \enquote{Instrumental quantile
  regression inference for structural and treatment effect models,} \emph{J.
  Econometrics}, 132, 491--525.

\bibitem[\protect\citeauthoryear{Chernozhukov, Hansen, and
  Spindler}{Chernozhukov et~al.}{2015{\natexlab{a}}}]{CHS:PnP}
\textsc{Chernozhukov, V., C.~Hansen, and M.~Spindler} (2015{\natexlab{a}}):
  \enquote{Post-Selection and Post-Regularization Inference in Linear Models
  with Very Many Controls and Instruments,} \emph{American Economic Review:
  Papers and Proceedings}, 105, 486--490.

\bibitem[\protect\citeauthoryear{Chernozhukov, Hansen, and
  Spindler}{Chernozhukov et~al.}{2015{\natexlab{b}}}]{CHS:AnnRev}
---\hspace{-.1pt}---\hspace{-.1pt}--- (2015{\natexlab{b}}): \enquote{Valid
  Post-Selection and Post-Regularization Inference: An Elementary, General
  Approach,} \emph{Annual Review of Economics}, 7, 649--688.

\bibitem[\protect\citeauthoryear{Chesher}{Chesher}{2003}]{C03}
\textsc{Chesher, A.} (2003): \enquote{Identification in nonseparable models,}
  \emph{Econometrica}, 71, 1405--1441.

\bibitem[\protect\citeauthoryear{Dudley}{Dudley}{1999}]{Dudley99}
\textsc{Dudley, R.~M.} (1999): \emph{Uniform central limit theorems}, vol.~63
  of \emph{Cambridge Studies in Advanced Mathematics}, Cambridge: Cambridge
  University Press.

\bibitem[\protect\citeauthoryear{Engen and Gale}{Engen and
  Gale}{2000}]{engen:gale}
\textsc{Engen, E.~M. and W.~G. Gale} (2000): \enquote{The Effects of 401(k)
  Plans on Household Wealth: Differences Across Earnings Groups,} Working Paper
  8032, National Bureau of Economic Research.

\bibitem[\protect\citeauthoryear{Engen, Gale, and Scholz}{Engen
  et~al.}{1996}]{engen}
\textsc{Engen, E.~M., W.~G. Gale, and J.~K. Scholz} (1996): \enquote{The
  Illusory Effects of Saving Incentives on Saving,} \emph{Journal of Economic
  Perspectives}, 10, 113--138.

\bibitem[\protect\citeauthoryear{Escanciano and Zhu}{Escanciano and
  Zhu}{2013}]{EZ2013}
\textsc{Escanciano, J.~C. and L.~Zhu} (2013): \enquote{{Set inferences and
  sensitivity analysis in semiparametric conditionally identified models},}
  CeMMAP working papers CWP55/13, Centre for Microdata Methods and Practice,
  Institute for Fiscal Studies.

\bibitem[\protect\citeauthoryear{Fan and Li}{Fan and Li}{2001}]{FanLi2001}
\textsc{Fan, J. and R.~Li} (2001): \enquote{Variable selection via nonconcave
  penalized likelihood and its oracle properties,} \emph{Journal of American
  Statistical Association}, 96, 1348--1360.

\bibitem[\protect\citeauthoryear{Farrell}{Farrell}{2015}]{Farrell:JMP}
\textsc{Farrell, M.} (2015): \enquote{Robust Inference on Average Treatment
  Effects with Possibly More Covariates than Observations,} \emph{Journal of
  Econometrics}, 174, 1--23.

\bibitem[\protect\citeauthoryear{Frank and Friedman}{Frank and
  Friedman}{1993}]{FF:1993}
\textsc{Frank, I.~E. and J.~H. Friedman} (1993): \enquote{A Statistical View of
  Some Chemometrics Regression Tools,} \emph{Technometrics}, 35, 109--135.

\bibitem[\protect\citeauthoryear{Fr{\"o}lich and Melly}{Fr{\"o}lich and
  Melly}{2013}]{frolich:melly}
\textsc{Fr{\"o}lich, M. and B.~Melly} (2013): \enquote{Identification of
  treatment effects on the treated with one-sided non-compliance,}
  \emph{Econometric Reviews}, 32, 384--414.

\bibitem[\protect\citeauthoryear{Ghosal, Sen, and van~der Vaart}{Ghosal
  et~al.}{2000}]{GhosalSenVaart2000}
\textsc{Ghosal, S., A.~Sen, and A.~W. van~der Vaart} (2000): \enquote{Testing
  Monotonicity of Regression,} \emph{Ann. Statist.}, 28, 1054--1082.

\bibitem[\protect\citeauthoryear{Gin{\'e} and Zinn}{Gin{\'e} and
  Zinn}{1984}]{GineZinn1984}
\textsc{Gin{\'e}, E. and J.~Zinn} (1984): \enquote{Some limit theorems for
  empirical processes,} \emph{Ann. Probab.}, 12, 929--998, with discussion.

\bibitem[\protect\citeauthoryear{Hahn}{Hahn}{1997}]{Hahn97}
\textsc{Hahn, J.} (1997): \enquote{Bayesian bootstrap of the quantile
  regression estimator: a large sample study,} \emph{Internat. Econom. Rev.},
  38, 795--808.

\bibitem[\protect\citeauthoryear{Hahn}{Hahn}{1998}]{hahn-pp}
---\hspace{-.1pt}---\hspace{-.1pt}--- (1998): \enquote{On the role of the
  propensity score in efficient semiparametric estimation of average treatment
  effects,} \emph{Econometrica}, 315--331.

\bibitem[\protect\citeauthoryear{Hansen}{Hansen}{1996}]{Hansen96}
\textsc{Hansen, B.~E.} (1996): \enquote{Inference when a nuisance parameter is
  not identified under the null hypothesis,} \emph{Econometrica}, 64, 413--430.

\bibitem[\protect\citeauthoryear{Hansen}{Hansen}{1982}]{hansen_gmm}
\textsc{Hansen, L.~P.} (1982): \enquote{Large sample properties of generalized
  method of moments estimators,} \emph{Econometrica}, 50, 1029--1054.

\bibitem[\protect\citeauthoryear{Hansen and Singleton}{Hansen and
  Singleton}{1982}]{hansen-singleton_gmm}
\textsc{Hansen, L.~P. and K.~J. Singleton} (1982): \enquote{Generalized
  instrumental variables estimation of nonlinear rational expectations models,}
  \emph{Econometrica}, 50, 1269--1286.

\bibitem[\protect\citeauthoryear{Heckman and Vytlacil}{Heckman and
  Vytlacil}{1999}]{heckman:vytlacil}
\textsc{Heckman, J. and E.~J. Vytlacil} (1999): \enquote{Local instrumental
  variables and latent variable models for identifying and bounding treatment
  effects,} \emph{Proc. Natl. Acad. Sci. USA}, 96, 4730--4734 (electronic).

\bibitem[\protect\citeauthoryear{Heckman and Vytlacil}{Heckman and
  Vytlacil}{2005}]{hv2005}
\textsc{Heckman, J.~J. and E.~Vytlacil} (2005): \enquote{Structural equations,
  treatment effects, and econometric policy evaluation,} \emph{Econometrica},
  73, 669--738.

\bibitem[\protect\citeauthoryear{Hong and Nekipelov}{Hong and
  Nekipelov}{2010}]{hong:nekipelov:2010}
\textsc{Hong, H. and D.~Nekipelov} (2010): \enquote{Semiparametric efficiency
  in nonlinear LATE models,} \emph{Quantitative Economics}, 1, 279--304.

\bibitem[\protect\citeauthoryear{Hong and Scaillet}{Hong and
  Scaillet}{2006}]{HS2006}
\textsc{Hong, H. and O.~Scaillet} (2006): \enquote{A fast subsampling method
  for nonlinear dynamic models,} \emph{J. Econometrics}, 133, 557--578.

\bibitem[\protect\citeauthoryear{Huang, Horowitz, and Ma}{Huang
  et~al.}{2008}]{HHS2008}
\textsc{Huang, J., J.~L. Horowitz, and S.~Ma} (2008): \enquote{Asymptotic
  properties of bridge estimators in sparse high-dimensional regression
  models,} \emph{The Annals of Statistics}, 36, 587�613.

\bibitem[\protect\citeauthoryear{Huang, Horowitz, and Wei}{Huang
  et~al.}{2010}]{horowitz:lasso}
\textsc{Huang, J., J.~L. Horowitz, and F.~Wei} (2010): \enquote{Variable
  selection in nonparametric additive models,} \emph{Ann. Statist.}, 38,
  2282--2313.

\bibitem[\protect\citeauthoryear{Imbens and Angrist}{Imbens and
  Angrist}{1994}]{imbens:angrist:94}
\textsc{Imbens, G.~W. and J.~D. Angrist} (1994): \enquote{Identification and
  Estimation of Local Average Treatment Effects,} \emph{Econometrica}, 62,
  467--475.

\bibitem[\protect\citeauthoryear{Imbens and Newey}{Imbens and
  Newey}{2009}]{IN09}
\textsc{Imbens, G.~W. and W.~K. Newey} (2009): \enquote{Identification and
  estimation of triangular simultaneous equations models without additivity,}
  \emph{Econometrica}, 77, 1481--1512.

\bibitem[\protect\citeauthoryear{Imbens and Rubin}{Imbens and
  Rubin}{2015}]{imbens:rubin:book}
\textsc{Imbens, G.~W. and D.~B. Rubin} (2015): \emph{Causal Inference for
  Statistics, Social, and Biomedical Sciences: An Introduction}, Cambridge
  University Press.

\bibitem[\protect\citeauthoryear{Jing, Shao, and Wang}{Jing
  et~al.}{2003}]{jing:etal}
\textsc{Jing, B.-Y., Q.-M. Shao, and Q.~Wang} (2003): \enquote{Self-normalized
  Cram�r-type large deviations for independent random variables,} \emph{Ann.
  Probab.}, 31, 2167--2215.

\bibitem[\protect\citeauthoryear{Kato}{Kato}{2011}]{kato}
\textsc{Kato, K.} (2011): \enquote{Group Lasso for high dimensional sparse
  quantile regression models,} Preprint, ArXiv.

\bibitem[\protect\citeauthoryear{Kline and Santos}{Kline and
  Santos}{2012}]{KS12}
\textsc{Kline, P. and A.~Santos} (2012): \enquote{A Score Based Approach to
  Wild Bootstrap Inference,} \emph{Journal of Econometric Methods}, 1, 23--41.

\bibitem[\protect\citeauthoryear{Koenker}{Koenker}{1988}]{koenker:jappliedecon%
ometircs}
\textsc{Koenker, R.} (1988): \enquote{Asymptotic Theory and Econometric
  Practice,} \emph{Journal of Aplpied Econometrics}, 3, 139--147.

\bibitem[\protect\citeauthoryear{Koenker}{Koenker}{2005}]{koenker:book}
---\hspace{-.1pt}---\hspace{-.1pt}--- (2005): \emph{Quantile regression},
  Cambridge university press.

\bibitem[\protect\citeauthoryear{Kosorok}{Kosorok}{2008}]{kosorok:book}
\textsc{Kosorok, M.~R.} (2008): \emph{Introduction to Empirical Processes and
  Semiparametric Inference}, Series in Statistics, Berlin: Springer.

\bibitem[\protect\citeauthoryear{Leeb and P{\"o}tscher}{Leeb and
  P{\"o}tscher}{2008{\natexlab{a}}}]{leeb:potscher:pms}
\textsc{Leeb, H. and B.~M. P{\"o}tscher} (2008{\natexlab{a}}): \enquote{Can one
  estimate the unconditional distribution of post-model-selection estimators?}
  \emph{Econometric Theory}, 24, 338--376.

\bibitem[\protect\citeauthoryear{Leeb and P{\"o}tscher}{Leeb and
  P{\"o}tscher}{2008{\natexlab{b}}}]{leeb:potscher:review}
---\hspace{-.1pt}---\hspace{-.1pt}--- (2008{\natexlab{b}}): \enquote{Recent
  developments in model selection and related areas,} \emph{Econometric
  Theory}, 24, 319--322.

\bibitem[\protect\citeauthoryear{Linton}{Linton}{1996}]{linton96}
\textsc{Linton, O.} (1996): \enquote{Edgeworth approximation for {MINPIN}
  estimators in semiparametric regression models,} \emph{Econometric Theory},
  12, 30--60.

\bibitem[\protect\citeauthoryear{Mammen}{Mammen}{1993}]{mammen1993:bootstrap}
\textsc{Mammen, E.} (1993): \enquote{Bootstrap and wild bootstrap for high
  dimensional linear models,} \emph{The Annals of Statistics}, 255--285.

\bibitem[\protect\citeauthoryear{Meinshausen and Yu}{Meinshausen and
  Yu}{2009}]{MY2007}
\textsc{Meinshausen, N. and B.~Yu} (2009): \enquote{Lasso-type recovery of
  sparse representations for high-dimensional data,} \emph{Annals of
  Statistics}, 37, 2246--2270.

\bibitem[\protect\citeauthoryear{Newey}{Newey}{1990}]{newey90}
\textsc{Newey, W.~K.} (1990): \enquote{Semiparametric efficiency bounds,}
  \emph{Journal of Applied Econometrics}, 5, 99--135.

\bibitem[\protect\citeauthoryear{Newey}{Newey}{1994}]{newey94}
---\hspace{-.1pt}---\hspace{-.1pt}--- (1994): \enquote{The asymptotic variance
  of semiparametric estimators,} \emph{Econometrica}, 62, 1349--1382.

\bibitem[\protect\citeauthoryear{Newey}{Newey}{1997}]{newey:series}
---\hspace{-.1pt}---\hspace{-.1pt}--- (1997): \enquote{Convergence Rates and
  Asymptotic Normality for Series Estimators,} \emph{Journal of Econometrics},
  79, 147--168.

\bibitem[\protect\citeauthoryear{Neyman}{Neyman}{1979}]{Neyman1979}
\textsc{Neyman, J.} (1979): \enquote{$C(\alpha)$ tests and their use,}
  \emph{Sankhya}, 41, 1--21.

\bibitem[\protect\citeauthoryear{Ogburn, Rotnitzky, and Robins}{Ogburn
  et~al.}{2015}]{ORR:401k}
\textsc{Ogburn, E.~L., A.~Rotnitzky, and J.~M. Robins} (2015): \enquote{Doubly
  robust estimation of the local average treatment effect curve,} \emph{Journal
  of the Royal Statistical Society: Series B}, 77, 373--396.

\bibitem[\protect\citeauthoryear{Poterba, Venti, and Wise}{Poterba
  et~al.}{1994}]{pvw:94}
\textsc{Poterba, J.~M., S.~F. Venti, and D.~A. Wise} (1994): \enquote{401(k)
  Plans and Tax-Deferred savings,} in \emph{Studies in the Economics of Aging},
  ed. by D.~A. Wise, Chicago, IL: University of Chicago Press.

\bibitem[\protect\citeauthoryear{Poterba, Venti, and Wise}{Poterba
  et~al.}{1995}]{pvw:95}
---\hspace{-.1pt}---\hspace{-.1pt}--- (1995): \enquote{Do 401(k) Contributions
  Crowd Out Other Personal Saving?} \emph{Journal of Public Economics}, 58,
  1--32.

\bibitem[\protect\citeauthoryear{Poterba, Venti, and Wise}{Poterba
  et~al.}{1996}]{pvw:nber96}
---\hspace{-.1pt}---\hspace{-.1pt}--- (1996): \enquote{Personal Retirement
  Saving Programs and Asset Accumulation: Reconciling the Evidence,} Working
  Paper 5599, National Bureau of Economic Research.

\bibitem[\protect\citeauthoryear{Poterba, Venti, and Wise}{Poterba
  et~al.}{2001}]{pvw:01}
---\hspace{-.1pt}---\hspace{-.1pt}--- (2001): \enquote{The Transition to
  Personal Accounts and Increasing Retirement Wealth: Macro and Micro
  Evidence,} Working Paper 8610, National Bureau of Economic Research.

\bibitem[\protect\citeauthoryear{P{\"o}tscher}{P{\"o}tscher}{2009}]{Potscher20%
09}
\textsc{P{\"o}tscher, B.} (2009): \enquote{Confidence Sets Based on Sparse
  Estimators Are Necessarily Large,} \emph{Sankhya}, 71-A, 1--18.

\bibitem[\protect\citeauthoryear{Robins and Rotnitzky}{Robins and
  Rotnitzky}{1995}]{robins:dr}
\textsc{Robins, J.~M. and A.~Rotnitzky} (1995): \enquote{Semiparametric
  efficiency in multivariate regression models with missing data,} \emph{J.
  Amer. Statist. Assoc.}, 90, 122--129.

\bibitem[\protect\citeauthoryear{Robinson}{Robinson}{1988}]{robinson}
\textsc{Robinson, P.~M.} (1988): \enquote{Root-{$N$}-consistent semiparametric
  regression,} \emph{Econometrica}, 56, 931--954.

\bibitem[\protect\citeauthoryear{Romano and Shaikh}{Romano and
  Shaikh}{2012}]{Romano:Shaikh:AoS}
\textsc{Romano, J.~P. and A.~M. Shaikh} (2012): \enquote{On the uniform
  asymptotic validity of subsampling and the bootstrap,} \emph{The Annals of
  Statistics}, 40, 2798--2822.

\bibitem[\protect\citeauthoryear{Rothe and Firpo}{Rothe and
  Firpo}{2013}]{rothe_firpo2013}
\textsc{Rothe, C. and S.~Firpo} (2013): \enquote{Semiparametric Estimation and
  Inference Using Doubly Robust Moment Conditions,} Tech. rep., NYU preprint.

\bibitem[\protect\citeauthoryear{Sherman}{Sherman}{1994}]{Sherman1994}
\textsc{Sherman, R.} (1994): \enquote{Maximal inequalities for degenerate
  $U$-processes with applications to optimization estimators,} \emph{Ann.
  Statist.}, 22, 439--459.

\bibitem[\protect\citeauthoryear{Spindler, Chernozhukov, and Hansen}{Spindler
  et~al.}{2016}]{R:hdm}
\textsc{Spindler, M., V.~Chernozhukov, and C.~Hansen} (2016): \emph{hdm:
  High-Dimensional Metrics}, {R} package version 0.1.0,
  http://CRAN.R-project.org/package=hdm.

\bibitem[\protect\citeauthoryear{Tibshirani}{Tibshirani}{1996}]{T1996}
\textsc{Tibshirani, R.} (1996): \enquote{Regression shrinkage and selection via
  the Lasso,} \emph{J. Roy. Statist. Soc. Ser. B}, 58, 267--288.

\bibitem[\protect\citeauthoryear{Tsybakov}{Tsybakov}{2009}]{tsybakov:npbook}
\textsc{Tsybakov, A.~B.} (2009): \emph{Introduction to nonparametric
  estimation}, Springer.

\bibitem[\protect\citeauthoryear{van~de Geer}{van~de Geer}{2008}]{vdGeer}
\textsc{van~de Geer, S.~A.} (2008): \enquote{High-dimensional generalized
  linear models and the lasso,} \emph{Annals of Statistics}, 36, 614--645.

\bibitem[\protect\citeauthoryear{van~der Vaart}{van~der
  Vaart}{1991}]{vaart:1991}
\textsc{van~der Vaart, A.~W.} (1991): \enquote{On differentiable functionals,}
  \emph{The Annals of Statistics}, 178--204.

\bibitem[\protect\citeauthoryear{van~der Vaart}{van~der Vaart}{1998}]{vdV}
---\hspace{-.1pt}---\hspace{-.1pt}--- (1998): \emph{Asymptotic Statistics},
  Cambridge University Press.

\bibitem[\protect\citeauthoryear{van~der Vaart and Wellner}{van~der Vaart and
  Wellner}{1996}]{vdV-W}
\textsc{van~der Vaart, A.~W. and J.~A. Wellner} (1996): \emph{Weak Convergence
  and Empirical Processes}, Springer Series in Statistics.

\bibitem[\protect\citeauthoryear{Vytlacil}{Vytlacil}{2002}]{vytlacil:equiv}
\textsc{Vytlacil, E.~J.} (2002): \enquote{Independence, Monotonicity, and
  Latent Index Models: An Equivalence Result,} \emph{Econometrica}, 70,
  331--341.

\bibitem[\protect\citeauthoryear{Wasserman}{Wasserman}{2006}]{wasserman:npbook}
\textsc{Wasserman, L.} (2006): \emph{All of nonparametric statistics}, Springer
  New York.

\bibitem[\protect\citeauthoryear{Wooldridge}{Wooldridge}{2010}]{wooldridge:tex%
t}
\textsc{Wooldridge, J.~M.} (2010): \emph{Econometric Analysis of Cross Section
  and Panel Data}, Cambridge, Massachusetts: The MIT Press, second ed.

\bibitem[\protect\citeauthoryear{Zou}{Zou}{2006}]{Zou2006}
\textsc{Zou, H.} (2006): \enquote{The Adaptive Lasso And Its Oracle
  Properties,} \emph{Journal of the American Statistical Association}, 101,
  1418--1429.

\end{thebibliography}

\pagebreak

\pagebreak
\begin{figure}[h]
	\includegraphics[width=\textwidth]{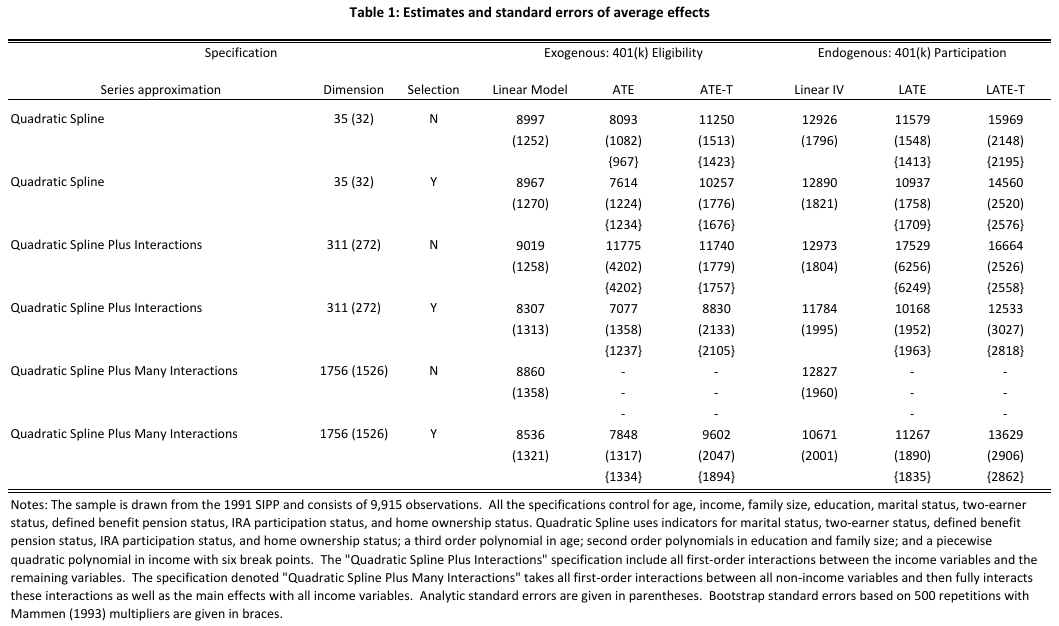}
	\label{fig:table1}
\end{figure}

\pagebreak
\begin{figure}[h]
	\includegraphics[width=\textwidth]{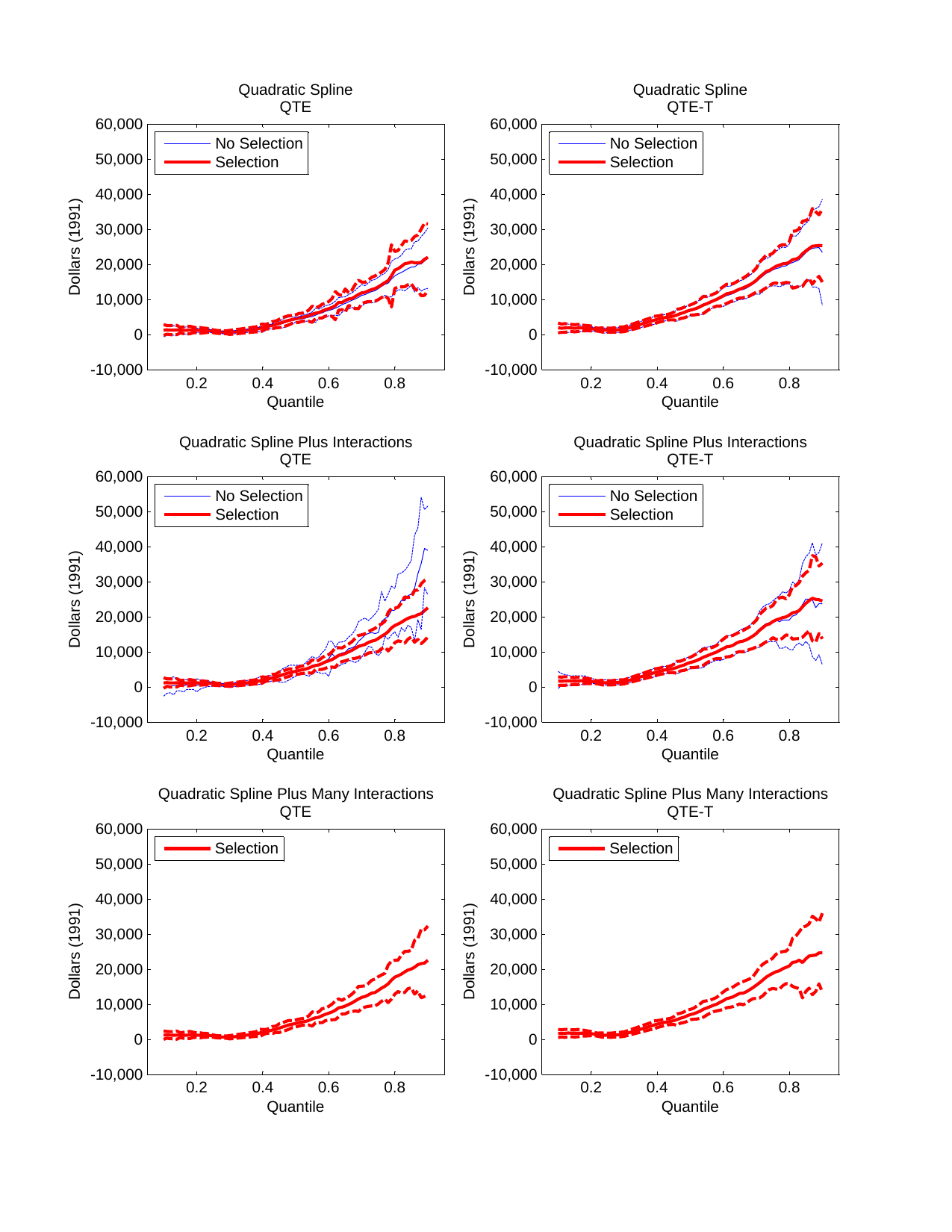}
	\label{fig:figure1}
	\caption{QTE and QTE-T estimates of the effect of 401(k) eligibility on net financial assets.}
\end{figure}

\pagebreak

\begin{figure}[h]
	\includegraphics[width=\textwidth]{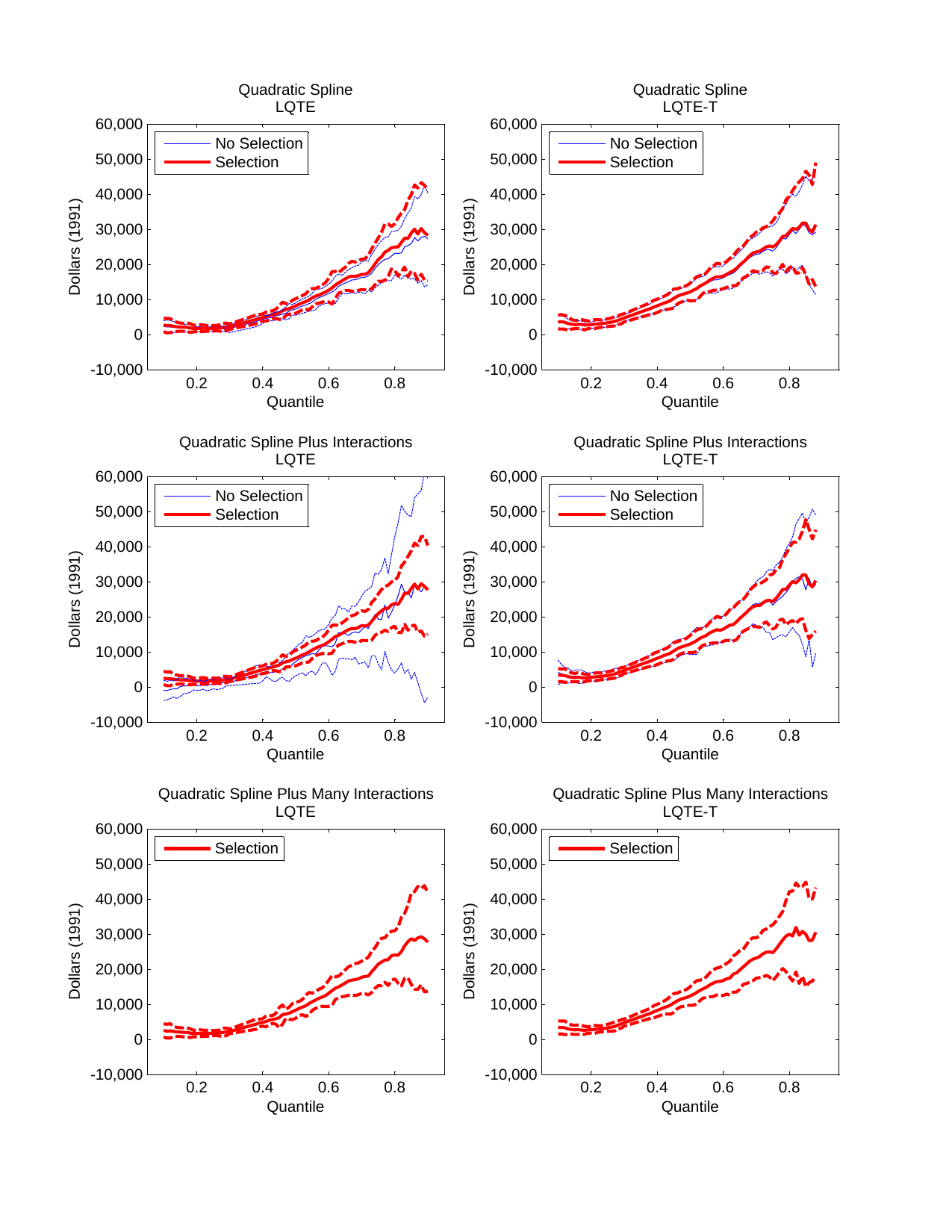}
	\label{fig:figure2}
	\caption{LQTE and LQTE-T estimates of the effect of 401(k) participation on net financial assets.}
\end{figure}

\pagebreak
\begin{figure}
	\includegraphics[width=\textwidth]{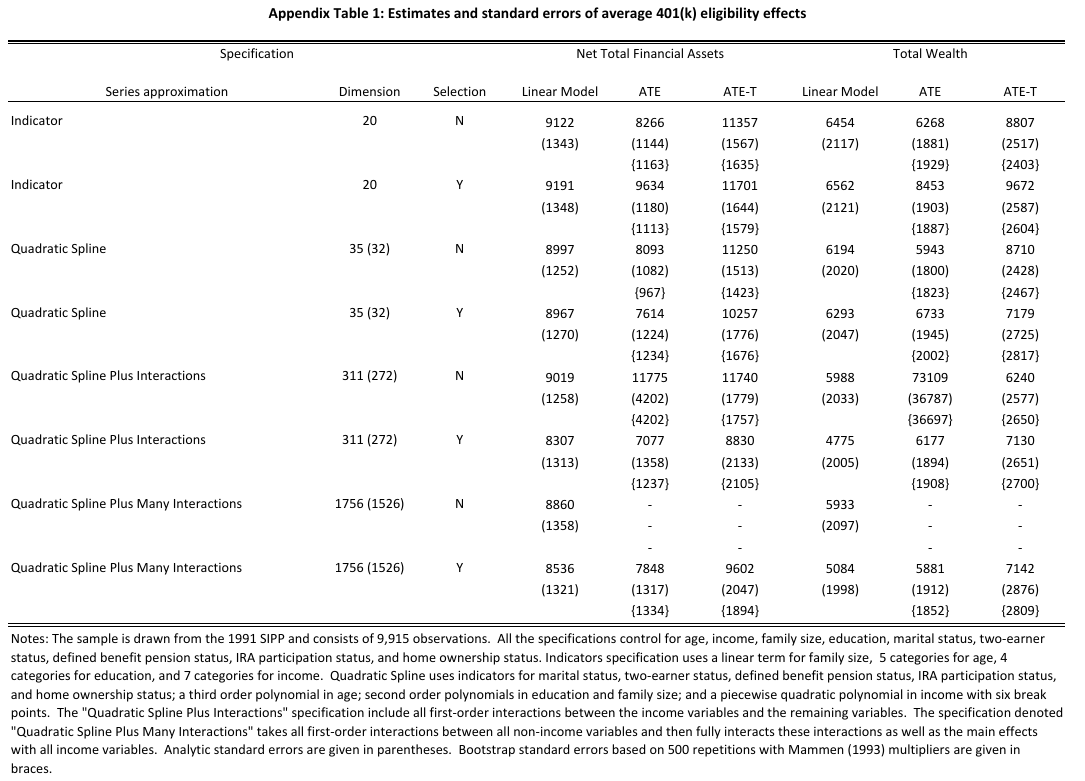}
	\label{fig:apptable1}
\end{figure}

\pagebreak
\begin{figure}
	\includegraphics[width=\textwidth]{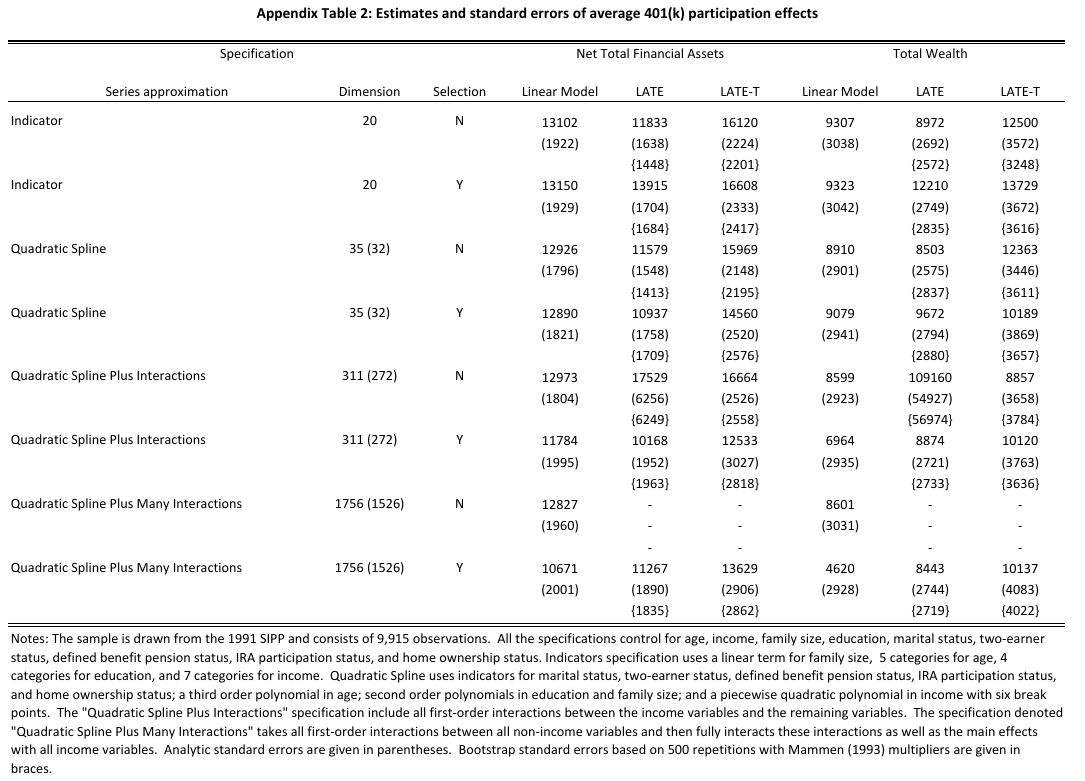}
	\label{fig:apptable2}
\end{figure}

\pagebreak
\begin{figure}
	\includegraphics[width=\textwidth]{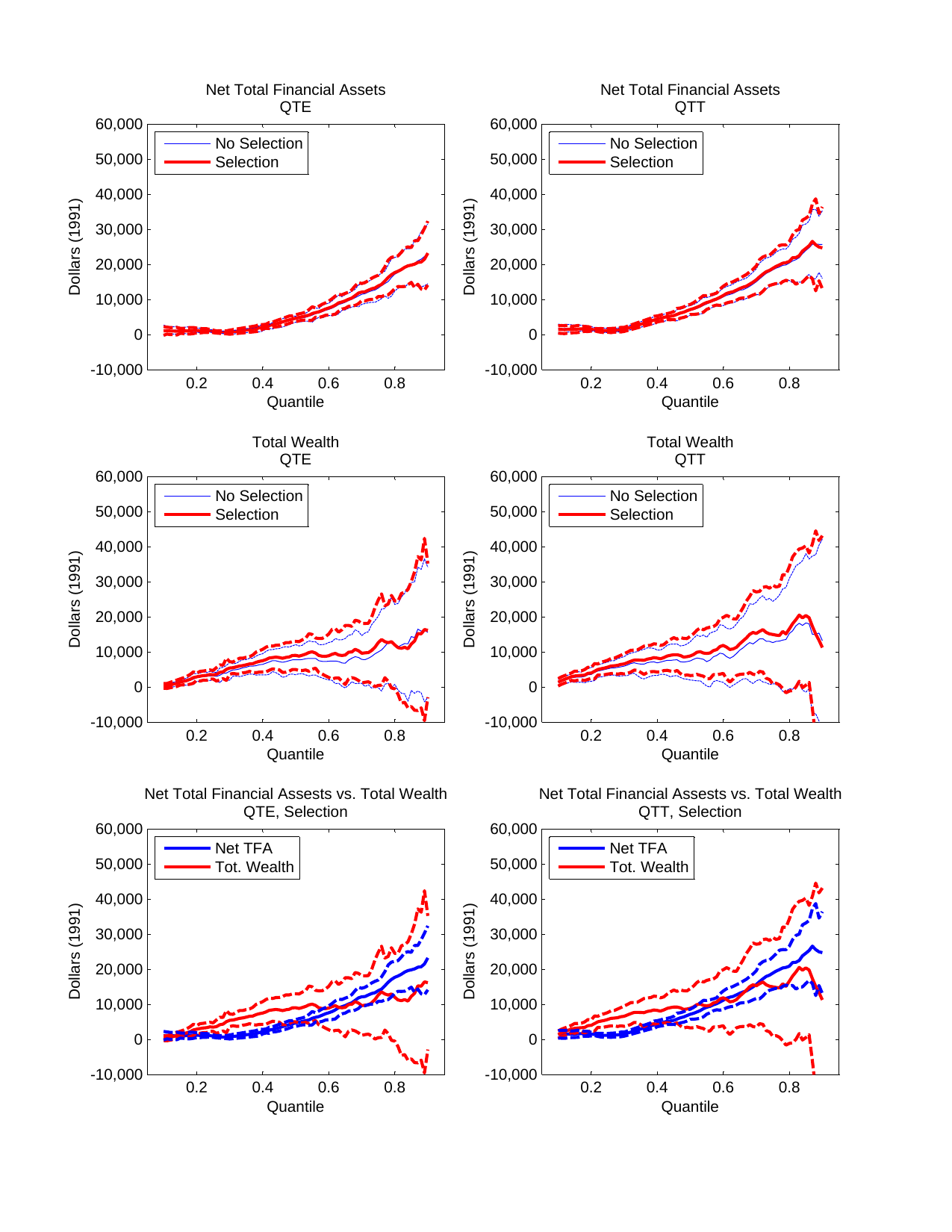}
	\label{fig:exogappfigure1}
	\caption{QTE and QTE-T estimates based on the Indicators specification.}
\end{figure}

\pagebreak
\begin{figure}
	\includegraphics[width=\textwidth]{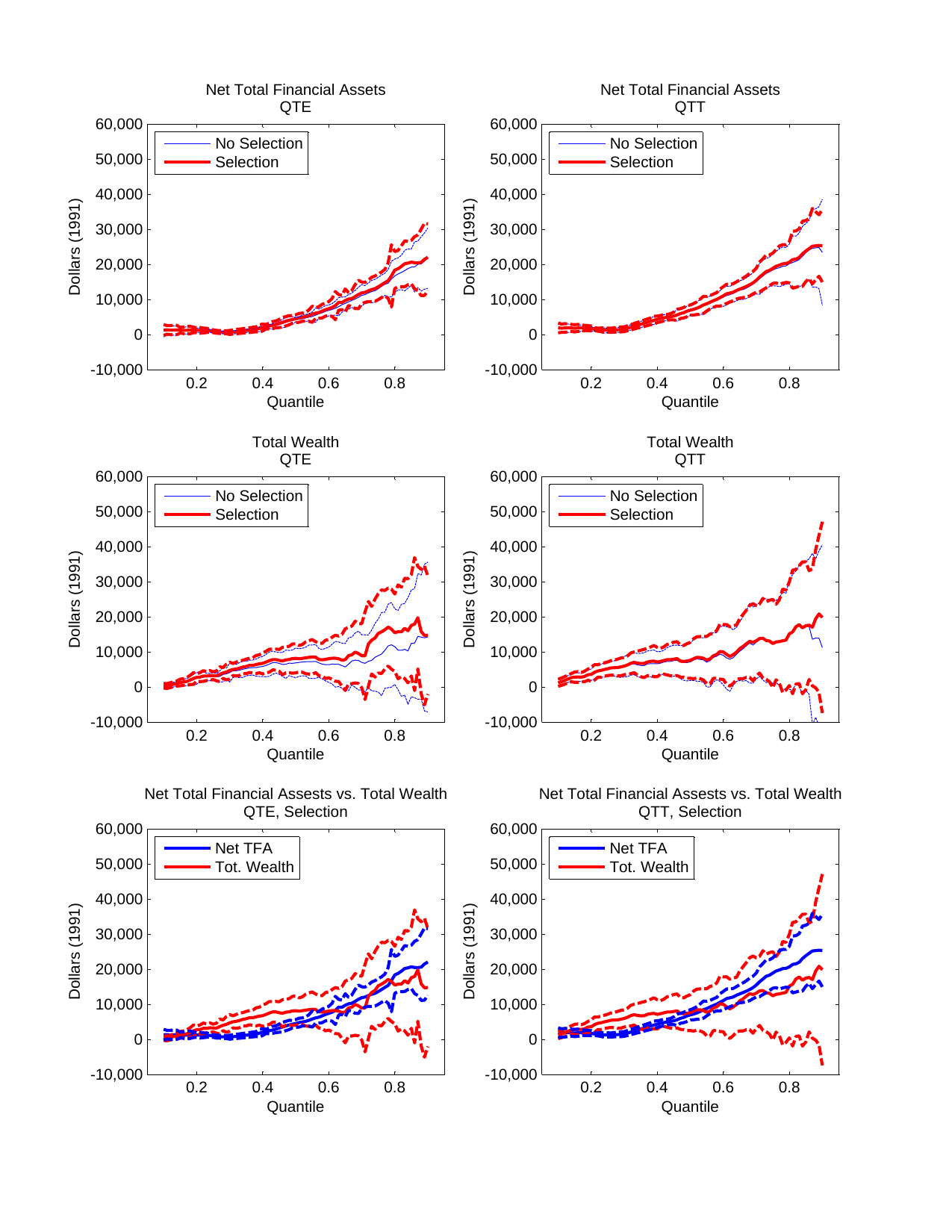}
	\label{fig:exogappfigure2}
	\caption{QTE and QTE-T estimates based on the Quadratic Spline specification.}
\end{figure}

\pagebreak
\begin{figure}
	\includegraphics[width=\textwidth]{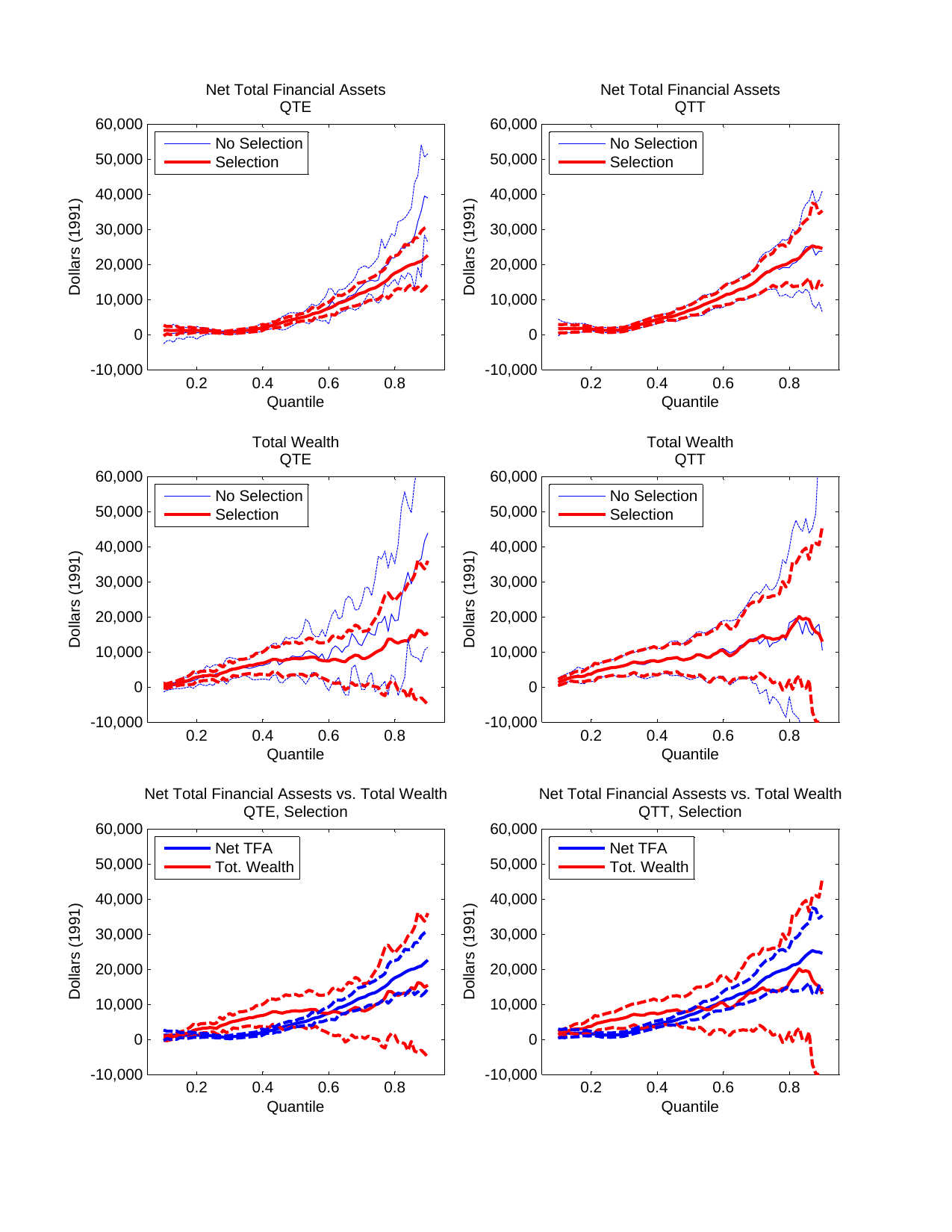}
	\label{fig:exogappfigure3}
	\caption{QTE and QTE-T estimates based on the Quadratic Spline Plus Interaction specification.}
\end{figure}

\pagebreak
\begin{figure}
	\includegraphics[width=\textwidth]{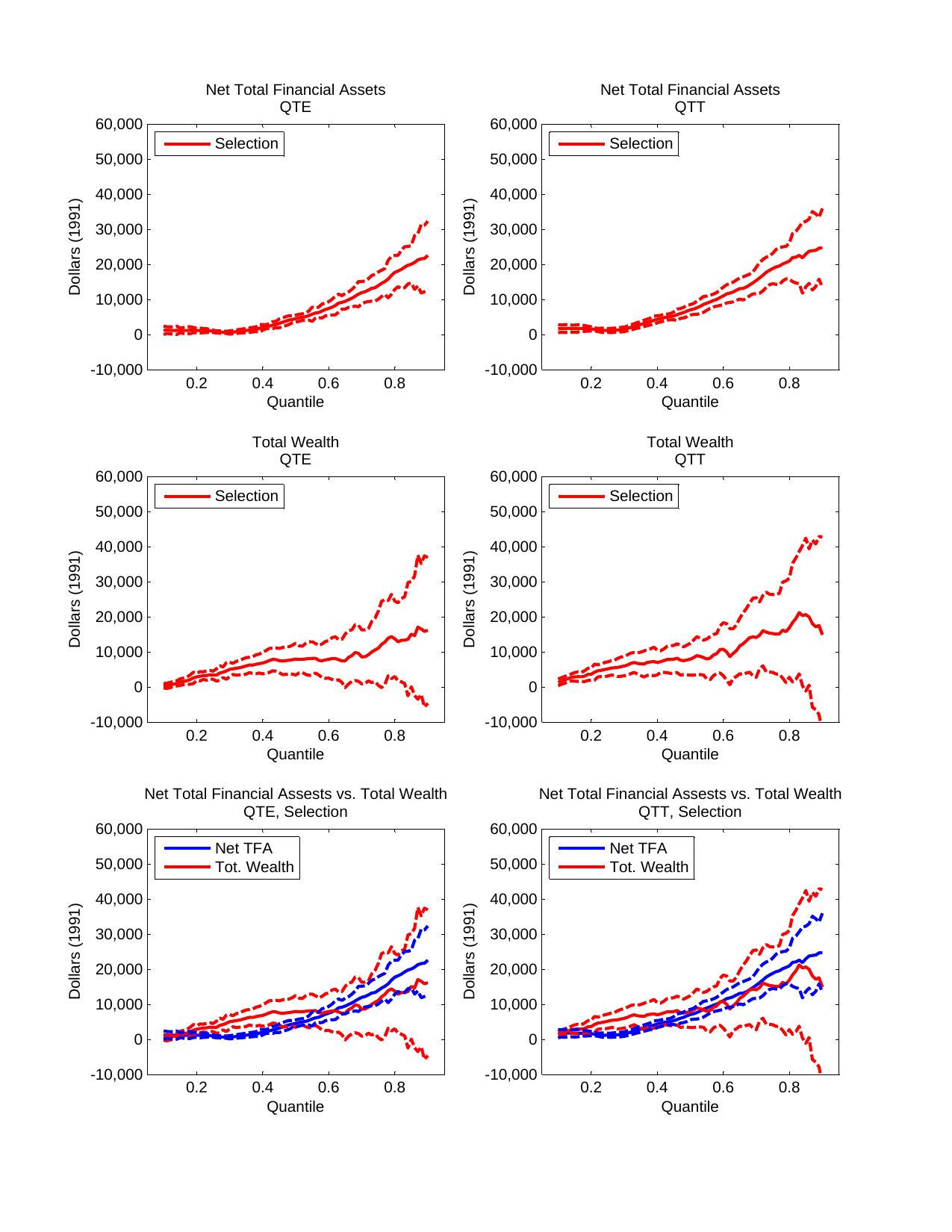}
	\label{fig:exogappfigure4}
	\caption{QTE and QTE-T estimates based on the Quadratic Spline Plus Many Interaction specification.}
\end{figure}

\pagebreak
\begin{figure}
	\includegraphics[width=\textwidth]{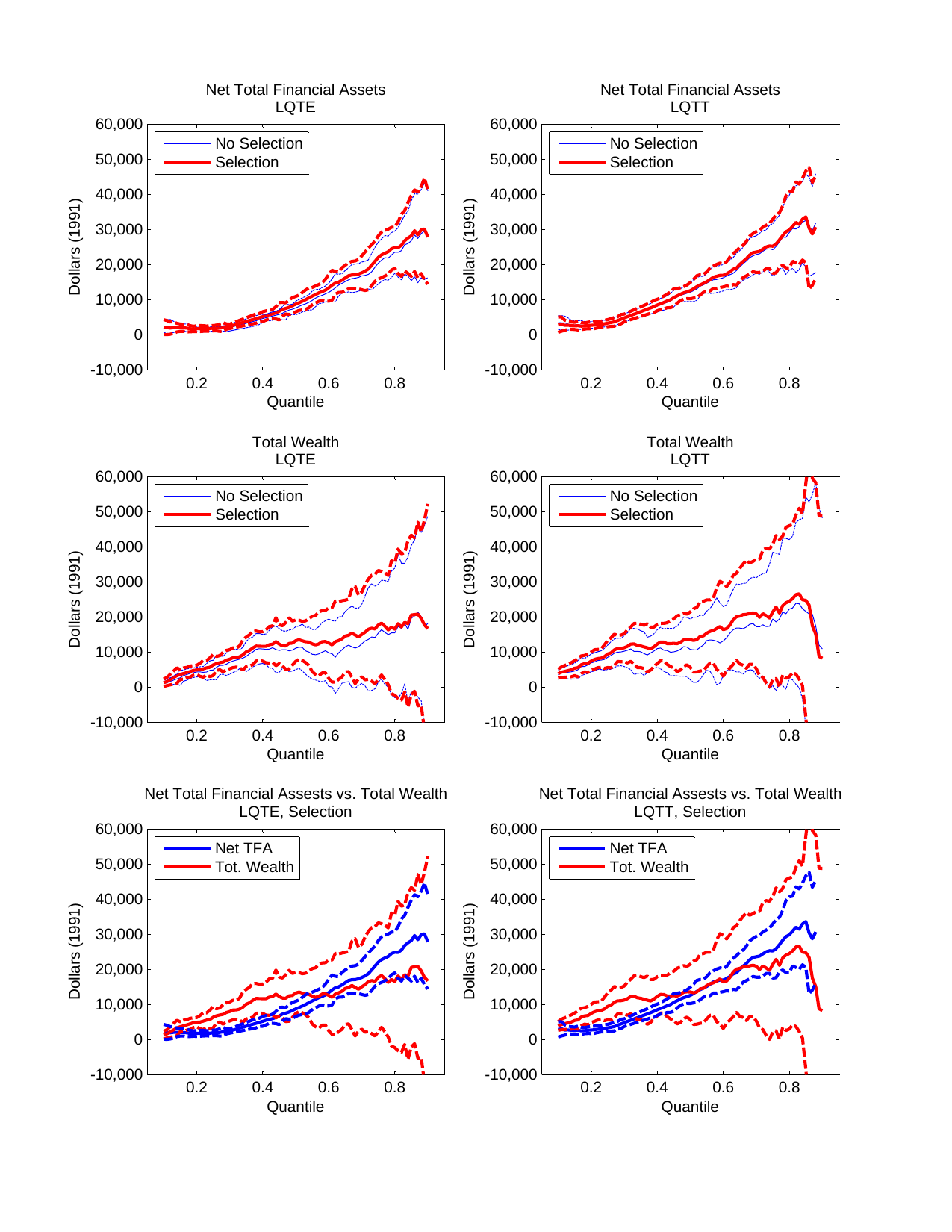}
	\label{fig:appfigure1}
	\caption{LQTE and LQTE-T estimates based on the Indicators specification.}
\end{figure}

\pagebreak
\begin{figure}
	\includegraphics[width=\textwidth]{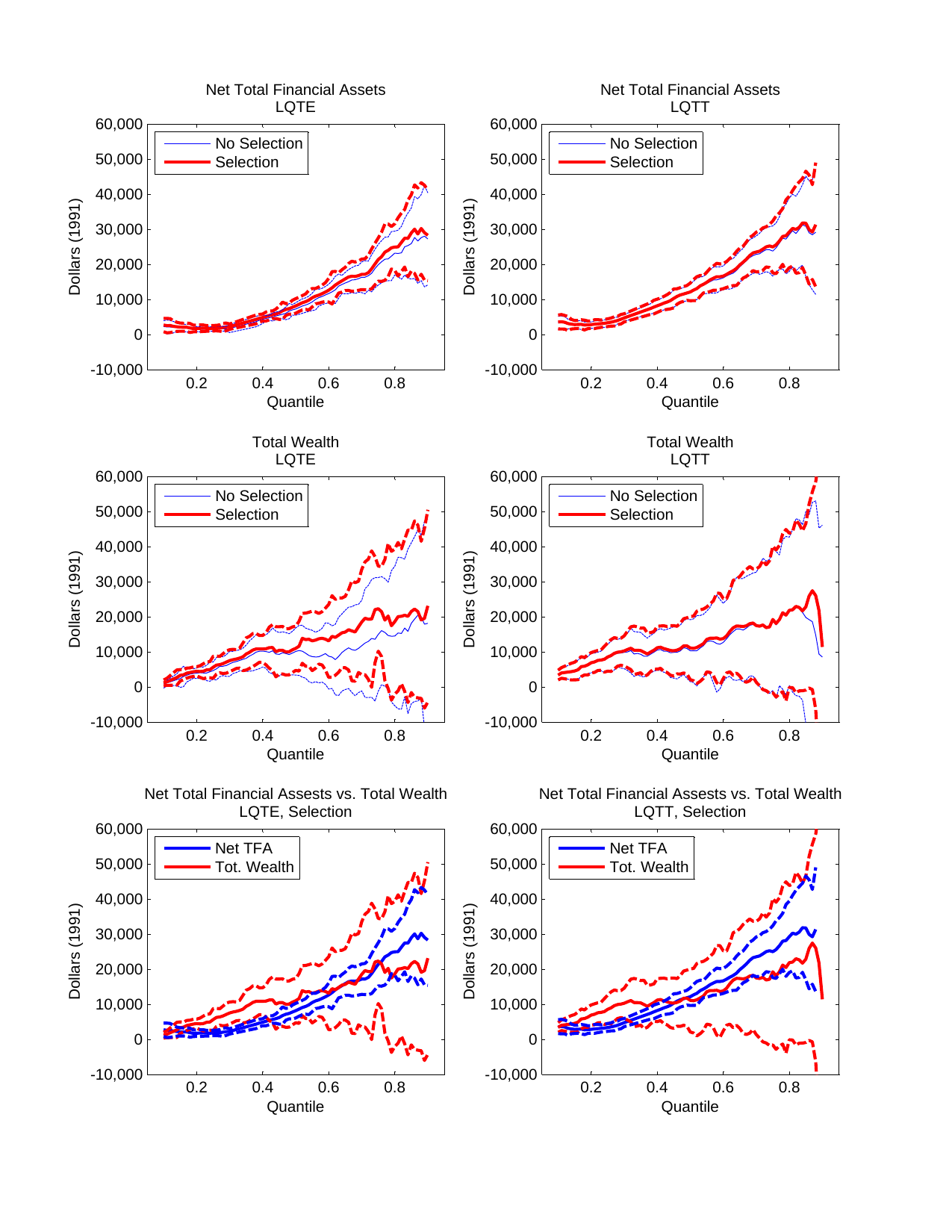}
	\label{fig:appfigure2}
	\caption{LQTE and LQTE-T estimates based on the Quadratic Spline specification.}
\end{figure}

\pagebreak
\begin{figure}
	\includegraphics[width=\textwidth]{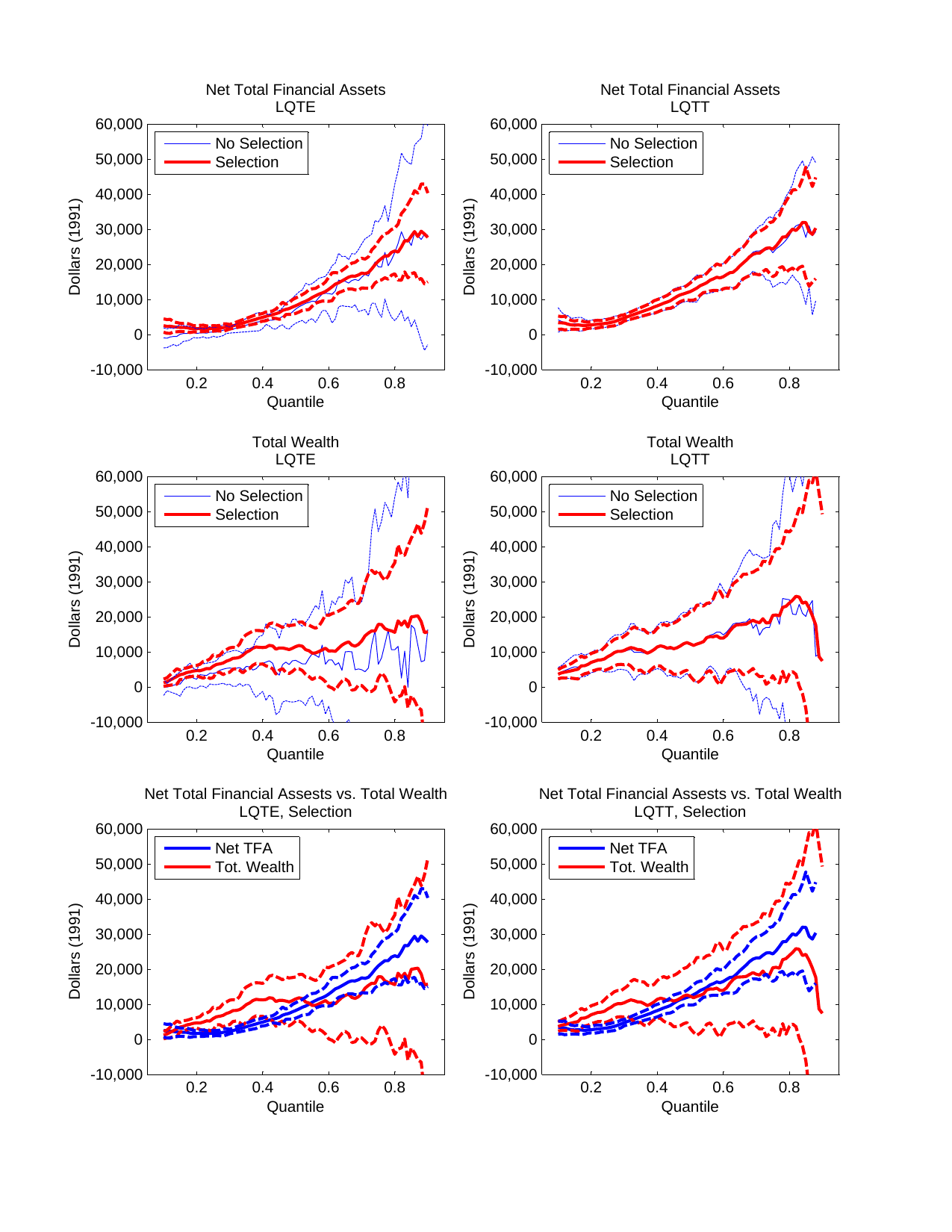}
	\label{fig:appfigure3}
	\caption{LQTE and LQTE-T estimates based on the Quadratic Spline Plus Interaction specification.}
\end{figure}

\pagebreak
\begin{figure}
	\includegraphics[width=\textwidth]{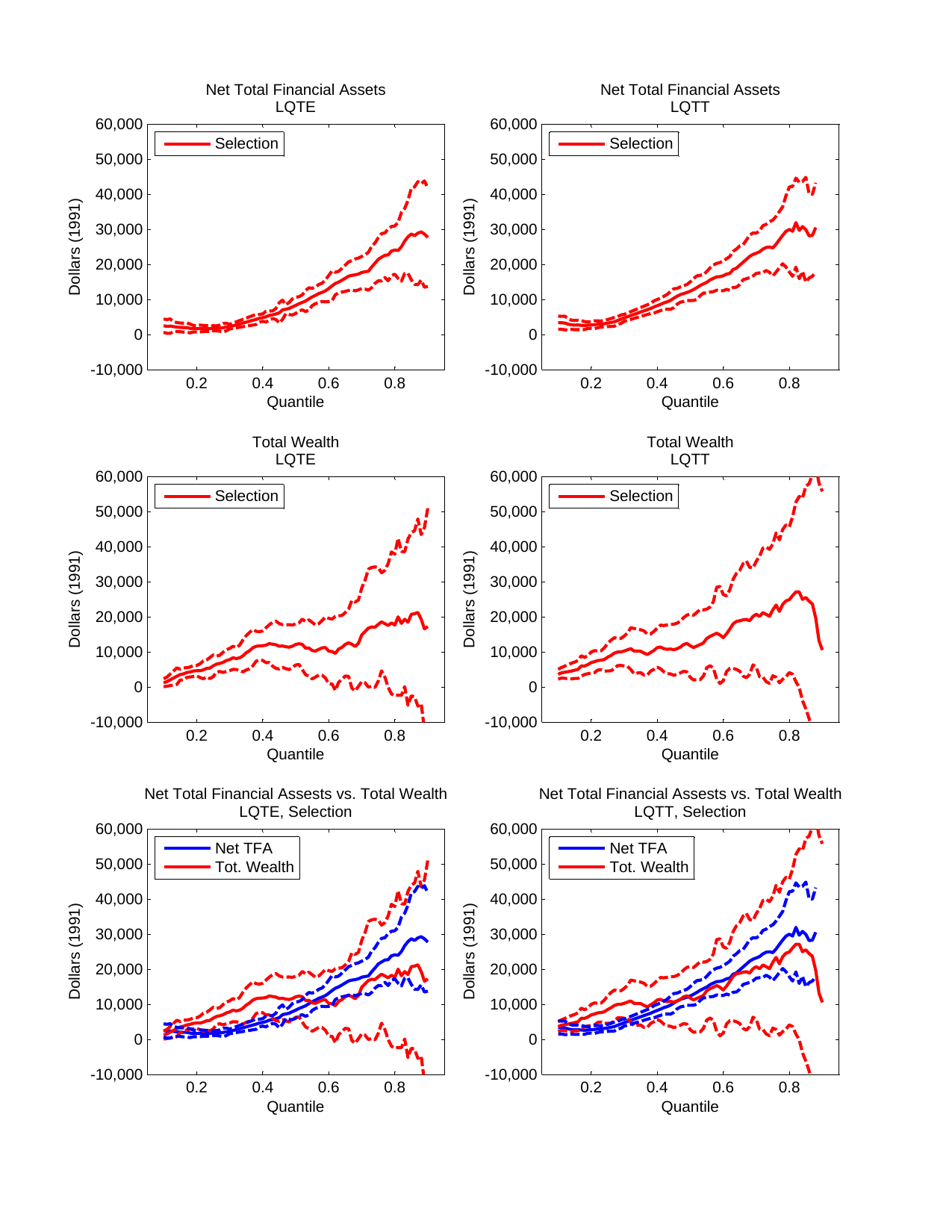}
	\label{fig:appfigure4}
	\caption{LQTE and LQTE-T estimates based on the Quadratic Spline Plus Many Interaction specification.}
\end{figure}

\pagebreak
\begin{figure}\label{SuppFigure1}
	\includegraphics[width=0.49\textwidth]{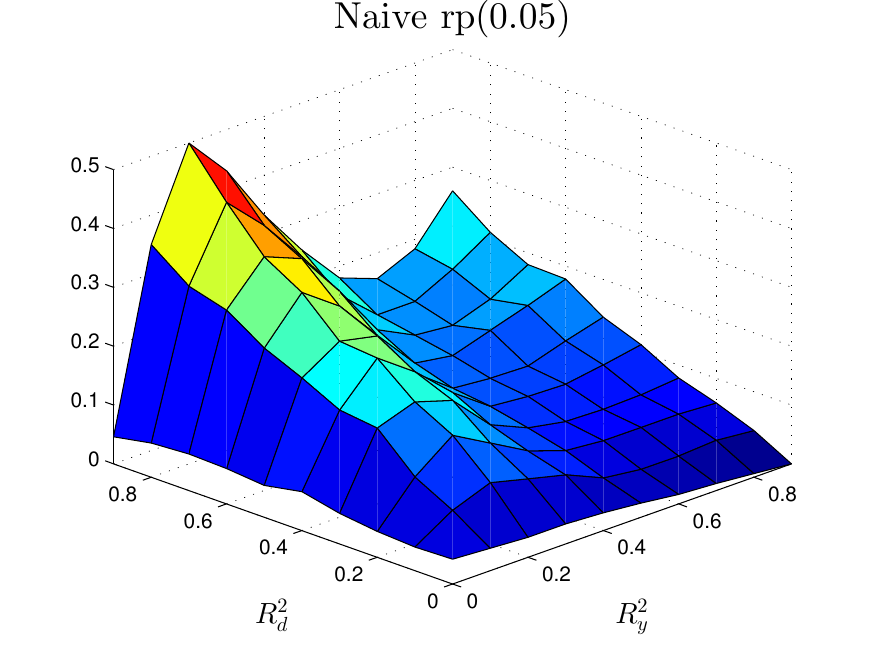}
	\includegraphics[width=0.49\textwidth]{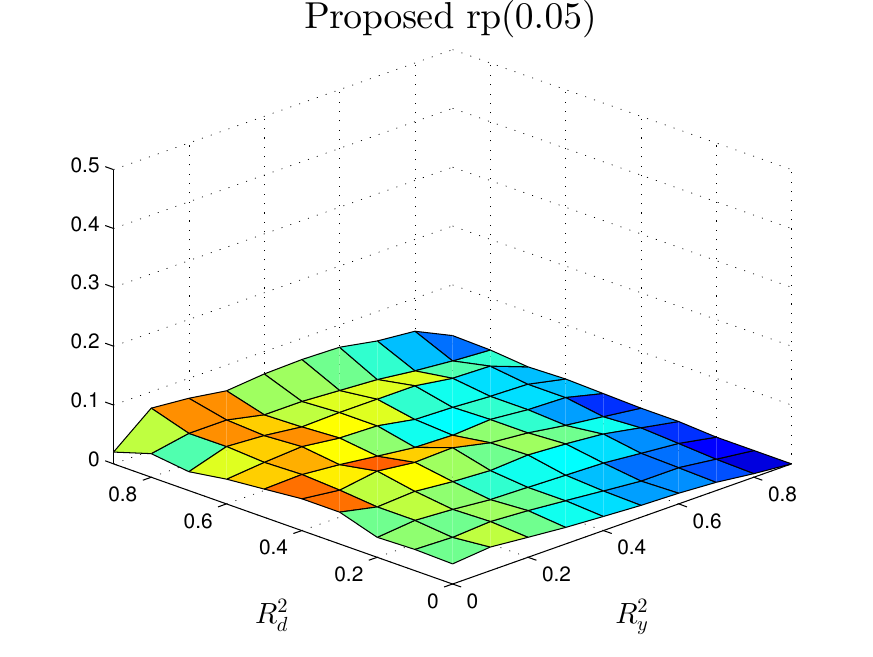}
	\caption{Rejection frequencies of 5\% level tests for average treatment effect estimators following model selection.  The left panel shows size of a test based on a ``naive'' estimator (Naive rp(0.05)), and the right panel shows size of a test based on our proposed procedure (Proposed rp(0.05)).}
\end{figure}

\end{document}